\newenvironment{summarybox}[1]{%
  \begin{mdframed}[
    linewidth=0.6pt,
    backgroundcolor=gray!10,
    skipabove=10pt,
    skipbelow=10pt,
    innerleftmargin=8pt,
    innerrightmargin=8pt,
    innertopmargin=6pt,
    innerbottommargin=6pt
  ]
  \textbf{#1}\par\medskip
}{%
  \end{mdframed}
}
\crefname{paragraph}{paragraph}{paragraphs}
\Crefname{paragraph}{Paragraph}{Paragraphs}
\newcommand{\spaceVariable}{x}
\newcommand{\leftBoundary}{\spaceVariable_{\ell}}
\newcommand{\rightBoundary}{\spaceVariable_{r}}
\newcommand{\spaceStep}{\Delta \spaceVariable}
\newcommand{\indexSpace}{j}
\newcommand{\indexTime}{n}
\newcommand{\spaceGridPoint}[1]{\spaceVariable_{#1}}
\newcommand{\spaceGridPointOnY}[1]{y_{#1}}
\newcommand{\definitionEquality}{:=}
\newcommand{\numberGridPoints}{J}
\newcommand{\numberGridPointsOnY}{K}
\newcommand{\integerInterval}[2]{\llbracket #1, #2 \rrbracket}
\newcommand{\conservedMoment}{u}
\newcommand{\nonConservedMoment}{v}
\newcommand{\nonNonConservedMoment}{w}
\newcommand{\collided}{\star}
\newcommand{\strong}[1]{\emph{#1}}
\newcommand{\relaxationParameter}{\omega}
\newcommand{\courantNumber}{\mathscr{C}}
\newcommand{\distributionFunctionLetter}{f}
\newcommand{\fourierShift}{\kappa}
\newcommand{\matricial}[1]{\bm{#1}}
\newcommand{\vectorial}[1]{\bm{#1}}
\newcommand{\schemeMatrixBulk}{\matricial{{E}}}
\newcommand{\schemeMatrixBulkFourier}{\schemeMatrixBulk}
\newcommand{\determinant}{\textnormal{det}}
\newcommand{\timeShiftOperator}{z}
\newcommand{\identityMatrix}[1]{\matricial{I}_{#1}}
\newcommand{\stableMarker}{\textnormal{s}}
\newcommand{\unstableMarker}{\textnormal{u}}
\newcommand{\stableRoot}{\fourierShift_{\stableMarker}}
\newcommand{\unstableRoot}{\fourierShift_{\unstableMarker}}
\newcommand{\eigenvectorLetter}{\varphi}
\newcommand{\zTransformed}[1]{\tilde{#1}}
\newcommand{\coefficientStable}{C_{\stableMarker}}
\newcommand{\coefficientUnstable}{C_{\unstableMarker}}
\newcommand{\relatives}{\mathbb{Z}}
\newcommand{\reals}{\mathbb{R}}
\newcommand{\complex}{\mathbb{C}}
\newcommand{\kernel}{\textnormal{ker}}
\newcommand{\transpose}[1]{#1^{\mathsf{T}}}
\newcommand{\unitCircle}{\mathbb{S}}
\newcommand{\unitDisk}{\mathbb{D}}
\newcommand{\closedUnitDisk}{\overline{\mathbb{D}}}
\newcommand{\neighborhoodInfinity}{\mathbb{U}}
\newcommand{\closedNeighborhoodInfinity}{\overline{\mathbb{U}}}
\newcommand{\productRoots}{\Pi}
\newcommand{\timeShiftOperatorModified}{\tilde{\timeShiftOperator}}
\newcommand{\argumentLegendrePolynomials}{\alpha}
\newcommand{\legendrePolynomial}[1]{P_{#1}}
\newcommand{\coefficientsExpansionSquareRoot}{B}
\newcommand{\coefficientsLaurentStableRoot}{s}
\newcommand{\differential}{\textnormal{d}}
\newcommand{\coefficientNumeratorRight}{\nu}
\newcommand{\coefficientNumeratorLeft}{\overline{\nu}}
\newcommand{\coefficientMultiplierNonConservedMomentRight}{\sigma}
\newcommand{\coefficientMultiplierNonConservedMomentLeft}{\overline{\sigma}}
\newcommand{\bigO}[1]{\mathcal{O}(#1)}
\newcommand{\confer}{{cf.}}
\newcommand{\coefficientsLaurentStableRootAllParities}{\beta}
\newcommand{\coefficientsLaurentUnstableRootReciprocalAllParities}{\upsilon}
\newcommand{\timeVariable}{t}
\newcommand{\advectionVelocity}{a}
\newcommand{\latticeVelocity}{\lambda}
\newcommand{\timeStep}{\Delta \timeVariable}
\newcommand{\lbmScheme}[2]{$\textnormal{D}_{#1}\textnormal{Q}_{#2}$}
\newcommand{\lbmSchemeVectorial}[3]{$\textnormal{D}_{#1}\textnormal{Q}_{#2}^{#3}$}
\newcommand{\timeGridPoint}[1]{\timeVariable^{#1}}
\newcommand{\naturals}{\mathbb{N}}
\newcommand{\labZeroVel}{\times}
\newcommand{\labPosX}{\Yright}
\newcommand{\labPosY}{\Yup}
\newcommand{\labNegX}{\Yleft}
\newcommand{\labNegY}{\Ydown}
\newcommand{\xLabel}{x}
\newcommand{\yLabel}{y}
\newcommand{\antiSymmetricMomentXLetter}{v_{\xLabel}}
\newcommand{\antiSymmetricMomentYLetter}{v_{\yLabel}}
\newcommand{\symmetricMomentXLetter}{w_{\xLabel}}
\newcommand{\symmetricMomentYLetter}{w_{\yLabel}}
\newcommand{\antiSymmetricMomentX}[1]{v_{\xLabel, #1}}
\newcommand{\antiSymmetricMomentY}[1]{v_{\yLabel, #1}}
\newcommand{\symmetricMomentX}[1]{w_{\xLabel, #1}}
\newcommand{\symmetricMomentY}[1]{w_{\yLabel, #1}}
\newcommand{\courantNumberX}{\courantNumber_{\xLabel}}
\newcommand{\courantNumberY}{\courantNumber_{\yLabel}}
\newcommand{\equilibriumCoefficientSymmetricX}{\mathscr{S}_{\xLabel}}
\newcommand{\equilibriumCoefficientSymmetricY}{\mathscr{S}_{\yLabel}}
\newcommand{\momentMatrix}{\matricial{M}}
\newcommand{\canonicalBasisVector}[1]{\vectorial{e}_{#1}}
\newcommand{\frequency}{\xi}
\newcommand{\zeroMatrix}[1]{\matricial{0}_{#1}}
\newcommand{\diagonalMatrix}{\text{\textbf{diag}}}
\newcommand{\numberTimeIterations}{N}
\newcommand{\termAtOrder}[2]{#1^{(#2)}}
\newcommand{\fluxLetter}{F}
\newcommand{\numberConservationLaws}{N}
\newcommand{\referenceState}{\overline{\vectorial{\conservedMoment}}}
\newcommand{\referenceStateMarked}[1]{\overline{#1}}
\newcommand{\height}{h}
\newcommand{\velocity}{u}
\newcommand{\gravity}{g}
\newcommand{\eigenvalueLinearized}{a}
\newcommand{\eigenvectorLinearized}{\vectorial{r}}
\newcommand{\matrixEigenvectorsLinearized}{\matricial{R}}
\newcommand{\linearGroup}[2]{\mathsf{GL}_{#1}(#2)}
\newcommand{\inTheWavesBase}[1]{\check{#1}}
\newcommand{\spectrum}{\textnormal{sp}}
\newcommand{\atEquilibrium}{\textnormal{eq}}
\newcommand{\singularity}{\eta}
\newcommand{\indicatorFunction}[1]{\mathds{1}_{#1}}
\newcommand{\topBoundary}{\yLabel_{t}}
\newcommand{\bottomBoundary}{\yLabel_{b}}
\newcommand{\soundSpeed}{c_{\textnormal{s}}}
\newcommand{\fourierTransformed}[1]{\hat{#1}}
\title{Perfectly transparent boundary conditions and wave propagation in lattice Boltzmann schemes}
\author{Thomas Bellotti}
\date{\today}
\institute{Université Paris-Saclay, CNRS, CentraleSupélec, Laboratoire EM2C \& Fédération de Mathématiques de CentraleSupélec, 91190, Gif-sur-Yvette, France}
\providecommand{\keywords}[1]{{\noindent \small \textbf{{Keywords: }} {#1}}}
\begin{document}

\maketitle

\begin{abstract}
    Systems of $\numberConservationLaws=1, 2, \dots$ first-order hyperbolic conservation laws feature $\numberConservationLaws$ undamped waves propagating at finite speeds.  
    On their own hand, multi-step Finite Difference and lattice Boltzmann schemes with $q=\numberConservationLaws+1, \numberConservationLaws+2, \dots$ unknowns involve $\numberConservationLaws$ ``physical'' waves, which are aimed at being as closely-looking as possible to the ones of the PDEs, and $q-\numberConservationLaws$ ``numerical--spurious--parasitic'' waves, which are subject to their own speed of propagation, and either damped or undamped. 
    The whole picture is even more complicated in the discrete setting---as numerical schemes act as dispersive media, thus propagate different harmonics at different phase (and group) velocities.
    For compelling practical reasons, simulations must always be conducted on bounded domains, even when the target problem is unbounded in space. 
    The importance of transparent boundary conditions, preventing artificial boundaries from acting as mirrors producing polluting ricochets, naturally follows.

    This work presents, building on Besse, Coulombel, and Noble [ESAIM: M2AN, 55 (2021)], a systematic way of developing perfectly transparent boundary conditions for lattice Boltzmann schemes tackling linear problems in one and two space dimensions.
    Our boundary conditions are ``perfectly'' transparent, at least for 1D problems, as they absorb both physical and spurious waves regardless of their frequency.
    After presenting, in a simple framework, several approaches to handle the fact that $q>\numberConservationLaws$, we elect the so-called ``scalar'' approach (which despite its name, also works when $\numberConservationLaws>1$) as method of choice for more involved problems. 
    This method solely relies on computing the coefficients of the Laurent series at infinity of the roots of the dispersion relation of the bulk scheme. 
    We insist on asymptotics  for these coefficients in the spirit of analytic combinatorics.
    The reason is two-fold: asymptotics guide truncation of boundary conditions to make them depending on a fixed number of past time-steps, and make it clear---during the process of computing coefficients---whether intermediate quantities can be safely stored using floating-point arithmetic or not.
    Numerous numerical investigations in 1D and 2D with $\numberConservationLaws = 1$ and $2$ are carried out, and show the effectiveness of the proposed boundary conditions.
\end{abstract}

\keywords{Hyperbolic conservation laws \and lattice Boltzmann schemes \and transparent boundary conditions \and wave propagation}

\subclass{65M99 \and 76M28 \and 65M06}


\section{Introduction}

Lattice Boltzmann methods (very often abridged ``LBM'') are numerical schemes for the approximation of solutions of Partial Differential Equations (PDEs) expressing some form of \strong{conservation principle}. 
These methods discretize time and space using uniform Cartesian meshes, and the algorithm evolves unknowns in a time-explicit fashion through two consecutive yet separated steps.
These schemes owe their name to this peculiar structure that mimics the Boltzmann equation.
Indeed, the first step is known as collision (or \strong{relaxation}) and is entirely local to each node of the mesh. It is the only part of the method that can feature non-linearities.
The second step, known as \strong{transport}, limits oneself to move unknowns, without mixing them, from one grid point to another, according to a velocity assigned to each unknown.

The growing success of lattice Boltzmann schemes, especially within applied communities of researchers, owes to their extreme rapidity and ease of implementation, both coming from their peculiar structure. 
Although most of the works use lattice Boltzmann schemes as weakly compressible approximations for the incompressible Navier-Stokes system, the present paper focuses on solving \strong{first-order systems of hyperbolic conservation laws}, an emerging trend using lattice Boltzmann methods \cite{dubois2014simulation, graille2014approximation, rao2015lattice, guillon2024stability, kozhanova2025hybrid, bellotti2025fourth, wissocq2024positive}.

The specific trait of any system of $\numberConservationLaws=1, 2, \dots$ hyperbolic conservation laws is that it features $\numberConservationLaws$ \strong{propagating waves} that are not attenuated in time, each one with its specific speed.
As they are shaped by the previously described structure, lattice Boltzmann methods to tackle these systems of PDEs must be based on $q\geq \numberConservationLaws + 1$ unknowns, and thus support $q>\numberConservationLaws$ (discrete) waves. 
The first $\numberConservationLaws$ waves by the numerical scheme are \strong{physically rooted} and hopefully good approximations, in some sense, of those of the PDEs, although affected by numerical dissipation and dispersion.
The remaining $q-\numberConservationLaws$ waves are purely \strong{artificial} (otherwise said, spurious). They experience propagation, possible time attenuation--damping, and of course dissipation and dispersion. 
They are a rather annoying by-product of the discretization process with which one is compelled to deal with, especially when they are not attenuated.
It is not much of a surprise that the very same plethora of waves at the numerical level exists with \strong{multi-step Finite Difference schemes} (e.g., the leap-frog scheme \cite{charney1950numerical})---considering the universal link between these and lattice Boltzmann schemes \cite{fuvcik2021equivalent, bellotti2022finite}.

Although the system of conservation laws may be posed on the whole space, the limited amount of computer memory dictates bounded computational meshes, and one would therefore like the discrete system behave \strong{as if boundaries were not present}.
This is the aim of \strong{transparent}---sometimes called ``non-reflecting'' or ``absorbing''---boundary conditions: allow waves (both physical and spurious) incident to the boundary to exit as they were carrying on their undisturbed propagation.
For Finite Difference schemes, the construction and analysis of such boundary conditions is a long line of research, and the interested reader can look at \cite{coulombel2019transparent} and references therein on this matter.
Another approach to achieve non-reflection is the so-called PML (\strong{Perfectly Matched Layer}) technique \cite{berenger1994perfectly, duru2012well, duru2014boundary}.
It boils down to surrounding the domain with an absorbing layer of finite thickness in which the governing equations are devised so that  waves decay rapidly.
This way of proceeding has been used with lattice Boltzmann schemes to minimize reflections of acoustic waves, see \cite{tekitek2009towards, xu2013analysis} for instance. 
In these works, absorbing layers are designed relying on formal expansions for small grid steps, and therefore only treat low-frequencies (i.e., smooth solutions) for physical waves, missing higher parts of spectra and spurious waves.
Another approach with lattice Boltzmann methods boils down to setting the amplitude of \strong{incoming waves} deduced from the target PDEs at a given boundary to zero.
However, although different ways of doing so have been proposed, they all rely on boundary conditions designed in the low-frequency limit \cite{izquierdo2008characteristic, wissocq2017regularized}, through Taylor expansions, or using equilibria \cite{heubes2014characteristic}. 
As a result, none of the previous approaches attains \strong{perfectly transparent} boundary conditions for lattice Boltzmann methods, which is the aim of the present contribution.
The existing works closer to our standpoint\footnote{That we discovered when our journey through these topics had already started.} are those of Heubes and collaborators \cite{heubes2014exact, heubes2015concept, heubes2015discrete}, with the first one dealing with a linear scalar \lbmScheme{1}{2} scheme\footnote{The notation \lbmScheme{d}{q} scheme designates a lattice Boltzmann scheme for a $d$-dimensional problem that features $q$ unknowns.}. Their approach yields perfectly transparent boundary conditions under the assumption that initial data are constant outside the domain, using the history of the values of the distribution functions at the boundary node, along with those of the equilibrium on a large set of nodes outside the domain.
As their construction is based on a good understanding of the way information propagates between nodes in a tree-like fashion, applicability is limited to 1D scalar schemes with a modest number of degrees of freedom. 
Generalizations to more involved settings \cite{heubes2015concept, heubes2015discrete} are presented only from the algorithmic standpoint, i.e. no closed-form formul\ae{} for boundary conditions are provided, relying on ``sub-problems'' solved at each time-step.
Finally, as these boundary conditions are, as the ones we presently develop, global-in-time, truncations to achieve higher efficiency are also proposed.

The aim of our work is to construct perfectly transparent boundary conditions for linear lattice Boltzmann schemes, both tackling scalar equations and systems, in 1D and 2D.
For systems of equations, our manuscript considers methods both based on a vectorial distribution function (the so-called ``vectorial'' schemes) and on a scalar distribution function with several conserved moments (that we call ``monolitic'').
To this end, we build on the work of \cite{besse2021discrete}, and propose a far-reaching and highly non-trivial generalization of \cite{heubes2014exact}.
After applying the $\timeShiftOperator$-transform to the numerical scheme to turn discrete time points into a complex parameter, this approach is based on the roots of the so-called ``\strong{characteristic equation}'' of the scheme.
In particular, the coefficients involved in the boundary conditions are obtained from those of \strong{Laurent expansions} of these roots at infinity, which can be computed either by direct computations, or inductively.
This latter approach shall turn out to be particularly flexible and easily deployable.
To make the scalar approach of Besse and collaborators work for lattice Boltzmann schemes, which feature at least two unknowns even for scalar PDEs, we investigate two solutions.
The first one, which we call ``systemic'', is based on the construction of the eigenspace associated with a root of the characteristic equation (i.e., an eigenvalue), see \cite{bellotti2025stability}.
The second one, called ``scalar'', lets the numerical scheme do the job at steering information on different components of the numerical solution, and unknowns are treated in an unentangled fashion at the boundary. This approach is particularly suitable for more involved numerical schemes, as it avoids the study of eigenvectors.
In the paper, we pay particular attention to the \strong{asymptotics} of the coefficients involved in boundary conditions, which we propose to compute through devices germane to \strong{analytic combinatorics} \cite{flajolet2009analytic}.
On the one hand, knowing the way coefficients tend to zero fosters truncation of boundary conditions, which are otherwise global-in-time for they depend on the entire past close to the boundary.
On the other hand, knowing their asymptotics enlightens whether intermediate quantities can be safely stored under floating-point arithmetic or not.
Before presenting the manuscript structure, let us point out that our strategy is intrinsically linear, and thus its extension to genuinely non-linear problems is a significant challenge left for future research.

Rather than pursuing a theoretical exposition, the paper is \strong{example-driven} and each part deals with an additional difficulty in the design of transparent boundary conditions.
Comprehensive numerical experiments are incrementally presented.
Overall, the scalar \lbmScheme{1}{2} scheme serves as initial benchmark to understand the main ideas, possible issues and best/easier practices. 
This example is therefore discussed in deep detail.
The paper starts with \Cref{sec:scalarProblems} on scalar problems, first solved in 1D with a \lbmScheme{1}{2} scheme and a fourth-order \lbmScheme{1}{3} scheme, then adding the difficulties of the 2D setting using a link two-relaxation-times \lbmScheme{2}{5} scheme.
\Cref{sec:systems} explains how the previous ideas are adapted to handle systems\footnote{Only in 1D, for the sake of presentation.}, both approximated with a monolithic \lbmScheme{1}{3} scheme on the example of shallow water equations, and by a vectorial \lbmSchemeVectorial{1}{2}{\numberConservationLaws} that works on any hyperbolic system.

\subsection{Notations and $\timeShiftOperator$-transform}\label{sec:notations}
For future use, we introduce the following notations on the segmentation of the complex plane:
\begin{align*}
	\unitCircle\definitionEquality\{\timeShiftOperator\in\complex\quad\text{such that}\quad |\timeShiftOperator|=1\}, \qquad
	\unitDisk &\definitionEquality \{\timeShiftOperator\in\complex\quad\text{such that}\quad |\timeShiftOperator|<1\}, \qquad \closedUnitDisk\definitionEquality \unitDisk\cup\unitCircle, \\
	\neighborhoodInfinity &\definitionEquality \{\timeShiftOperator\in\complex\quad\text{such that}\quad |\timeShiftOperator|>1\}, \qquad \closedNeighborhoodInfinity\definitionEquality \neighborhoodInfinity\cup\unitCircle.
\end{align*}

We also introduce the $\timeShiftOperator$-transform of a temporal signal, at least from a formal standpoint.
Let $(\conservedMoment^{\indexTime})_{\indexTime\in\naturals}\subset \reals$.
We define its $\timeShiftOperator$-transformed counterpart by 
\begin{equation*}
    \zTransformed{\conservedMoment}(\timeShiftOperator)\definitionEquality\sum_{\indexTime = 0}^{+\infty} \timeShiftOperator^{-\indexTime}\conservedMoment^{\indexTime},
\end{equation*}
which shows that $\conservedMoment^{\indexTime}$ is nothing but the $\indexTime$-th term in the \strong{Laurent series} of $\zTransformed{\conservedMoment}(\timeShiftOperator)$ at infinity.
Also, $\zTransformed{\conservedMoment}(\timeShiftOperator^{-1})$ can be interpreted as the (ordinary) \strong{generating function} of the sequence $(\conservedMoment^{\indexTime})_{\indexTime\in\naturals}$.
The inverse formula is given by 
\begin{equation*}
    \conservedMoment^{\indexTime} = \frac{1}{2\pi i}\oint_{\gamma} \timeShiftOperator^{\indexTime - 1}\zTransformed{\conservedMoment}(\timeShiftOperator)\differential\timeShiftOperator,
\end{equation*}
where $\gamma$ is a positively oriented closed curve enclosing the origin and completely within the region of convergence of $\zTransformed{\conservedMoment}(\timeShiftOperator)$.
Remark that the $\timeShiftOperator$-transform of the convolution of two sequences is simply the product of the $\timeShiftOperator$-transforms of each sequence.

\section{Scalar problems}\label{sec:scalarProblems}

\subsection{1D setting}

\subsubsection{Target partial differential equation}

We approximate the solution of 
\begin{equation}\label{eq:targetTransport1D}
    \begin{dcases}
        \begin{aligned}
            &\partial_{\timeVariable}\conservedMoment(\timeVariable, \spaceVariable) + \advectionVelocity\partial_{\spaceVariable}\conservedMoment(\timeVariable, \spaceVariable)  = 0, \qquad &\timeVariable>0, \quad &\spaceVariable\in\reals, \\
            &\conservedMoment(0, \spaceVariable)  = \conservedMoment^{\circ}(\spaceVariable), \qquad & &\spaceVariable\in\reals,
        \end{aligned}
    \end{dcases}
\end{equation}
with $\advectionVelocity\neq 0$, whose exact solution is given by $\conservedMoment(\timeVariable, \spaceVariable) =  \conservedMoment^{\circ}(\spaceVariable-\advectionVelocity\timeVariable)$.
As numerical simulations must necessarily be conducted on bounded domains, let us say $[\leftBoundary, \rightBoundary]$  with $\rightBoundary>\leftBoundary$, compactly supported initial data $\conservedMoment^{\circ}$ eventually yield zero solutions on $[\leftBoundary, \rightBoundary]$.
We would like this property to be preserved by numerical schemes.

\subsubsection{Space and time discretization}

The interval $[\leftBoundary, \rightBoundary]$ is paved with a mesh made up of $\numberGridPoints + 2$ points, with spacing
\begin{equation*}
    \spaceStep \definitionEquality \frac{\rightBoundary-\leftBoundary}{\numberGridPoints + 1}.
\end{equation*}
In this way, the discrete mesh is made up of $\spaceGridPoint{\indexSpace}\definitionEquality\leftBoundary + \indexSpace\spaceStep$, with $\indexSpace\in\integerInterval{0}{\numberGridPoints + 1}$.
On the points $\spaceGridPoint{0} = \leftBoundary$ and $\spaceGridPoint{\numberGridPoints + 1}= \rightBoundary$, boundary conditions must be enforced\footnote{We analyze schemes with space stencil of one both to the left and to the right.}.
Time is discretized by a mesh $\timeGridPoint{\indexTime}\definitionEquality\indexTime\timeStep$ with $\indexTime\in\naturals$ using a step $\timeStep\definitionEquality\spaceStep/\latticeVelocity$, where $\latticeVelocity>0$ is kept fixed.
For future use, we introduce the Courant number given by $\courantNumber\definitionEquality\advectionVelocity/\latticeVelocity$.

\subsubsection{Numerical schemes and their properties}

\paragraph{\lbmScheme{1}{2} scheme}\label{sec:D1Q2PresentationAndProp}

This scheme is presented, for instance, in \cite{junk2008regular, graille2014approximation}.
At the iteration $\indexTime\in\naturals$, the known input is  $(\conservedMoment_0^{\indexTime}, \nonConservedMoment_0^{\indexTime}), (\conservedMoment_1^{\indexTime}, \nonConservedMoment_1^{\indexTime}), \dots, (\conservedMoment_{\numberGridPoints}^{\indexTime}, \nonConservedMoment_{\numberGridPoints}^{\indexTime}), (\conservedMoment_{\numberGridPoints + 1}^{\indexTime}, \nonConservedMoment_{\numberGridPoints+1}^{\indexTime})$, with $\conservedMoment_{\indexSpace}^{\indexTime}\approx \conservedMoment(\timeGridPoint{\indexTime}, \spaceGridPoint{\indexSpace})$.
Then, the solution undergoes the following steps.
\begin{itemize}
    \item \strong{Relaxation} on the moments:
    \begin{equation}\label{eq:relaxationD1Q2}
        \conservedMoment_{\indexSpace}^{\indexTime, \collided} = \conservedMoment_{\indexSpace}^{\indexTime} \qquad \text{and}
        \qquad \nonConservedMoment_{\indexSpace}^{\indexTime, \collided} = (1-\omega)\nonConservedMoment_{\indexSpace}^{\indexTime} + \omega \underbrace{\courantNumber \conservedMoment_{\indexSpace}^{\indexTime}}_{\nonConservedMoment^{\atEquilibrium}(\conservedMoment_{\indexSpace}^{\indexTime})}, \qquad \indexSpace\in\integerInterval{0}{\numberGridPoints + 1}.
    \end{equation}
    The first equation in \eqref{eq:relaxationD1Q2} expresses local conservation for $\conservedMoment$, while the second one is a relaxation of $\nonConservedMoment$ towards its equilibrium with relaxation parameter $\relaxationParameter\in(0, 2]$.
    When $\relaxationParameter = 1$ is chosen, we obtain a relaxation scheme, see \cite{bouchut2003entropy}.
    \item \strong{Transport} on the distribution functions. After having computed the distribution functions $\distributionFunctionLetter_{\pm, \indexSpace}^{\indexTime, \collided}= \tfrac{1}{2}(\conservedMoment_{\indexSpace}^{\indexTime, \collided} \pm \nonConservedMoment_{\indexSpace}^{\indexTime, \collided} )$ for $\indexSpace\in\integerInterval{0}{\numberGridPoints + 1}$, we have 
    \begin{equation}\label{eq:transportD1Q2}
        \distributionFunctionLetter_{\pm, \indexSpace}^{\indexTime + 1}= \distributionFunctionLetter_{\pm, \indexSpace \mp 1}^{\indexTime, \collided}, \qquad \indexSpace\in\integerInterval{1}{\numberGridPoints}.
    \end{equation}
    Then, we obtain the moments $\conservedMoment_{\indexSpace}^{\indexTime+1} = \distributionFunctionLetter_{+, \indexSpace}^{\indexTime + 1} +  \distributionFunctionLetter_{-, \indexSpace}^{\indexTime + 1}$ and $\nonConservedMoment_{\indexSpace}^{\indexTime+1} = \distributionFunctionLetter_{+, \indexSpace}^{\indexTime + 1} -  \distributionFunctionLetter_{-, \indexSpace}^{\indexTime + 1}$ again, for $ \indexSpace\in\integerInterval{1}{\numberGridPoints}$.
    \item \strong{Boundary conditions}. Compute $(\conservedMoment_0^{\indexTime + 1}, \nonConservedMoment_0^{\indexTime + 1})$ and $(\conservedMoment_{\numberGridPoints + 1}^{\indexTime + 1}, \nonConservedMoment_{\numberGridPoints+1}^{\indexTime + 1})$ as described in \Cref{sec:transparentD1Q2}.
\end{itemize}

\newcommand{\squareTikzTwoColors}[2]{%
	\fill[color=RoyalBlue] (#1-0.2,#2-0.2) -- (#1-0.2,#2+0.2) -- (#1,#2+0.2) -- (#1,#2-0.2) -- cycle;
	\fill[color=ForestGreen] (#1,#2-0.2) -- (#1,#2+0.2) -- (#1+0.2,#2+0.2) -- (#1+0.2,#2-0.2) -- cycle;
  	\draw[black] (#1-0.2,#2-0.2) rectangle (#1+0.2,#2+0.2);
}

\newcommand{\squareTikzFourColors}[2]{%
	\fill[color=RoyalBlue] (#1-0.2,#2-0.2) -- (#1-0.2,#2+0.2) -- (#1,#2+0.2) -- (#1,#2-0.2) -- cycle;
	\fill[color=ForestGreen] (#1,#2-0.2) -- (#1,#2+0.2) -- (#1+0.2,#2+0.2) -- (#1+0.2,#2-0.2) -- cycle;

    \fill[color=YellowOrange] (#1-0.2,#2+0.2) -- (#1-0.2,#2+0.6) -- (#1,#2+0.6) -- (#1,#2+0.2) -- cycle;
	\fill[color=Magenta] (#1,#2+0.2) -- (#1,#2+0.6) -- (#1+0.2,#2+0.6) -- (#1+0.2,#2+0.2) -- cycle;

  	\draw[black] (#1-0.2,#2-0.2) rectangle (#1+0.2,#2+0.2);
  	\draw[black] (#1-0.2,#2+0.2) rectangle (#1+0.2,#2+0.6);
}

\newcommand{\squareTikzFourColorsFlipped}[2]{%
	\fill[color=YellowOrange] (#1-0.2,#2-0.2) -- (#1-0.2,#2+0.2) -- (#1,#2+0.2) -- (#1,#2-0.2) -- cycle;
	\fill[color=Magenta] (#1,#2-0.2) -- (#1,#2+0.2) -- (#1+0.2,#2+0.2) -- (#1+0.2,#2-0.2) -- cycle;
    
    \fill[color=RoyalBlue] (#1-0.2,#2+0.2) -- (#1-0.2,#2+0.6) -- (#1,#2+0.6) -- (#1,#2+0.2) -- cycle;
	\fill[color=ForestGreen] (#1,#2+0.2) -- (#1,#2+0.6) -- (#1+0.2,#2+0.6) -- (#1+0.2,#2+0.2) -- cycle;

  	\draw[black] (#1-0.2,#2-0.2) rectangle (#1+0.2,#2+0.2);
  	\draw[black] (#1-0.2,#2+0.2) rectangle (#1+0.2,#2+0.6);
}

\usetikzlibrary{decorations.pathreplacing} 

\begin{figure}[h]
	\begin{center}
		\begin{tikzpicture}

			\draw[->] (-4,0) -- (6,0) node[right] {\(x\)};
			\draw[->] (-3.8,-0.2) -- (-3.8,6) node[above] {\(t\)};

			\foreach \x in {-2,...,0}
				\fill[color=black](\x*1.4,0) circle (2pt);
            \foreach \x in {2, 3}
                \fill[color=black](\x*1.4,0) circle (2pt);

			\node at (-2*1.4, -0.5) {$\spaceGridPoint{0} = \leftBoundary$};
			\node at (-1*1.4, -0.5) {$\spaceGridPoint{1}$};
			\node at (0*1.4, -0.5) {$\spaceGridPoint{2}$};
			\node at (1*1.4, -0.5) {$\cdots$};
            \node at (1*1.4, 1) {$\cdots$};
            \node at (1*1.4, 3) {$\cdots$};
            \node at (1*1.4, 5) {$\cdots$};
			\node at (2*1.4, -0.5) {$\spaceGridPoint{\numberGridPoints }$};
			\node at (3*1.4, -0.5) {$\spaceGridPoint{\numberGridPoints + 1} = \rightBoundary$};

            \draw (-3.9, 1) node[left] {$\timeGridPoint{\indexTime}$} -- (-3.7, 1);
			\draw (-3.9, 3) node[left] {($\timeGridPoint{\indexTime\collided}$)} -- (-3.7, 3);
			\draw (-3.9, 5) node[left] {$\timeGridPoint{\indexTime + 1}$} -- (-3.7, 5);

            \squareTikzTwoColors{-2.8}{1}
            \squareTikzTwoColors{-1.4}{1}			
			\squareTikzTwoColors{0.}{1}
			\squareTikzTwoColors{2.8}{1}
			\squareTikzTwoColors{4.2}{1}

            \foreach \x in {-2,...,0, 2, 3}
                \draw[thick, ->, color=RoyalBlue] (\x*1.4-0.1, 1.3) to[out=120,in=240] (\x*1.4-0.1, 2.5);
            \foreach \x in {-2,...,0, 2, 3}
                \draw[thick, smooth, ->, color=RoyalBlue] (\x*1.4-0.07, 1.3) -- (\x*1.4+0.07, 2.4);
            \foreach \x in {-2,...,0, 2, 3}
                \draw[thick, ->, color=ForestGreen] (\x*1.4+0.1, 1.3) to[out=60,in=300] (\x*1.4+0.1, 2.5);

            \squareTikzFourColors{-2.8}{2.8}
            \squareTikzFourColors{-1.4}{2.8}			
			\squareTikzFourColors{0.}{2.8}
			\squareTikzFourColors{2.8}{2.8}
			\squareTikzFourColors{4.2}{2.8}

			\foreach \x in {-2,...,0}
				\draw[thick, ->, color=YellowOrange] (\x*1.4-0.1, 3.5) -- (\x*1.4 + 1.3, 4.5);

			\foreach \x in {-1,...,-1, 1, 2}
				\draw[thick, ->, color=Magenta] (\x*1.4+1.5, 3.5) -- (\x*1.4 + 0.1, 4.5);

            \squareTikzTwoColors{-2.8}{5.2}
			\squareTikzFourColorsFlipped{-1.4}{4.8}
			\squareTikzFourColorsFlipped{0.}{4.8}
			\squareTikzFourColorsFlipped{2.8}{4.8}
            \squareTikzTwoColors{4.2}{5.2}

            \draw[decorate, decoration={brace, mirror}, thick] (4*1.4-0.5,1.1) -- (4*1.4-0.5,2.9) node[midway, right=2pt] {Relaxation \eqref{eq:relaxationD1Q2}};
			\draw[decorate, decoration={brace, mirror}, thick] (4*1.4-0.5,3.1) -- (4*1.4-2,4.9) node[midway, right=2pt] {Transport \eqref{eq:transportD1Q2}};

            \draw (-2.8, 5.2) circle[radius=0.5cm, thick];
            \draw (4.2, 5.2) circle[radius=0.5cm, thick];
            \path (-2.8, 5.2) ++(30:0.65cm) coordinate (A);
            \path (4.2, 5.2) ++(150:0.65cm) coordinate (B);

  \draw[-, thick]
    (A) .. controls ++(60:1cm) and ++(120:1cm) .. (B)
    node[midway, above] {Boundary conditions};
		\end{tikzpicture}
	\end{center}\caption{\label{fig:descriptionScheme}Schematic description of the \lbmScheme{1}{2} algorithm. Cold colors indicate moments, with blue for the conserved moment. 
    Warm colors indicate distribution functions.}
\end{figure}

A visual illustration of the algorithm is given in \Cref{fig:descriptionScheme}.
Notice that $\conservedMoment$ and $\nonConservedMoment$ must be initialized to bridge with the datum $\conservedMoment^{\circ}$.
In all the numerical simulations and schemes, we consider 
\begin{equation}\label{eq:initializationEquilibrium}
    \conservedMoment_{\indexSpace}^{0} = \conservedMoment^{\circ}(\spaceGridPoint{\indexSpace})\qquad\text{and} \qquad \nonConservedMoment_{\indexSpace}^{0} = \nonConservedMoment^{\atEquilibrium}(\conservedMoment^{\circ}(\spaceGridPoint{\indexSpace})), \qquad \indexSpace\in\integerInterval{0}{\numberGridPoints+1},
\end{equation}
or analogous expressions for other schemes, assuming the initial datum $\conservedMoment^{\circ}$  be smooth enough to be point-wise sampled.

Introducing the space-shift operator $\fourierShift$, such that $(\fourierShift\distributionFunctionLetter)_{\indexSpace} = \distributionFunctionLetter_{\indexSpace+1}$, we can recast the numerical scheme \eqref{eq:relaxationD1Q2}--\eqref{eq:transportD1Q2} on an unbounded domain under the form $\transpose{(\conservedMoment_{\indexSpace}^{\indexTime + 1}, \nonConservedMoment_{\indexSpace}^{\indexTime + 1})} = \schemeMatrixBulkFourier(\fourierShift) \transpose{(\conservedMoment_{\indexSpace}^{\indexTime}, \nonConservedMoment_{\indexSpace}^{\indexTime})}$  with $\indexSpace\in\relatives$.
We have that 
\begin{equation*}
    \schemeMatrixBulkFourier(\fourierShift) = 
    \frac{1}{2}
    \begin{pmatrix}
       (1+\courantNumber\relaxationParameter)\fourierShift^{-1} + (1-\courantNumber\relaxationParameter)\fourierShift & (1-\relaxationParameter)\fourierShift^{-1} -  (1-\relaxationParameter)\fourierShift\\
       (1+\courantNumber\relaxationParameter)\fourierShift^{-1} - (1-\courantNumber\relaxationParameter)\fourierShift & (1-\relaxationParameter)\fourierShift^{-1} +  (1-\relaxationParameter)\fourierShift
    \end{pmatrix}.
\end{equation*}
The matrix $\schemeMatrixBulkFourier(\fourierShift)$ can be conveniently interpreted as made up of Laurent polynomials in $\fourierShift$, or also considering $\fourierShift$ as a complex number.
We also have  the associated \strong{characteristic equation}
\begin{equation}\label{eq:charEquation}
    \determinant(\timeShiftOperator\identityMatrix{2} - \schemeMatrixBulkFourier(\fourierShift)) = \tfrac{1}{2}((1-\courantNumber)\relaxationParameter-2)\timeShiftOperator \fourierShift^{-1} + (\timeShiftOperator^2 + 1-\relaxationParameter) +\tfrac{1}{2}((1+\courantNumber)\relaxationParameter-2)\timeShiftOperator \fourierShift = 0,
\end{equation}
which can be seen, besides its formal interpretation,
\begin{itemize}
    \item as an equation on $\timeShiftOperator$ parametrized by $\fourierShift\in\unitCircle$ (which yields a discrete Fourier transform in space), for the analysis on unbounded domains and to study the waves supported by the \strong{bulk scheme}. 
    This point is important to understand numerical results with imperfectly transparent boundary conditions.
    \item Or as an equation on $\fourierShift$ parametrized by $\timeShiftOperator\in\closedNeighborhoodInfinity$, which we use to devise boundary conditions.
    This standpoint is tightly linked to the GKS (Gustafsson--Kreiss--Sundstr{\"o}m) stability analysis for boundary conditions \cite{gustafsson1972stability, coulombel2009stability}.
\end{itemize}
Observe that \eqref{eq:charEquation} is the \strong{dispersion relation} of the scheme, which is \strong{non-linear} despite the linear character of the method.
It essentially rules what modes $(\timeShiftOperator, \fourierShift)$ are admitted by the numerical scheme.

\begin{remark}[How \eqref{eq:charEquation} could be already known]\label{rem:D1Q2AlreadyKnown}
    A careful inspection of \eqref{eq:charEquation} reveals that it is the dispersion relation of the leap-frog scheme for the equation $\partial_{\timeVariable\timeVariable}\conservedMoment+\latticeVelocity^2\partial_{\spaceVariable\spaceVariable}\conservedMoment = 0$ when $\relaxationParameter = 0$, that of the Lax-Friedrichs scheme for \eqref{eq:targetTransport1D} when $\relaxationParameter = 1$, and the one for the leap-frog scheme (\confer{}, \cite{besse2021discrete}) tackling \eqref{eq:targetTransport1D} when $\relaxationParameter = 2$.
    Outside these particular values of $\omega$,  \eqref{eq:charEquation} can be seen as a convex combination between two of the three previously mentioned dispersions relations.
\end{remark}

The study of the roots $\timeShiftOperator$ of  \eqref{eq:charEquation} as functions of $\fourierShift$ allows a characterization of both consistency and stability\footnote{At least concerning necessary conditions based solely on the modulus of eigenvalues, i.e. von Neumann criterion.}  as follows (proof in \cite{bellotti2025stability} and references therein). 
We also indulge on a long description of other properties of the \lbmScheme{1}{2}, such as dissipation, damping, and propagation, which---although elementary---are scattered or nowhere to be found in previous works.
\begin{proposition}[Consistency and stability of the \lbmScheme{1}{2} scheme on $\relatives$]\label{prop:consistencyD1Q2}
    The \lbmScheme{1}{2} scheme is such that there exists a unique eigenvalue $\timeShiftOperator_{\textnormal{phy}}(\frequency\spaceStep)$ of $\schemeMatrixBulkFourier(e^{i\frequency\spaceStep})$ with $\frequency\spaceStep\in[-\pi, \pi]$ (equivalently, root of \eqref{eq:charEquation} with $\fourierShift=e^{i\frequency\spaceStep}$) such that 
    \begin{equation}\label{eq:expansionConsistencyRoot}
        \timeShiftOperator_{\textnormal{phy}}(\frequency\spaceStep) = 1 - i \advectionVelocity\xi\timeStep + \Bigl ( -\frac{\courantNumber^2}{2} + (1-\courantNumber^2)\Bigl (\frac{1}{2}-\frac{1}{\relaxationParameter}\Bigr )\Bigr ) \latticeVelocity^2(\xi\timeStep)^2 + \bigO{(\xi\timeStep)^3}.
    \end{equation}
    This entails that the scheme is first-order accurate when $\relaxationParameter\in(0, 2)$ and second-order accurate when $\relaxationParameter = 2$.

    The scheme is $\ell^2$--stable, namely there exists $C>0$ such that $\sup_{\frequency\spaceStep\in[-\pi, \pi]} |\schemeMatrixBulkFourier(e^{i\frequency\spaceStep})^{\indexTime}|\leq C$ for all $\indexTime\in\naturals$, if and only if 
    \begin{equation}\label{eq:stabConditionD1Q2}
        \text{when }\relaxationParameter\in (0, 2), \quad \text{then}\quad |\courantNumber|\leq 1 \qquad\qquad\text{or}\qquad\qquad
        \text{when }\relaxationParameter = 2,\quad \text{then}\quad |\courantNumber|< 1.
    \end{equation}
\end{proposition}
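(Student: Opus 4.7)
The plan is to work directly with the characteristic polynomial \eqref{eq:charEquation}, viewed as a quadratic in $\timeShiftOperator$ with $\fourierShift = e^{i\frequency\spaceStep}\in\unitCircle$ acting as a parameter. Combining the $\fourierShift$ and $\fourierShift^{-1}$ terms, this quadratic reads $\timeShiftOperator^2 + \bigl[(\relaxationParameter-2)\cos(\frequency\spaceStep) + i\courantNumber\relaxationParameter\sin(\frequency\spaceStep)\bigr]\timeShiftOperator + (1-\relaxationParameter) = 0$. At $\frequency\spaceStep = 0$ it factors as $(\timeShiftOperator-1)(\timeShiftOperator-(1-\relaxationParameter))$, isolating the candidate physical root $\timeShiftOperator = 1$, which is simple whenever $\relaxationParameter \neq 0$; the implicit function theorem then produces a unique analytic branch $\timeShiftOperator_{\textnormal{phy}}(\frequency\spaceStep)$ near the origin with $\timeShiftOperator_{\textnormal{phy}}(0) = 1$. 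Injecting the ansatz $\timeShiftOperator_{\textnormal{phy}} = 1 + \alpha_1 \frequency\spaceStep + \alpha_2 (\frequency\spaceStep)^2 + \bigO{(\frequency\spaceStep)^3}$ together with the Taylor expansions of $\fourierShift^{\pm 1}$ into this quadratic and matching powers of $\frequency\spaceStep$ yields, after elementary bookkeeping, formula \eqref{eq:expansionConsistencyRoot}. The order of accuracy then follows by comparing $\timeShiftOperator_{\textnormal{phy}}$ with the exact PDE amplification factor $e^{-i\advectionVelocity\frequency\timeStep}$: the linear term matches unconditionally, while the quadratic error $(1-\courantNumber^2)(1/2 - 1/\relaxationParameter)\latticeVelocity^2(\frequency\timeStep)^2$ vanishes precisely at $\relaxationParameter = 2$, outside the degenerate sonic case $|\courantNumber| = 1$.

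For the stability statement, Vieta's formulas applied to the same quadratic give $\timeShiftOperator_+ \timeShiftOperator_- = 1-\relaxationParameter$, so the necessary von Neumann condition $|1-\relaxationParameter| \leq 1$, i.e. $\relaxationParameter \in [0, 2]$, is immediate, and $\relaxationParameter = 0$ can be discarded since it makes relaxation trivial. To upgrade this into genuine $\ell^2$-stability I would (i) establish $|\timeShiftOperator_\pm(\frequency\spaceStep)| \leq 1$ uniformly in $\frequency\spaceStep \in [-\pi, \pi]$ by applying the Schur--Cohn test to the explicit quadratic above, and (ii) for frequencies at which a root lies on $\unitCircle$, verify that $\schemeMatrixBulkFourier(e^{i\frequency\spaceStep})$ stays diagonalisable---any non-trivial Jordan block would make its powers grow linearly in $\indexTime$, breaking the uniform bound. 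Step (ii) reduces to showing that the two eigenvalues do not coalesce on $\unitCircle$.

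The main obstacle is the endpoint $\relaxationParameter = 2$: there $\timeShiftOperator_+ \timeShiftOperator_- = -1$ forces \emph{both} roots onto $\unitCircle$ for every frequency, and any collision point instantaneously breaks semisimplicity. Computing the discriminant of the quadratic shows that such a collision occurs at $\frequency\spaceStep = 0$ exactly when $|\courantNumber| = 1$, producing a non-diagonalisable $\schemeMatrixBulkFourier(1)$; this accounts for the strict inequality in the second clause of \eqref{eq:stabConditionD1Q2}. In contrast, the interior regime $\relaxationParameter \in (0, 2)$ is tame because $|\timeShiftOperator_+ \timeShiftOperator_-| = |1-\relaxationParameter| < 1$ prevents both roots from simultaneously sitting on $\unitCircle$, so the Jordan obstruction cannot arise and the Schur--Cohn inequalities collapse into elementary trigonometric estimates valid under $|\courantNumber| \leq 1$.
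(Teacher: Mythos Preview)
Your overall strategy is sound and is essentially the standard route (the paper does not reprove the result but cites \cite{bellotti2025stability}): expand the physical root by the implicit function theorem for consistency, and combine a Schur--Cohn localisation with a semisimplicity check on $\unitCircle$ for $\ell^2$-stability. The observation that $|\timeShiftOperator_+\timeShiftOperator_-|=|1-\relaxationParameter|<1$ rules out any Jordan obstruction on $\unitCircle$ when $\relaxationParameter\in(0,2)$ is the right shortcut.

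There is, however, a concrete error in your treatment of the endpoint $\relaxationParameter=2$. At $\frequency\spaceStep=0$ the quadratic is $\timeShiftOperator^2-1=0$, with roots $\pm 1$ that are \emph{always} distinct, so $\schemeMatrixBulkFourier(1)$ is never defective, regardless of $\courantNumber$. The discriminant of your quadratic at $\relaxationParameter=2$ is $4\bigl(1-\courantNumber^2\sin^2(\frequency\spaceStep)\bigr)$, which vanishes first at $\frequency\spaceStep=\pm\pi/2$ when $|\courantNumber|=1$, producing the double root $\timeShiftOperator=\mp i\courantNumber$ on $\unitCircle$ and the non-diagonalisable matrix $\schemeMatrixBulkFourier(\pm i)$. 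The paper itself confirms these are the glancing frequencies of the leap-frog scheme (see the discussion around \eqref{eq:relaxationParameterDoubleTimeEig}). Your conclusion---strict inequality $|\courantNumber|<1$ at $\relaxationParameter=2$---is correct, but the argument needs to be relocated from $\frequency\spaceStep=0$ to $\frequency\spaceStep=\pm\pi/2$.
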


Let $\fourierShift = e^{i\frequency\spaceStep}$ with $\frequency\spaceStep\in[-\pi, \pi]$ in \eqref{eq:charEquation} and denote $\timeShiftOperator_{\textnormal{phy}}(\frequency\spaceStep)$ and $\timeShiftOperator_{\textnormal{spu}}(\frequency\spaceStep)$ the roots of \eqref{eq:charEquation}, which have been continuously and globally (for the moment, we just assume this fact) parametrized. These roots are symbols (and except for $\relaxationParameter = 1$, not trigonometric polynomials) or---analogously to pseudo-differential operators---pseudo-Finite Difference operators. 
The symbol $\timeShiftOperator_{\textnormal{phy}}(\frequency\spaceStep)$ is the PDE-equivalent of what \cite[Chapter XV]{3-540-30663-3} call ``underlying one-step method'' in the context of multi-step methods for ODEs.
We call $\timeShiftOperator_{\textnormal{phy}}(\frequency\spaceStep)$ ``physical symbol'' as we assume that fulfills $\timeShiftOperator_{\textnormal{phy}}(0) = 1$.
Conversely, $\timeShiftOperator_{\textnormal{spu}}(\frequency\spaceStep)$ is called ``spurious symbol''.
If these symbols are distinct, fact that we study in a moment, the numerical solution in the Fourier space, see \cite[Chapter 4]{strikwerda2004finite}, \cite[Chapter 1]{gustafsson2013time}, \cite[Chapter XV]{3-540-30663-3}, reads 
\begin{equation*}
    \fourierTransformed{\conservedMoment}^{\indexTime}(\frequency) = \timeShiftOperator_{\textnormal{phy}}(\frequency\spaceStep)^{\indexTime} A_{\textnormal{phy}}(\frequency) + \timeShiftOperator_{\textnormal{spu}}(\frequency\spaceStep)^{\indexTime} A_{\textnormal{spu}}(\frequency),
\end{equation*}
where $A_{\textnormal{phy}}(\frequency)$ and $A_{\textnormal{spu}}(\frequency)$ are determined by the initialization. 
This means that the numerical solution is the superposition of iterative applications of two pseudo-schemes, one physical that approximates the solution of the PDE as good as its feature allow, and one spurious which is merely a byproduct of the numerical approximation. 

As it makes an important bifurcation in the structure of the spectrum, we study the fact that the two symbols coincide for some frequencies.
To do so, we study the system
\begin{empheq}[left=\empheqlbrace]{align}
    \determinant(\tilde{\timeShiftOperator}\identityMatrix{2}-\schemeMatrixBulkFourier(e^{i\frequency\spaceStep}))|_{\tilde{\timeShiftOperator} = \timeShiftOperator(\frequency\spaceStep)} &= 0 \label{eq:systemDoubleRootFirst} \\
    \tfrac{\differential{}}{\differential{\tilde{\timeShiftOperator}}}\determinant(\tilde{\timeShiftOperator}\identityMatrix{2}-\schemeMatrixBulkFourier(e^{i\frequency\spaceStep}))|_{\tilde{\timeShiftOperator} = \timeShiftOperator(\frequency\spaceStep)} &= 0. \label{eq:systemDoubleRootSecond}
  \end{empheq}
Through straightforward computations presented in \Cref{app:D1Q2coincidingsymbols}, we obtain what follows. 
\begin{itemize}
    \item For $\relaxationParameter<1$, the roots $\timeShiftOperator_{\textnormal{phy}}(\frequency\spaceStep)$ and $\timeShiftOperator_{\textnormal{spu}}(\frequency\spaceStep)$  are distinct for $\frequency\spaceStep\in[-\pi, \pi]$, and thus smoothly parametrized (they are continuous for free as the eigenvalues are always continuous functions), see \cite[Section 5.2]{serre}, although it might be difficult to construct this global parametrization explicitly.

    \item For $\relaxationParameter\geq 1$, the roots $\timeShiftOperator_{\textnormal{phy}}(\frequency\spaceStep)$ and $\timeShiftOperator_{\textnormal{spu}}(\frequency\spaceStep)$, otherwise distinct and smoothly parametrized, coincide for the relaxation parameter
    \begin{equation}\label{eq:relaxationParameterDoubleTimeEig}
        \relaxationParameter = \frac{2}{\courantNumber^2} (1-\sqrt{1-\courantNumber^2})
    \end{equation}
    at $\frequency\spaceStep = \pm \pi/2$, the glancing frequencies of the leap-frog scheme, with the coinciding eigenvalues being $\timeShiftOperator = \pm i ({\tfrac{2}{\courantNumber^2} (1-\sqrt{1-\courantNumber^2})-1})^{1/2}$.
    The right-hand side of \eqref{eq:relaxationParameterDoubleTimeEig} describes an even convex function of $\courantNumber$, with image in $[1, 2]$, and such that $\lim_{\courantNumber\to 0}\frac{2}{\courantNumber^2} (1-\sqrt{1-\courantNumber^2}) = 1$ and $\lim_{|\courantNumber|\to 1}\frac{2}{\courantNumber^2} (1-\sqrt{1-\courantNumber^2}) = 2$.
    The latter case describes the celebrated reason why the leap-frog scheme cannot be employed with $|\courantNumber| = 1$, see \cite[Chapter 4]{strikwerda2004finite}.
\end{itemize}
One can still conjecture, looking at \Cref{fig:D1Q2Spectrum}, that even when \eqref{eq:relaxationParameterDoubleTimeEig} holds (in the setting of \Cref{fig:D1Q2Spectrum}, this happens for $\relaxationParameter = \tfrac{72}{25}(1-\sqrt{{11}/{36}})\approx 1.28802$), one can still find global smooth parametrizations for $\timeShiftOperator_{\textnormal{phy}}(\frequency\spaceStep)$ and $\timeShiftOperator_{\textnormal{spu}}(\frequency\spaceStep)$.
They are however very different from the ones that one naturally considers when  \eqref{eq:relaxationParameterDoubleTimeEig} does not hold.
Indeed, it is not possible to find parametrizations being smooth both  with respect to $\frequency\spaceStep$ and $\relaxationParameter$, i.e. spectra are not continuous with respect to more than one parameter.

\begin{figure}
    \begin{center}
        \includegraphics[width=1\textwidth]{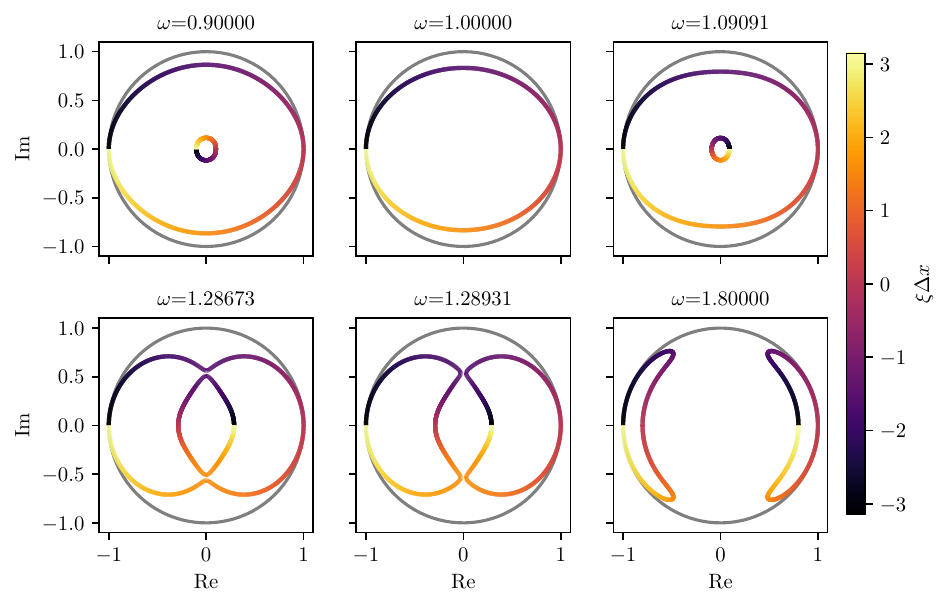}
    \end{center}\caption{\label{fig:D1Q2Spectrum}Symbols $\timeShiftOperator_{\textnormal{phy}}(\frequency\spaceStep)$ and $\timeShiftOperator_{\textnormal{spu}}(\frequency\spaceStep)$ parametrized globally and continuously for the \lbmScheme{1}{2} with $\courantNumber = \tfrac{5}{6}$.}
\end{figure}

We also use the dispersion relation to study \strong{damping} harmonics-by-harmonics (i.e., behaviors of kind $\propto \timeShiftOperator^{\indexTime}$ for $|\timeShiftOperator|<1$) and the \strong{propagation} of spurious modes. We shall observe these features on actual numerical simulations.
\begin{remark}[Damping \strong{versus} dissipation]
    Damping and dissipation (defined by \cite[Definition 5.1.1]{strikwerda2004finite}--\cite[Definition 4.2.4]{gustafsson2013time}) are related but distinct concepts.
    Damping describes the tendency of packets to be (geometrically) \strong{crushed} in time, without any description, contrarily to dissipation, of packets' \strong{spread} in space as time advances.
    This comes from the point-wise nature of damping in the frequency space. 
    A symbol that does not damp all non-zero frequencies cannot be dissipative (e.g., the leap-frog scheme).

    Also notice that dissipation is not generally easy to describe for lattice Boltzmann schemes, as symbols are not trigonometric polynomials.
    For the \lbmScheme{1}{2} when $\relaxationParameter\in(0, 2)$, it is not difficult to see that dissipation is very similar to the one of the Lax-Friedrichs scheme, which is ``nondissipative but not strictly so'' \cite{strikwerda2004finite} due to $\frequency\spaceStep = \pm \pi$.
    What can be done in general is a localized-in-frequency description of dissipation by Taylor expansions, see remark below.
\end{remark}

\begin{remark}[Dissipation at low frequencies]\label{rem:dissipationLowFrequencies}
    Taylor expansions yield 
    \begin{align*}
        |\timeShiftOperator_{\textnormal{phy}}(\frequency\spaceStep)| = &1-(1-\courantNumber^2)\Bigl (\frac{1}{\relaxationParameter} - \frac{1}{2}\Bigr )(\frequency\spaceStep)^2 + \bigO{(\frequency\spaceStep)^4}, \\
        |\timeShiftOperator_{\textnormal{spu}}(\frequency\spaceStep)| = |\relaxationParameter-1|\Bigl ( &1-(1-\courantNumber^2)\Bigl (\frac{1}{2} - \frac{1}{\relaxationParameter}\Bigr )(\frequency\spaceStep)^2 + \bigO{(\frequency\spaceStep)^4}\Bigr ),
    \end{align*}
    which is coherent with \Cref{fig:D1Q2Spectrum}, and shows that for $\relaxationParameter\in(0, 2)$ and smooth solutions, the physical symbol is \strong{dissipative of order 2} (this is often called ``numerical diffusion''), whereas the renormalized spurious symbol is \strong{anti-dissipative}.
    This means that harmonics around $\frequency\spaceStep = 0$ are damped slower than constant modes.
    This is visible on \Cref{fig:D1Q2Spectrum}: the modulus increases around $\frequency\spaceStep = 0$.
    This is not a problem as the spurious symbol experiences \strong{damping}.
\end{remark}

\begin{lemma}[Damping in the \lbmScheme{1}{2} scheme]\label{lemma:dampingD1Q2}
    Consider the strict CFL condition $|\courantNumber|<1$.    
    Let $\fourierShift = e^{i\frequency\spaceStep}$ with $\frequency\spaceStep\in[-\pi, \pi]$ in \eqref{eq:charEquation}.
    Then the roots $\timeShiftOperator = \timeShiftOperator(\frequency\spaceStep)$ of \eqref{eq:charEquation} are as follows.
    \begin{itemize}
        \item For $\frequency\spaceStep\in (-\pi, \pi)\smallsetminus\{0\}$, they belong to $\unitDisk$ when $\relaxationParameter\in (0, 2)$ and to $\unitCircle$ when $\relaxationParameter = 2$.
        \item For  $\frequency\spaceStep=0$, they are $\{1, 1-\relaxationParameter\}$, so the second one belongs to $\unitDisk$ when $\relaxationParameter\in (0, 2)$ and to $\unitCircle$ when $\relaxationParameter = 2$.
        \item For $\frequency\spaceStep=\pm\pi$, they are $\{-1, \relaxationParameter-1\}$, so the second  one belongs to $\unitDisk$ when $\relaxationParameter\in (0, 2)$ and to $\unitCircle$ when $\relaxationParameter = 2$.
    \end{itemize}
\end{lemma}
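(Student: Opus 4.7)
The plan is to handle the three regimes for $\frequency\spaceStep$ separately, exploiting the compact form that \eqref{eq:charEquation} takes once $\fourierShift = e^{i\frequency\spaceStep}$ is substituted.

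First, using $\fourierShift+\fourierShift^{-1} = 2\cos(\frequency\spaceStep)$ and $\fourierShift-\fourierShift^{-1} = 2i\sin(\frequency\spaceStep)$, equation \eqref{eq:charEquation} becomes the quadratic
\begin{equation*}
    \timeShiftOperator^2 + \bigl[(\relaxationParameter - 2)\cos(\frequency\spaceStep) + i\courantNumber\relaxationParameter \sin(\frequency\spaceStep)\bigr]\timeShiftOperator + (1-\relaxationParameter) = 0.
\end{equation*}
For $\frequency\spaceStep = 0$, this factors as $(\timeShiftOperator - 1)(\timeShiftOperator - (1-\relaxationParameter)) = 0$, and for $\frequency\spaceStep = \pm\pi$, as $(\timeShiftOperator + 1)(\timeShiftOperator - (\relaxationParameter-1)) = 0$, establishing the last two bullet points immediately. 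The membership of the second root in $\unitDisk$ or $\unitCircle$ then follows from $|1-\relaxationParameter| < 1$ for $\relaxationParameter\in(0,2)$ and $|1-\relaxationParameter|=1$ for $\relaxationParameter = 2$.

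Next I turn to the generic case $\frequency\spaceStep\in(-\pi,\pi)\smallsetminus\{0\}$. Denote the two roots by $\timeShiftOperator_1,\timeShiftOperator_2$; by Vieta's formulae, $\timeShiftOperator_1\timeShiftOperator_2 = 1-\relaxationParameter$ and $\timeShiftOperator_1+\timeShiftOperator_2 = -(\relaxationParameter - 2)\cos(\frequency\spaceStep) - i\courantNumber\relaxationParameter \sin(\frequency\spaceStep)$. When $\relaxationParameter = 2$, the product has modulus $1$; since \Cref{prop:consistencyD1Q2} yields $\ell^2$-stability under $|\courantNumber|<1$, both roots lie in $\closedUnitDisk$, and the constraint $|\timeShiftOperator_1||\timeShiftOperator_2| = 1$ forces both onto $\unitCircle$.

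The main (and only delicate) step concerns $\relaxationParameter\in(0,2)$, where we must upgrade the closed-disk membership granted by \Cref{prop:consistencyD1Q2} to strict membership in $\unitDisk$. I proceed by contradiction, assuming some root $\timeShiftOperator_1 = e^{i\theta}$ sits on $\unitCircle$. Then $\timeShiftOperator_2 = (1-\relaxationParameter)e^{-i\theta}$, and substituting into the sum formula and equating real and imaginary parts yields
\begin{equation*}
    (2-\relaxationParameter)\cos\theta = (2-\relaxationParameter)\cos(\frequency\spaceStep), \qquad \relaxationParameter \sin\theta = -\courantNumber\relaxationParameter\sin(\frequency\spaceStep).
\end{equation*}
Since $\relaxationParameter\in(0,2)$, both prefactors are nonzero and we get $\cos\theta = \cos(\frequency\spaceStep)$ and $\sin\theta = -\courantNumber\sin(\frequency\spaceStep)$. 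Imposing $\cos^2\theta + \sin^2\theta = 1$ gives $\sin^2(\frequency\spaceStep)(1 - \courantNumber^2) = 0$; the strict CFL assumption $|\courantNumber|<1$ then forces $\sin(\frequency\spaceStep) = 0$, i.e. $\frequency\spaceStep\in\{0,\pm\pi\}$, contradicting the assumption on $\frequency\spaceStep$. Hence both roots lie in the open disk $\unitDisk$, completing the proof.
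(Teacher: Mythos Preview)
Your proof is correct. The endpoint cases $\frequency\spaceStep=0,\pm\pi$ are handled by direct factorization, the $\relaxationParameter=2$ case follows cleanly from $|\timeShiftOperator_1\timeShiftOperator_2|=1$ combined with von Neumann stability, and for $\relaxationParameter\in(0,2)$ your contradiction argument via Vieta and the identity $\cos^2\theta+\sin^2\theta=1$ is airtight.

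Your route differs from the paper's. The paper defers to the systematic Schur--Cohn type machinery of \cite[Chapter~4]{strikwerda2004finite} (forming $\varphi^*$, reducing degree, checking $|\varphi(0)|<|\varphi^*(0)|$, and so on), which is the same toolkit used in the proofs of \Cref{lemma:rootsRadicand} and Propositions~\ref{prop:stabilityD2Q5} and~\ref{prop:stabilityD1Q3ShallowWater}. You instead recycle the $\ell^2$-stability already established in \Cref{prop:consistencyD1Q2} to get closed-disk membership for free, and then a short elementary computation rules out $\unitCircle$. Your argument is lighter and more transparent for this particular quadratic; the paper's approach has the advantage of being self-contained (it does not lean on \Cref{prop:consistencyD1Q2}, whose proof is external) and of being the uniform method applied throughout the manuscript to all root-location questions.
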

\begin{proof}
    The proof follows the procedures described in \cite[Chapter 4]{strikwerda2004finite} to locate roots of complex-coefficients polynomials with respect to $\unitDisk$ and $\unitCircle$.
\end{proof}
\Cref{lemma:dampingD1Q2} tells the following concerning damping.
\begin{itemize}
    \item For $\relaxationParameter\in (0, \frac{2}{\courantNumber^2} (1-\sqrt{1-\courantNumber^2}))$, the physical mode is damped except for the harmonics $\frequency\spaceStep=0, \pm\pi$, which is a peculiar feature of the Lax-Friedrichs scheme ($\relaxationParameter = 1$), \confer{} \cite{breuss2004correct, li2009local}. The spurious mode is damped for every frequency.
    \item For $\relaxationParameter\in[\frac{2}{\courantNumber^2} (1-\sqrt{1-\courantNumber^2}), 2)$. The physical mode is damped except at $\frequency\spaceStep=0$.
    The spurious mode is damped except at $\frequency\spaceStep = \pm \pi$, in such a way that the Lax-Friedrichs' ``signature'' is transferred to the spurious symbol.
    \item For $\relaxationParameter = 2$. No harmonics of either the physical or the spurious mode is attenuated.
\end{itemize}
If the spurious symbol is damped, see Remark 2.3 in \cite[Chapter XV]{3-540-30663-3}, the long-time behavior is \strong{essentially} the one of a one-step method with symbol $\timeShiftOperator_{\textnormal{phy}}(\frequency\spaceStep)$.

\begin{remark}[A taste of $\ell^{\infty}$ stability]
    In light of \Cref{lemma:dampingD1Q2}, there is hope of adapting---in a future work---the results on $\ell^{\infty}$-stability by Thomée to the \lbmScheme{1}{2} when $\relaxationParameter\in (0, 2)$.
    In this case, under the condition $|\courantNumber|<1$, the only points where the symbols touch $\unitCircle$ correspond to $\frequency\spaceStep = 0, \pm \pi$ and Taylor expansions show that 
    \begin{align*}
        \timeShiftOperator_{\textnormal{phy}}(\frequency\spaceStep) &= \text{exp}\Bigl ( i(-\courantNumber)\frequency\spaceStep - \overbrace{(1-\courantNumber^2)\Bigl (\frac{1}{\relaxationParameter} - \frac{1}{2}\Bigr )}^{\textnormal{positive}}(\frequency\spaceStep)^2(1+o(1))\Bigr ), \\
        \timeShiftOperator_{\textnormal{phy/spu}}(\frequency\spaceStep) &= -\text{exp}\Bigl ( i(-\courantNumber)(\frequency\spaceStep-\pi) - (1-\courantNumber^2)\Bigl (\frac{1}{\relaxationParameter} - \frac{1}{2}\Bigr )(\frequency\spaceStep - \pi)^2(1+o(1))\Bigr ),
    \end{align*}
    so there is hope to adapt \cite[Theorem 1]{thomee1965stability}.
\end{remark}

We now analyze \strong{propagation} of the spatially-smooth (not necessarily time-smooth) part of the numerical solution.
This is achieved by eventual Taylor expansions close to $\frequency\spaceStep = 0$.
While the previous analysis shows that the spurious mode is damped approximately at ratio $1-\relaxationParameter$, and time-checkerboard for $\relaxationParameter>1$, the question is whether it propagates before disappearing.
This can be understood by analyzing the \strong{group velocity} \cite{trefethen1982group, trefethen1984instability} in the low-frequency limit, which we extend in the case of  absorptive--lossy ``medium''.
Let $\timeShiftOperator = e^{i\eta\timeStep}$ and $\fourierShift = e^{i\frequency\spaceStep}$, with $\frequency\spaceStep\in[-\pi, \pi]$. When $\relaxationParameter\in(0, 2)$, due to the damping exerted on the spurious symbol, we cannot request $\eta\timeStep\in\reals$, but we allow complex pulsations $\eta\in\complex$.
Into \eqref{eq:charEquation} and differentiating in $\eta$ and $\frequency$, we have 
\begin{equation*}
    \spaceStep \bigl ( (2-\relaxationParameter)\sin(\frequency\spaceStep)+ i \relaxationParameter\courantNumber\cos(\frequency\spaceStep)\bigr ) \differential{\frequency}
    + \timeStep\bigl ( (\relaxationParameter-2)\sin(\eta\timeStep) + i\relaxationParameter \cos(\eta\timeStep) \bigr ) \differential{\eta}= 0.
\end{equation*}
According to \cite[Equation (3.17)]{trefethen1984instability}, we define the group velocity by 
\begin{equation*}
    \textnormal{V}_{\textnormal{g}}\definitionEquality -\frac{\differential{\eta}}{\differential{\frequency}} = \latticeVelocity \frac{ (2-\relaxationParameter)\sin(\frequency\spaceStep)+ i \relaxationParameter\courantNumber\cos(\frequency\spaceStep)}{(\relaxationParameter-2)\sin(\eta\timeStep) + i\relaxationParameter \cos(\eta\timeStep) },
\end{equation*}
where it is understood that $\timeShiftOperator = e^{i\eta\timeStep}$ and $\fourierShift = e^{i\frequency\spaceStep}$ fulfill \eqref{eq:charEquation}.
To evaluate the group velocity for space-smooth data, we consider $\frequency\spaceStep = 0$:
\begin{align}
    \textnormal{(Physical mode)}\qquad &\textnormal{V}_{\textnormal{g}}(\eta\timeStep = 0, \frequency\spaceStep = 0) = \latticeVelocity\courantNumber = \advectionVelocity, \label{eq:gvPhysicalLow}\\
    \textnormal{(Spurious mode)}\qquad &\textnormal{V}_{\textnormal{g}}(\eta\timeStep = -i\log(1-\relaxationParameter), \frequency\spaceStep = 0) = -\latticeVelocity\courantNumber = -\advectionVelocity.\label{eq:gvSpuriousLow}
\end{align}
This result can also be obtained by expansions analogous to \eqref{eq:expansionConsistencyRoot} for the spurious root\footnote{Notice that since we consider low spatial frequencies, group and phase velocities coincide.}, which read
\begin{equation*}
    \timeShiftOperator_{\textnormal{spu}}(\frequency\spaceStep) = \underbrace{(1-\relaxationParameter)}_{\textnormal{damping}} (\underbrace{1 + i\advectionVelocity\frequency\timeStep}_{\textnormal{propagation}} + \bigO{(\frequency\timeStep)^2}).
\end{equation*}
The fact that the smooth part of the spurious mode propagates at speed with opposite sign with respect to the physical mode can be understood by looking at the color scale of \Cref{fig:D1Q2Spectrum} and the way it evolves along the curves close to $\frequency\spaceStep = 0$.

To sum up, we can say that the scheme inherits dissipation (essentially) from the Lax-Friedrichs scheme and propagation for the spurious mode from the leap-frog scheme, \confer{} \Cref{rem:D1Q2AlreadyKnown}.
These phenomena are interweaved by damping.

\begin{summarybox}{Summary of the main properties of the \lbmScheme{1}{2} scheme}
    \begin{small}
        \begin{center}
            \begin{tabular}{|ll!{\vrule width 1pt}c|c!{\vrule width 1pt}}
                \cline{3-4}
                \multicolumn{2}{l|}{} & $\relaxationParameter\in(0, 2)$ & \multicolumn{1}{c|}{$\relaxationParameter = 2$}\\
                \cline{1-2}\noalign{\global\arrayrulewidth=1pt}
                \cline{3-4}
                \noalign{\global\arrayrulewidth=.4pt}
                \multicolumn{2}{|l!{\vrule width 1pt}}{Consistency (see \Cref{prop:consistencyD1Q2})} & 1st order & 2nd order \\
                \hline
                \multicolumn{2}{|l!{\vrule width 1pt}}{Stability condition (see \Cref{prop:consistencyD1Q2})} & $|\courantNumber|\leq 1$ & $|\courantNumber|< 1$ \\
                \hline
                \multirow{2}{*}{\makecell{Dissipation at low freq. ($\frequency\spaceStep\approx 0$) \\ (see \Cref{rem:dissipationLowFrequencies})}} & Physical symb. &2nd order dissipative & None\\\cline{2-4}
                                             & \makecell{Spurious symb. \\ (renormalized)} &2nd order anti-dissipative & None\\
                \hline
                \multicolumn{2}{|l!{\vrule width 1pt}}{Damping (see \Cref{lemma:dampingD1Q2})} & All modes except $\frequency\spaceStep=0, \pm\pi$ & None \\
                \hline
                \multirow{2}{*}{Propagation speed at low freq. ($\frequency\spaceStep\approx 0$)} & Physical symb. \eqref{eq:gvPhysicalLow} &$\advectionVelocity$ &$\advectionVelocity$\\\cline{2-4}
                                             & Spurious symb. \eqref{eq:gvSpuriousLow} &$-\advectionVelocity$ &$-\advectionVelocity$\\
                \cline{1-2}\noalign{\global\arrayrulewidth=1pt}
                \cline{3-4}\noalign{\global\arrayrulewidth=.4pt}
                \end{tabular}
        \end{center}        
    \end{small}
\end{summarybox}

\begin{remark}[The ``role'' of boundary conditions]
    Boundary conditions transform part of the entering waves (either physical or spurious) into reflected ones (either spurious or physical). 
    The role of transparent boundary conditions is to make the amplitude of the reflected waves equal to zero.
    Interestingly, even if the bulk scheme in linear and thus obeys the superposition principle, which means that modes $(\timeShiftOperator, \fourierShift)$ satisfying the dispersion relation do not mix, boundary conditions turn modes into another. 
    In the following, we indicate $(\timeShiftOperator_{\text{in}}, \fourierShift_{\text{in}}) \rightsquigarrow (\timeShiftOperator_{\text{ref}}, \fourierShift_{\text{ref}})$ when a boundary transforms the mode $(\timeShiftOperator_{\text{in}}, \fourierShift_{\text{in}})$ incident to it into the reflected mode $(\timeShiftOperator_{\text{ref}}, \fourierShift_{\text{ref}})$.
\end{remark}

\paragraph{Fourth-order \lbmScheme{1}{3} scheme}

Sharing its essential structure with the \lbmScheme{1}{2} scheme, we have the scheme introduced in \cite{bellotti2024influence, bellotti2025stability}, where the following \Cref{prop:consistencyD1Q34th} and \Cref{lemma:noDampingD1Q3Fourth} are proved.
\begin{itemize}
    \item \strong{Relaxation}.
    \begin{equation*}
        \conservedMoment_{\indexSpace}^{\indexTime, \collided} = \conservedMoment_{\indexSpace}^{\indexTime}, \qquad \nonConservedMoment_{\indexSpace}^{\indexTime, \collided} = -\nonConservedMoment_{\indexSpace}^{\indexTime} + 2 \underbrace{\courantNumber \conservedMoment_{\indexSpace}^{\indexTime}}_{\nonConservedMoment^{\atEquilibrium}(\conservedMoment_{\indexSpace}^{\indexTime})}, \qquad \nonNonConservedMoment_{\indexSpace}^{\indexTime, \collided} = -\nonNonConservedMoment_{\indexSpace}^{\indexTime} + 2 \underbrace{\tfrac{1}{3}(1+2\courantNumber^2) \conservedMoment_{\indexSpace}^{\indexTime}}_{\nonNonConservedMoment^{\atEquilibrium}(\conservedMoment_{\indexSpace}^{\indexTime})}, \qquad \indexSpace\in\integerInterval{0}{\numberGridPoints + 1}.
    \end{equation*}
    This relaxation parallels the one of the \lbmScheme{1}{2} scheme \eqref{eq:relaxationD1Q2} where the relaxation parameters have been taken equal to two.
    \item \strong{Transport}. After having computed $\distributionFunctionLetter_{0, \indexSpace}^{\indexTime, \collided}= \conservedMoment_{\indexSpace}^{\indexTime, \collided} - \nonNonConservedMoment_{\indexSpace}^{\indexTime, \collided}$ and $\distributionFunctionLetter_{\pm, \indexSpace}^{\indexTime, \collided}= \tfrac{1}{2}(\pm\nonConservedMoment_{\indexSpace}^{\indexTime, \collided} + \nonNonConservedMoment_{\indexSpace}^{\indexTime, \collided} )$ for $\indexSpace\in\integerInterval{0}{\numberGridPoints + 1}$, we have 
    \begin{equation*}
        \distributionFunctionLetter_{0, \indexSpace}^{\indexTime + 1}= \distributionFunctionLetter_{0, \indexSpace}^{\indexTime, \collided} \qquad \text{and}\qquad 
        \distributionFunctionLetter_{\pm, \indexSpace}^{\indexTime + 1}= \distributionFunctionLetter_{\pm, \indexSpace \mp 1}^{\indexTime, \collided}, \qquad \indexSpace\in\integerInterval{1}{\numberGridPoints}.
    \end{equation*}
    Then, we obtain $\conservedMoment_{\indexSpace}^{\indexTime+1} = \distributionFunctionLetter_{0, \indexSpace}^{\indexTime + 1} + \distributionFunctionLetter_{+, \indexSpace}^{\indexTime + 1} +  \distributionFunctionLetter_{-, \indexSpace}^{\indexTime + 1}$, $\nonConservedMoment_{\indexSpace}^{\indexTime+1} = \distributionFunctionLetter_{+, \indexSpace}^{\indexTime + 1} -  \distributionFunctionLetter_{-, \indexSpace}^{\indexTime + 1}$, and $\nonNonConservedMoment_{\indexSpace}^{\indexTime+1} = \distributionFunctionLetter_{+, \indexSpace}^{\indexTime + 1} +  \distributionFunctionLetter_{-, \indexSpace}^{\indexTime + 1}$ for $ \indexSpace\in\integerInterval{1}{\numberGridPoints}$.
    \item \strong{Boundary conditions}. Compute $(\conservedMoment_0^{\indexTime + 1}, \nonConservedMoment_0^{\indexTime + 1}, \nonNonConservedMoment_0^{\indexTime + 1})$ and $(\conservedMoment_{\numberGridPoints + 1}^{\indexTime + 1}, \nonConservedMoment_{\numberGridPoints+1}^{\indexTime + 1}, \nonNonConservedMoment_{\numberGridPoints+1}^{\indexTime + 1})$ as described in \Cref{sec:BCFourthOrderD1Q3}.
\end{itemize}

This scheme has a computable matrix $\schemeMatrixBulkFourier(\fourierShift)$ such that $\transpose{(\conservedMoment_{\indexSpace}^{\indexTime+1}, \nonConservedMoment_{\indexSpace}^{\indexTime+1}, \nonNonConservedMoment_{\indexSpace}^{\indexTime+1})} = \schemeMatrixBulkFourier(\fourierShift) \transpose{(\conservedMoment_{\indexSpace}^{\indexTime}, \nonConservedMoment_{\indexSpace}^{\indexTime}, \nonNonConservedMoment_{\indexSpace}^{\indexTime})}$ for $\indexSpace\in\relatives$ with characteristic equation
\begin{align}
    \determinant(&\timeShiftOperator\identityMatrix{3} - \schemeMatrixBulkFourier(\fourierShift)) \nonumber\\
    =
    &\bigl( -\tfrac{1}{3}(2\courantNumber - 1)(\courantNumber + 2)\timeShiftOperator^2 + \tfrac{1}{3}(2\courantNumber + 1)(\courantNumber - 2)\timeShiftOperator\bigr )\fourierShift^{-1} 
    +\bigl ( \timeShiftOperator^3 +\tfrac{1}{3}(4\courantNumber^2 - 1)\timeShiftOperator^2-\tfrac{1}{3}(4\courantNumber^2 - 1)\timeShiftOperator - 1\bigr )
    \nonumber \\
    +&\bigl( -\tfrac{1}{3}(2\courantNumber + 1)(\courantNumber - 2)\timeShiftOperator^2 + \tfrac{1}{3}(2\courantNumber - 1)(\courantNumber + 2)\timeShiftOperator\bigr )\fourierShift  = 0.\label{eq:charEquationD1Q3FourthOrder}
\end{align}

\begin{proposition}[Consistency and stability of the fourth-order \lbmScheme{1}{3} scheme on $\relatives$]\label{prop:consistencyD1Q34th}
    The fourth-order \lbmScheme{1}{3} scheme is such that there exists a unique eigenvalue $\timeShiftOperator_{\textnormal{phy}}(\frequency\spaceStep)$ of $\schemeMatrixBulkFourier(e^{i\frequency\spaceStep})$ such that 
    \begin{equation}\label{eq:taylorConsistencyFourthOrder}
        \timeShiftOperator_{\textnormal{phy}}(\frequency\spaceStep) = 1 - i \advectionVelocity\frequency\timeStep - \frac{1}{2} \advectionVelocity^2(\frequency\timeStep)^2 + \frac{i}{6} \advectionVelocity^3(\frequency\timeStep)^3 +  \frac{1}{24} \advectionVelocity^4(\frequency\timeStep)^4+\frac{i\latticeVelocity^4\advectionVelocity}{360}(5\courantNumber^4 - 10\courantNumber^2+2)(\frequency\timeStep)^5 +\bigO{(\frequency\timeStep)^6},
    \end{equation}
    so the scheme is fourth-order accurate.

    The scheme is $\ell^2$--stable, namely there exists $C>0$ such that $\sup_{\frequency\spaceStep\in[-\pi, \pi]} |\schemeMatrixBulkFourier(e^{i\frequency\spaceStep})^{\indexTime}|\leq C$ for all $\indexTime\in\naturals$, if and only if 
    \begin{equation}\label{eq:stabilityConditionFourthOrder}
        |\courantNumber|<\tfrac{1}{2}.
    \end{equation}
\end{proposition}
\begin{lemma}[Absence of damping in the fourth-order \lbmScheme{1}{3} scheme]\label{lemma:noDampingD1Q3Fourth}
    Consider the strict CFL condition $|\courantNumber|<\tfrac{1}{2}$.    
    Let $\fourierShift = e^{i\frequency\spaceStep}$ with $\frequency\spaceStep\in[-\pi, \pi]$ in \eqref{eq:charEquationD1Q3FourthOrder}.
    Then the roots $\timeShiftOperator = \timeShiftOperator(\frequency\spaceStep)$ of \eqref{eq:charEquationD1Q3FourthOrder} are on $\unitCircle$ and distinct for $\frequency\spaceStep\neq 0$.
    For $\frequency\spaceStep = 0$, they are $\{1, -1, -1\}$.
\end{lemma}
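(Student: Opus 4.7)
The plan is to split the conclusion into (i) localizing all three roots on $\unitCircle$ and (ii) establishing distinctness for $\frequency\spaceStep\neq 0$.

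\textbf{The degenerate frequency $\frequency\spaceStep=0$.} I would substitute $\fourierShift=1$ directly into \eqref{eq:charEquationD1Q3FourthOrder}: the $\courantNumber$-dependent contributions coming from the $\fourierShift^{\pm 1}$ blocks cancel pairwise against those of the middle block, producing the $\courantNumber$-independent factorization $\timeShiftOperator^3+\timeShiftOperator^2-\timeShiftOperator-1 = (\timeShiftOperator-1)(\timeShiftOperator+1)^2$, which gives the claimed multiset $\{1,-1,-1\}$.

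\textbf{Unit modulus for $\frequency\spaceStep\neq 0$.} View \eqref{eq:charEquationD1Q3FourthOrder} at $\fourierShift=e^{i\frequency\spaceStep}$ as a monic cubic $p(\timeShiftOperator)=\timeShiftOperator^3+a_2\timeShiftOperator^2+a_1\timeShiftOperator+a_0$ with $\frequency\spaceStep$-dependent complex coefficients. A direct inspection yields $a_0=-1$ and, critically, $a_2=-\overline{a_1}$, which is equivalent to saying that the reversed-and-conjugated polynomial $p^{\ast}(\timeShiftOperator):=\timeShiftOperator^3\overline{p(1/\overline{\timeShiftOperator})}$ equals $-p(\timeShiftOperator)$. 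Consequently the roots of $p$ come in pairs $\{\timeShiftOperator_0,1/\overline{\timeShiftOperator_0}\}$. The CFL assumption \eqref{eq:stabilityConditionFourthOrder} together with the stability statement of Proposition~\ref{prop:consistencyD1Q34th} forces every root of $p$ to lie in $\closedUnitDisk$: were any root $\timeShiftOperator_0$ to satisfy $|\timeShiftOperator_0|<1$, its paired root $1/\overline{\timeShiftOperator_0}$ would lie strictly outside $\closedUnitDisk$ and violate that bound. Hence all three roots of $p$ belong to $\unitCircle$.

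\textbf{Distinctness for $\frequency\spaceStep\neq 0$.} I would follow the template used for Lemma~\ref{lemma:dampingD1Q2}: couple $p(\timeShiftOperator,e^{i\frequency\spaceStep})=0$ with $\partial_{\timeShiftOperator}p(\timeShiftOperator,e^{i\frequency\spaceStep})=0$ and take the combination $3p(\timeShiftOperator)-\timeShiftOperator\partial_{\timeShiftOperator}p(\timeShiftOperator)=a_2\timeShiftOperator^2+2a_1\timeShiftOperator-3$ so as to reduce the degree. Solving for $\timeShiftOperator$ between this relation and $\partial_{\timeShiftOperator}p=0$ and substituting back into $p=0$ yields an algebraic constraint relating only $\frequency\spaceStep$ and $\courantNumber$. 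The main obstacle is showing that this constraint is never satisfied on $(-\pi,\pi]\setminus\{0\}$ under $|\courantNumber|<\tfrac12$: equivalently, one must verify non-vanishing of the discriminant of $p$, viewed as a trigonometric polynomial in $\frequency\spaceStep$, a calculation which is elementary but tedious and exploits the CFL margin to dominate cross-terms. As a sanity check near $\frequency\spaceStep=0$, a bivariate Taylor expansion of $p$ at $(\timeShiftOperator,\frequency\spaceStep)=(-1,0)$ reduces, after rescaling, to the quadratic $\delta^2+i\courantNumber\delta\tau+\tfrac{2(1-\courantNumber^2)}{3}\tau^2=0$ in $\delta:=\timeShiftOperator+1$ and $\tau:=\frequency\spaceStep$, whose discriminant is strictly negative under $|\courantNumber|<\tfrac12$, confirming that the double root at $-1$ splits into two distinct unit-modulus branches as $\frequency\spaceStep$ leaves $0$ and anchoring the global argument.
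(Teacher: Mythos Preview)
The paper does not prove this lemma in-text; it defers to the cited references \cite{bellotti2024influence, bellotti2025stability}, so there is no detailed argument here to compare against. Your outline is therefore assessed on its own.

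Your treatment of $\frequency\spaceStep=0$ and the self-conjugacy identity $p^{\ast}=-p$ are both correct and constitute the heart of the unit-modulus claim; the local quadratic you derive near $(\timeShiftOperator,\frequency\spaceStep)=(-1,0)$ matches the expansions \eqref{eq:groupVelocitySmoothParasiticD1Q3} exactly. Two points, however, deserve attention. First, invoking Proposition~\ref{prop:consistencyD1Q34th} to place the roots in $\closedUnitDisk$ is logically delicate: in the cited references the route to $\ell^2$-stability almost certainly passes through von Neumann stability \emph{and} diagonalizability of $\schemeMatrixBulkFourier$, the latter being precisely the ``distinct roots on $\unitCircle$'' content you are trying to establish. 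A non-circular version replaces this step by a direct Schur--Cohn verification (in the style of \cite[Chapter~4]{strikwerda2004finite}, as used for Lemma~\ref{lemma:dampingD1Q2}) that the roots lie in $\closedUnitDisk$ under $|\courantNumber|<\tfrac12$; then your self-conjugacy pairing $\timeShiftOperator\mapsto 1/\overline{\timeShiftOperator}$ forces them onto $\unitCircle$.

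Second, your distinctness argument is only a sketch: you correctly identify that the task reduces to showing the discriminant of $p$ (a trigonometric polynomial in $\frequency\spaceStep$) does not vanish on $(-\pi,\pi]\smallsetminus\{0\}$ for $|\courantNumber|<\tfrac12$, but this computation is neither carried out nor bounded. The local expansion near $\frequency\spaceStep=0$ is a useful sanity check but does not substitute for the global argument. This is the genuine gap in your proposal.
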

This shows that the scheme does not damp any wave (whether physical or spurious) for every harmonics.
Moreover, the scalar Finite Difference scheme having \eqref{eq:charEquationD1Q3FourthOrder} as characteristic equation is $\ell^2$-unstable.
\begin{remark}[On the initialization at equilibrium]
    We warn the reader that the initialization at equilibrium (i.e., similar to \eqref{eq:initializationEquilibrium}) that we have proposed does not ensure that the overall scheme preserve fourth-order accuracy for smooth data.
    With this initialization, the non-attenuated spurious symbols carry $\bigO{\spaceStep}$ amplitudes, so the method is only first-order accurate.
    This issue can be fixed by different choices for the initial data, see \cite{bellotti2024influence}.
\end{remark}
We finish on the propagation of spurious time-checkerboard modes for low spatial frequencies:
\begin{equation}\label{eq:groupVelocitySmoothParasiticD1Q3}
    \timeShiftOperator_{\textnormal{spu}, \pm}(\frequency\spaceStep) = -\bigl ( 1+i \tfrac{\latticeVelocity}{6}(3\courantNumber\pm\sqrt{24-15\courantNumber^2}) \frequency\timeStep + \bigO{(\frequency\timeStep)^2}\bigr ),
\end{equation}
whence spurious smooth-in-space packets travel, one to the left and one to the right, faster than physical waves, as $|\tfrac{\latticeVelocity}{6}(3\courantNumber\pm\sqrt{24-15\courantNumber^2})|>\advectionVelocity$ under \eqref{eq:stabilityConditionFourthOrder}.

\pagebreak

\begin{summarybox}{Summary of the main properties of the fourth-order \lbmScheme{1}{4} scheme}
    \begin{small}
        \begin{center}
            \begin{tabular}{|ll!{\vrule width 1pt}c!{\vrule width 1pt}}
                \cline{1-2}\noalign{\global\arrayrulewidth=1pt}
                \cline{3-3}
                \noalign{\global\arrayrulewidth=.4pt}
                \multicolumn{2}{|l!{\vrule width 1pt}}{Consistency (see \Cref{prop:consistencyD1Q34th})} & 4th order \\
                \hline
                \multicolumn{2}{|l!{\vrule width 1pt}}{Stability condition (see \Cref{prop:consistencyD1Q34th})} & $|\courantNumber|< 1/2$  \\
                \hline
                \multicolumn{2}{|l!{\vrule width 1pt}}{Dissipation and damping (see \Cref{lemma:noDampingD1Q3Fourth})} & None  \\
                \hline
                \multirow{2}{*}{Propagation speed at low freq. ($\frequency\spaceStep\approx 0$)} & Physical symb. \eqref{eq:taylorConsistencyFourthOrder} &$\advectionVelocity$ \\\cline{2-3}
                                             & Spurious symb. \eqref{eq:groupVelocitySmoothParasiticD1Q3} &$-\tfrac{\latticeVelocity}{6}(3\courantNumber\pm\sqrt{24-15\courantNumber^2})$\\
                                             \cline{1-2}\noalign{\global\arrayrulewidth=1pt}
                                             \cline{3-3}
                                             \noalign{\global\arrayrulewidth=.4pt}
            \end{tabular}
        \end{center}        
    \end{small}
\end{summarybox}

\subsubsection{Transparent boundary conditions for the \lbmScheme{1}{2} scheme}\label{sec:transparentD1Q2}

We devise boundary conditions building on the work of \cite{besse2021discrete}, adapted to several numerical unknowns in two fashions.
The first one follows \cite{bellotti2025stability}, and is based on the detailed constructions of eigenvectors.
The second (simpler) one lets the numerical scheme do the job.
We present the case $\courantNumber>0$.

\paragraph{Spatial roots of the characteristic equation}

As previously announced, the construction of transparent boundary conditions relies on seeing \eqref{eq:charEquation} as an equation on $\fourierShift$ parametrized by $\timeShiftOperator\in\closedNeighborhoodInfinity$.
Under the stability condition \eqref{eq:stabConditionD1Q2}, only the coefficient in front of $\fourierShift$ in \eqref{eq:charEquation} vanishes when $\relaxationParameter = \tfrac{2}{1+\courantNumber}$. 
Except in this case, \eqref{eq:charEquation} is a second-order equation in $\fourierShift$.
{Von Neumann} stability ({a fortiori}, $\ell^2$ stability) ensures the following splitting of the roots (see \cite{bellotti2025stability} and references therein for the proofs of Lemmata \ref{lemma:spatialRootsD1Q2} and \ref{lemma:continuousExtension}).
\begin{lemma}[Spatial roots of the characteristic equation of the \lbmScheme{1}{2} scheme]\label{lemma:spatialRootsD1Q2}
    Let $\courantNumber>0$ and the stability condition \eqref{eq:stabConditionD1Q2} be fulfilled.
    \begin{itemize}
        \item  If $\relaxationParameter \neq \tfrac{2}{1+\courantNumber}$, \eqref{eq:charEquation} has two roots, one stable $\stableRoot(\timeShiftOperator)\in\unitDisk$ for $\timeShiftOperator\in\neighborhoodInfinity$ and one unstable $\unstableRoot(\timeShiftOperator)\in\neighborhoodInfinity$ for $\timeShiftOperator\in\neighborhoodInfinity$.
        Their product $\stableRoot(\timeShiftOperator)\unstableRoot(\timeShiftOperator)$ given by $\productRoots= \frac{(1-\courantNumber)\relaxationParameter-2}{(1+\courantNumber)\relaxationParameter-2}$, thus independent of $\timeShiftOperator$.
        The continuous extension of the roots to the unit circle $\unitCircle$ fulfills $\stableRoot(\pm 1) = \pm 1$ and $\unstableRoot(\pm 1) = \pm \productRoots$.

        \item If $\relaxationParameter = \tfrac{2}{1+\courantNumber}$, \eqref{eq:charEquation} has one root $\stableRoot(\timeShiftOperator) = \frac{2\courantNumber\timeShiftOperator}{(\courantNumber+1)\timeShiftOperator^2+\courantNumber-1}\in\unitDisk$ for $\timeShiftOperator\in\neighborhoodInfinity$, with its continuous extension to $\unitCircle$ satisfying $\stableRoot(\pm 1) = \pm 1$ as $\courantNumber>0$.
    \end{itemize}
\end{lemma}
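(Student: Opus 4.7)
\medskip
\noindent\textbf{Proof proposal.}
The plan is to reduce \eqref{eq:charEquation} to a polynomial equation in $\fourierShift$, use Vieta's formulas together with the $\ell^2$--stability statement of \Cref{prop:consistencyD1Q2} to locate the roots with respect to $\unitCircle$, and finally identify the continuous extensions at $\timeShiftOperator = \pm 1$ by direct substitution. Multiplying \eqref{eq:charEquation} by $2\fourierShift$ yields
\begin{equation*}
    \bigl((1+\courantNumber)\relaxationParameter-2\bigr)\timeShiftOperator\,\fourierShift^{2} + 2(\timeShiftOperator^{2}+1-\relaxationParameter)\fourierShift + \bigl((1-\courantNumber)\relaxationParameter-2\bigr)\timeShiftOperator = 0,
\end{equation*}
which, for $\timeShiftOperator\in\neighborhoodInfinity$, is a genuine quadratic in $\fourierShift$ precisely when $\relaxationParameter\neq \tfrac{2}{1+\courantNumber}$. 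The cases in the statement correspond to whether this leading coefficient vanishes.

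In the non-degenerate case, Vieta's formulas immediately give $\stableRoot(\timeShiftOperator)\unstableRoot(\timeShiftOperator) = \productRoots$, independent of $\timeShiftOperator$. To locate the roots I would first rule out crossings of $\unitCircle$: if $\fourierShift_{0}\in\unitCircle$ were a root for some $\timeShiftOperator_{0}\in\neighborhoodInfinity$, write $\fourierShift_{0}=e^{i\frequency\spaceStep}$ for a real $\frequency\spaceStep$; then $\timeShiftOperator_{0}$ would be an eigenvalue of $\schemeMatrixBulkFourier(e^{i\frequency\spaceStep})$, contradicting the uniform bound $\sup_{\frequency\spaceStep}|\schemeMatrixBulkFourier(e^{i\frequency\spaceStep})^{\indexTime}|\leq C$ granted by \eqref{eq:stabConditionD1Q2}. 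Hence on the connected set $\neighborhoodInfinity$, continuity of the roots forces each branch to stay entirely in $\unitDisk$ or entirely in $\neighborhoodInfinity$. Balancing leading terms as $\timeShiftOperator\to\infty$ shows that one root behaves like $\mathcal{O}(1/\timeShiftOperator)$ and the other like $\mathcal{O}(\timeShiftOperator)$, which identifies them as $\stableRoot$ and $\unstableRoot$ respectively.

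For the continuous extension to $\unitCircle$, I would simply plug $\timeShiftOperator = 1$ into the reduced quadratic: direct algebraic cancellation shows $\fourierShift = 1$ is a root, and Vieta's formula then forces the companion to be $\productRoots$; the same manipulation at $\timeShiftOperator = -1$ gives roots $\{-1, -\productRoots\}$. The identification $\stableRoot(\pm 1) = \pm 1$ versus $\unstableRoot(\pm 1) = \pm\productRoots$ under the sign assumption $\courantNumber>0$ follows by matching with the limits coming from the interior of $\neighborhoodInfinity$ (essentially since $|\productRoots|\geq 1$ under \eqref{eq:stabConditionD1Q2}, with equality only in the limit $\relaxationParameter=2$, so the branch through $\pm 1$ is the stable one).

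Finally, the degenerate case $\relaxationParameter = \tfrac{2}{1+\courantNumber}$ reduces the equation to a linear one in $\fourierShift$, solved explicitly to produce the announced formula. To verify $\stableRoot(\timeShiftOperator)\in\unitDisk$ for $\timeShiftOperator\in\neighborhoodInfinity$, I would run the very same stability argument as above—no root of a linear equation can sit on $\unitCircle$ for $\timeShiftOperator\in\neighborhoodInfinity$ without violating $\ell^2$--stability—and check $\stableRoot(\pm 1) = \pm 1$ by a one-line substitution in the closed-form expression. The main technical delicacy, in my view, is the clean separation argument in Step 2: one must ensure that continuity together with the stability-exclusion on $\unitCircle$ indeed yields a \emph{global} decomposition over $\neighborhoodInfinity$ rather than just a local one; this hinges on the connectedness of $\neighborhoodInfinity$ and on the absence of branch points inside (which is precisely guaranteed by von Neumann stability forbidding $\fourierShift\in\unitCircle$ for $\timeShiftOperator\in\neighborhoodInfinity$).
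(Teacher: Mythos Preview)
Your argument is correct and follows the standard Kreiss--Hersh separation route: the paper itself does not prove this lemma but refers to \cite{bellotti2025stability}, where the very same counting argument (no root of \eqref{eq:charEquation} on $\unitCircle$ for $\timeShiftOperator\in\neighborhoodInfinity$ by $\ell^2$--stability, hence exactly one root in $\unitDisk$ by connectedness and the $\timeShiftOperator\to\infty$ asymptotics) is employed. The only spot that deserves one more line is the identification $\stableRoot(\pm 1)=\pm 1$ when $\relaxationParameter=2$: there $\productRoots=-1$, so both candidate limits lie on $\unitCircle$ and the inequality $|\productRoots|\geq 1$ no longer discriminates; the cleanest fix is either to evaluate the explicit formula \eqref{eq:paramStableRoot} at $\timeShiftOperator=\pm 1$, or to argue by continuity in $\relaxationParameter$ from the strict case $\relaxationParameter<2$, rather than the somewhat informal ``matching'' you invoke.
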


Only for this simple scheme, we shall rely on explicit formul\ae{} for $\stableRoot(\timeShiftOperator)$ ($\unstableRoot(\timeShiftOperator)$ can be computed from $\stableRoot(\timeShiftOperator)$ through $\productRoots$): when $\relaxationParameter \neq \tfrac{2}{1+\courantNumber}$, we obtain that  
\begin{equation}
    \stableRoot(\timeShiftOperator) = \frac{(\relaxationParameter-1) \timeShiftOperator^{-1} - \timeShiftOperator + \sqrt{\timeShiftOperator^2 + ((\courantNumber^2-1)\relaxationParameter^2 + 2\relaxationParameter-2) + (\relaxationParameter-1)^2\timeShiftOperator^{-2}}}{(\courantNumber + 1)\relaxationParameter-2}, \label{eq:paramStableRoot}
\end{equation}
which becomes Equation (3.8) in the work of Besse \emph{et al.} in the case $\relaxationParameter = 2$.
The general solution of the $\timeShiftOperator$-transformed numerical scheme on $\relatives$ is
\begin{equation}\label{eq:generalSolution}
    \begin{pmatrix}
        \zTransformed{\conservedMoment}_{\indexSpace}(\timeShiftOperator)\\
        \zTransformed{\nonConservedMoment}_{\indexSpace}(\timeShiftOperator)
    \end{pmatrix} = 
    \coefficientStable(\timeShiftOperator)
    \begin{pmatrix}
        1\\
        \eigenvectorLetter_{\stableMarker}(\timeShiftOperator)
    \end{pmatrix}
    \stableRoot(\timeShiftOperator)^{\indexSpace}
    +
    \coefficientUnstable(\timeShiftOperator)
    \begin{pmatrix}
        1\\
        \eigenvectorLetter_{\unstableMarker}(\timeShiftOperator)
    \end{pmatrix}
    \unstableRoot(\timeShiftOperator)^{\indexSpace}, \qquad \indexSpace\in\relatives,
\end{equation}
for $\timeShiftOperator\in\neighborhoodInfinity$.
Here, we have chosen a normalization for the eigenvectors $\transpose{(1, \eigenvectorLetter_{\stableMarker}(\timeShiftOperator))}\in\kernel(\timeShiftOperator\identityMatrix{2} - \schemeMatrixBulkFourier(\stableRoot(\timeShiftOperator)))$ and $\transpose{(1, \eigenvectorLetter_{\unstableMarker}(\timeShiftOperator))}\in\kernel(\timeShiftOperator\identityMatrix{2} - \schemeMatrixBulkFourier(\unstableRoot(\timeShiftOperator)))$, which is a convenient one and the same as in \cite{bellotti2025stability}.
This entails that, with $\fourierShift(\timeShiftOperator)$ being either $\stableRoot(\timeShiftOperator)$ or $\unstableRoot(\timeShiftOperator)$:
\begin{equation*}
    \eigenvectorLetter(\timeShiftOperator) = \frac{2\timeShiftOperator - (1+\courantNumber\relaxationParameter)\fourierShift(\timeShiftOperator)^{-1} - (1-\courantNumber\relaxationParameter)\fourierShift(\timeShiftOperator)}{(1-\relaxationParameter)(\fourierShift(\timeShiftOperator)^{-1} -  \fourierShift(\timeShiftOperator))}.
\end{equation*}
The coefficients $\coefficientStable(\timeShiftOperator)$ and $\coefficientUnstable(\timeShiftOperator)$ in \eqref{eq:generalSolution} are determined by the initial data. 

\begin{remark}[Relaxation scheme]
    The previous expression for $\eigenvectorLetter(\timeShiftOperator)$ seems ill-defined when $\relaxationParameter = 1$. 
    This is actually not true as, injecting the expressions of $\fourierShift(\timeShiftOperator)$, a factor $1-\relaxationParameter$ appears also in the numerator, hence simplifying the expression.

    In this case, $\nonConservedMoment$ relaxes on the equilibrium, thus is slave of $\conservedMoment$.
    Anything can be put on the non-conserved moment on the boundary (i.e., taken as $\nonConservedMoment_0^{\indexTime}$ and $\nonConservedMoment_{\numberGridPoints + 1}^{\indexTime}$), and considering eigenvectors is superfluous.
\end{remark}

\begin{lemma}[Continuous extension of the eigenvectors]\label{lemma:continuousExtension}
    Let $\courantNumber>0$.
    The stable eigenvector ({i.e.}, $\eigenvectorLetter_{\stableMarker}(\timeShiftOperator)$) can be continuously extended to $\unitCircle$ for every $\relaxationParameter \in (0, 2]$.
    Conversely, the unstable eigenvector ({i.e.}, $\eigenvectorLetter_{\unstableMarker}(\timeShiftOperator)$) can be continuously extended to $\unitCircle$ for every $\relaxationParameter \in (0, 2)$, but cannot be so when $\relaxationParameter = 2$, in which case 
    \begin{equation*}
        \eigenvectorLetter_{\unstableMarker}(\timeShiftOperator) = \mp 2\courantNumber (\timeShiftOperator \mp 1)^{-1}+\bigO{1}\qquad \text{as}\qquad \timeShiftOperator\to \pm 1.
    \end{equation*}
\end{lemma}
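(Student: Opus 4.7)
The plan is to use the alternative representation of $\eigenvectorLetter$ obtained from the \emph{second} row of the kernel equation $(\timeShiftOperator\identityMatrix{2}-\schemeMatrixBulkFourier(\fourierShift(\timeShiftOperator)))\transpose{(1,\eigenvectorLetter(\timeShiftOperator))}=0$, namely
\[
    \eigenvectorLetter(\timeShiftOperator) \;=\; \frac{(1+\courantNumber\relaxationParameter)\fourierShift(\timeShiftOperator)^{-1}-(1-\courantNumber\relaxationParameter)\fourierShift(\timeShiftOperator)}{2\timeShiftOperator - (1-\relaxationParameter)(\fourierShift(\timeShiftOperator)^{-1}+\fourierShift(\timeShiftOperator))},
\]
which agrees with the first-row formula displayed in the text on the characteristic variety but has different zeros and poles. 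I would switch between the two forms depending on which denominator is nonvanishing at the point of interest.

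For $\timeShiftOperator\in\unitCircle\setminus\{\pm 1\}$ the first-row denominator $(1-\relaxationParameter)(\fourierShift^{-1}-\fourierShift)$ does not vanish: substituting $\fourierShift(\timeShiftOperator)=\pm 1$ into \eqref{eq:charEquation} would force $\timeShiftOperator=\pm 1$ or $\timeShiftOperator=\pm(1-\relaxationParameter)$, and the latter lies on $\unitCircle$ only when $\relaxationParameter\in\{0,2\}$. Hence both eigenvectors inherit continuity from that of the roots provided by \Cref{lemma:spatialRootsD1Q2}. At $\timeShiftOperator=\pm 1$ the stable root satisfies $\stableRoot(\pm 1)=\pm 1$ and the first-row denominator vanishes; passing to the alternative form, the numerator evaluates to $\pm 2\courantNumber\relaxationParameter$ and the denominator to $\pm 2\relaxationParameter$, whence $\eigenvectorLetter_{\stableMarker}(\pm 1)=\courantNumber$ for every $\relaxationParameter\in(0,2]$. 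For the unstable root with $\relaxationParameter\in(0,2)$, \Cref{lemma:spatialRootsD1Q2} gives $\unstableRoot(\pm 1)=\pm\productRoots$; squaring $|(1-\courantNumber)\relaxationParameter-2|=|(1+\courantNumber)\relaxationParameter-2|$ shows that $|\productRoots|=1$ would force $\courantNumber=0$ or $\relaxationParameter\in\{0,2\}$, so $\unstableRoot(\pm 1)\ne\pm 1$, the first-row denominator is again nonzero, and the limit is finite.

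The delicate case is $\relaxationParameter=2$: then $\productRoots=-1$ and $\unstableRoot(\pm 1)=\mp 1$, so \emph{both} denominators vanish, while the alternative-form numerator at $\fourierShift=\mp 1$ gives $\mp 4\courantNumber\ne 0$; hence $\eigenvectorLetter_{\unstableMarker}$ has simple poles at $\pm 1$. To extract the residue I would linearize $\unstableRoot=\productRoots/\stableRoot=-1/\stableRoot$ using \eqref{eq:paramStableRoot}: the radicand $\timeShiftOperator^2+4\courantNumber^2-2+\timeShiftOperator^{-2}$ equals $4\courantNumber^2$ at $\timeShiftOperator=\pm 1$ with vanishing first derivative, yielding $\stableRoot'(\pm 1)=-1/\courantNumber$ and hence $\unstableRoot'(\pm 1)=-1/\courantNumber$. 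Plugging the expansion $\unstableRoot(\timeShiftOperator)=\mp 1-(\timeShiftOperator\mp 1)/\courantNumber+\bigO{(\timeShiftOperator\mp 1)^2}$ into the alternative formula gives a numerator tending to $\mp 4\courantNumber$ and a denominator equal to $2(\timeShiftOperator\mp 1)+\bigO{(\timeShiftOperator\mp 1)^2}$, from which the stated $\mp 2\courantNumber/(\timeShiftOperator\mp 1)+\bigO{1}$ follows.

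The main obstacle is the sign tracking in this last step: choosing the correct branch of the square root in \eqref{eq:paramStableRoot} so that $\stableRoot(\pm 1)=\pm 1$ is actually realised is what pins down the sign of $\stableRoot'(\pm 1)$, and hence the sign of the residue; matching both upper and lower signs consistently through $\unstableRoot=-1/\stableRoot$ and the alternative formula is the only place where care is required. All other ingredients are routine: a non-vanishing check for the denominator away from $\pm 1$, and two direct substitutions uniform in $\relaxationParameter\in(0,2]$ or $\relaxationParameter\in(0,2)$ respectively.
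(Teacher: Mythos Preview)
Your argument is correct. The paper does not supply an in-text proof of this lemma but defers to \cite{bellotti2025stability}, so there is no proof here to compare against; your strategy of switching between the first-row and second-row kernel relations to avoid spurious $0/0$ forms is sound, and the residue computation at $\relaxationParameter=2$ via $\unstableRoot=-1/\stableRoot$ together with $\stableRoot'(\pm 1)=-1/\courantNumber$ is clean and yields the stated pole. One small gap: for $\relaxationParameter=1$ the first-row denominator $(1-\relaxationParameter)(\fourierShift^{-1}-\fourierShift)$ vanishes \emph{identically}, not only at $\fourierShift=\pm 1$, so your sentence ``the first-row denominator does not vanish for $\timeShiftOperator\in\unitCircle\setminus\{\pm 1\}$'' is false there; you must fall back on the second-row formula throughout in that case (its denominator reduces to $2\timeShiftOperator\ne 0$ on $\unitCircle$), which your switching principle covers in spirit but your explicit argument does not. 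The sign bookkeeping for the square-root branch that you flag as the main obstacle is indeed the only delicate point, and the values you obtain are consistent with the statement.
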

This result can have consequences on the effect of computing the coefficients of transparent boundary condition using floating-point arithmetic, in a way that we analyze with numerical experiments.

\paragraph{Systemic approach}\label{sec:systemicRight}

The first approach we utilize, called ``\strong{systemic}'', builds on an eigenvector of the system which remains fixed whether we deal with $\conservedMoment$ or $\nonConservedMoment$.
Probably the most natural approach in view of \eqref{eq:generalSolution}, it is not the most flexible one when dealing with more involved schemes, since it requires a study of the eigenvectors. 

\begin{itemize}

\item \strong{Right boundary}.
We follow \cite{besse2021discrete} and consider \eqref{eq:generalSolution} for $\indexSpace\geq \numberGridPoints + 1$.
Since we want that the solution decays as $\indexSpace\to+\infty$ to remain $\ell^2$-integrable, we take $\coefficientUnstable(\timeShiftOperator)\equiv 0$. 
We employ $\conservedMoment_{\numberGridPoints}$ as a boundary datum to the left (this choice is arbitrary, as we could have used $\nonConservedMoment_{\numberGridPoints}$, see the discussion in \Cref{sec:onceAgainChoiceEigenvalue}).
This gives $\zTransformed{\conservedMoment}_{\numberGridPoints}(\timeShiftOperator) = \coefficientStable(\timeShiftOperator) \stableRoot(\timeShiftOperator)^{\numberGridPoints}$, so $\coefficientStable(\timeShiftOperator) = \stableRoot(\timeShiftOperator)^{-\numberGridPoints} \zTransformed{\conservedMoment}_{\numberGridPoints}(\timeShiftOperator)$, and \eqref{eq:generalSolution} becomes 
\begin{equation}\label{eq:generalSolutionRightWithoutFreeCoefficient}
    \begin{pmatrix}
        \zTransformed{\conservedMoment}_{\indexSpace}(\timeShiftOperator)\\
        \zTransformed{\nonConservedMoment}_{\indexSpace}(\timeShiftOperator)
    \end{pmatrix} = 
    \begin{pmatrix}
        1\\
        \eigenvectorLetter_{\stableMarker}(\timeShiftOperator)
    \end{pmatrix}
    \stableRoot(\timeShiftOperator)^{\indexSpace-\numberGridPoints} \zTransformed{\conservedMoment}_{\numberGridPoints}(\timeShiftOperator), \qquad \indexSpace\geq\numberGridPoints + 1.
\end{equation}
Evaluating \eqref{eq:generalSolutionRightWithoutFreeCoefficient} for $\indexSpace = \numberGridPoints + 1$ provides
\begin{equation}\label{eq:tmp2}
    \zTransformed{\conservedMoment}_{\numberGridPoints + 1}(\timeShiftOperator)
    =
    \stableRoot(\timeShiftOperator) \zTransformed{\conservedMoment}_{\numberGridPoints}(\timeShiftOperator)
    \qquad \text{and}\qquad 
    \zTransformed{\nonConservedMoment}_{\numberGridPoints + 1}(\timeShiftOperator)
    =
    \eigenvectorLetter_{\stableMarker}(\timeShiftOperator) \stableRoot(\timeShiftOperator)\zTransformed{\conservedMoment}_{\numberGridPoints}(\timeShiftOperator).
\end{equation}
Now we have to invert the $\timeShiftOperator$-transform as presented in \Cref{sec:notations}: in particular, we find the Laurent series of $\stableRoot(\timeShiftOperator)$ and $\eigenvectorLetter_{\stableMarker}(\timeShiftOperator) \stableRoot(\timeShiftOperator)$ at infinity.
We claim---and prove in the following pages---that 
\begin{equation}\label{eq:laurentExpansions}
    \stableRoot(\timeShiftOperator) = \sum_{\indexTime = 0}^{+\infty}\coefficientsLaurentStableRoot_{\indexTime}\timeShiftOperator^{-2\indexTime-1} \qquad \text{and}\qquad 
    \eigenvectorLetter_{\stableMarker}(\timeShiftOperator)\stableRoot(\timeShiftOperator) = \sum_{\indexTime = 0}^{+\infty}\coefficientMultiplierNonConservedMomentRight_{\indexTime}\timeShiftOperator^{-2\indexTime-1}.
\end{equation}
From \eqref{eq:tmp2} and using \eqref{eq:laurentExpansions} in the inverse $\timeShiftOperator$-transformation, we obtain for $\indexTime\geq 1$
\begin{align}
    \conservedMoment_{\numberGridPoints + 1}^{\indexTime} &= \sum_{k = 0}^{\lfloor (\indexTime - 1)/ 2 \rfloor} \coefficientsLaurentStableRoot_{k} \conservedMoment_{\numberGridPoints}^{\indexTime-2k - 1},\label{eq:rightBCU}\\
    \nonConservedMoment_{\numberGridPoints + 1}^{\indexTime} &= \sum_{k = 0}^{\lfloor (\indexTime - 1)/ 2 \rfloor} \coefficientMultiplierNonConservedMomentRight_{k} \conservedMoment_{\numberGridPoints}^{\indexTime-2k - 1}. \label{eq:bcRightNonConserved}
\end{align}
\begin{remark}[A first comparison with \cite{heubes2014exact}]
    Comparing \eqref{eq:rightBCU}--\eqref{eq:bcRightNonConserved} to the work of \cite{heubes2014exact}, we see that their boundary conditions populate the distribution function $\distributionFunctionLetter_{-, \numberGridPoints}^{\indexTime + 1}$ at $\spaceGridPoint{\numberGridPoints}$ rather than at $\spaceGridPoint{\numberGridPoints + 1}$, as they already take transport and collision into account. Their computation of $\distributionFunctionLetter_{-, \numberGridPoints}^{\indexTime + 1}$ uses the values of distribution functions $\distributionFunctionLetter_{\pm, \numberGridPoints}$ at previous time steps with the same parity as $\indexTime + 1$. In our case, the parity is altered, since we populate at $\spaceGridPoint{\numberGridPoints + 1}$ and let the scheme perform the collision and transport. 
    Still, the two approaches are equivalent.
    A specific trait of the work by Heubes \strong{et al.} is that initial data are compactly supported up to an additive constant. 
    In our work, we assume data to be compactly supported in the domain, so the two settings are analogous because of linearity combined with the fact that constants discrete solutions are invariant by the numerical schemes.
\end{remark}
We now discuss ways of finding the explicit expressions for $\coefficientsLaurentStableRoot_{\indexTime}$ and $\coefficientMultiplierNonConservedMomentRight_{\indexTime}$ in \eqref{eq:laurentExpansions}--\eqref{eq:rightBCU}--\eqref{eq:bcRightNonConserved}. 
We only consider the case $\relaxationParameter\geq 1$ for the sake of room in the manuscript.
\begin{itemize}
    \item \strong{Relaxation (Lax-Friedrichs) scheme: $\relaxationParameter = 1$}.
    Using \eqref{eq:paramStableRoot}, we have that 
    \begin{equation}\label{eq:tmp4}
        \stableRoot(\timeShiftOperator) = \frac{-\timeShiftOperator + \sqrt{\courantNumber^2-1+\timeShiftOperator^2}}{\courantNumber- 1} = \frac{1}{\courantNumber-1}\sum_{\indexTime = 0}^{+\infty} \binom{1/2}{\indexTime+1}(\courantNumber^2-1)^{\indexTime + 1}\timeShiftOperator^{-2\indexTime - 1},
    \end{equation}
    thanks to the generalized binomial theorem. We can thus claim  that $\coefficientsLaurentStableRoot_{\indexTime} = \binom{1/2}{\indexTime+1}(\courantNumber+1)(\courantNumber^2-1)^{\indexTime}$.
    By the well-known identity $\binom{n}{k} = \binom{n}{k-1}(n-k+1)/k$, we obtain the recurrence relation
    \begin{equation*}
        \coefficientsLaurentStableRoot_{0} = \tfrac{1}{2}(\courantNumber+1) \qquad \text{and} \qquad  \coefficientsLaurentStableRoot_{\indexTime} = \frac{\tfrac{1}{2}-\indexTime}{1+\indexTime}(\courantNumber^2 - 1)\coefficientsLaurentStableRoot_{\indexTime - 1}, \qquad \indexTime\geq 1.
    \end{equation*} 

    We can estimate the speed of convergence of $\coefficientsLaurentStableRoot_{\indexTime}$ by generalizing the Stirling formula: 
    \begin{equation*}
        \binom{1/2}{\indexTime+1} = \frac{\sqrt{\pi}}{2\Gamma(\indexTime + 2)\Gamma(1/2-\indexTime)} = \frac{(-1)^{\indexTime} \Gamma(\indexTime + 1/2)}{2\sqrt{\pi}\Gamma(\indexTime + 2)} = \frac{(-1)^{\indexTime} }{2\sqrt{\pi}} \indexTime^{-3/2} (1 + \bigO{\indexTime^{-1}}),
    \end{equation*}
    with the second equality from the Euler's reflection formula for the $\Gamma$ function, $\Gamma(1/2-\indexTime) = \tfrac{\pi}{\cos(\indexTime\pi)}\Gamma(\indexTime + 1/2)^{-1} = (-1)^{\indexTime}\pi \Gamma(\indexTime + 1/2)^{-1}$, and the last identity from \cite[Formula 6.1.47]{abramowitz1948handbook} for the asymptotics of the Gamma function.
    This entails 
    \begin{equation}\label{eq:asymptoticRelaxationScheme}
        \coefficientsLaurentStableRoot_{\indexTime} = \frac{1+\courantNumber}{2\sqrt{\pi}} (1-\courantNumber^2)^{\indexTime} \indexTime^{-3/2} (1 + \bigO{\indexTime^{-1}}) , 
    \end{equation}
    so the speed of convergence of $\coefficientsLaurentStableRoot_{\indexTime}$ to zero is geometric with a modulation by $\indexTime^{-3/2}$.
    We shall come back to this estimate in \Cref{sec:backAsympt}.
    \item \strong{Monotone over-relaxing scheme: $1< \relaxationParameter<\tfrac{2}{1+\courantNumber}$}.
    This regime is called like so because it ensures that the scheme is monotone according to the definition by \cite{aregba2024convergence}.
    From \eqref{eq:paramStableRoot}, we are brought to the study of 
    \begin{equation}\label{eq:tmp5}
        \sqrt{\timeShiftOperator^2 + ((\courantNumber^2-1)\relaxationParameter^2 + 2\relaxationParameter-2) + (\relaxationParameter-1)^2\timeShiftOperator^{-2}} = \timeShiftOperator  ( 1 - 2\argumentLegendrePolynomials \timeShiftOperatorModified^{-2}+ \timeShiftOperatorModified^{-4} )\sum_{\indexTime = 0}^{+\infty} \legendrePolynomial{\indexTime}(\argumentLegendrePolynomials)\timeShiftOperatorModified^{-2\indexTime},
    \end{equation}
    where we introduced $\timeShiftOperatorModified^{-2}\definitionEquality (\relaxationParameter - 1)\timeShiftOperator^{-2}$ and $\argumentLegendrePolynomials\definitionEquality \frac{(\courantNumber^2-1)\relaxationParameter^2 + 2\relaxationParameter-2}{2(1-\relaxationParameter)}$, with $\legendrePolynomial{\indexTime}$ being the $\indexTime$-th Legendre polynomial. 
    This way of proceeding follows the computations of Besse \strong{et al.}
    The previous formal identity can be written for every $\argumentLegendrePolynomials$.
    However, here $\argumentLegendrePolynomials$ lays outside $[-1, 1]$, thus $\legendrePolynomial{\indexTime}(\argumentLegendrePolynomials)$ quickly tends to infinity with $\indexTime$ (geometrically with a modulation by $\indexTime^{-1/2}$): by the Laplace-Heine formula \cite[Theorem 8.21.1]{szeg1939orthogonal}
    \begin{equation}\label{eq:LaplaceHeineFormula}
        \legendrePolynomial{\indexTime}(\argumentLegendrePolynomials)
        \cong
        \frac{1}{\sqrt{2\pi}}  (\argumentLegendrePolynomials^2-1)^{-1/4} (\argumentLegendrePolynomials+\sqrt{\argumentLegendrePolynomials^2-1})^{\indexTime+1/2} \indexTime^{-1/2}\qquad \text{as}\qquad \indexTime\to+\infty.
    \end{equation}
    Although this is not a problem on $\reals$, it causes problems when computations are deployed on floating-point arithmetic, and one must not store the values of $\legendrePolynomial{\indexTime}(\argumentLegendrePolynomials)$.
    We continue by introducing
    \begin{equation*}
        \coefficientsExpansionSquareRoot_0 = \legendrePolynomial{1}(\argumentLegendrePolynomials) - 2\argumentLegendrePolynomials\legendrePolynomial{0}(\argumentLegendrePolynomials) \qquad\text{and}\qquad \coefficientsExpansionSquareRoot_{\indexTime} = \legendrePolynomial{\indexTime + 1}(\argumentLegendrePolynomials) - 2\argumentLegendrePolynomials\legendrePolynomial{\indexTime}(\argumentLegendrePolynomials) + \legendrePolynomial{\indexTime - 1}(\argumentLegendrePolynomials), \quad \indexTime\geq 1.
    \end{equation*}
    One can see---using the Bonnet's recursion formula for Legendre polynomials---that they can be obtained by the recurrence 
    \begin{equation}\label{eq:recurrenceOne}
        \coefficientsExpansionSquareRoot_0 = -\argumentLegendrePolynomials, \qquad
        \coefficientsExpansionSquareRoot_{1} = \legendrePolynomial{2}(\argumentLegendrePolynomials) - 2\argumentLegendrePolynomials^2 + 1, \qquad\text{and}\qquad 
        \coefficientsExpansionSquareRoot_{\indexTime} = \frac{2\indexTime - 1}{\indexTime + 1}\argumentLegendrePolynomials \coefficientsExpansionSquareRoot_{\indexTime-1} - \frac{\indexTime-2}{\indexTime + 1}\coefficientsExpansionSquareRoot_{\indexTime-2}, \quad \indexTime\geq 2. 
    \end{equation}
    Thanks to the coefficients $\coefficientsExpansionSquareRoot_{\indexTime}$, we rewrite 
    \begin{equation}\label{eq:legendreAndBn}
        ( 1 - 2\argumentLegendrePolynomials\timeShiftOperatorModified^{-2} + \timeShiftOperatorModified^{-4} )\sum_{\indexTime = 0}^{+\infty} \legendrePolynomial{\indexTime}(\argumentLegendrePolynomials)\timeShiftOperatorModified^{-2\indexTime} = \sum_{\indexTime = 0}^{+\infty}\coefficientsExpansionSquareRoot_{\indexTime}\timeShiftOperatorModified^{-2(\indexTime+1)} + 1 
        = \sum_{\indexTime = 0}^{+\infty}(\relaxationParameter-1)^{\indexTime +1}\coefficientsExpansionSquareRoot_{\indexTime}\timeShiftOperator^{-2(\indexTime+1)} + 1 
    \end{equation}
    so that we obtain 
    \begin{equation}\label{eq:laurentSeriesStableRoot}
        \stableRoot(\timeShiftOperator) =\frac{\relaxationParameter - 1}{(\courantNumber + 1)\relaxationParameter-2} \Bigl (\timeShiftOperator^{-1}  +  \sum_{\indexTime = 0}^{+\infty}(\relaxationParameter - 1)^{\indexTime} \coefficientsExpansionSquareRoot_{\indexTime}\timeShiftOperator^{-2\indexTime-1}  \Bigr ) = \sum_{\indexTime = 0}^{+\infty}\coefficientsLaurentStableRoot_{\indexTime}\timeShiftOperator^{-2\indexTime - 1}, 
    \end{equation}
    where 
    \begin{equation}\label{eq:definitionSAsProduct}
        \coefficientsLaurentStableRoot_{0} = \frac{\relaxationParameter - 1}{(\courantNumber + 1)\relaxationParameter-2} (1+\coefficientsExpansionSquareRoot_0) = 1+\tfrac{1}{2}(\courantNumber-1)\relaxationParameter \qquad \text{and}\qquad \coefficientsLaurentStableRoot_{\indexTime} = \frac{(\relaxationParameter-1)^{\indexTime+1}}{(\courantNumber + 1)\relaxationParameter-2} \coefficientsExpansionSquareRoot_{\indexTime}, \quad \indexTime\geq 1.
    \end{equation}

    Recalling that we also have $\coefficientsExpansionSquareRoot_{\indexTime}=(\legendrePolynomial{\indexTime - 1}(\argumentLegendrePolynomials)-\legendrePolynomial{\indexTime + 1}(\argumentLegendrePolynomials))/(2\indexTime + 1)$ for $\indexTime\geq 1$, \eqref{eq:LaplaceHeineFormula} entails that:
    \begin{equation}\label{eq:expansionBBelowZeroCoefficient}
        \coefficientsExpansionSquareRoot_{\indexTime} = \frac{1}{\sqrt{2\pi}} (\argumentLegendrePolynomials^2-1)^{-1/4} \frac{1-\argumentLegendrePolynomials^2 - \argumentLegendrePolynomials\sqrt{\argumentLegendrePolynomials^2-1}}{\sqrt{\argumentLegendrePolynomials+\sqrt{\argumentLegendrePolynomials^2-1}}} \bigl (\argumentLegendrePolynomials+\sqrt{\argumentLegendrePolynomials^2-1} \bigr )^{\indexTime} (\indexTime^{-3/2} + \bigO{\indexTime^{-5/2} }).
    \end{equation}

    $\coefficientsExpansionSquareRoot_{\indexTime} \sim  (\argumentLegendrePolynomials+\sqrt{\argumentLegendrePolynomials^2-1}  )^{\indexTime} \indexTime^{-3/2}$ tends (essentially) geometrically to infinity. 
    It is therefore catastrophic to use \eqref{eq:definitionSAsProduct} with floating-point arithmetic, as we experience overflows in the computations of $\coefficientsExpansionSquareRoot_{\indexTime}$.
    Then
    \begin{equation*}
        \coefficientsLaurentStableRoot_{\indexTime} = \frac{1}{\sqrt{2\pi}} (\argumentLegendrePolynomials^2-1)^{-1/4} \frac{(1-\argumentLegendrePolynomials^2 - \argumentLegendrePolynomials\sqrt{\argumentLegendrePolynomials^2-1})(\relaxationParameter - 1)}{((\courantNumber + 1)\relaxationParameter-2)\sqrt{\argumentLegendrePolynomials+\sqrt{\argumentLegendrePolynomials^2-1}}} \bigl ((\relaxationParameter - 1)(\argumentLegendrePolynomials+\sqrt{\argumentLegendrePolynomials^2-1}) \bigr )^{\indexTime} (\indexTime^{-3/2} + \bigO{\indexTime^{-5/2} }),
    \end{equation*}
    so tends to zero geometrically (since $(\relaxationParameter - 1)(\argumentLegendrePolynomials+\sqrt{\argumentLegendrePolynomials^2-1} ) \in [0, 1)$) with a modulation by $\indexTime^{-3/2}$, which is fine.
    Overall, we need an alternative to \eqref{eq:definitionSAsProduct} to be deployed for actual computations, contrarily to the setting of Besse \strong{at al.}
    It is quite interesting that the regime $\relaxationParameter< \tfrac{2}{1+\courantNumber}$ is one of the two cases where \cite{heubes2014exact} (\confer{}, their Lemma 13) were able to prove that their coefficients, analogous to $\coefficientsLaurentStableRoot_{\indexTime}$, tend to zero.
    Their proof relies on an upper bound (and not an asymptotic estimate) with control by a geometric term accompanied by a loose estimate on the modulation $\sim \indexTime^{-1}$ ($\indexTime^{-1}\gg \indexTime^{-3/2}$ for large $\indexTime$).
    Bringing the digression to an end, the formula we need is recurrent and  obtained by multiplying both sides of \eqref{eq:recurrenceOne} by $\frac{(\relaxationParameter-1)^{\indexTime+1}}{(\courantNumber + 1)\relaxationParameter-2}$:
    \begin{equation}\label{eq:recurrenceS}
        \coefficientsLaurentStableRoot_{\indexTime} = \frac{2\indexTime - 1}{\indexTime + 1} (1-\relaxationParameter+\tfrac{1}{2}(1-\courantNumber^2)\relaxationParameter^2)\coefficientsLaurentStableRoot_{\indexTime-1} - \frac{\indexTime - 2}{\indexTime + 1}(\relaxationParameter-1)^2 \coefficientsLaurentStableRoot_{\indexTime-2}, \qquad \indexTime\geq 3.
    \end{equation}

    We now consider $\eigenvectorLetter_{\stableMarker}(\timeShiftOperator) \stableRoot(\timeShiftOperator)$: formal computations yield 
    \begin{equation*}
        \eigenvectorLetter_{\stableMarker}(\timeShiftOperator) \stableRoot(\timeShiftOperator) = \frac{A(\timeShiftOperator) + B(\timeShiftOperator) \sqrt{\timeShiftOperator^4 + ((\courantNumber^2-1)\relaxationParameter^2 + 2\relaxationParameter-2)\timeShiftOperator^2 + (\relaxationParameter-1)^2}}{C(\timeShiftOperator) + D(\timeShiftOperator) \sqrt{\timeShiftOperator^4 + ((\courantNumber^2-1)\relaxationParameter^2 + 2\relaxationParameter-2)\timeShiftOperator^2 + (\relaxationParameter-1)^2}},
    \end{equation*}
    where $A(\timeShiftOperator), B(\timeShiftOperator), C(\timeShiftOperator)$, and $D(\timeShiftOperator)$ are suitable polynomials in $\timeShiftOperator$, depending on $\courantNumber$ and $\relaxationParameter$.
    We multiply numerator and denominator by $C(\timeShiftOperator) - D(\timeShiftOperator) \sqrt{\timeShiftOperator^4 + ((\courantNumber^2-1)\relaxationParameter^2 + 2\relaxationParameter-2)\timeShiftOperator^2 + (\relaxationParameter-1)^2}$, and simplify:
    \begin{equation}\label{eq:fraction}
        \eigenvectorLetter_{\stableMarker}(\timeShiftOperator) \stableRoot(\timeShiftOperator) = \frac{\mathscr{N}(\timeShiftOperator)}{\mathscr{D}(\timeShiftOperator)},
    \end{equation}
    where 
    \begin{multline*}
        \mathscr{N}(\timeShiftOperator) = 
        {\timeShiftOperator}^{3}  + {\left({\left({\courantNumber}^{2} + {\courantNumber} - 1\right)} {\relaxationParameter}^{2} - {\left({\courantNumber} - 2\right)} {\relaxationParameter} - 2\right)} {\timeShiftOperator} + \bigl ( {\left(2  {\courantNumber} + 1\right)} {\relaxationParameter}^{2} - {\courantNumber} {\relaxationParameter}^{3} - {\left({\courantNumber} + 2\right)} {\relaxationParameter}  + 1 \bigr ) \timeShiftOperator^{-1} \\
        - {\left({\courantNumber} {\relaxationParameter}^{2} - {\left({\courantNumber} + 1\right)} {\relaxationParameter} + {\timeShiftOperator}^{2} + 1\right)} \sqrt{\timeShiftOperator^2 + ((\courantNumber^2-1)\relaxationParameter^2 + 2\relaxationParameter-2) + (\relaxationParameter-1)^2\timeShiftOperator^{-2}}, \\
        \mathscr{D}(\timeShiftOperator) = - ((\courantNumber+1)\relaxationParameter - 2) ((\relaxationParameter-1)^2 - \timeShiftOperator^2).
    \end{multline*}
    We start by discussing the Laurent series relative to $1/\mathscr{D}(\timeShiftOperator)$:
    \begin{equation*}
        \frac{1}{\mathscr{D}(\timeShiftOperator)} = -\frac{1}{((\courantNumber+1)\relaxationParameter - 2)(\relaxationParameter-1)^2}\frac{1}{1 -  ( \frac{\timeShiftOperator}{\relaxationParameter-1} )^2} = \frac{1}{((\courantNumber+1)\relaxationParameter - 2)(\relaxationParameter-1)^2} \sum_{\indexTime = 1}^{+\infty}\Bigl ( \frac{\timeShiftOperator}{\relaxationParameter-1} \Bigr )^{-2\indexTime}.
    \end{equation*}
    For the numerator, using \eqref{eq:legendreAndBn}
    \begin{multline*}
        \mathscr{N}(\timeShiftOperator) = 
        \bigl (\tfrac{1}{2}(\courantNumber^2-1)\relaxationParameter^2  + 2(\relaxationParameter-1)\bigr ) {\timeShiftOperator} \\
        + \bigl ( {\left(2  {\courantNumber} + 1\right)} {\relaxationParameter}^{2} - {\courantNumber} {\relaxationParameter}^{3} - {\left({\courantNumber} + 2\right)} {\relaxationParameter}  + 1 - (\relaxationParameter-1)^2\coefficientsExpansionSquareRoot_1 - {\left({\courantNumber} {\relaxationParameter}^{2} - {\left({\courantNumber} + 1\right)} {\relaxationParameter} +  1\right)} (\relaxationParameter-1)\coefficientsExpansionSquareRoot_0\bigr ) \timeShiftOperator^{-1} \\
        -  \sum_{\indexTime = 1}^{+\infty} (\relaxationParameter-1)^{\indexTime +1} \Bigl ( (\relaxationParameter-1)\coefficientsExpansionSquareRoot_{\indexTime + 1}+ {\left({\courantNumber} {\relaxationParameter}^{2} - {\left({\courantNumber} + 1\right)} {\relaxationParameter} +  1\right)} \coefficientsExpansionSquareRoot_{\indexTime} \Bigr ) \timeShiftOperator^{-2\indexTime-1} = \sum_{\indexTime = 0}^{+\infty} \coefficientNumeratorRight_{\indexTime} \timeShiftOperator^{-2\indexTime + 1},
    \end{multline*}
    where we set 
    \begin{align}
        \coefficientNumeratorRight_0&= \tfrac{1}{2}(\courantNumber^2-1)\relaxationParameter^2  + 2(\relaxationParameter-1),\nonumber \\ 
        \coefficientNumeratorRight_1&=  {\left(2  {\courantNumber} + 1\right)} {\relaxationParameter}^{2} - {\courantNumber} {\relaxationParameter}^{3} - {\left({\courantNumber} + 2\right)} {\relaxationParameter}  + 1 - (\relaxationParameter-1)((\relaxationParameter-1)\coefficientsExpansionSquareRoot_1 + {\left({\courantNumber} {\relaxationParameter}^{2} - {\left({\courantNumber} + 1\right)} {\relaxationParameter} +  1\right)}\coefficientsExpansionSquareRoot_0),\nonumber \\
        \coefficientNumeratorRight_{\indexTime}&=  -(\relaxationParameter-1)^{\indexTime}  ( (\relaxationParameter-1)\coefficientsExpansionSquareRoot_{\indexTime}+ {\left({\courantNumber} {\relaxationParameter}^{2} - {\left({\courantNumber} + 1\right)} {\relaxationParameter} +  1\right)} \coefficientsExpansionSquareRoot_{\indexTime-1}  ), \qquad \indexTime\geq 2.\label{eq:tmp1}
    \end{align}
    Analogously to $\coefficientsLaurentStableRoot_{\indexTime}$, one has to avoid computing $\coefficientNumeratorRight_{\indexTime}$ from $\coefficientsExpansionSquareRoot_{\indexTime}$ using \eqref{eq:tmp1} in floating-point arithmetic.
    We therefore find the recurrence relation satisfied by $\coefficientNumeratorRight_{\indexTime}$, which reads 
    \begin{equation}\label{eq:recurrecenceRight}
        \coefficientNumeratorRight_{\indexTime} = \mathscr{V}_{-1}(\indexTime)\coefficientNumeratorRight_{\indexTime-1}+ \mathscr{V}_{-2}(\indexTime)\coefficientNumeratorRight_{\indexTime-2}, \qquad \indexTime\geq 3, 
    \end{equation}
    where the weights $\mathscr{V}_{-1}(\indexTime)$ and $\mathscr{V}_{-1}(\indexTime)$ have explicit yet involved expressions given in \Cref{app:expressionWeights}. 
    Expanding the Cauchy product of formal series:
    \begin{align*}
        \eigenvectorLetter_{\stableMarker}(\timeShiftOperator) \stableRoot(\timeShiftOperator) &=\frac{1}{(\courantNumber+1)\relaxationParameter - 2}  \Biggl ( \sum_{\indexTime = 0}^{+\infty} \coefficientNumeratorRight_{\indexTime} \timeShiftOperator^{-2\indexTime + 1} \Biggr ) \Biggl ( \sum_{\indexTime = 1}^{+\infty}(\relaxationParameter-1)^{2(\indexTime-1)}\timeShiftOperator^{-2\indexTime} \Biggr )\\
        &=\frac{1}{(\courantNumber+1)\relaxationParameter - 2} \bigl ( \coefficientNumeratorRight_0\timeShiftOperator^{-1} + (\coefficientNumeratorRight_0 (\relaxationParameter - 1)^2 + \coefficientNumeratorRight_1)\timeShiftOperator^{-3} + (\coefficientNumeratorRight_0(\relaxationParameter-1)^4 + \coefficientNumeratorRight_1(\relaxationParameter-1)^2 + \coefficientNumeratorRight_2)\timeShiftOperator^{-5}+\dots \bigr )\\
        &=\sum_{\indexTime = 0}^{+\infty}\coefficientMultiplierNonConservedMomentRight_{\indexTime}\timeShiftOperator^{-2\indexTime - 1}, \qquad \text{where}\quad \coefficientMultiplierNonConservedMomentRight_{\indexTime}=\frac{1}{(\courantNumber+1)\relaxationParameter - 2}\sum_{k=0}^{\indexTime}(\relaxationParameter-1)^{2(\indexTime-k)}\coefficientNumeratorRight_{k}.
    \end{align*}
    Notice that $\coefficientMultiplierNonConservedMomentRight_{\indexTime}$ can be computed recursively, hence in a cheaper way, using 
    \begin{equation}\label{eq:recurrenceEigv}
        \coefficientMultiplierNonConservedMomentRight_{\indexTime + 1} = (\relaxationParameter-1)^2\coefficientMultiplierNonConservedMomentRight_{\indexTime} + \frac{1}{(\courantNumber+1)\relaxationParameter - 2}\coefficientNumeratorRight_{\indexTime + 1}.
    \end{equation}
    \item \strong{Scheme with no unstable spatial root: $\relaxationParameter=\tfrac{2}{1+\courantNumber}$}. 
    In this setting, the expression $\stableRoot(\timeShiftOperator)$ is particularly simple, \confer{} \Cref{lemma:spatialRootsD1Q2}.
    Assume for the moment that $0<\courantNumber<1$, then
    \begin{equation}\label{eq:rootNoSecondOrder}
        \stableRoot(\timeShiftOperator) = 
    \frac{2\courantNumber}{\courantNumber - 1} \frac{\timeShiftOperator}{1-\Bigl (\sqrt{\frac{1+\courantNumber}{1-\courantNumber}}\timeShiftOperator \Bigr )^2} = 2\courantNumber \sum_{\indexTime = 0}^{+\infty} \frac{(1-\courantNumber)^{\indexTime}}{(1+\courantNumber)^{\indexTime+1}}\timeShiftOperator^{-2\indexTime - 1},
    \end{equation}
    which allows considering $\coefficientsLaurentStableRoot_{\indexTime} = 2\courantNumber \frac{(1-\courantNumber)^{\indexTime}}{(1+\courantNumber)^{\indexTime+1}}$, tending geometrically to zero.
    Notice that this is also valid when $\courantNumber = 1$, with $\coefficientsLaurentStableRoot_{\indexTime}=\delta_{0\indexTime}$. 
    For the eigenvector
    \begin{equation*}
        \eigenvectorLetter_{\stableMarker}(\timeShiftOperator) = \frac{(3\courantNumber+1)(\courantNumber-1) + (\courantNumber+1)^2 \timeShiftOperator^2}{-(\courantNumber-1)^2 + (\courantNumber+1)^2 \timeShiftOperator^2} = 1-4\courantNumber\sum_{\indexTime  = 1}^{+\infty} \frac{(1-\courantNumber)^{2\indexTime - 1}}{(1+\courantNumber)^{2\indexTime}}\timeShiftOperator^{-2\indexTime}.
    \end{equation*}
    Taking the Cauchy product with $\stableRoot(\timeShiftOperator)$, one clearly sees that 
    \begin{multline*}
        \eigenvectorLetter_{\stableMarker}(\timeShiftOperator)\stableRoot(\timeShiftOperator) = \sum_{\indexTime = 0}^{+\infty}\coefficientMultiplierNonConservedMomentRight_{\indexTime}\timeShiftOperator^{-2\indexTime - 1}, \qquad \text{where} \qquad \coefficientMultiplierNonConservedMomentRight_0 = \frac{2\courantNumber}{1+\courantNumber} \\
        \text{and}\qquad  \coefficientMultiplierNonConservedMomentRight_{\indexTime} = \frac{1-\courantNumber}{1+\courantNumber} \coefficientMultiplierNonConservedMomentRight_{\indexTime - 1}
        -\frac{8\courantNumber^2 (1-\courantNumber)^{2\indexTime - 1}}{(1+\courantNumber)^{2\indexTime + 1}}, \qquad \indexTime\geq 1.
    \end{multline*}
    \item \strong{Non-monotone over-relaxing scheme: $\tfrac{2}{1+\courantNumber}<\relaxationParameter\leq 2$}.
    All the discussions done for $1<\relaxationParameter<\tfrac{2}{1+\courantNumber}$ still hold, except for the estimates of the growth of the coefficients.
    It is now safe to manipulate $\legendrePolynomial{\indexTime}(\argumentLegendrePolynomials)$ and $\coefficientsExpansionSquareRoot_{\indexTime}$ in floating-point arithmetics.
    In this case, we have that $\argumentLegendrePolynomials\in (-1, 1)$, hence $\legendrePolynomial{\indexTime}(\argumentLegendrePolynomials)$ tends to zero with $\indexTime$: Legendre polynomials satisfy the Laplace formula \cite[Theorem 8.21.2]{szeg1939orthogonal}
    \begin{equation}\label{eq:LaplaceFormula}
        \legendrePolynomial{\indexTime}(\argumentLegendrePolynomials) = \sqrt{\frac{2}{\pi}} (1-\argumentLegendrePolynomials^2)^{-1/4} \cos\bigl ( \bigl ( \indexTime+\tfrac{1}{2}\bigr )\vartheta - \tfrac{\pi}{4}\bigr ) \indexTime^{-1/2} + \bigO{\indexTime^{-3/2}},
    \end{equation}
    where $\vartheta$ is such that $\cos(\vartheta) = \argumentLegendrePolynomials$.
    This gives 
    \begin{equation}\label{eq:expansionBCloseTwo}
        \coefficientsExpansionSquareRoot_{\indexTime} = \sqrt{\frac{2}{\pi}} (1-\argumentLegendrePolynomials^2)^{1/4} \sin\bigl ( \bigl ( \indexTime+\tfrac{1}{2}\bigr )\vartheta - \tfrac{\pi}{4}\bigr ) \indexTime^{-3/2} + \bigO{\indexTime^{-5/2}}.
    \end{equation}
    Looking at \eqref{eq:definitionSAsProduct}, we see that $(\relaxationParameter-1)^{\indexTime + 1}$ tends to zero geometrically (except for $\relaxationParameter = 2$, the case investigated by Besse \strong{et al.}), and $\coefficientsExpansionSquareRoot_{\indexTime} \sim \indexTime^{-3/2}$ (\confer{} \eqref{eq:expansionBCloseTwo}) tends to zero---although slowly---as well. 
    It is therefore safe to employ \eqref{eq:definitionSAsProduct} with floating-point arithmetic, namely compute and store $\coefficientsExpansionSquareRoot_{\indexTime}$, and eventually multiply it by $(\relaxationParameter-1)^{\indexTime+1}$.

    The expressions for the weights in the expansion of $\eigenvectorLetter_{\stableMarker}(\timeShiftOperator)\stableRoot(\timeShiftOperator)$ are the same as for the case $1<\relaxationParameter<\tfrac{2}{1+\courantNumber}$.
\end{itemize}

\begin{summarybox}{Summary of the trends of the coefficients $\coefficientsExpansionSquareRoot_{\indexTime}$ and $\coefficientsLaurentStableRoot_{\indexTime}$ in \eqref{eq:rightBCU}}
    \begin{small}
        \begin{center}
            \begin{tabular}{|c!{\vrule width 1pt}c|c!{\vrule width 1pt}}
                \cline{2-3}
                \multicolumn{1}{l|}{} & $\coefficientsExpansionSquareRoot_{\indexTime}$ & $\coefficientsLaurentStableRoot_{\indexTime}$ \\
                \cline{1-2}\noalign{\global\arrayrulewidth=1pt}
                \cline{2-3}
                \noalign{\global\arrayrulewidth=.4pt}
                \multicolumn{1}{|c!{\vrule width 1pt}}{$\relaxationParameter=1$} & Does not apply & $\sim \rho^{\indexTime}\indexTime^{-3/2}$ with $|\rho|<1$\\
                \multicolumn{1}{|c!{\vrule width 1pt}}{$1<\relaxationParameter< \frac{2}{1+\courantNumber}$} & $\sim \rho^{\indexTime}\indexTime^{-3/2}$ with $|\rho|>1$ & $\sim \rho^{\indexTime}\indexTime^{-3/2}$ with $|\rho|<1$\\
                \multicolumn{1}{|c!{\vrule width 1pt}}{$\relaxationParameter = \frac{2}{1+\courantNumber}$} & Does not apply & $\sim \rho^{\indexTime}$ with $|\rho|<1$\\
                \multicolumn{1}{|c!{\vrule width 1pt}}{$\frac{2}{1+\courantNumber}<\relaxationParameter<2$} & $\sim \rho^{\indexTime}\indexTime^{-3/2}$ with $|\rho|<1$ & $\sim \rho^{\indexTime}\indexTime^{-3/2}$ with $|\rho|<1$\\
                \multicolumn{1}{|c!{\vrule width 1pt}}{$\relaxationParameter = 2$} & $\sim \indexTime^{-3/2}$ & $\sim \indexTime^{-3/2}$ \\
                                \cline{1-2}\noalign{\global\arrayrulewidth=1pt}
                \cline{2-3}\noalign{\global\arrayrulewidth=.4pt}
                \end{tabular}
        \end{center}        
    \end{small}
\end{summarybox}

\item \strong{Left boundary}.
Consider \eqref{eq:generalSolution} for $\indexSpace\leq 0$.
Since we want the solution decay as $\indexSpace\to-\infty$, \eqref{eq:generalSolution} becomes 
\begin{equation}\label{eq:solD1Q2Left}
    \begin{pmatrix}
        \zTransformed{\conservedMoment}_{\indexSpace}(\timeShiftOperator)\\
        \zTransformed{\nonConservedMoment}_{\indexSpace}(\timeShiftOperator)
    \end{pmatrix} = 
    \coefficientUnstable(\timeShiftOperator)
    \begin{pmatrix}
        1\\
        \eigenvectorLetter_{\unstableMarker}(\timeShiftOperator)
    \end{pmatrix}
    \unstableRoot(\timeShiftOperator)^{\indexSpace} = 
    \coefficientUnstable(\timeShiftOperator)
    \begin{pmatrix}
        1\\
        \eigenvectorLetter_{\unstableMarker}(\timeShiftOperator)
    \end{pmatrix}
    \productRoots^{\indexSpace}\stableRoot(\timeShiftOperator)^{-\indexSpace}, \qquad \indexSpace \leq 0.
\end{equation}
Analogously to the right boundary, we obtain $ \coefficientUnstable(\timeShiftOperator) = \productRoots^{-1}\stableRoot(\timeShiftOperator) \zTransformed{\conservedMoment}_{1}(\timeShiftOperator) = \zTransformed{\conservedMoment}_{0}(\timeShiftOperator)$, hence inverting the $\timeShiftOperator$-transformation:
\begin{equation}\label{eq:bcConservedLeft}
    \conservedMoment_0^{\indexTime} = \frac{1}{\productRoots} \sum_{k = 0}^{\lfloor (\indexTime - 1)/ 2 \rfloor} \coefficientsLaurentStableRoot_{k} \conservedMoment_{1}^{\indexTime-2k - 1}.
\end{equation}
Remark that when $\relaxationParameter=\tfrac{2}{1+\courantNumber}$, we have $\productRoots^{-1} = 0$, hence \eqref{eq:bcConservedLeft} becomes $\conservedMoment_0^{\indexTime} = 0$.
For the non-conserved moment, if we consider that 
\begin{equation*}
    \eigenvectorLetter_{\unstableMarker}(\timeShiftOperator)\stableRoot(\timeShiftOperator)=\frac{1}{\productRoots}\sum_{\indexTime = 0}^{+\infty}\coefficientMultiplierNonConservedMomentLeft_{\indexTime}\timeShiftOperator^{-2\indexTime-1},
    \qquad \text{we obtain}
    \qquad 
    \nonConservedMoment_{0}^{\indexTime} = \frac{1}{\productRoots} \sum_{k = 0}^{\lfloor (\indexTime - 1)/ 2 \rfloor} \coefficientMultiplierNonConservedMomentLeft_{k} \conservedMoment_{1}^{\indexTime-2k - 1}.
\end{equation*}
Let us illustrate how to compute $\coefficientMultiplierNonConservedMomentLeft_{k}$ in the case $\relaxationParameter\in (1, 2]\smallsetminus\{\tfrac{2}{1+\courantNumber}\}$.
Like for the right boundary, we have to compute the Laurent expansion of $\eigenvectorLetter_{\unstableMarker}(\timeShiftOperator)\stableRoot(\timeShiftOperator) = \mathscr{N}(\timeShiftOperator)/\mathscr{D}(\timeShiftOperator)$, where
\begin{multline*}
    \mathscr{N}(\timeShiftOperator) = 
     {\timeShiftOperator}^{3}  + {\left({\left({\courantNumber}^{2} - {\courantNumber} - 1\right)} {\relaxationParameter}^{2} + {\left({\courantNumber} + 2\right)} {\relaxationParameter} - 2\right)} {\timeShiftOperator} + \bigl ( {-\left(2  {\courantNumber} - 1\right)} {\relaxationParameter}^{2} + {\courantNumber} {\relaxationParameter}^{3} + {\left({\courantNumber} - 2\right)} {\relaxationParameter}  + 1 \bigr ) \timeShiftOperator^{-1} \\
    - {\left(-{\courantNumber} {\relaxationParameter}^{2} + {\left({\courantNumber} - 1\right)} {\relaxationParameter} + {\timeShiftOperator}^{2} + 1\right)} \sqrt{\timeShiftOperator^2 + ((\courantNumber^2-1)\relaxationParameter^2 + 2\relaxationParameter-2) + (\relaxationParameter-1)^2\timeShiftOperator^{-2}}, \\
    \mathscr{D}(\timeShiftOperator) = ((\courantNumber+1)\relaxationParameter - 2) ((\relaxationParameter-1)^2 - \timeShiftOperator^2).
\end{multline*}
For the numerator: $\mathscr{N}(\timeShiftOperator) = \sum_{\indexTime = 0}^{+\infty} \coefficientNumeratorLeft_{\indexTime} \timeShiftOperator^{-2\indexTime + 1}$ with
\begin{align*}
    \coefficientNumeratorLeft_0&= \tfrac{1}{2}(\courantNumber^2-1)\relaxationParameter^2  + 2(\relaxationParameter-1), \\ 
    \coefficientNumeratorLeft_1&=  {\left(-2  {\courantNumber} + 1\right)} {\relaxationParameter}^{2} + {\courantNumber} {\relaxationParameter}^{3} - {\left(-{\courantNumber} + 2\right)} {\relaxationParameter}  + 1 - (\relaxationParameter-1)((\relaxationParameter-1)\coefficientsExpansionSquareRoot_1 + {\left({-\courantNumber} {\relaxationParameter}^{2} - {\left(-{\courantNumber} + 1\right)} {\relaxationParameter} +  1\right)}\coefficientsExpansionSquareRoot_0), \\
    \coefficientNumeratorLeft_{\indexTime}&=  -(\relaxationParameter-1)^{\indexTime}  ( (\relaxationParameter-1)\coefficientsExpansionSquareRoot_{\indexTime}+ {\left(-{\courantNumber} {\relaxationParameter}^{2} - {\left(-{\courantNumber} + 1\right)} {\relaxationParameter} +  1\right)} \coefficientsExpansionSquareRoot_{\indexTime-1}  ), \qquad \indexTime\geq 2.
\end{align*}
The recurrence for $\coefficientNumeratorLeft_{\indexTime}$ is the one for $\coefficientNumeratorRight_{\indexTime}$ where the Courant number $\courantNumber$ is replaced by $-\courantNumber$.
We also have 
\begin{equation}\label{eq:recurrenceLeft}
    \coefficientNumeratorLeft_{\indexTime} = \overline{\mathscr{V}}_{-1}(\indexTime)\coefficientNumeratorLeft_{\indexTime-1}+ \overline{\mathscr{V}}_{-2}(\indexTime)\coefficientNumeratorLeft_{\indexTime-2}, \qquad \indexTime\geq 3, 
\end{equation}
where the weights $\overline{\mathscr{V}}_{-1}(\indexTime)$ and $\overline{\mathscr{V}}_{-1}(\indexTime)$ are given in \Cref{app:expressionWeights}. 
We thus obtain 
\begin{equation*}
    \eigenvectorLetter_{\unstableMarker}(\timeShiftOperator)\stableRoot(\timeShiftOperator) =\sum_{\indexTime = 0}^{+\infty}\coefficientMultiplierNonConservedMomentLeft_{\indexTime}\timeShiftOperator^{-2\indexTime - 1}, \qquad \text{where}\quad \coefficientMultiplierNonConservedMomentLeft_{\indexTime}= - \frac{1}{(\courantNumber+1)\relaxationParameter - 2}\sum_{k=0}^{\indexTime}(\relaxationParameter-1)^{2(\indexTime-k)}\coefficientNumeratorLeft_{k},
\end{equation*}
with furthermore
\begin{equation}\label{eq:recurrenceEigvleft}
    \coefficientMultiplierNonConservedMomentLeft_{\indexTime + 1} = (\relaxationParameter-1)^2\coefficientMultiplierNonConservedMomentLeft_{\indexTime} - \frac{1}{(\courantNumber+1)\relaxationParameter - 2}\coefficientNumeratorLeft_{\indexTime + 1}.
\end{equation}

\end{itemize}

\paragraph{Scalar approach}\label{sec:onceAgainChoiceEigenvalue}

The second approach we present, called ``\strong{scalar}'', changes the expression (normalization) of the eigenvector of the system according to $\conservedMoment$ and $\nonConservedMoment$.
Therefore, for the mere sake of enforcing transparent boundary condition, these two unknowns are treated as totally independent ones.
This approach is easier for more involved schemes (e.g., the fourth-order \lbmScheme{1}{3} scheme) and has---on 1D problems---a similar storage cost compared to the previous one.

\begin{itemize}

\item \strong{Right boundary}.
The expression for $\conservedMoment$ is again given by \eqref{eq:rightBCU}.
For $\nonConservedMoment$, we consider
\begin{equation}\label{eq:tmp3}
    \begin{pmatrix}
        \zTransformed{\conservedMoment}_{\indexSpace}(\timeShiftOperator)\\
        \zTransformed{\nonConservedMoment}_{\indexSpace}(\timeShiftOperator)
    \end{pmatrix} = 
    \tilde{\coefficientStable}(\timeShiftOperator)
    \begin{pmatrix}
        \tilde{\eigenvectorLetter}_{\stableMarker}(\timeShiftOperator)\\
        1
    \end{pmatrix}
    \stableRoot(\timeShiftOperator)^{\indexSpace}, \qquad \indexSpace\geq\numberGridPoints + 1,
\end{equation}
where $\transpose{(\tilde{\eigenvectorLetter}_{\stableMarker}(\timeShiftOperator), 1)}\in\kernel(\timeShiftOperator\identityMatrix{2} - \schemeMatrixBulkFourier(\stableRoot(\timeShiftOperator)))$, and boils down to changing the normalization of the eigenvector while treating $\nonConservedMoment$.
This analogously yields \eqref{eq:rightBCU} where $\conservedMoment$ is replaced by $\nonConservedMoment$ on both sides of the equation:
\begin{equation}\label{eq:rightBCVScalar}
    \nonConservedMoment_{\numberGridPoints + 1}^{\indexTime} = \sum_{k = 0}^{\lfloor (\indexTime - 1)/ 2 \rfloor} \coefficientsLaurentStableRoot_{k} \nonConservedMoment_{\numberGridPoints}^{\indexTime-2k - 1}.
\end{equation}
With this approach, we need to store the ``history'' of $\nonConservedMoment_{\numberGridPoints}$ instead of the weights $\coefficientMultiplierNonConservedMomentRight_{\indexTime}$ featured by \eqref{eq:bcRightNonConserved}.

We come back to the discussion that we hinted to at the beginning of \Cref{sec:systemicRight} about the arbitrariness of using $\conservedMoment_{\numberGridPoints}$ as a boundary datum to the left.
If we use $\nonConservedMoment_{\numberGridPoints}$ instead, \eqref{eq:generalSolutionRightWithoutFreeCoefficient} reads 
\begin{equation*}
    \begin{pmatrix}
        \zTransformed{\conservedMoment}_{\indexSpace}(\timeShiftOperator)\\
        \zTransformed{\nonConservedMoment}_{\indexSpace}(\timeShiftOperator)
    \end{pmatrix} = 
    \begin{pmatrix}
        1/\eigenvectorLetter_{\stableMarker}(\timeShiftOperator)\\
        1
    \end{pmatrix}
    \stableRoot(\timeShiftOperator)^{\indexSpace-\numberGridPoints} \zTransformed{\nonConservedMoment}_{\numberGridPoints}(\timeShiftOperator), \qquad \indexSpace\geq\numberGridPoints + 1,
\end{equation*}
whereas the expression corresponding to the normalization of the eigenvector in \eqref{eq:tmp3} is 
\begin{equation*}
    \begin{pmatrix}
        \zTransformed{\conservedMoment}_{\indexSpace}(\timeShiftOperator)\\
        \zTransformed{\nonConservedMoment}_{\indexSpace}(\timeShiftOperator)
    \end{pmatrix} = 
    \begin{pmatrix}
        \tilde{\eigenvectorLetter}_{\stableMarker}(\timeShiftOperator)\\
        1
    \end{pmatrix}
    \stableRoot(\timeShiftOperator)^{\indexSpace-\numberGridPoints} \zTransformed{\nonConservedMoment}_{\numberGridPoints}(\timeShiftOperator), \qquad \indexSpace\geq\numberGridPoints + 1.
\end{equation*}
As a consequence, ${\eigenvectorLetter}_{\stableMarker}(\timeShiftOperator)\tilde{\eigenvectorLetter}_{\stableMarker}(\timeShiftOperator)\equiv 1$, and we could have readily devised the scalar approach while treating the systemic one by just switching the moment used as boundary datum from $\conservedMoment$ to $\nonConservedMoment$ when finding the boundary condition for $\nonConservedMoment$.
\begin{remark}[A second comparison with \cite{heubes2014exact}]
    When initial data are compactly supported, \cite[Equation (15)]{heubes2014exact} reads for $\indexTime\geq 1$
    \begin{equation*}
        \distributionFunctionLetter_{-, \numberGridPoints}^{\indexTime + 1} = \sum_{k = 1}^{\lfloor (\indexTime + 1)/2\rfloor} (A_+(k) \distributionFunctionLetter_{+, \numberGridPoints}^{\indexTime-2k+1} + A_-(k) \distributionFunctionLetter_{-, \numberGridPoints}^{\indexTime-2k+1})
    \end{equation*}
    with our way of indexing discrete velocities, where $A_{\pm}(k)$ are provided in the paper by Heubes \emph{et al}.
    Rewriting \eqref{eq:rightBCU} and \eqref{eq:rightBCVScalar} on the distribution functions, and advancing of one time-step applying relaxation and transport, where the latter step moves $\distributionFunctionLetter_{-, \numberGridPoints+1}^{\indexTime, \collided}$ to $\distributionFunctionLetter_{-, \numberGridPoints}^{\indexTime+1}$, we have
    \begin{equation*}
        \distributionFunctionLetter_{-, \numberGridPoints}^{\indexTime + 1} = \sum_{k = 1}^{\lfloor (\indexTime + 1)/2\rfloor} (\tfrac{1}{2}\relaxationParameter (1-\courantNumber)\coefficientsLaurentStableRoot_{k-1} \distributionFunctionLetter_{+, \numberGridPoints}^{\indexTime-2k+1} +  (1-\tfrac{1}{2}\relaxationParameter(1+\courantNumber))\coefficientsLaurentStableRoot_{k-1} \distributionFunctionLetter_{-, \numberGridPoints}^{\indexTime-2k+1}).
    \end{equation*} 
    One can show that $\tfrac{1}{2}\relaxationParameter (1-\courantNumber)\coefficientsLaurentStableRoot_{k-1} = A_+(k)$ and $(1-\tfrac{1}{2}\relaxationParameter(1+\courantNumber))\coefficientsLaurentStableRoot_{k-1}=A_-(k)$, making our and their boundary conditions the same for compactly supported data.
\end{remark}

\item \strong{Left boundary}.
For the conserved moment $\conservedMoment$, we still use \eqref{eq:bcConservedLeft}.
For the non-conserved moment $\nonConservedMoment$, the equation becomes 
\begin{equation}
    \nonConservedMoment_0^{\indexTime} = \frac{1}{\productRoots} \sum_{k = 0}^{\lfloor (\indexTime - 1)/ 2 \rfloor} \coefficientsLaurentStableRoot_{k} \nonConservedMoment_{1}^{\indexTime-2k - 1}.
\end{equation}

\end{itemize}

We can now comment on the slight difference of storage cost between the systemic and scalar approach.
Let $\numberTimeIterations\gg 1$ be the number of time iteration to reach the final time of the simulation.
For the systemic approach, we have to store $\sim \numberTimeIterations$ floating-point numbers for $\conservedMoment_{\numberGridPoints}^{\indexTime}$, $\sim \numberTimeIterations/2$ for $\coefficientsLaurentStableRoot_{\indexTime}$, and $\sim \numberTimeIterations/2$ for $\coefficientMultiplierNonConservedMomentRight_{\indexTime}$---for the right boundary; and $\sim \numberTimeIterations$ floating-point numbers for $\conservedMoment_{1}^{\indexTime}$ and $\sim \numberTimeIterations/2$ for $\coefficientMultiplierNonConservedMomentLeft_{\indexTime}$---for the right boundary.
Overall, the cost is $\sim \tfrac{7}{2}\numberTimeIterations$.
For the scalar approach, we have to store $\sim \numberTimeIterations$ floating-point numbers for $\conservedMoment_{\numberGridPoints}^{\indexTime}$ and $\nonConservedMoment_{\numberGridPoints}^{\indexTime}$, and $\sim \numberTimeIterations/2$ for $\coefficientsLaurentStableRoot_{\indexTime}$---for the right boundary; $\sim \numberTimeIterations$ floating-point numbers for $\conservedMoment_{1}^{\indexTime}$ and $\nonConservedMoment_{1}^{\indexTime}$---for the right boundary.
Overall, the cost is $\sim \tfrac{9}{2}\numberTimeIterations$.

\begin{itemize}

\item \strong{Recurrent computation of $\coefficientsLaurentStableRoot_{\indexTime}$}.
The coefficients $\coefficientsLaurentStableRoot_{\indexTime}$, the only ones needed in the scalar approach, can also be computed inductively as presented by Besse \strong{at al}.\footnote{This could also be done with the systemic approach, yet be quite cumbersome.
Indeed, the coefficients $\coefficientsLaurentStableRoot_{\indexTime}$ would be used to deduce and expansion for $(\timeShiftOperator^2 + ((\courantNumber^2-1)\relaxationParameter^2 + 2\relaxationParameter-2) + (\relaxationParameter-1)^2\timeShiftOperator^{-2})^{1/2}$, which enters into \eqref{eq:fraction}.} 
Knowing that $\lim_{\timeShiftOperator\to \infty}\stableRoot(\timeShiftOperator) = 0$, we have $\stableRoot(\timeShiftOperator) = \sum_{\indexTime = 1}^{+\infty}\coefficientsLaurentStableRootAllParities_{\indexTime}\timeShiftOperator^{-\indexTime}$.
Into the characteristic equation \eqref{eq:charEquation} and developing the Cauchy product to compute the square term, we obtain 
\begin{equation}\label{eq:tmp6}
    2\coefficientsLaurentStableRootAllParities_1 + ((1-\courantNumber)\relaxationParameter-2) + 2\coefficientsLaurentStableRootAllParities_2 \timeShiftOperator^{-1} + \sum_{\indexTime = 2}^{+\infty}\Bigl (
    ((1+\courantNumber)\relaxationParameter-2)\sum_{k = 1}^{\indexTime - 1} \coefficientsLaurentStableRootAllParities_{k}\coefficientsLaurentStableRootAllParities_{\indexTime - k} + 2 \coefficientsLaurentStableRootAllParities_{\indexTime + 1}+ 2(1-\relaxationParameter)\coefficientsLaurentStableRootAllParities_{\indexTime - 1}\Bigr )  \timeShiftOperator^{-\indexTime} = 0.
\end{equation}
Equating power-by-power, we have that $\coefficientsLaurentStableRootAllParities_1 = 1 + \tfrac{1}{2}(\courantNumber-1)\relaxationParameter$ and $\coefficientsLaurentStableRootAllParities_2 = 0$, along with 
\begin{equation*}
    \coefficientsLaurentStableRootAllParities_{\indexTime} = (\relaxationParameter-1)\coefficientsLaurentStableRootAllParities_{\indexTime - 2} + \bigl ( 1 - \tfrac{1}{2}(1+\courantNumber)\relaxationParameter\bigr )\sum_{k = 1}^{\indexTime - 2} \coefficientsLaurentStableRootAllParities_{k}\coefficientsLaurentStableRootAllParities_{\indexTime - 1 - k}, \qquad \indexTime \geq 3.
\end{equation*}
One can easily show  that $\coefficientsLaurentStableRootAllParities_{2\indexTime} = 0$ for $\indexTime\geq 1$.
Therefore
\begin{multline}\label{eq:recurrentDefinitionSD1Q2Scalar}
    \stableRoot(\timeShiftOperator) = \sum_{\indexTime = 0}^{+\infty}\coefficientsLaurentStableRoot_{\indexTime}\timeShiftOperator^{-2\indexTime-1}, \qquad \text{with}\qquad \coefficientsLaurentStableRoot_0 = 1 + \tfrac{1}{2}(\courantNumber-1)\relaxationParameter \\
    \text{and}\quad 
    \coefficientsLaurentStableRoot_{\indexTime} =  (\relaxationParameter-1)\coefficientsLaurentStableRoot_{\indexTime - 1} + \bigl ( 1 - \tfrac{1}{2}(1+\courantNumber)\relaxationParameter\bigr )\sum_{k = 0}^{\indexTime - 1} \coefficientsLaurentStableRoot_{k}\coefficientsLaurentStableRoot_{\indexTime - 1 - k}, \quad \indexTime\geq 1.
\end{multline}
The previous recurrence simplifies when $\relaxationParameter = \tfrac{2}{1+\courantNumber}$ (the convolution term in the recurrence disappears), where we obtain the expression from \eqref{eq:rootNoSecondOrder}.
The significant advantage of this approach is that it works regardless of the values of the parameter $\relaxationParameter$ and possible links that we could not be able to establish with Legendre polynomials or well-known generating functions.
\end{itemize}

\begin{summarybox}{Summary on the construction of transparent boundary conditions in 1D}
    \begin{small}
        For a given boundary:
        \begin{enumerate}
            \item Consider the $\timeShiftOperator$-transformed scheme and the spatial root(s) of the characteristic equation (e.g., \eqref{eq:charEquation}) associated with stable geometric solutions for the half-space problem extending outward from the boundary (e.g., $\stableRoot(\timeShiftOperator)$ for the right boundary, $\unstableRoot(\timeShiftOperator)$ for the left boundary).
            \item Then, two approaches are possible.
            \begin{itemize}
                \item \strong{Systemic approach}: construct the solution for the half-space problem through an eigenvector with fixed normalization regardless of the component of interest, e.g. \eqref{eq:solD1Q2Left}.
                \item \strong{Scalar approach}: construct the solution for the half-space problem allowing convenient changes in the normalization of the eigenvector according to the component of interest, e.g. first use \eqref{eq:solD1Q2Left} and then \eqref{eq:tmp3}.
            \end{itemize}
            \item Use the previous solution to construct ghost values in the $\timeShiftOperator$-transformed space, e.g. \eqref{eq:tmp2}.
            \item Invert the $\timeShiftOperator$-transform to deduce actual boundary conditions on ghost cells.
            This can be done by
            \begin{itemize}
                \item \strong{Explicitly} solving the characteristic equation in $\fourierShift(\timeShiftOperator)$ (e.g., \eqref{eq:paramStableRoot}) and use well-known results when available (e.g., \eqref{eq:tmp4} and \eqref{eq:tmp5}).
                \item \strong{Implicitly} solving the characteristic equation by inserting the Laurent series of $\fourierShift(\timeShiftOperator)$ and formally equate each power, e.g. \eqref{eq:tmp6}.
            \end{itemize}
        \end{enumerate}
    \end{small}
\end{summarybox}

\paragraph{Back to asymptotic expansions using analytic combinatorics}\label{sec:backAsympt}

The question is: why do we insist on asymptotics?
The answer is that they help us decide when truncating the boundary conditions to obtain cheaper approximated ones.
In the previous discussion, asymptotics were obtained by known results on Legendre polynomials.
We now wish to provide a different proof of some of them, which can be deployed in cases where a link with Legendre polynomials in not established.

Recall from \Cref{sec:notations} that the coefficients $\coefficientsLaurentStableRootAllParities_{\indexTime}$ in the Laurent expansion of $\stableRoot(\timeShiftOperator)$ at $\timeShiftOperator=\infty$ form a sequence whose generating function is $\stableRoot(\timeShiftOperator^{-1})$.
Asymptotics for sequences with given generating functions are well-known in \strong{analytic combinatorics}, see for instance \cite{flajolet2009analytic} and \cite[Chapter 5]{wilf2005generatingfunctionology}.
The determination of such asymptotics boils down to a local analysis of the function $\stableRoot(\timeShiftOperator^{-1})$ at its singularities.
Each singularity $\singularity\in\complex$ gives a contribution to the asymptotics of $\coefficientsLaurentStableRootAllParities_{\indexTime}$ of the form 
\begin{equation*}
    \singularity^{-\indexTime} \tau(\indexTime)
\end{equation*}
where $\tau(\indexTime)$ is a sub-exponential tame factor depending on the nature of the singularity (pole, branch point, \emph{etc.}).
When all singularities lay on $\unitCircle$, one can also employ \cite[Theorem 8.4]{szeg1939orthogonal}.

\begin{equation}\label{eq:stableRootGenerating}
    \stableRoot(\timeShiftOperator^{-1}) = 
    \begin{cases}
        \frac{2\courantNumber\timeShiftOperator}{(\courantNumber+1)+(\courantNumber-1)\timeShiftOperator^2}, \qquad&\text{if}\quad  \relaxationParameter= \tfrac{2}{1+\courantNumber}, \\
        \frac{(\relaxationParameter-1) \timeShiftOperator^{2} - 1 + \sqrt{1 + ((\courantNumber^2-1)\relaxationParameter^2 + 2\relaxationParameter-2)\timeShiftOperator^{2} + (\relaxationParameter-1)^2\timeShiftOperator^{4}}}{((\courantNumber + 1)\relaxationParameter-2)\timeShiftOperator}, \qquad&\text{if}\quad  \relaxationParameter\neq \tfrac{2}{1+\courantNumber}.
    \end{cases}
\end{equation}

\begin{itemize}
    \item When  $\relaxationParameter= \tfrac{2}{1+\courantNumber}$, we observe two singularities $\pm({\frac{1+\courantNumber}{1-\courantNumber}})^{1/2}\in\reals$, first-order poles.
    This indicates \cite[Figure VI.5]{flajolet2009analytic} that tame factors $\tau(\indexTime) = \bigO{1}$ for large $\indexTime$.
    \item When $\relaxationParameter\neq \tfrac{2}{1+\courantNumber}$, we have four (if $\relaxationParameter \neq 1$) or two (if $\relaxationParameter = 1$) singularities under the form of branch points for the square root.  This indicates that the tame factors behave, at leading order, as $\indexTime^{-3/2}$.
    If we show that the branch points belong to $\closedNeighborhoodInfinity$, then this proves that $\coefficientsLaurentStableRootAllParities_{\indexTime}$ is bounded, with periodic structures if they belong to $\unitCircle$ and geometrically damped behavior if they are in $\neighborhoodInfinity$.
\end{itemize}
These facts are coherent with the asymptotics in \Cref{sec:systemicRight} and made precise by the following result, whose proof is in \Cref{app:rootsRadicand} and based on the results of \cite[Chapter 4]{strikwerda2004finite}.
\begin{lemma}[Branch points in $\stableRoot$]\label{lemma:rootsRadicand}
    Assume the stability conditions \eqref{eq:stabConditionD1Q2} be met and $\courantNumber\neq 0$.
    Then, the roots of $1 + ((\courantNumber^2-1)\relaxationParameter^2 + 2\relaxationParameter-2)\timeShiftOperator^{2} + (\relaxationParameter-1)^2\timeShiftOperator^{4}= 0$, whose left-hand side appears in \eqref{eq:stableRootGenerating}, are in $\neighborhoodInfinity$ for $0<\relaxationParameter<2$ and on $\unitCircle$ for $\relaxationParameter = 2$.
\end{lemma}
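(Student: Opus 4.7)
The plan is to reduce to a quadratic by setting $w = \timeShiftOperator^2$, under which $|\timeShiftOperator| > 1 \iff |w| > 1$ and $|\timeShiftOperator| = 1 \iff |w| = 1$. The polynomial in question becomes $P(w) \definitionEquality (\relaxationParameter - 1)^2 w^2 + ((\courantNumber^2 - 1)\relaxationParameter^2 + 2\relaxationParameter - 2) w + 1$. I would first dispose of the degenerate case $\relaxationParameter = 1$ by direct computation: $P$ collapses to the affine polynomial $(\courantNumber^2 - 1) w + 1$, whose unique root $w = 1/(1-\courantNumber^2)$ lies in $\neighborhoodInfinity$ under $0 < |\courantNumber| < 1$, giving the claim in this sub-case.

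For $\relaxationParameter \in (0, 2) \smallsetminus \{1\}$, I would form the reciprocal polynomial $P^{\ast}(w) \definitionEquality w^2 P(1/w) = w^2 + ((\courantNumber^2 - 1)\relaxationParameter^2 + 2\relaxationParameter - 2) w + (\relaxationParameter - 1)^2$, whose roots are the reciprocals of those of $P$, so that showing the two roots of $P^{\ast}$ lie in $\unitDisk$ is equivalent to the desired $|w| > 1$ for the roots of $P$. To this end, I would apply the standard Schur--Cohn root-location test for a real monic quadratic $w^2 + b w + c$---namely $|c| < 1$, $1 + b + c > 0$, and $1 - b + c > 0$---as presented in \cite[Chapter 4]{strikwerda2004finite}. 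The three checks reduce to: $c = (\relaxationParameter-1)^2 \in [0, 1)$, which is immediate from $\relaxationParameter \in (0, 2) \smallsetminus \{1\}$; $1 + b + c = \courantNumber^2 \relaxationParameter^2 > 0$, which is a one-line algebraic simplification; and $1 - b + c = (2 - \courantNumber^2) \relaxationParameter^2 - 4 \relaxationParameter + 4$, whose strict positivity follows because its own discriminant as a quadratic in $\relaxationParameter$ equals $16 - 16(2-\courantNumber^2) = -16(1-\courantNumber^2) < 0$ under $|\courantNumber|<1$, forcing constant sign, which is positive from the value $4$ attained at $\relaxationParameter = 0$.

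The boundary case $\relaxationParameter = 2$ is handled by direct computation: $P(w) = w^2 + (4\courantNumber^2 - 2) w + 1$ has discriminant $16 \courantNumber^2 (\courantNumber^2 - 1) < 0$ under $0 < |\courantNumber| < 1$, so its two roots are complex conjugates with product equal to $1$; hence $|w|^2 = w \overline{w} = 1$ and both roots sit on $\unitCircle$, which through $|\timeShiftOperator|^2 = |w|$ yields the announced location. The only non-routine point in this plan is the verification of $1 - b + c > 0$ in the Schur--Cohn test, which I would present via the discriminant argument above rather than by trying to factor or expand; the rest of the proof is bookkeeping, and the Schur--Cohn framework is precisely the tool invoked elsewhere in the paper (\emph{cf.} \cite[Chapter 4]{strikwerda2004finite}).
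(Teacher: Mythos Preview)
Your proof is correct and takes a genuinely more economical route than the paper's. The paper works directly with the quartic $\varphi_4(\timeShiftOperator)$ and runs the full Schur--Cohn reduction machinery from \cite[Chapter~4]{strikwerda2004finite}: it computes $\varphi_4^{*}$, $\varphi_3$, checks when $\varphi_3\equiv 0$ (this isolates $\relaxationParameter=2$, handled via \cite[Theorem~4.3.8]{strikwerda2004finite} and the roots of $\varphi_4'$), and for $\relaxationParameter<2$ passes to $\psi_4=\varphi_4^{*}$ and iterates the reduction through $\psi_3$, $\psi_2$, $\psi_1$. You instead exploit that the quartic is even in $\timeShiftOperator$ and substitute $w=\timeShiftOperator^2$ up front, collapsing the problem to a real quadratic; the Schur--Cohn test then becomes the three one-line checks $|c|<1$, $1\pm b+c>0$, which you dispatch algebraically. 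Your approach buys a shorter and more transparent argument; the paper's buys a template that would still work if the polynomial were not even.

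One small gap: the stability condition \eqref{eq:stabConditionD1Q2} for $\relaxationParameter\in(0,2)$ allows $|\courantNumber|=1$, whereas your discriminant argument for $1-b+c>0$ explicitly assumes $|\courantNumber|<1$. At $|\courantNumber|=1$ the discriminant vanishes and $1-b+c=(\relaxationParameter-2)^2$, still strictly positive for $\relaxationParameter<2$; and in your $\relaxationParameter=1$ sub-case the polynomial degenerates to the constant $1$ (no roots, claim vacuous). A one-sentence remark covers both.
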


We now illustrate computations based on a local analysis of the singularities of $\stableRoot(\timeShiftOperator^{-1})$ in the case $\relaxationParameter = 1$ and $\relaxationParameter = 2$.

\begin{itemize}

\item \strong{Relaxation scheme: $\relaxationParameter = 1$}.
We have that $\singularity = \pm (1-\courantNumber^2)^{-1/2}\in\neighborhoodInfinity$ are the singular points, and
\begin{equation*}
    \stableRoot(\timeShiftOperator^{-1}) =
    \frac{\sqrt{(1 - \frac{\timeShiftOperator}{(1-\courantNumber^2)^{-1/2}})(1 - \frac{\timeShiftOperator}{-(1-\courantNumber^2)^{-1/2}}) }-1}{(\courantNumber-1)\timeShiftOperator}.
\end{equation*}
We shall see that to analyze the behavior of $\coefficientsLaurentStableRootAllParities_{\indexTime}$ for large $\indexTime$, all terms except the square root do not contribute.
In the vicinity of $\singularity = \pm (1-\courantNumber^2)^{-1/2}\in\neighborhoodInfinity$, we have the Puiseux expansion
\begin{equation*}
    \stableRoot(\timeShiftOperator^{-1}) = \pm \frac{(1-\courantNumber^2)^{1/2}}{\courantNumber-1}\bigl ( -1 + \sqrt{2} (1-\timeShiftOperator/\singularity)^{1/2} - (1-\timeShiftOperator/\singularity) + \bigO{(1-\timeShiftOperator/\singularity)^{3/2}}\bigr ).
\end{equation*}
The terms with integer powers of $1-\timeShiftOperator/\singularity$ indeed come from a regular part and hence give no contribution to the asymptotics, \confer{} \cite[Figure VI.5]{flajolet2009analytic}.
By this and Theorem VI.5 in the same reference:
\begin{align*}
    \coefficientsLaurentStableRootAllParities_{\indexTime} = &(1-\courantNumber^2)^{\indexTime/2}\Bigl [\frac{(1-\courantNumber^2)^{1/2}}{\courantNumber-1}\Bigl ( 0 + \sqrt{2} \Bigl ( -\frac{1}{2\sqrt{\pi}}\indexTime^{-3/2} + \bigO{\indexTime^{-5/2}}\Bigr ) + 0\Bigr ) \Bigr ]\\
    +(-1)^{\indexTime}&(1-\courantNumber^2)^{\indexTime/2}\underbrace{\Bigl [-\frac{(1-\courantNumber^2)^{1/2}}{\courantNumber-1}\Bigl ( 0 + \sqrt{2} \Bigl ( -\frac{1}{2\sqrt{\pi}}\indexTime^{-3/2} + \bigO{\indexTime^{-5/2}}\Bigr ) + 0\Bigr )\Bigr ]}_{\text{sub-exponential tame}} + \bigO{(1-\courantNumber^2)^{\indexTime/2}\indexTime^{-5/2}}.
\end{align*}
This yields 
\begin{equation*}
    \coefficientsLaurentStableRootAllParities_{\indexTime} = \frac{\sqrt{2}(1-\courantNumber^2)^{1/2}}{(1-\courantNumber)\sqrt{\pi}} (1-\courantNumber^2)^{\indexTime/2}\indexTime^{-3/2} \frac{1 - (-1)^{\indexTime}}{2} +\bigO{(1-\courantNumber^2)^{\indexTime/2}\indexTime^{-5/2}}
\end{equation*}
and gives an idea why, as previously shown, $\coefficientsLaurentStableRootAllParities_{\indexTime}=0$ when $\indexTime$ is even.
For $\indexTime = 2k + 1$, we have 
\begin{equation*}
    \coefficientsLaurentStableRootAllParities_{2k + 1} = \frac{1+\courantNumber}{2\sqrt{\pi}}(1-\courantNumber^2)^{k} k^{-3/2}  +\bigO{(1-\courantNumber^2)^{k}k^{-5/2}}, \qquad \text{which is \eqref{eq:asymptoticRelaxationScheme}.}
\end{equation*}

\item \strong{Leap-frog scheme: $\relaxationParameter = 2$}.
We solve $\singularity^{4} + 2(2\courantNumber^2-1)\singularity^2 + 1=0$ in $\singularity^2$, obtaining 
\begin{equation*}
    \singularity^2 = 1-2\courantNumber^2\pm 2 i \courantNumber\sqrt{1-\courantNumber^2} =
    e^{\pm i\arccos(1-2\courantNumber^2)}.
\end{equation*}
Solving this equation in $\singularity$ and setting $\vartheta\definitionEquality\arccos(1-2\courantNumber^2)$, the four singular points read 
\begin{equation*}
    \singularity_1 = e^{\frac{i}{2}\vartheta}, \quad \singularity_2 = e^{\frac{i}{2}\vartheta+ i\pi}, \qquad \text{and}\qquad 
    \singularity_3 = e^{-\frac{i}{2}\vartheta}, \quad \singularity_4 = e^{-\frac{i}{2}\vartheta+ i\pi}.
\end{equation*}
We have 
\begin{align*}
    &\prod_{\substack{k=1\\k\neq 1}}^{4}(\singularity_1 - \singularity_k) = 4 i e^{i\vartheta/2} \sin (\vartheta)=4 e^{i/2(\vartheta + \pi)} \sin (\vartheta), \qquad \prod_{\substack{k=1\\k\neq 2}}^{4}(\singularity_2 - \singularity_k) = - 4 i e^{i\vartheta/2} \sin (\vartheta) = 4 e^{i/2(\vartheta - \pi)} \sin (\vartheta), \\
    &\prod_{\substack{k=1\\k\neq 3}}^{4}(\singularity_3 - \singularity_k) = -4 i e^{-i\vartheta/2} \sin (\vartheta)= 4 e^{i/2(-\vartheta-\pi)}\sin (\vartheta), \qquad \prod_{\substack{k=1\\k\neq 4}}^{4}(\singularity_4 - \singularity_k) = 4 i e^{-i\vartheta/2} \sin (\vartheta)= 4 e^{i/2(-\vartheta+\pi)}\sin (\vartheta).
\end{align*}
Here, the way we have decided to write $\pm i$ in polar form on the right-hand sides is dictated by the fact that we shall take the square root of these quantities, whose chosen determination must be used consistently.
Moreover, $\sin(\vartheta) = 2\courantNumber\sqrt{1-\courantNumber^2}$.
Next, we feature
\begin{equation*}
    \stableRoot(\timeShiftOperator^{-1}) =\frac{\timeShiftOperator^{2} - 1}{2\courantNumber\timeShiftOperator} + \underbrace{\frac{\sqrt{1 + 2(2\courantNumber^2-1)\timeShiftOperator^{2} + \timeShiftOperator^{4}}}{2\courantNumber\timeShiftOperator}}_{=\psi(\timeShiftOperator)},\qquad \text{where} \qquad \psi(\timeShiftOperator) = \frac{\Bigl ( \prod\limits_{k = 1}^{4}(\timeShiftOperator - \singularity_k) \Bigr )^{1/2}}{2\courantNumber\timeShiftOperator}.
\end{equation*}
We limit the study to $\psi$, as the remainder of $\stableRoot(\timeShiftOperator^{-1})$ is regular.
In the neighborhood of $\singularity = \pm e^{i\vartheta/2}$
\begin{equation*}
    \psi(\timeShiftOperator)= \pm\frac{\sqrt{2}(1-\courantNumber^2)^{1/4}}{\sqrt{\courantNumber} }e^{-i\pi/4}  \sqrt{1-\frac{\timeShiftOperator}{\singularity} } + \bigO{(1-\timeShiftOperator/\singularity)^{3/2}}.
\end{equation*}
For $\timeShiftOperator\to \singularity = \pm e^{-i\vartheta/2}$
\begin{equation*}
    \psi(\timeShiftOperator) =\pm \frac{\sqrt{2}  (1-\courantNumber^2)^{1/4}}{\sqrt{\courantNumber}} e^{i\pi /4} \sqrt{1-\frac{\timeShiftOperator}{\singularity} } + \bigO{(1-\timeShiftOperator/\singularity)^{3/2}}.
\end{equation*}
By the results in \cite{flajolet2009analytic}, we obtain
\begin{align*}
    \coefficientsLaurentStableRootAllParities_{\indexTime} &= \frac{\sqrt{2}  (1-\courantNumber^2)^{1/4}}{\sqrt{\courantNumber}} \overbrace{\Bigl (-\frac{1}{2\sqrt{\pi}}\indexTime^{-3/2} + \bigO{\indexTime^{-5/2}}\Bigr )}^{\text{sub-exponential tames}} \Bigl ( e^{i\pi /4} (\singularity_3^{-\indexTime}-\singularity_4^{-\indexTime}) + e^{-i\pi /4} (\singularity_1^{-\indexTime}-\singularity_2^{-\indexTime}) \Bigr ) + \bigO{\indexTime^{-5/2}}\\
    &=-\frac{2\sqrt{2}  (1-\courantNumber^2)^{1/4}}{\sqrt{\pi\courantNumber}} \indexTime^{-3/2}\frac{1-(-1)^{\indexTime}}{2} \cos( \tfrac{1}{2} \indexTime\vartheta +\pi/4)  + \bigO{\indexTime^{-5/2}}
\end{align*}
for large $\indexTime$.
Writing $\cos( \tfrac{1}{2} \indexTime\vartheta +\pi/4) = \cos( \tfrac{1}{2} \indexTime\vartheta - \pi/4 + \pi/2) = - \sin( \tfrac{1}{2} \indexTime\vartheta - \pi/4)$, we have
\begin{equation*}
    \coefficientsLaurentStableRootAllParities_{2k +1} = \frac{(1-\courantNumber^2)^{1/4}}{\sqrt{\courantNumber}\sqrt{\pi}}  \sin( (k+\tfrac{1}{2})\vartheta -\tfrac{\pi}{4}) k^{-3/2} + \bigO{k^{-5/2}},
\end{equation*}
which is nothing but the estimate for $\coefficientsLaurentStableRoot_{k}$ (equal to $\coefficientsLaurentStableRootAllParities_{2k + 1}$) that we deduce from \eqref{eq:expansionBCloseTwo} together with \eqref{eq:definitionSAsProduct}.

\end{itemize}

\paragraph{Numerical experiments}\label{sec:numExpD1Q2}

\begin{figure}
    \begin{center}
        \includegraphics[width=1\textwidth]{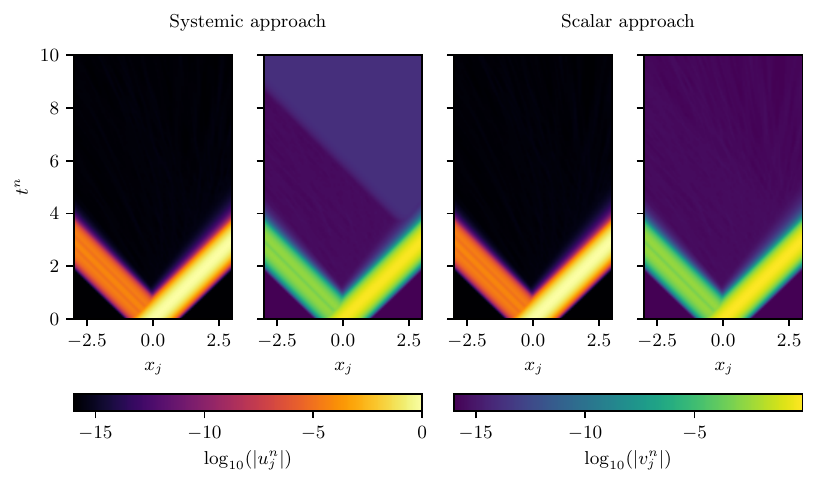}
    \end{center}\caption{\label{fig:D1Q2-sys-vs-scal}Solutions for the \lbmScheme{1}{2} using $\relaxationParameter = 2$, comparing the systemic approach and the scalar approach.}
\end{figure}
\begin{itemize}

\item \strong{Need for accuracy in the computation of the coefficients}\label{sec:accuracyCoeffs}.
We simulate using $\advectionVelocity = 1$, $\latticeVelocity = \tfrac{6}{5}$ and $\numberGridPoints = 1000$ on a domain delimited by $\leftBoundary = -3$ and $\rightBoundary = 3$.
The initial datum is the smooth profile $\conservedMoment^{\circ}(\spaceVariable) = \cos^{10}(\pi\spaceVariable/2)\indicatorFunction{(-1, 1)}(\spaceVariable)$, with $\conservedMoment^{\circ}\in C^9(\reals)$.
We compare the systemic approach where the weights are computed in double precision by \eqref{eq:recurrenceS}, \eqref{eq:recurrecenceRight}--\eqref{eq:recurrenceEigv}, and \eqref{eq:recurrenceLeft}--\eqref{eq:recurrenceEigvleft}, with the scalar approach, where only the computation through \eqref{eq:recurrenceS} is needed.
Results are shown in \Cref{fig:D1Q2-sys-vs-scal} for $\relaxationParameter = 2$, as both approaches yield the same ones for $\relaxationParameter<2$ (simulations not shown here).

\begin{figure}
    \begin{center}
        \includegraphics[width=0.5\textwidth]{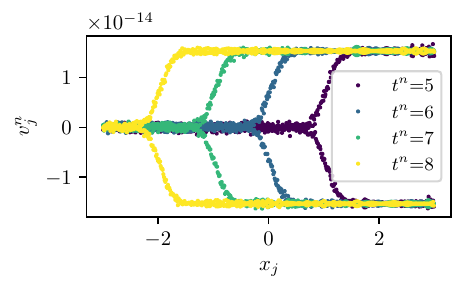}
    \end{center}\caption{\label{fig:D1Q2-progressive-systemic}Non-conserved moment for the \lbmScheme{1}{2} using $\relaxationParameter = 2$ with a systemic approach.}
\end{figure}

We remark---first and foremost---that both approaches ensure that waves, be them physical right-going or spurious left-going ones, exit the domain.
There is a small exception to this, which appears in the case where we employ the \strong{systemic approach}, and concerns the non-conserved moment $\nonConservedMoment$.
By zooming on this, \confer{} \Cref{fig:D1Q2-progressive-systemic}, we see that this is of amplitude $10^{-14}$, so very close to the machine epsilon in double precision.
The pattern reveals a mode that is quite notorious: $(\timeShiftOperator, \fourierShift) = (1, -1)$, the space-checkerboard mode, such that $\unstableRoot(1) = -1$.
This mode propagates to the left at group velocity equal to $-\advectionVelocity$.
This phenomenon can be explained as follows. Although we prescribed that the solution at the right boundary is only brought by the stable eigenvalue-eigenvector, \confer{} \eqref{eq:generalSolutionRightWithoutFreeCoefficient}, round-off errors, probably on the coefficients of the boundary condition,  generate some data with non-zero amplitude along the unstable eigenvector $\transpose{(1, \eigenvectorLetter_{\unstableMarker}(\timeShiftOperator))}$. 
This mode can---according to \Cref{lemma:continuousExtension} and when $\relaxationParameter = 2$ only---be  propagated inside the domain by the bulk scheme only on $\nonConservedMoment$, as $\eigenvectorLetter_{\unstableMarker}(\timeShiftOperator)$ features a pole exactly on $\timeShiftOperator = 1$.
From our recent work \cite{bellotti2025stability}, we know that the scheme can develop mild instabilities stemming from boundaries on $\nonConservedMoment$ when $\relaxationParameter = 2$, due to this fact.

\item \strong{Effectiveness of the transparent boundary conditions and comparison with other boundary conditions}.
\begin{figure}
    \begin{center}
        \includegraphics[width=1\textwidth]{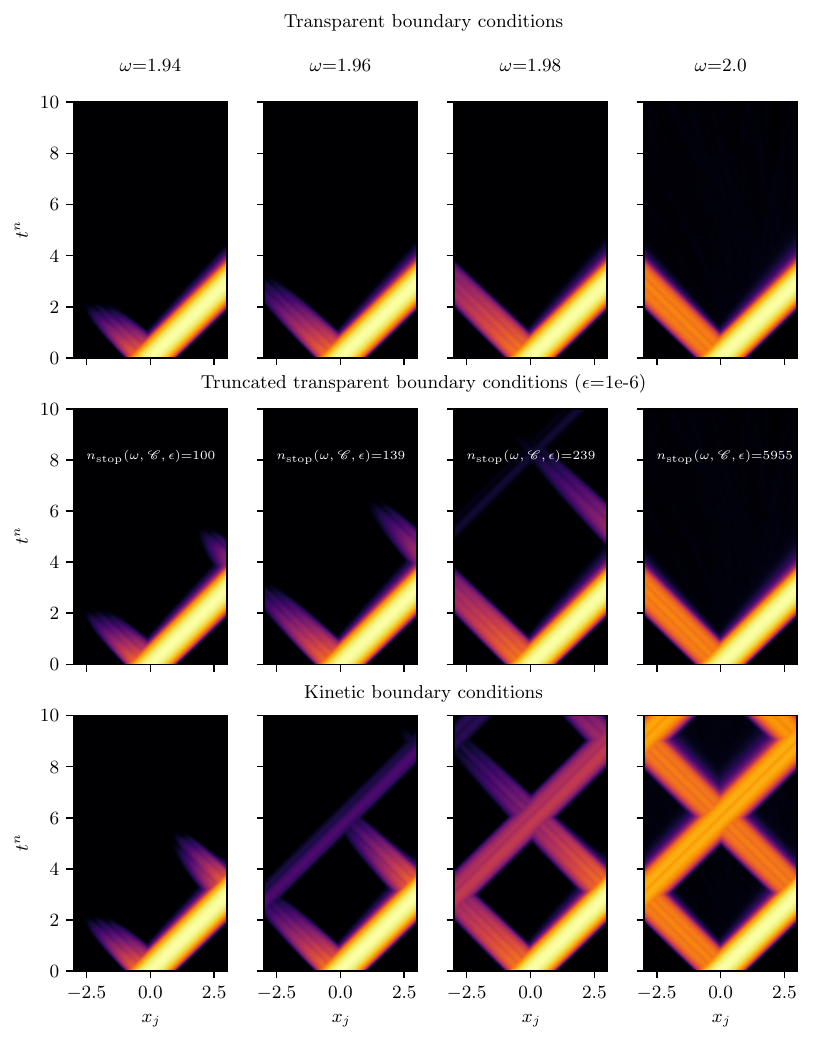}
    \end{center}\caption{\label{fig:D1Q2-comparison-BC}Values of $\log_{10}(|\conservedMoment_{\indexSpace}^{\indexTime}|)$ (same color-scale as \Cref{fig:D1Q2-sys-vs-scal}) for the \lbmScheme{1}{2} endowed with different boundary conditions.}
\end{figure}
From now on, we consider the scalar approach as the procedure of choice.
We compare three boundary conditions, namely the previously introduced transparent boundary conditions, and truncated transparent ones, as well as some kinetic boundary conditions, all described below.

Truncated transparent boundary conditions are devised as follows.
For $\relaxationParameter$ close to two, we have shown that 
\begin{multline*}
    \coefficientsLaurentStableRoot_{\indexTime} \sim \frac{\relaxationParameter-1}{(\courantNumber+1)\relaxationParameter - 2}
    \sqrt{\frac{2}{\pi}} (1-\argumentLegendrePolynomials^2)^{1/4} \sin\bigl ( \bigl ( \indexTime+\tfrac{1}{2}\bigr )\vartheta - \tfrac{\pi}{4}\bigr ) \indexTime^{-3/2} (\relaxationParameter-1)^{\indexTime}\\
    \leq\underbrace{\left | \frac{\relaxationParameter-1}{(\courantNumber+1)\relaxationParameter - 2}
    \sqrt{\frac{2}{\pi}} (1-\argumentLegendrePolynomials^2)^{1/4}  \right |}_{=C} \indexTime^{-3/2} (\relaxationParameter-1)^{\indexTime}.
\end{multline*}
Introducing a tolerance $0\leq \epsilon \ll 1$, we impose that the right-hand side in the previous inequality be equal to $\epsilon$, which commands
\begin{equation*}
    \indexTime = \Bigl (\frac{C}{\epsilon}\Bigr )^{2/3} e^{\tfrac{2}{3}\log(\relaxationParameter - 1)\indexTime}.
\end{equation*}
When $\relaxationParameter\neq 2$, this equation can be solved using the Lambert $W$ function, see \cite{corless1996lambert}, suggesting stopping at 
\begin{equation*}
    \indexTime_{\text{stop}}(\relaxationParameter, \courantNumber, \epsilon) \definitionEquality \Bigl \lceil -\frac{3}{2\log(\relaxationParameter - 1)} W_0\Bigl ( -\frac{2}{3}\log(\relaxationParameter - 1) \Bigl (\frac{C}{\epsilon}\Bigr )^{2/3} \Bigr ) \Bigr \rceil, 
\end{equation*}
where $W_0$ is the principal branch of the Lambert $W$ function.
The growth of $\epsilon\mapsto \indexTime_{\text{stop}}(\relaxationParameter, \courantNumber, \epsilon)$ as $\epsilon\to 0^+$ is very slow, as dictated by the very mild growth of $W_0$ at $+\infty$.
The truncated transparent boundary conditions, taking \eqref{eq:rightBCU} as example, hence read
\begin{equation}\label{eq:truncatedBoundaryCondition}
    \conservedMoment_{\numberGridPoints + 1}^{\indexTime} = \sum_{k = 0}^{\substack{\min(\lfloor (\indexTime - 1)/ 2 \rfloor, \\ \indexTime_{\text{stop}}(\relaxationParameter, \courantNumber, \epsilon))}} \coefficientsLaurentStableRoot_{k} \conservedMoment_{\numberGridPoints}^{\indexTime-2k - 1}.
\end{equation}
We observe that alternative approaches to reduce the cost of transparent boundary conditions, such as the ``sum of exponentials'' technique \cite{cms/1250880098}, could be explored in a future work.

Concerning the kinetic boundary conditions that we test, we employ a kinetic Dirichlet boundary condition at the inflow $\leftBoundary$, see \cite{aregba2025equilibrium, bellotti2025stability}, and a kinetic 1st-order extrapolation, see \cite{bellotti2025consistency, bellotti2025stability}, at the outflow $\rightBoundary$.
They are implemented by using the cells $\spaceGridPoint{0}$ and $\spaceGridPoint{\numberGridPoints + 1}$ as ghosts, where the values of the post-relaxation distribution functions are prepared as 
\begin{equation}\label{eq:kineticBoundaryConditions}
    \distributionFunctionLetter_{+, 0}^{\indexTime, \collided} = 0\qquad \text{and} \qquad \distributionFunctionLetter_{-, \numberGridPoints + 1}^{\indexTime, \collided} = \distributionFunctionLetter_{-, \numberGridPoints}^{\indexTime, \collided}.
\end{equation}

\begin{remark}[Computational cost \strong{versus} accuracy]\label{rem:costVsAccuracy}
    Notice that there is a general trade-off between \strong{computational cost}, which is maximal with our transparent boundary conditions owing to their global-in-time character, and \strong{accuracy} (i.e., absence of reflected waves), which is also maximal in this case.
    Consider that $\numberTimeIterations \propto T/\timeStep$ time-iterations are needed to reach the final time $T>0$.
    Then we can remark the following.
    \begin{itemize}
        \item For transparent boundary conditions \eqref{eq:rightBCU}, the cost of the $\indexTime$-th iteration is of $\bigO{\indexTime}$ sums--multiplications, thus $\bigO{\numberTimeIterations^2}$ to deal with boundary conditions for the whole simulation. If $\numberGridPoints\sim \numberTimeIterations$, this cost is commensurable to that of the inner scheme, which is $\bigO{\numberTimeIterations^2}$.
        This significant toll is the price to pay for perfectly non-reflective boundary conditions for any frequency of impinging waves.
        \item For truncated transparent boundary conditions \eqref{eq:truncatedBoundaryCondition}, the cost of the $\indexTime$-th iteration is of $\bigO{1}$, with a large constant if $\epsilon$ is small and/or coefficients do not decrease fast enough, due to large values of $\indexTime_{\text{stop}}(\relaxationParameter, \courantNumber, \epsilon)$. 
        The cost of boundary conditions is thus $\bigO{\numberTimeIterations}$, possibly with large constants.
        Still, this condition can be of high quality in cases where $\relaxationParameter$ is far apart from two, as coefficients quickly decrease, thus make $\indexTime_{\text{stop}}$ small while essentially retaining the features of perfectly transparent conditions. 
        \item Kinetic boundary conditions such as \eqref{eq:kineticBoundaryConditions} generally use only data at the same/previous iteration, thus are strongly localized in time. 
        Their cost for the whole simulation is therefore $\bigO{\numberTimeIterations}$ with small constants.
        However, their cheaper character comes at the price of large reflected waves.
        Indeed, these boundary conditions are essentially designed using Taylor expansions (see \Cref{sec:foo}), corresponding to non-reflective behaviors only for impinging data of small frequency and corresponding to ``physical'' waves.
    \end{itemize}
\end{remark}

Under the same conditions as \Cref{fig:D1Q2-sys-vs-scal}, the results are collected in \Cref{fig:D1Q2-comparison-BC}.
We point out the following facts.
\begin{itemize}
    \item Only transparent boundary conditions allow all waves to exit the domain. 
    \item For truncated transparent boundary conditions, we see that truncation entails a delay in the outbreak of (small) reflected waves. 
    This happens because these conditions are based on forgetting a remote past through a moving window of fixed size, and the forgotten past is thus more relevant later in time, as the support of the initial datum tardily reaches the boundary.
    \item When $\relaxationParameter<2$, if a damped spurious left-going wave manages to survive (i.e., have a significant amplitude) as $\relaxationParameter\approx 2$ up to reaching the left boundary, reflective boundary conditions may turn it into an undamped physical right-going wave, for instance $(1-\relaxationParameter, 1) \rightsquigarrow (1, 1)$. 
    \item On the other hand, undamped physical right-going waves reaching the right boundary may be turned into damped (except when $\relaxationParameter = 2$) spurious left-going waves, for instance $(1, 1)  \rightsquigarrow (1-\relaxationParameter, 1)$.
    This is not particularly dangerous as long as $\relaxationParameter$ is sufficiently far from two, so that reflected waves are rapidly damped, as they will not live for long enough to reach the other boundary.
    \item When $\relaxationParameter<2$, we see that spurious damped beams pinch before fading away.
    This can be attributed to anti-dissipation, see \Cref{rem:dissipationLowFrequencies}.
\end{itemize}

Tests are repeated using initial data with several packets at different frequencies, for instance 
\begin{equation}\label{eq:manypackets}
    \conservedMoment^{\circ}(\spaceVariable) = \sum_{k=0}^3 \underbrace{\cos^{10}(\tfrac{\pi}{2}(\spaceVariable +3-\tfrac{6}{5}(k+1)))\indicatorFunction{(-1, 1)}(\spaceVariable +3-\tfrac{6}{5}(k+1))}_{\text{envelope}} \underbrace{\cos(5\pi\times 2^k (\spaceVariable +3-\tfrac{6}{5}(k+1)))}_{\text{oscillation}}
\end{equation}
to validate the claim that the devised boundary conditions are transparent \strong{regardless of the frequency} of the incident packet. 
These results are shown in \Cref{app:moreSimulationsD1Q2}.

\item \strong{Transparent inflow boundary conditions}.
\begin{figure}
    \begin{center}
        \includegraphics[width=1\textwidth]{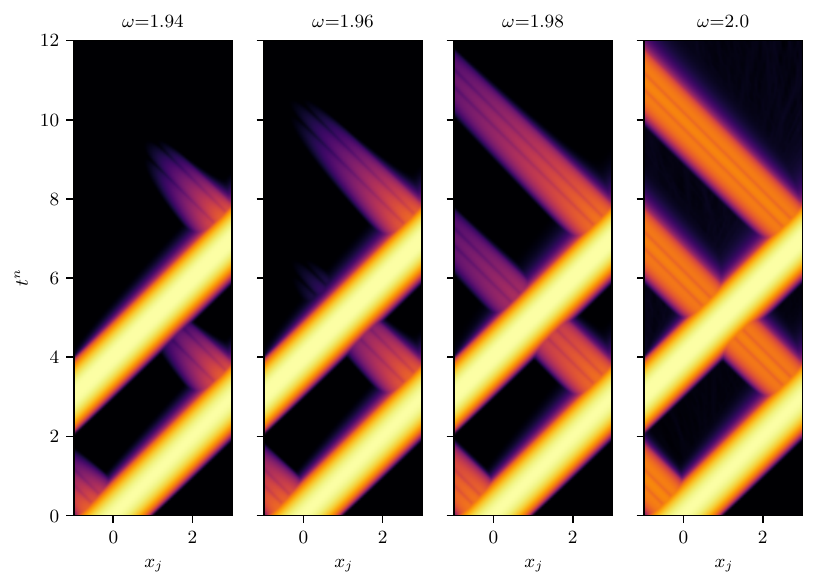}
    \end{center}\caption{\label{fig:D1Q2-inflow}Values of $\log_{10}(|\conservedMoment_{\indexSpace}^{\indexTime}|)$ (same color-scale as \Cref{fig:D1Q2-sys-vs-scal}) for the \lbmScheme{1}{2} endowed a transparent inflow boundary condition.}
\end{figure}
Consider $\advectionVelocity>0$ and a problem different from \eqref{eq:targetTransport1D}, namely
\begin{equation*}
    \begin{dcases}
        \begin{aligned}
            &\partial_{\timeVariable}\conservedMoment(\timeVariable, \spaceVariable) + \advectionVelocity\partial_{\spaceVariable}\conservedMoment(\timeVariable, \spaceVariable)  = 0, \qquad &\timeVariable>0, \quad &\spaceVariable\in (\leftBoundary, \rightBoundary),\\
            &\conservedMoment(0, \spaceVariable)  = \conservedMoment^{\circ}(\spaceVariable), \qquad & &\spaceVariable\in (\leftBoundary, \rightBoundary),\\
            &\conservedMoment(\timeVariable, \leftBoundary)  = \conservedMoment_{\text{in}}(\timeVariable), \qquad &\timeVariable>0. \quad &
        \end{aligned}
    \end{dcases}
\end{equation*}
Now the question is: can we build a numerical boundary condition at $\leftBoundary$ which enforces the inflow datum $\conservedMoment_{\text{in}}$ without reflecting waves?
It turns out that this is possible by a lift in \eqref{eq:solD1Q2Left}.
This boils down to having the right-hand side of \eqref{eq:solD1Q2Left} equal to $\transpose{(\zTransformed{\conservedMoment}_{\indexSpace}(\timeShiftOperator) - \zTransformed{\conservedMoment}_{\text{in}}(\timeShiftOperator), \zTransformed{\nonConservedMoment}(\timeShiftOperator))}$, where $\zTransformed{\conservedMoment}_{\text{in}}(\timeShiftOperator)$ is the $\timeShiftOperator$-transform of the time-discretized boundary datum (we shall use $ \conservedMoment_{\text{in}}(\timeGridPoint{\indexTime})$).
This entails
\begin{equation*}
    \zTransformed{\conservedMoment}_{0}(\timeShiftOperator) = \frac{\stableRoot(\timeShiftOperator)}{\productRoots} (\zTransformed{\conservedMoment}_{1}(\timeShiftOperator) \underbrace{- \zTransformed{\conservedMoment}_{\text{in}}(\timeShiftOperator)}_{\text{unlift}}) \underbrace{+ \zTransformed{\conservedMoment}_{\text{in}}(\timeShiftOperator)}_{\text{lift}}
    \qquad \text{and}\qquad 
    \zTransformed{\nonConservedMoment}_{0}(\timeShiftOperator) = \frac{\stableRoot(\timeShiftOperator)}{\productRoots} \zTransformed{\nonConservedMoment}_{1}(\timeShiftOperator).
\end{equation*}
Inverting the $\timeShiftOperator$-transform, we have 
\begin{equation}\label{eq:DirichletInflowD1Q2}
    \conservedMoment_0^{\indexTime} = \frac{1}{\productRoots} \sum_{k = 0}^{\lfloor (\indexTime - 1)/ 2 \rfloor} \coefficientsLaurentStableRoot_{k} (\conservedMoment_{1}^{\indexTime-2k - 1} - \conservedMoment_{\text{in}}(\timeGridPoint{\indexTime-2k - 1}) ) + \conservedMoment_{\text{in}}(\timeGridPoint{\indexTime}) \qquad 
    \text{and}\qquad 
    \nonConservedMoment_0^{\indexTime} = \frac{1}{\productRoots} \sum_{k = 0}^{\lfloor (\indexTime - 1)/ 2 \rfloor} \coefficientsLaurentStableRoot_{k} \nonConservedMoment_{1}^{\indexTime-2k - 1}.
\end{equation}

On the right boundary, we test with the condition given by \eqref{eq:kineticBoundaryConditions} on purpose, that is to generate reflected waves that shall eventually be incident to the left boundary, and show that \eqref{eq:DirichletInflowD1Q2} manages both to enforce the inflow condition and to prevent reflections.
We consider $\leftBoundary = -1$, $\rightBoundary = 3$, and $\numberGridPoints = 667$, except to what the framework stands as before.
Moreover, we take $\conservedMoment_{\text{in}}(\timeVariable) = \conservedMoment^{\circ}(3-\advectionVelocity\timeVariable)$.
The results in \Cref{fig:D1Q2-inflow} confirm the expected behavior.
The slightly dusty appearance for $\relaxationParameter = 2$ on one side of the beams, the one opposite to the direction of propagation, comes from numerical dispersion.
\end{itemize}

\subsubsection{Transparent boundary conditions for the fourth-order \lbmScheme{1}{3} scheme}\label{sec:BCFourthOrderD1Q3}

As for the \lbmScheme{1}{2} scheme, we present the case $\courantNumber>0$.
The main additional challenge with respect to the  \lbmScheme{1}{2} scheme lays in the computation of the coefficients for the left boundary condition.
Lemmata \ref{lemma:herschD1Q3Fourth} and \ref{lemma:eigenvectorsD1Q3Fourth} below are proved in \cite{bellotti2025stability}.

\paragraph{Spatial roots of the characteristic equation}

\begin{lemma}[Spatial roots of the characteristic equation of the fourth-order \lbmScheme{1}{3} scheme]\label{lemma:herschD1Q3Fourth}
    Let the stability condition \eqref{eq:stabilityConditionFourthOrder} be fulfilled.
    Then, \eqref{eq:charEquationD1Q3FourthOrder} has two roots, one stable $\stableRoot(\timeShiftOperator)\in\unitDisk$ for $\timeShiftOperator\in\neighborhoodInfinity$ and one unstable $\unstableRoot(\timeShiftOperator)$ for $\timeShiftOperator\in\neighborhoodInfinity$.
    Their product $\stableRoot(\timeShiftOperator)\unstableRoot(\timeShiftOperator)$ is given by 
    \begin{equation*}
        \productRoots(\timeShiftOperator) = 
        \frac{(2\courantNumber + 1)(\courantNumber-2)-(2\courantNumber-1)(\courantNumber+2)\timeShiftOperator}{(2\courantNumber - 1)(\courantNumber+2)-(2\courantNumber+1)(\courantNumber-2)\timeShiftOperator}.
    \end{equation*}
    As $\courantNumber>0$, the continuous extension of $\stableRoot$ and $\unstableRoot$ to the unit circle $\unitCircle$ fulfills $\stableRoot(\pm 1) = 1$ and $\unstableRoot(\pm 1) = \mp 1$.
\end{lemma}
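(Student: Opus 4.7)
The plan is to multiply \eqref{eq:charEquationD1Q3FourthOrder} by $\fourierShift$ and view the result as a genuine quadratic
\begin{equation*}
a(\timeShiftOperator)\fourierShift^2 + b(\timeShiftOperator)\fourierShift + c(\timeShiftOperator) = 0
\end{equation*}
in $\fourierShift$, with $a,b,c$ explicit polynomials in $\timeShiftOperator$ of respective degrees $2,3,2$ read off the Laurent coefficients. Vieta's formulas immediately give $\stableRoot(\timeShiftOperator)\unstableRoot(\timeShiftOperator)=c(\timeShiftOperator)/a(\timeShiftOperator)$, and factoring the common $\timeShiftOperator/3$ out of numerator and denominator produces exactly $\productRoots(\timeShiftOperator)$ as stated. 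A preliminary verification is that $a(\timeShiftOperator)\ne 0$ on $\closedNeighborhoodInfinity$: since $a(\timeShiftOperator)$ factors as $(\timeShiftOperator/3)\cdot\bigl(-(2\courantNumber+1)(\courantNumber-2)\timeShiftOperator+(2\courantNumber-1)(\courantNumber+2)\bigr)$, one only needs to show that the single non-zero root $(2\courantNumber-1)(\courantNumber+2)/((2\courantNumber+1)(\courantNumber-2))$ lies strictly inside $\unitDisk$ under $0<|\courantNumber|<\tfrac{1}{2}$, which is an elementary algebraic inequality.

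The stable/unstable splitting on $\neighborhoodInfinity$ follows a standard Hersh-type continuity argument. As $|\timeShiftOperator|\to\infty$, the sum of roots $-b/a$ grows linearly in $\timeShiftOperator$ while the product $c/a$ converges to the finite non-zero limit $(2\courantNumber-1)(\courantNumber+2)/((2\courantNumber+1)(\courantNumber-2))$, so one root behaves like $\sim \timeShiftOperator$ and the other like $\sim 1/\timeShiftOperator$, producing the splitting for $|\timeShiftOperator|$ sufficiently large. By the $\ell^2$-stability established in \Cref{prop:consistencyD1Q34th}, no pair $(\timeShiftOperator,\fourierShift)$ with $|\timeShiftOperator|>1$ and $|\fourierShift|=1$ solves \eqref{eq:charEquationD1Q3FourthOrder}, so as $\timeShiftOperator$ varies within the connected set $\neighborhoodInfinity$ the two continuous branches of roots cannot cross $\unitCircle$. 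Continuity of the roots in $\timeShiftOperator$ (guaranteed by $a\ne 0$) combined with the asymptotic splitting forces the integer-valued count of roots inside $\unitDisk$ to remain constant and equal to $1$ throughout $\neighborhoodInfinity$.

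For the continuous extension to $\unitCircle$ and the boundary evaluations, I would substitute $\timeShiftOperator=\pm 1$ directly. At $\timeShiftOperator=1$ one checks that $b(1)=0$ while $a(1)=-c(1)=2\courantNumber$, reducing the equation to $\fourierShift^2=1$ with roots $\fourierShift=\pm 1$; at $\timeShiftOperator=-1$ the quadratic collapses to $\tfrac{4}{3}(1-\courantNumber^2)(\fourierShift-1)^2=0$, a double root at $\fourierShift=1$. To identify $\stableRoot(1)$ and $\unstableRoot(1)$ I would invoke the implicit function theorem at each of the simple roots $(\timeShiftOperator,\fourierShift)=(1,\pm 1)$: since $\partial_{\fourierShift}F(1,\pm 1)=\pm 4\courantNumber\ne 0$, the directional derivative $\differential{\fourierShift}/\differential{\timeShiftOperator}$ along a real excursion $\timeShiftOperator=1+\delta$ is computable explicitly and yields, for $\courantNumber>0$, signs showing that the branch emanating from $\fourierShift=1$ moves into $\unitDisk$ while the branch from $\fourierShift=-1$ leaves into $\neighborhoodInfinity$. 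This fixes $\stableRoot(1)=1$ and $\unstableRoot(1)=-1$, and the equalities $\stableRoot(-1)=\unstableRoot(-1)=1$ then follow from the double-root coalescence.

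The main obstacle I anticipate is the branch point at $\timeShiftOperator=-1$: the implicit function theorem fails there because $\partial_{\fourierShift}F$ vanishes at the double root, so one must resort to a Puiseux expansion of the two branches in fractional powers of $(\timeShiftOperator+1)^{1/2}$. The delicate point is to verify that the two resulting branches, although meeting tangentially at $\fourierShift=1$, remain respectively inside and outside $\unitDisk$ for $\timeShiftOperator\in\neighborhoodInfinity$ near $-1$---a local analysis that must be consistent with the global splitting obtained above, and that ultimately justifies extending $\stableRoot$ and $\unstableRoot$ continuously through this tangential crossing of $\unitCircle$.
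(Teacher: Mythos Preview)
The paper does not supply a proof of this lemma; it explicitly defers to the companion reference \cite{bellotti2025stability} (see the sentence immediately preceding the statement: ``Lemmata \ref{lemma:herschD1Q3Fourth} and \ref{lemma:eigenvectorsD1Q3Fourth} below are proved in \cite{bellotti2025stability}''). Hence there is no in-paper proof to compare against.

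That said, your outline is the standard Hersh--Kreiss argument and is correct: Vieta's formulas give $\productRoots(\timeShiftOperator)$ after factoring $\timeShiftOperator/3$, the leading coefficient $a(\timeShiftOperator)$ has its nontrivial zero in $\unitDisk$ under \eqref{eq:stabilityConditionFourthOrder}, and von Neumann stability from \Cref{prop:consistencyD1Q34th} blocks any root from touching $\unitCircle$ while $\timeShiftOperator\in\neighborhoodInfinity$, so the asymptotic $1$--$1$ splitting persists by continuity. Your local identification at $\timeShiftOperator=1$ via $\partial_{\fourierShift}F(1,\pm 1)=\pm 4\courantNumber$ is fine.

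Your self-identified ``main obstacle'' at $\timeShiftOperator=-1$ is not actually an obstacle, and you are making the argument harder than it needs to be. You have \emph{already} established, globally on $\neighborhoodInfinity$, that $\stableRoot(\timeShiftOperator)\in\unitDisk$ and $\unstableRoot(\timeShiftOperator)\in\neighborhoodInfinity$; no local Puiseux verification is required to re-confirm this near $\timeShiftOperator=-1$. For the continuous extension itself, the two roots of $a(\timeShiftOperator)\fourierShift^2+b(\timeShiftOperator)\fourierShift+c(\timeShiftOperator)=0$ depend continuously on $\timeShiftOperator$ as an unordered pair, and at $\timeShiftOperator=-1$ they coalesce at $\fourierShift=1$. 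Since both branches tend to the \emph{same} limit, there is no branch-identification problem: $\lim_{\timeShiftOperator\to -1}\stableRoot(\timeShiftOperator)=\lim_{\timeShiftOperator\to -1}\unstableRoot(\timeShiftOperator)=1$ is immediate, which is precisely the content of $\stableRoot(-1)=\unstableRoot(-1)=1$. The Puiseux expansion would only be needed if you wanted differentiability of the extended branches at $\timeShiftOperator=-1$, which the lemma does not assert.
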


\paragraph{Scalar approach}

We adopt a scalar approach with recurrent computation of the coefficients.
Although we do not pursue a systemic approach, we stress the fact that we know that the eigenvectors $\transpose{(1, \eigenvectorLetter_{\stableMarker, \nonConservedMoment}(\timeShiftOperator), \eigenvectorLetter_{\stableMarker, \nonNonConservedMoment}(\timeShiftOperator))}\in\kernel(\timeShiftOperator\identityMatrix{3} - \schemeMatrixBulkFourier(\stableRoot(\timeShiftOperator)))$ and $\transpose{(1, \eigenvectorLetter_{\unstableMarker, \nonConservedMoment}(\timeShiftOperator), \eigenvectorLetter_{\unstableMarker, \nonNonConservedMoment}(\timeShiftOperator))}\in\kernel(\timeShiftOperator\identityMatrix{3} - \schemeMatrixBulkFourier(\unstableRoot(\timeShiftOperator)))$ cannot be continuously extended to $\unitCircle$ for some $\timeShiftOperator$'s.
This has to be taken into account when analyzing numerical results and is made precise by the following result.
\begin{lemma}[Continuous extension of the eigenvectors]\label{lemma:eigenvectorsD1Q3Fourth}
    Let $\courantNumber>0$.
    The stable eigenvector cannot be continuously extended to $\unitCircle$ due to the presence of first-order poles in $\eigenvectorLetter_{\stableMarker, \nonConservedMoment}(\timeShiftOperator)$ and $\eigenvectorLetter_{\stableMarker, \nonNonConservedMoment}(\timeShiftOperator)$ as $\timeShiftOperator\to -1$.

    The unstable eigenvector cannot be continuously extended to $\unitCircle$  either due to the presence of first-order poles in $\eigenvectorLetter_{\unstableMarker, \nonConservedMoment}(\timeShiftOperator)$ as $\timeShiftOperator\to\pm 1$ and in $\eigenvectorLetter_{\unstableMarker, \nonNonConservedMoment}(\timeShiftOperator)$ as $\timeShiftOperator\to -1$.
\end{lemma}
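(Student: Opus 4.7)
The plan is to analyze the kernel of $\timeShiftOperator\identityMatrix{3} - \schemeMatrixBulkFourier(\fourierShift)$ via its adjugate. Whenever the matrix is singular, the first column of $\textnormal{adj}(\timeShiftOperator\identityMatrix{3} - \schemeMatrixBulkFourier(\fourierShift))$ is a kernel vector whose first component is the $(1,1)$-cofactor, say $\Delta_{11}(\timeShiftOperator, \fourierShift)$, i.e. the $2\times 2$ minor obtained by deleting the first row and column. Normalizing the eigenvector as $\transpose{(1,\eigenvectorLetter_\nonConservedMoment,\eigenvectorLetter_\nonNonConservedMoment)}$ then amounts to dividing the $(2,1)$- and $(3,1)$-cofactors by $\Delta_{11}$, so the components $\eigenvectorLetter_{\bullet,\nonConservedMoment}$ and $\eigenvectorLetter_{\bullet,\nonNonConservedMoment}$ pick up singularities precisely where $\Delta_{11}(\timeShiftOperator,\fourierShift(\timeShiftOperator))$ vanishes along the relevant spatial branch without a matching cancellation in the numerator.

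First I would write $\schemeMatrixBulkFourier(\fourierShift)$ in closed form in the moment basis using the relaxation, transport, and reassembly steps of the scheme, and evaluate it at the critical values $\fourierShift = \pm 1$ dictated by \Cref{lemma:herschD1Q3Fourth}. A direct computation shows that $\schemeMatrixBulkFourier(1)$ is lower triangular with spectrum $\{1,-1,-1\}$ and that its $(-1)$-eigenspace is the hyperplane $\{v_1 = 0\}$ (two-dimensional), while $\schemeMatrixBulkFourier(-1)$ admits $1$ as a simple eigenvalue whose eigenspace is spanned by $\canonicalBasisVector{2}$. Since \Cref{lemma:herschD1Q3Fourth} provides $\stableRoot(\pm 1) = 1$ and $\unstableRoot(\pm 1) = \mp 1$, every limiting eigenvector at the four critical configurations lies in the vanishing-first-component subspace, forcing at least one of $\eigenvectorLetter_{\bullet,\nonConservedMoment}$, $\eigenvectorLetter_{\bullet,\nonNonConservedMoment}$ to blow up.

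To promote this obstruction into exact first-order poles in $\timeShiftOperator\mp 1$, I would insert Puiseux expansions of the spatial roots near $\timeShiftOperator = \pm 1$ and track orders in $\Delta_{11}$ and the companion cofactors. Substituting $\timeShiftOperator = -1$ into \eqref{eq:charEquationD1Q3FourthOrder} produces the factor $(\fourierShift - 1)^2$, so both spatial roots collide with a square-root branch $\stableRoot(\timeShiftOperator) = 1 + c\sqrt{\timeShiftOperator+1} + \bigO{\timeShiftOperator + 1}$; in contrast, at $\timeShiftOperator = 1$ the factor $\fourierShift^2 - 1$ appears and $\unstableRoot$ reaches $-1$ analytically. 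The claimed first-order poles then follow provided $\Delta_{11}$, viewed as a function of $\fourierShift$ at the corresponding fixed $\timeShiftOperator$, vanishes to an order that precisely balances the spatial branch---double vanishing at $(\timeShiftOperator,\fourierShift)=(-1,1)$ for the square-root branch, simple vanishing at $(1,-1)$ for the smooth case---while $\eigenvectorLetter_{\unstableMarker,\nonNonConservedMoment}$ at $\timeShiftOperator = 1$ remains regular because the $(3,1)$-cofactor there vanishes at the same rate as $\Delta_{11}$.

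The main technical obstacle, in my view, is exactly this cancellation count at $(\timeShiftOperator, \unstableRoot) = (1,-1)$: one has to show that the $(3,1)$-cofactor and $\Delta_{11}$ share their leading zero---rather than both being regular or both vanishing faster---so that $\eigenvectorLetter_{\unstableMarker,\nonNonConservedMoment}$ extends continuously while $\eigenvectorLetter_{\unstableMarker,\nonConservedMoment}$ acquires a genuine simple pole. This is a delicate algebraic check that hinges on the specific equilibria $\nonConservedMoment^{\atEquilibrium} = \courantNumber\conservedMoment$ and $\nonNonConservedMoment^{\atEquilibrium} = \tfrac{1}{3}(1+2\courantNumber^2)\conservedMoment$ of the fourth-order \lbmScheme{1}{3} scheme; everything else reduces to a bookkeeping exercise on $2\times 2$ determinants.
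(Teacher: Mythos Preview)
The paper does not prove this lemma; it explicitly defers to \cite{bellotti2025stability} (see the sentence preceding \Cref{lemma:herschD1Q3Fourth}), so there is no in-paper argument to compare against. I can only assess your strategy on its own terms.

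Your adjugate-column approach is the natural one, and the bookkeeping you outline---Puiseux expansions of $\stableRoot,\unstableRoot$ near $\timeShiftOperator=\pm 1$ fed into the $2\times 2$ cofactors of $\timeShiftOperator\identityMatrix{3}-\schemeMatrixBulkFourier(\fourierShift)$---would indeed produce the claimed first-order poles after a careful count of vanishing orders. Your verification that \eqref{eq:charEquationD1Q3FourthOrder} at $\timeShiftOperator=-1$ factors as $(\fourierShift-1)^2$ and at $\timeShiftOperator=1$ as $\fourierShift^2-1$ is correct, as are the identification of the $(-1)$-eigenspace of $\schemeMatrixBulkFourier(1)$ as the hyperplane $\{v_1=0\}$ and of the simple $1$-eigenvector of $\schemeMatrixBulkFourier(-1)$ as $\canonicalBasisVector{2}$. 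You also correctly isolate the delicate cancellation at $(\timeShiftOperator,\unstableRoot)=(1,-1)$ as the step that distinguishes $\eigenvectorLetter_{\unstableMarker,\nonNonConservedMoment}$ (regular) from $\eigenvectorLetter_{\unstableMarker,\nonConservedMoment}$ (simple pole).

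There is, however, one slip. You assert that ``every limiting eigenvector at the four critical configurations lies in the vanishing-first-component subspace''. This is false at $(\timeShiftOperator,\stableRoot)=(1,1)$: here $\timeShiftOperator=1$ is a \emph{simple} eigenvalue of $\schemeMatrixBulkFourier(1)$, with eigenvector $\transpose{(1,\courantNumber,\tfrac{1}{3}(1+2\courantNumber^2))}$, whose first component is nonzero. Hence $\eigenvectorLetter_{\stableMarker,\nonConservedMoment}$ and $\eigenvectorLetter_{\stableMarker,\nonNonConservedMoment}$ extend continuously at $\timeShiftOperator=1$, which is precisely why the lemma locates the stable-branch poles only at $\timeShiftOperator=-1$. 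This does not damage your proof of the lemma itself---you are over-claiming at a point the lemma is silent about---but the sentence should be corrected so that the argument does not appear to contradict the very statement it is establishing.
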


\begin{itemize}

\item \strong{Right boundary}.
We consider the Laurent expansion $\stableRoot(\timeShiftOperator) = \sum_{\indexTime = 1}^{+\infty} \coefficientsLaurentStableRootAllParities_{\indexTime}\timeShiftOperator^{-\indexTime}$, which yields the boundary condition, for $\indexTime\geq 1$
\begin{equation}\label{eq:rightBoundaryD1Q3}
    \phi_{\numberGridPoints + 1}^{\indexTime} = \sum_{k = 1}^{\indexTime}\coefficientsLaurentStableRootAllParities_k \phi_{\numberGridPoints}^{\indexTime-k}, \qquad \phi \in \{ \conservedMoment, \nonConservedMoment, \nonNonConservedMoment \}.
\end{equation}
Inserting the Laurent series of $\stableRoot(\timeShiftOperator)$ into \eqref{eq:charEquationD1Q3FourthOrder} and identifying each term, we find 
\begin{align*}
    \coefficientsLaurentStableRootAllParities_{1} = \tfrac{1}{3}(2\courantNumber - 1)(\courantNumber + 2), \qquad
    \coefficientsLaurentStableRootAllParities_{2} &= - \tfrac{1}{3}(4\courantNumber^2 - 1) \coefficientsLaurentStableRootAllParities_{1}- \tfrac{1}{3}(2\courantNumber + 1)(\courantNumber - 2), \\
    \coefficientsLaurentStableRootAllParities_{3} &= \tfrac{1}{3}(2\courantNumber + 1)(\courantNumber - 2) \coefficientsLaurentStableRootAllParities_1 \coefficientsLaurentStableRootAllParities_1  -\tfrac{1}{3}(4\courantNumber^2 - 1) \coefficientsLaurentStableRootAllParities_2 +\tfrac{1}{3}(4\courantNumber^2 - 1) \coefficientsLaurentStableRootAllParities_1,
\end{align*}
and
\begin{multline*}
    \coefficientsLaurentStableRootAllParities_{\indexTime} = \tfrac{1}{3}(2\courantNumber + 1)(\courantNumber - 2)  \sum_{k = 1}^{\indexTime - 2}\coefficientsLaurentStableRootAllParities_k \coefficientsLaurentStableRootAllParities_{\indexTime -1 - k} - \tfrac{1}{3}(2\courantNumber - 1)(\courantNumber + 2) \sum_{k = 1}^{\indexTime-3}\coefficientsLaurentStableRootAllParities_k \coefficientsLaurentStableRootAllParities_{\indexTime - 2- k} \\- \tfrac{1}{3}(4\courantNumber^2 - 1) (\coefficientsLaurentStableRootAllParities_{\indexTime - 1} - \coefficientsLaurentStableRootAllParities_{\indexTime - 2}) + \coefficientsLaurentStableRootAllParities_{\indexTime - 3}, \quad \indexTime\geq 4.
\end{multline*}

\item \strong{Left boundary}.
For the left boundary, we need to compute 
\begin{equation*}
    \zTransformed{\phi}_0(\timeShiftOperator) = \frac{1}{\unstableRoot(\timeShiftOperator)}\zTransformed{\phi}_1(\timeShiftOperator) = \frac{\stableRoot(\timeShiftOperator)}{\productRoots(\timeShiftOperator)}\zTransformed{\phi}_1(\timeShiftOperator)
\end{equation*}
with $\phi \in \{ \conservedMoment, \nonConservedMoment, \nonNonConservedMoment \}$.
Compared to the \lbmScheme{1}{2}, the product $\productRoots$ is no longer independent of $\timeShiftOperator$.
A first idea would be to exploit the previously found Laurent expansion for $\stableRoot(\timeShiftOperator)$ together with the one of $1/\productRoots(\timeShiftOperator)$, computing each one separately and then combine them by Cauchy product.
Unfortunately, the computation is not viable on floating-point arithmetic as $1/\productRoots(\timeShiftOperator)$ has a pole in $\neighborhoodInfinity$.
More precisely, we find
\begin{align*}
    \frac{1}{\productRoots(\timeShiftOperator)} &= 
    \Biggl ( \frac{(2\courantNumber+1)(\courantNumber-2)}{(2\courantNumber-1)(\courantNumber+2)} - \timeShiftOperator^{-1}\Biggr )
    \Biggl( 1-\frac{(2\courantNumber+1)(\courantNumber-2)}{(2\courantNumber-1)(\courantNumber+2)}\timeShiftOperator^{-1}\Biggr )^{-1}\\
    &= \frac{(2\courantNumber+1)(\courantNumber-2)}{(2\courantNumber-1)(\courantNumber+2)} + 2\, \textnormal{sinh}\Biggl ( \ln \Biggl ( \frac{(2\courantNumber+1)(\courantNumber-2)}{(2\courantNumber-1)(\courantNumber+2)} \Biggr )\Biggr ) \sum_{\indexTime = 1}^{+\infty} \Biggl ( \underbrace{\frac{(2\courantNumber+1)(\courantNumber-2)}{(2\courantNumber-1)(\courantNumber+2)}}_{>1} \Biggr )^{\indexTime}\timeShiftOperator^{-\indexTime},
\end{align*}
which prevents actual computations as the coefficients in the Laurent expansion of $1/\productRoots(\timeShiftOperator)$ geometrically diverge.
A more successful approach consists in finding the Laurent expansion of $\unstableRoot(\timeShiftOperator)^{-1} = \sum_{\indexTime = 1}^{+\infty} \coefficientsLaurentUnstableRootReciprocalAllParities_{\indexTime} \timeShiftOperator^{-\indexTime}$ (which tends to zero as $\timeShiftOperator\to\infty$) as we did for that of $\stableRoot(\timeShiftOperator)$.
Multiplying the characteristic equation \eqref{eq:charEquationD1Q3FourthOrder} by $\fourierShift(\timeShiftOperator)^{-1}$ and proceeding as usual yields 
\begin{align*}
    \coefficientsLaurentUnstableRootReciprocalAllParities_{1}  = \tfrac{1}{3}(2\courantNumber + 1)(\courantNumber - 2), \qquad
    \coefficientsLaurentUnstableRootReciprocalAllParities_{2} &= - \tfrac{1}{3}(4\courantNumber^2 - 1) \coefficientsLaurentUnstableRootReciprocalAllParities_{1}- \tfrac{1}{3}(2\courantNumber - 1)(\courantNumber + 2), \\
    \coefficientsLaurentUnstableRootReciprocalAllParities_{3} &= \tfrac{1}{3}(2\courantNumber - 1)(\courantNumber + 2) \coefficientsLaurentUnstableRootReciprocalAllParities_1 \coefficientsLaurentUnstableRootReciprocalAllParities_1  -\tfrac{1}{3}(4\courantNumber^2 - 1) \coefficientsLaurentUnstableRootReciprocalAllParities_2 +\tfrac{1}{3}(4\courantNumber^2 - 1) \coefficientsLaurentUnstableRootReciprocalAllParities_1, 
\end{align*}
and 
\begin{multline*}
    \coefficientsLaurentUnstableRootReciprocalAllParities_{\indexTime} = \tfrac{1}{3}(2\courantNumber - 1)(\courantNumber + 2)  \sum_{k = 1}^{\indexTime - 2}\coefficientsLaurentUnstableRootReciprocalAllParities_k \coefficientsLaurentUnstableRootReciprocalAllParities_{\indexTime -1 - k} - \tfrac{1}{3}(2\courantNumber + 1)(\courantNumber - 2) \sum_{k = 1}^{\indexTime-3}\coefficientsLaurentUnstableRootReciprocalAllParities_k \coefficientsLaurentUnstableRootReciprocalAllParities_{\indexTime - 2- k} \\
    - \tfrac{1}{3}(4\courantNumber^2 - 1) (\coefficientsLaurentUnstableRootReciprocalAllParities_{\indexTime - 1} - \coefficientsLaurentUnstableRootReciprocalAllParities_{\indexTime - 2}) + \coefficientsLaurentUnstableRootReciprocalAllParities_{\indexTime - 3}, \quad \indexTime\geq 4,
\end{multline*}
so the boundary condition, for $\indexTime\geq 1$
\begin{equation}\label{eq:leftBoundaryD1Q3}
    \phi_{0}^{\indexTime} = \sum_{k = 1}^{\indexTime}\coefficientsLaurentUnstableRootReciprocalAllParities_k \phi_{1}^{\indexTime-k}, \qquad \phi \in \{ \conservedMoment, \nonConservedMoment, \nonNonConservedMoment \}.
\end{equation}
\end{itemize}

\paragraph{Asymptotic expansions}

\begin{figure}
    \begin{center}
        \includegraphics{./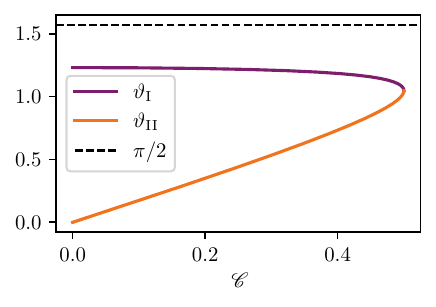}
    \end{center}
    \caption{\label{fig:thetasD1Q3}Plot of $\vartheta_{\textnormal{I}}$ and $\vartheta_{\textnormal{II}}$ given by \eqref{eq:thetaI} and \eqref{eq:thetaII} as functions of $\courantNumber$.}
\end{figure}

We now find the asymptotic trend of $\coefficientsLaurentStableRootAllParities_{\indexTime}$ for large $\indexTime$ ($\coefficientsLaurentUnstableRootReciprocalAllParities_{\indexTime}$ can be investigated analogously).
As the solutions of the characteristic equation feature a square root, we feel that the tame factors will be $\sim\indexTime^{-3/2}$ at leading order.
\begin{proposition}[Asymptotic estimate for $\coefficientsLaurentStableRootAllParities_{\indexTime}$ for the 4th-order D1Q3 scheme]
    For $\indexTime\to +\infty$, the coefficients $\coefficientsLaurentStableRootAllParities_{\indexTime}$ follow the estimate 
    \begin{align}
        \coefficientsLaurentStableRootAllParities_{\indexTime} = 
         -\sqrt{\frac{2}{\pi}} \Bigl (&\sqrt{\sin(\vartheta_{\textnormal{I}})} \textnormal{Re}\bigl (\gamma(e^{i\vartheta_{\textnormal{I}}}) \bigl ( (e^{i\vartheta_{\textnormal{I}}}-e^{i\vartheta_{\textnormal{II}}})(e^{i\vartheta_{\textnormal{I}}}-e^{-i\vartheta_{\textnormal{II}}})\bigr )^{1/2} e^{-i(\indexTime -1/2)\vartheta_{\textnormal{I}} - i\pi/4} \bigr )\nonumber\\
        + &\sqrt{\sin(\vartheta_{\textnormal{II}})} \textnormal{Re}\bigl (  \gamma(e^{i\vartheta_{\textnormal{II}}}) \bigl ( (e^{i\vartheta_{\textnormal{II}}}-e^{i\vartheta_{\textnormal{I}}})(e^{i\vartheta_{\textnormal{II}}}-e^{-i\vartheta_{\textnormal{I}}})\bigr )^{1/2} e^{-i(\indexTime -1/2)\vartheta_{\textnormal{II}} - i\pi/4} \bigr ) \Bigr )\indexTime^{-3/2}+\bigO{\indexTime^{-5/2}},\label{eq:asymptoticsD1Q3}
    \end{align}
    where 
    \begin{align}
        \gamma(\timeShiftOperator) = \frac{ {3\left( {\timeShiftOperator} + 1\right)} }{2((2\courantNumber-1)(\courantNumber+2)\timeShiftOperator - (2\courantNumber+1)(\courantNumber-2))\timeShiftOperator},\quad 
        \vartheta_{\textnormal{I}}&=\arctan\Biggl (\tfrac{\sqrt{4(1-\courantNumber^2)\sqrt{(4\courantNumber^2-1)(\courantNumber^2-1)} - 8\courantNumber^4+13\courantNumber^2 + 4}}{2(1-\courantNumber^2) - \sqrt{(4\courantNumber^2-1)(\courantNumber^2-1)}}\Biggr ),\label{eq:thetaI}\\ 
        \text{and}\qquad \vartheta_{\textnormal{II}}&=\arctan\Biggl (\tfrac{\sqrt{-4(1-\courantNumber^2)\sqrt{(4\courantNumber^2-1)(\courantNumber^2-1)} - 8\courantNumber^4+13\courantNumber^2 + 4}}{2(1-\courantNumber^2) + \sqrt{(4\courantNumber^2-1)(\courantNumber^2-1)}}\Biggr ),\label{eq:thetaII}
    \end{align}
    with $\tfrac{\pi}{2}>\vartheta_{\textnormal{I}}> \vartheta_{\textnormal{II}}>0$.
\end{proposition}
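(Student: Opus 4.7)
The plan is to apply the analytic combinatorics procedure developed in \Cref{sec:backAsympt} to the generating function $\stableRoot(\timeShiftOperator^{-1})$, whose $\indexTime$-th Taylor coefficient is $\coefficientsLaurentStableRootAllParities_{\indexTime}$. The characteristic equation \eqref{eq:charEquationD1Q3FourthOrder} is quadratic in $\fourierShift$, so solving the quadratic yields a closed-form expression of the shape
\begin{equation*}
    \stableRoot(\timeShiftOperator) = \frac{P(\timeShiftOperator) - \sqrt{R(\timeShiftOperator)}}{2Q(\timeShiftOperator)},
\end{equation*}
where $P, Q$ are explicit polynomials in $\timeShiftOperator$ (read off from the coefficients of $\fourierShift^{0}$ and $\fourierShift^{\pm 1}$, respectively) and $R$ is the discriminant, a polynomial in $\timeShiftOperator$. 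The minus sign in front of the square root is selected by the requirement $\lim_{\timeShiftOperator\to\infty} \stableRoot(\timeShiftOperator) = 0$ (\Cref{lemma:herschD1Q3Fourth}).

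The first key step is to locate the singularities of $\stableRoot(\timeShiftOperator^{-1})$. After substituting $\timeShiftOperator\to\timeShiftOperator^{-1}$, the rational prefactor is regular at $\timeShiftOperator = 0$, so the only singular points are branch points, namely the roots of $R(\timeShiftOperator^{-1})$ (multiplied by the appropriate power of $\timeShiftOperator$ to clear denominators). A direct expansion shows that $R$ is a biquadratic expression in $\timeShiftOperator$ (up to monomial factors), so the branch points come in conjugate pairs $\{\pm e^{\pm i \vartheta}\}$. Solving the quadratic for $\cos(2\vartheta) = \tfrac{1}{2}(e^{2i\vartheta}+e^{-2i\vartheta})$ yields the two values $\vartheta_{\textnormal{I}}$ and $\vartheta_{\textnormal{II}}$ given in \eqref{eq:thetaI}--\eqref{eq:thetaII}. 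That all four branch points lie on $\unitCircle$ is an immediate consequence of \Cref{lemma:noDampingD1Q3Fourth}, which guarantees the absence of damping under the CFL condition \eqref{eq:stabilityConditionFourthOrder}; checking that $\tfrac{\pi}{2} > \vartheta_{\textnormal{I}} > \vartheta_{\textnormal{II}} > 0$ requires an elementary inspection of the sign and magnitudes of the two real quantities $2(1-\courantNumber^2) \pm \sqrt{(4\courantNumber^2-1)(\courantNumber^2-1)}$ for $|\courantNumber|<\tfrac12$.

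The second key step is a local (Puiseux) analysis at each singularity. Writing $R(\timeShiftOperator) = \prod_{k=1}^{4}(\timeShiftOperator - \singularity_k)$ with $\singularity_{1,2} = e^{\pm i\vartheta_{\textnormal{I}}}$ and $\singularity_{3,4} = e^{\pm i\vartheta_{\textnormal{II}}}$, one gets in the vicinity of each $\singularity = \singularity_k$
\begin{equation*}
    \stableRoot(\timeShiftOperator^{-1}) = (\text{regular part}) - \gamma(\singularity) \Bigl (\prod_{j\neq k}(\singularity - \singularity_j)\Bigr )^{1/2} \sqrt{1-\timeShiftOperator/\singularity} + \bigO{(1-\timeShiftOperator/\singularity)^{3/2}},
\end{equation*}
where $\gamma(\timeShiftOperator)$ is the rational prefactor obtained from $1/(2Q(\timeShiftOperator^{-1}))$ after clearing monomials, that is
\begin{equation*}
    \gamma(\timeShiftOperator) = \frac{3(\timeShiftOperator+1)}{2\bigl((2\courantNumber-1)(\courantNumber+2)\timeShiftOperator - (2\courantNumber+1)(\courantNumber-2)\bigr)\timeShiftOperator}.
\end{equation*}
The delicate point here is the choice of determination of the square root $(\prod_{j\neq k}(\singularity-\singularity_j))^{1/2}$, which must be made consistently so that the four Puiseux expansions assemble into the real function $\stableRoot(\timeShiftOperator^{-1})$ outside a branch cut exterior to $\closedUnitDisk$; the conjugation symmetry $\singularity_2=\overline{\singularity_1}$, $\singularity_4=\overline{\singularity_3}$ forces the two contributions within each conjugate pair to be complex-conjugate of each other.

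The final step applies the Flajolet--Odlyzko transfer theorem \cite[Theorem VI.4]{flajolet2009analytic}, stating $[\timeShiftOperator^{\indexTime}]\sqrt{1-\timeShiftOperator/\singularity} = -\frac{1}{2\sqrt{\pi}} \singularity^{-\indexTime} \indexTime^{-3/2}(1+\bigO{\indexTime^{-1}})$. Summing the four contributions and using $\singularity^{-\indexTime}+\overline{\singularity}^{-\indexTime} = 2\textnormal{Re}(\singularity^{-\indexTime})$ to group conjugate pairs produces two real-part terms, one indexed by $\vartheta_{\textnormal{I}}$ and one by $\vartheta_{\textnormal{II}}$. The factor $\sqrt{\sin(\vartheta)}$ and the extra phase $e^{-i(\indexTime-1/2)\vartheta-i\pi/4}$ in \eqref{eq:asymptoticsD1Q3} arise from rewriting $\bigl(\prod_{j\neq k}(\singularity_k-\singularity_j)\bigr)^{1/2}$ in polar form (which involves $2\sin(\vartheta_{\textnormal{I}})$ or $2\sin(\vartheta_{\textnormal{II}})$ as modulus, together with a $\pm i$ sign whose argument contributes $\pm\pi/4$ after taking the square root), combined with the factor $\singularity^{-\indexTime} = e^{\mp i \indexTime \vartheta}$ and the $-\tfrac{1}{2\sqrt{\pi}}$ prefactor. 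The main obstacle is keeping track of these phases and signs so as to recover exactly the formula \eqref{eq:asymptoticsD1Q3}; beyond this bookkeeping, no conceptually new ingredient is needed beyond what was already demonstrated for $\relaxationParameter = 2$ in \Cref{sec:backAsympt}.
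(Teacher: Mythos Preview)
Your approach is the same as the paper's, but two points need correction. First, the claim that ``the only singular points are branch points'' is incomplete: the denominator $2Q(\timeShiftOperator^{-1})$ (equivalently $\mathscr{D}(\timeShiftOperator)=2((2\courantNumber-1)(\courantNumber+2)\timeShiftOperator-(2\courantNumber+1)(\courantNumber-2))\timeShiftOperator$) contributes, besides the removable zero at $\timeShiftOperator=0$, a genuine pole of $\stableRoot(\timeShiftOperator^{-1})$ at $\singularity_0=(2\courantNumber+1)(\courantNumber-2)/((2\courantNumber-1)(\courantNumber+2))$. The paper checks explicitly that $\singularity_0>1$ for $0<\courantNumber<\tfrac12$, so this pole is subdominant with respect to the branch points on $\unitCircle$; without this check the transfer argument does not go through. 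Second, the discriminant is \emph{not} biquadratic: after clearing powers, the radicand reads $3\timeShiftOperator^4+8(\courantNumber^2-1)\timeShiftOperator^3-2(2\courantNumber^2-5)\timeShiftOperator^2+8(\courantNumber^2-1)\timeShiftOperator+3$, which carries odd-degree terms. It is palindromic (self-reciprocal), so the right substitution is $\timeShiftOperator+\timeShiftOperator^{-1}=2\cos\vartheta$, yielding a quadratic in $\cos\vartheta$ (not in $\cos 2\vartheta$). This is why the four branch points are two conjugate pairs $e^{\pm i\vartheta_{\textnormal{I}}}$, $e^{\pm i\vartheta_{\textnormal{II}}}$, rather than the negation-symmetric set $\{\pm e^{\pm i\vartheta}\}$ you first wrote; your later display $\singularity_{1,2}=e^{\pm i\vartheta_{\textnormal{I}}}$, $\singularity_{3,4}=e^{\pm i\vartheta_{\textnormal{II}}}$ is the correct one. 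Finally, invoking \Cref{lemma:noDampingD1Q3Fourth} to place the branch points on $\unitCircle$ is only heuristic (that lemma locates time-eigenvalues for $\fourierShift\in\unitCircle$, not the $\timeShiftOperator$-values where $\stableRoot=\unstableRoot$); the paper obtains $|\singularity_k|=1$ by direct computation of the roots. Once these three points are fixed, the rest of your Puiseux/transfer bookkeeping matches the paper's proof.
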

\begin{proof}
    Consider $\stableRoot(\timeShiftOperator^{-1}) = {\mathscr{N}(\timeShiftOperator)}/{\mathscr{D}(\timeShiftOperator)}$ with 
    \begin{align*}
        \mathscr{N}(\timeShiftOperator) &= 3  {\timeShiftOperator}^{3} + {\left(4  {\courantNumber}^{2} - 1\right)} ({\timeShiftOperator}^{2}-\timeShiftOperator)  - 3 +  {\sqrt{3}\left( {\timeShiftOperator} + 1\right)} \sqrt{8  {\left({\courantNumber}^{2} - 1\right)} {\timeShiftOperator}^{3} + 3  {\timeShiftOperator}^{4} - 2  {\left(2  {\courantNumber}^{2} - 5\right)} {\timeShiftOperator}^{2} + 8  {\left({\courantNumber}^{2} - 1\right)} {\timeShiftOperator} + 3}, \\
        \mathscr{D}(\timeShiftOperator) &=2((2\courantNumber-1)(\courantNumber+2)\timeShiftOperator - (2\courantNumber+1)(\courantNumber-2))\timeShiftOperator,
    \end{align*}
    for which one easily sees that $\singularity = 0$ is a removable singularity.
    Conversely, the point $\singularity = (2\courantNumber+1)(\courantNumber-2)/((2\courantNumber-1)(\courantNumber+2))$ is a pole singularity of $\stableRoot(\timeShiftOperator^{-1})$.
    One easily checks that $\singularity > 1$ for $0<\courantNumber<\tfrac{1}{2}$. Therefore, this is not the dominant singularity, as we shall see that the other singularities yet to be considered dwell closer to the origin.
    These are the four branch points in the square root, which are pairwise complex conjugate, and given by 
    \begin{align*}
        \singularity_1 &= \tfrac{2}{3}(1-\courantNumber^2) - \tfrac{1}{3}\sqrt{\tilde{\Pi}(\courantNumber)} + \tfrac{i}{3}\sqrt{4(1-\courantNumber^2)\sqrt{\tilde{\Pi}(\courantNumber)} - 8\courantNumber^4+13\courantNumber^2 + 4} = e^{i\vartheta_{\textnormal{I}}} \in\unitCircle, \\
        \singularity_2 &= \tfrac{2}{3}(1-\courantNumber^2) - \tfrac{1}{3}\sqrt{\tilde{\Pi}(\courantNumber)} - \tfrac{i}{3}\sqrt{4(1-\courantNumber^2)\sqrt{\tilde{\Pi}(\courantNumber)} - 8\courantNumber^4+13\courantNumber^2 + 4} = e^{-i\vartheta_{\textnormal{I}}} \in\unitCircle, \\
        \singularity_3 &= \tfrac{2}{3}(1-\courantNumber^2) + \tfrac{1}{3}\sqrt{\tilde{\Pi}(\courantNumber)} + \tfrac{i}{3}\sqrt{-4(1-\courantNumber^2)\sqrt{\tilde{\Pi}(\courantNumber)} - 8\courantNumber^4+13\courantNumber^2 + 4} = e^{i\vartheta_{\textnormal{II}}} \in\unitCircle, \\
        \singularity_4 &= \tfrac{2}{3}(1-\courantNumber^2) + \tfrac{1}{3}\sqrt{\tilde{\Pi}(\courantNumber)} - \tfrac{i}{3}\sqrt{-4(1-\courantNumber^2)\sqrt{\tilde{\Pi}(\courantNumber)} - 8\courantNumber^4+13\courantNumber^2 + 4} = e^{-i\vartheta_{\textnormal{II}}} \in\unitCircle,
    \end{align*}
    where we have defined $\tilde{\Pi}(\courantNumber)= (4\courantNumber^2-1)(\courantNumber^2-1)>0$, and 
    \begin{equation}\label{eq:argumentD1Q3One}
        \vartheta_{\textnormal{I}}=\arctan\Biggl (\tfrac{\sqrt{4(1-\courantNumber^2)\sqrt{\tilde{\Pi}(\courantNumber)} - 8\courantNumber^4+13\courantNumber^2 + 4}}{2(1-\courantNumber^2) - \sqrt{\tilde{\Pi}(\courantNumber)}}\Biggr ), \qquad 
        \vartheta_{\textnormal{II}}=\arctan\Biggl (\tfrac{\sqrt{-4(1-\courantNumber^2)\sqrt{\tilde{\Pi}(\courantNumber)} - 8\courantNumber^4+13\courantNumber^2 + 4}}{2(1-\courantNumber^2) + \sqrt{\tilde{\Pi}(\courantNumber)}}\Biggr ). 
    \end{equation}
    One can check that in these expressions, the terms under square roots are positive.
    From \Cref{fig:thetasD1Q3}, we understand that $\singularity_3$ and $\singularity_4$ tend to the same point equal to $1$ as $\courantNumber\to 0$, whereas $\singularity_1\to\singularity_3$ and $\singularity_2\to\singularity_4$ as $\courantNumber\to \tfrac{1}{2}$.
    Moreover, the $\singularity$'s stay in the right half-plane for every value of $\courantNumber$.
    In order to study the impact of these singularities, we consider $\psi(\timeShiftOperator) = \gamma(\timeShiftOperator)({\prod_{k = 1}^{4}(\timeShiftOperator-\singularity_k)})^{1/2}$ with $\gamma(\timeShiftOperator) = { {3\left( {\timeShiftOperator} + 1\right)} }/{\mathscr{D}(\timeShiftOperator)}$.
    Note that the coefficients of $\gamma(\timeShiftOperator)$ are real, which entails that $\gamma(\overline{\timeShiftOperator}) = \overline{\gamma(\timeShiftOperator)}$.
    In the vicinity of $\singularity_1$, we have 
    \begin{equation*}
        \psi(\timeShiftOperator)= \sqrt{2}\sqrt{\sin(\vartheta_{\textnormal{I}})} \gamma(\singularity_1) \bigl ( (e^{i\vartheta_{\textnormal{I}}}-e^{i\vartheta_{\textnormal{II}}})(e^{i\vartheta_{\textnormal{I}}}-e^{-i\vartheta_{\textnormal{II}}})\bigr )^{1/2} e^{i\vartheta_{\textnormal{I}}/2 - i\pi/4} \sqrt{1-\frac{\timeShiftOperator}{\singularity_1}} + \bigO{(1-\timeShiftOperator/\singularity_1)^{3/2}},
    \end{equation*}
    whereas close to $\singularity_2$
    \begin{equation*}
        \psi(\timeShiftOperator)= \sqrt{2}\sqrt{\sin(\vartheta_{\textnormal{I}})} \overline{\gamma(\singularity_1) \bigl ( (e^{i\vartheta_{\textnormal{I}}}-e^{i\vartheta_{\textnormal{II}}})(e^{i\vartheta_{\textnormal{I}}}-e^{-i\vartheta_{\textnormal{II}}})\bigr )^{1/2} e^{i\vartheta_{\textnormal{I}}/2 - i\pi/4} } \sqrt{1-\frac{\timeShiftOperator}{\singularity_2}} + \bigO{(1-\timeShiftOperator/\singularity_2)^{3/2}}.
    \end{equation*}
    In the same way, close to $\singularity_3$ and $\singularity_4$, we obtain 
    \begin{align*}
        \psi(\timeShiftOperator)&= \sqrt{2}\sqrt{\sin(\vartheta_{\textnormal{II}})} \gamma(\singularity_3) \bigl ( (e^{i\vartheta_{\textnormal{II}}}-e^{i\vartheta_{\textnormal{I}}})(e^{i\vartheta_{\textnormal{II}}}-e^{-i\vartheta_{\textnormal{I}}})\bigr )^{1/2} e^{i\vartheta_{\textnormal{II}}/2 - i\pi/4} \sqrt{1-\frac{\timeShiftOperator}{\singularity_3}} + \bigO{(1-\timeShiftOperator/\singularity_3)^{3/2}},\\
        \psi(\timeShiftOperator)&= \sqrt{2}\sqrt{\sin(\vartheta_{\textnormal{II}})} \overline{\gamma(\singularity_3) \bigl ( (e^{i\vartheta_{\textnormal{II}}}-e^{i\vartheta_{\textnormal{I}}})(e^{i\vartheta_{\textnormal{II}}}-e^{-i\vartheta_{\textnormal{I}}})\bigr )^{1/2} e^{i\vartheta_{\textnormal{II}}/2 - i\pi/4}} \sqrt{1-\frac{\timeShiftOperator}{\singularity_4}} + \bigO{(1-\timeShiftOperator/\singularity_4)^{3/2}}.
    \end{align*}
    This gives the result, by virtue of \cite[Figure VI.5 and Theorem VI.5]{flajolet2009analytic}.
\end{proof}

\begin{figure}
    \begin{center}
        \includegraphics[width=1\textwidth]{./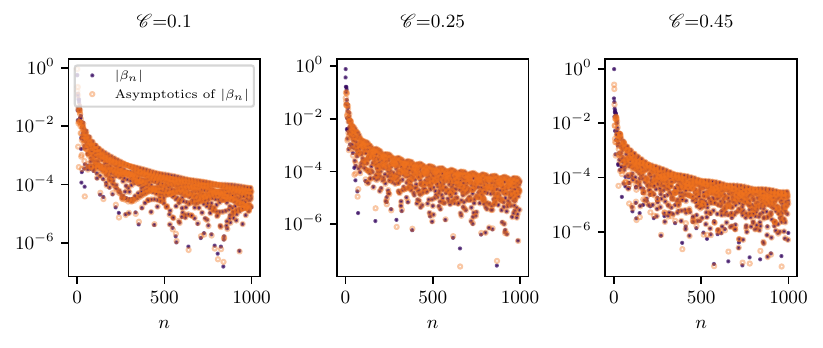}
    \end{center}\caption{\label{fig:D1Q3-4th-asymptotic}Values of $|\coefficientsLaurentStableRootAllParities_{\indexTime}|$ versus their asymptotics given by the leading order of the right-hand side of \eqref{eq:asymptoticsD1Q3}.}
\end{figure}

\Cref{fig:D1Q3-4th-asymptotic} demonstrates the accuracy of the asymptotic expansion \eqref{eq:asymptoticsD1Q3} for different Courant numbers.

\paragraph{Numerical experiments}

\begin{figure}
    \begin{center}
        \includegraphics[width=1\textwidth]{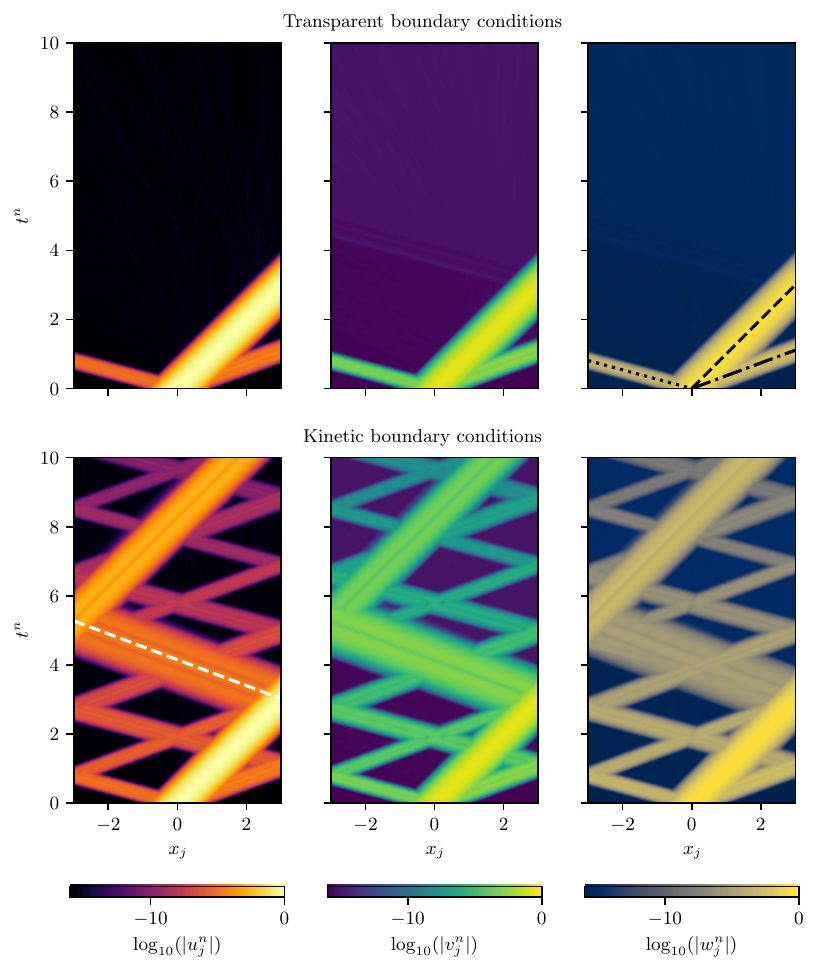}
    \end{center}\caption{\label{fig:D1Q3-4th-order-BC-comparison}Solutions for the fourth-order \lbmScheme{1}{3} endowed with different boundary conditions. Black lines indicate the slope $\latticeVelocity\courantNumber = \advectionVelocity$ for the dashed one, $-\tfrac{\latticeVelocity}{6}(3\courantNumber+\sqrt{24-15\courantNumber^2})$ for the dotted one, and $-\tfrac{\latticeVelocity}{6}(3\courantNumber-\sqrt{24-15\courantNumber^2})$ for the dash-dotted one, \confer{} \eqref{eq:groupVelocitySmoothParasiticD1Q3}, in the $\spaceVariable/\timeVariable$ plane. The white line corresponds to the slope $-3\advectionVelocity/(2\courantNumber^2+1)$, \confer{} \eqref{eq:groupVelocityD1Q3Fourth}.}
\end{figure}

We perform numerical simulations in the same setting as the \lbmScheme{1}{2} scheme, \confer{} \Cref{sec:accuracyCoeffs}, except for the fact of taking $\latticeVelocity = 4$ to ensure stability.
We compare the transparent boundary conditions to the kinetic boundary conditions given by \eqref{eq:kineticBoundaryConditions}, obtaining \Cref{fig:D1Q3-4th-order-BC-comparison}.
\begin{itemize}
    \item For transparent boundary conditions, nothing special has to be said, except the fact that we observe the effect of the lack of continuous extension of the eigenvectors concerning $\nonConservedMoment$ and $\nonNonConservedMoment$, \confer{} \Cref{lemma:eigenvectorsD1Q3Fourth}.
    \item For the kinetic boundary conditions, we observe repeated reflections. 
    The results feature an endless exchange between the two spurious modes, especially around the double root $(-1, 1)$ of the dispersion relation, and which travel at the (rapid) speed prescribed by \eqref{eq:groupVelocitySmoothParasiticD1Q3}.
    Precisely tracking what is the right-going physical wave at the beginning of the simulation, the first reflection is $(1, 1)\rightsquigarrow (1, -1)$, where this time-smooth--space-checkerboard mode travels at the easily computable group velocity
    \begin{equation}\label{eq:groupVelocityD1Q3Fourth}
        \textnormal{V}_{\textnormal{g}}(\eta\timeStep = 0, \frequency\spaceStep = \pm\pi) = -\frac{3\advectionVelocity}{2\courantNumber^2 + 1}.
    \end{equation}
    The next reflection on $\leftBoundary$ gives $(1, -1) \rightsquigarrow (1, 1)$.
\end{itemize}

\subsection{2D setting}\label{sec:2DScalar}

\subsubsection{Target partial differential equation}

We approximate the solution of 
\begin{equation*}
    \begin{dcases}
        \begin{aligned}
            &\partial_{\timeVariable}\conservedMoment(\timeVariable, \vectorial{\spaceVariable}) + \advectionVelocity_{\xLabel}\partial_{\xLabel}\conservedMoment(\timeVariable, \vectorial{\spaceVariable}) + \advectionVelocity_{\yLabel}\partial_{\yLabel}\conservedMoment(\timeVariable, \vectorial{\spaceVariable})  = 0, \qquad &\timeVariable>0, \quad &\vectorial{\spaceVariable}\in\reals^2, \\
            &\conservedMoment(0, \vectorial{\spaceVariable})  = \conservedMoment^{\circ}(\vectorial{\spaceVariable}), \qquad &&\vectorial{\spaceVariable}\in\reals^2.
        \end{aligned}
    \end{dcases}
\end{equation*}
The exact solution is $\conservedMoment(\timeVariable, \vectorial{\spaceVariable}) =  \conservedMoment^{\circ}(\vectorial{\spaceVariable}-\transpose{(\advectionVelocity_{\xLabel}, \advectionVelocity_{\yLabel})}\timeVariable)$.
Numerical simulations must be conducted on the bounded domain $[\leftBoundary, \rightBoundary]\times [\bottomBoundary, \topBoundary]$.

\subsubsection{Space and time discretization}

As space and time discretization are very similar to the 1D setting, we just detail new features.
For the sake of simplicity, we assume that 
\begin{equation*}
    \frac{\topBoundary-\bottomBoundary}{\rightBoundary-\leftBoundary}\in \mathbb{Q}\cap (0, +\infty),
\end{equation*}
so that the interval $[\bottomBoundary, \topBoundary]$ along the $\yLabel$-axis is discretized using $\numberGridPointsOnY + 2$ points, with 
\begin{equation*}
    \numberGridPointsOnY\definitionEquality\frac{\topBoundary-\bottomBoundary}{\rightBoundary-\leftBoundary}\numberGridPoints,
\end{equation*}
that we assume $\numberGridPointsOnY\in\naturals$.
This entails that the space step along the $\yLabel$-axis is also given by $\spaceStep$, and we have $\spaceGridPointOnY{k}\definitionEquality\bottomBoundary + k\spaceStep$, with $k\in\integerInterval{0}{\numberGridPointsOnY + 1}$.
We have Courant numbers along each axis, given by $\courantNumber_{\xLabel}\definitionEquality\advectionVelocity_{\xLabel}/\latticeVelocity$ and $\courantNumber_{\yLabel}\definitionEquality\advectionVelocity_{\yLabel}/\latticeVelocity$.

\subsubsection{Numerical scheme and its properties: link TRT \lbmScheme{2}{5} scheme with magic parameter equal to $1/4$}

This numerical scheme has been introduced in \cite{d2009viscosity}.
More recently \cite{bellotti2024initialisation}, we have rediscovered this approach and have shown one of its peculiar features: a simple dispersion relation that---despite the 2D setting---promises quite explicit stability conditions.
Moreover, in the case of relaxation parameter equal to two, this scheme essentially becomes the 2D leap-frog scheme of \cite{besse2021discrete}, and---as these authors point out---``exhibits some ``dimensional splitting'''' which facilitates the construction of boundary conditions without having to deal with corners. 
Overall, this numerical method shares similarities with the \lbmScheme{1}{2} scheme.

\begin{itemize}
    \item \strong{Relaxation} on the moments: 
    \begin{align*}
        \conservedMoment_{\vectorial{\indexSpace}}^{\indexTime, \collided} = \conservedMoment_{\vectorial{\indexSpace}}^{\indexTime}, \qquad 
        \antiSymmetricMomentX{\vectorial{\indexSpace}}^{\indexTime, \collided} &= (1-\relaxationParameter) \antiSymmetricMomentX{\vectorial{\indexSpace}}^{\indexTime}+\relaxationParameter \overbrace{\courantNumberX \conservedMoment_{\vectorial{\indexSpace}}^{\indexTime}}^{\antiSymmetricMomentXLetter^{\atEquilibrium}(\conservedMoment_{\vectorial{\indexSpace}}^{\indexTime})}, \qquad \symmetricMomentX{\vectorial{\indexSpace}}^{\indexTime, \collided} = (\relaxationParameter-1) \symmetricMomentX{\vectorial{\indexSpace}}^{\indexTime}+(2-\relaxationParameter) \overbrace{\equilibriumCoefficientSymmetricX \conservedMoment_{\vectorial{\indexSpace}}^{\indexTime}}^{\symmetricMomentXLetter^{\atEquilibrium}(\conservedMoment_{\vectorial{\indexSpace}}^{\indexTime})}, \\
        \antiSymmetricMomentY{\vectorial{\indexSpace}}^{\indexTime, \collided} &= (1-\relaxationParameter) \antiSymmetricMomentY{\vectorial{\indexSpace}}^{\indexTime}+\relaxationParameter \underbrace{\courantNumberY \conservedMoment_{\vectorial{\indexSpace}}^{\indexTime}}_{\antiSymmetricMomentYLetter^{\atEquilibrium}(\conservedMoment_{\vectorial{\indexSpace}}^{\indexTime})}, \qquad \symmetricMomentY{\vectorial{\indexSpace}}^{\indexTime, \collided} = (\relaxationParameter-1) \symmetricMomentY{\vectorial{\indexSpace}}^{\indexTime}+(2-\relaxationParameter) \underbrace{\equilibriumCoefficientSymmetricY \conservedMoment_{\vectorial{\indexSpace}}^{\indexTime}}_{\symmetricMomentYLetter^{\atEquilibrium}(\conservedMoment_{\vectorial{\indexSpace}}^{\indexTime})},
    \end{align*}
    for $\vectorial{\indexSpace}\in\integerInterval{0}{\numberGridPoints + 1}\times \integerInterval{0}{\numberGridPointsOnY + 1} \smallsetminus \{(0, 0), (0, \numberGridPointsOnY + 1), (\numberGridPoints + 1, 0), (\numberGridPoints + 1, \numberGridPointsOnY + 1)\}$, i.e. on the inner domain, plus boundary strips parallel to the axes, excluding corners.
    The magic parameter is the product of the H\'enon's parameters of $\nonConservedMoment$ and $\nonNonConservedMoment$ (whatever the considered axis), which reads 
    \begin{equation*}
        \Bigl (\frac{1}{\relaxationParameter}-\frac{1}{2} \Bigr )\times\Bigl (\frac{1}{2-\relaxationParameter}-\frac{1}{2} \Bigr ) = \frac{1}{4}.
    \end{equation*}
    Notice that upon taking $\relaxationParameter = 2$, we also ``artificially'' enforce conservation for the $\nonNonConservedMoment$'s.
    However, note that in this case the equilibrium of $\nonConservedMoment$ depends on $\conservedMoment$, and not on the $\nonNonConservedMoment$'s.
    The real coefficients $\equilibriumCoefficientSymmetricX$ and $\equilibriumCoefficientSymmetricY$ tune dissipative features, and are pointless whenever $\relaxationParameter = 2$.
    \item \strong{Transport} on the distribution functions. After having set
    \begin{equation*}
        \momentMatrix 
        = 
        \begin{pmatrix}
            1 & 1 & 1 & 1 & 1\\
            0 & 1 & -1 & 0 & 0\\
            0 & 1 & 1 & 0 & 0\\
            0 & 0 & 0 & 1 & -1\\
            0 & 0 & 0 & 1 & 1
        \end{pmatrix} 
        \qquad \text{and computed}\qquad 
        \begin{pmatrix}
            \distributionFunctionLetter_{\labZeroVel, \vectorial{\indexSpace}}^{\indexTime, \collided}\\
            \distributionFunctionLetter_{\labPosX, \vectorial{\indexSpace}}^{\indexTime, \collided}\\
            \distributionFunctionLetter_{\labNegX, \vectorial{\indexSpace}}^{\indexTime, \collided}\\
            \distributionFunctionLetter_{\labPosY, \vectorial{\indexSpace}}^{\indexTime, \collided}\\
            \distributionFunctionLetter_{\labNegY, \vectorial{\indexSpace}}^{\indexTime, \collided}
        \end{pmatrix}
        = \momentMatrix^{-1}
        \begin{pmatrix}
            \conservedMoment_{\vectorial{\indexSpace}}^{\indexTime, \collided}\\
            \antiSymmetricMomentX{\vectorial{\indexSpace}}^{\indexTime, \collided}\\
            \symmetricMomentX{\vectorial{\indexSpace}}^{\indexTime, \collided}\\
            \antiSymmetricMomentY{\vectorial{\indexSpace}}^{\indexTime, \collided}\\
            \symmetricMomentY{\vectorial{\indexSpace}}^{\indexTime, \collided}
        \end{pmatrix},
    \end{equation*}
    for $\vectorial{\indexSpace}\in\integerInterval{0}{\numberGridPoints + 1}\times \integerInterval{0}{\numberGridPointsOnY + 1} \smallsetminus \{(0, 0), (0, \numberGridPointsOnY + 1), (\numberGridPoints + 1, 0), (\numberGridPoints + 1, \numberGridPointsOnY + 1)\}$, transport reads 
    \begin{align*}
        \distributionFunctionLetter_{\labZeroVel, \vectorial{\indexSpace}}^{\indexTime + 1} = \distributionFunctionLetter_{\labZeroVel, \vectorial{\indexSpace}}^{\indexTime, \collided}, 
        \qquad 
        \distributionFunctionLetter_{\labPosX, \vectorial{\indexSpace}}^{\indexTime + 1} &= \distributionFunctionLetter_{\labPosX, \vectorial{\indexSpace}-\canonicalBasisVector{\xLabel}}^{\indexTime, \collided}, \qquad 
        \distributionFunctionLetter_{\labNegX, \vectorial{\indexSpace}}^{\indexTime + 1} = \distributionFunctionLetter_{\labNegX, \vectorial{\indexSpace}+\canonicalBasisVector{\xLabel}}^{\indexTime, \collided}, \\
        \distributionFunctionLetter_{\labPosY, \vectorial{\indexSpace}}^{\indexTime + 1} &= \distributionFunctionLetter_{\labPosY, \vectorial{\indexSpace}-\canonicalBasisVector{\yLabel}}^{\indexTime, \collided}, \qquad 
        \distributionFunctionLetter_{\labNegY, \vectorial{\indexSpace}}^{\indexTime + 1} = \distributionFunctionLetter_{\labNegY, \vectorial{\indexSpace}+\canonicalBasisVector{\yLabel}}^{\indexTime, \collided}, \qquad \vectorial{\indexSpace}\in \integerInterval{1}{\numberGridPoints}\times \integerInterval{1}{\numberGridPointsOnY}.
    \end{align*}
    We then compute $\transpose{(\conservedMoment_{\vectorial{\indexSpace}}^{\indexTime+1},
    \antiSymmetricMomentX{\vectorial{\indexSpace}}^{\indexTime+1},
    \symmetricMomentX{\vectorial{\indexSpace}}^{\indexTime+1},
    \antiSymmetricMomentY{\vectorial{\indexSpace}}^{\indexTime+1},
    \symmetricMomentY{\vectorial{\indexSpace}}^{\indexTime+1})} = \momentMatrix \transpose{(\distributionFunctionLetter_{\labZeroVel, \vectorial{\indexSpace}}^{\indexTime+1},
    \distributionFunctionLetter_{\labPosX, \vectorial{\indexSpace}}^{\indexTime+1},
    \distributionFunctionLetter_{\labNegX, \vectorial{\indexSpace}}^{\indexTime+1},
    \distributionFunctionLetter_{\labPosY, \vectorial{\indexSpace}}^{\indexTime+1},
    \distributionFunctionLetter_{\labNegY, \vectorial{\indexSpace}}^{\indexTime+1})}$ for $ \vectorial{\indexSpace}\in \integerInterval{1}{\numberGridPoints}\times \integerInterval{1}{\numberGridPointsOnY}$, hence only on the inner domain.
    \item \strong{Boundary conditions}. Compute $(\conservedMoment_{\vectorial{\indexSpace}}^{\indexTime+1},
    \antiSymmetricMomentX{\vectorial{\indexSpace}}^{\indexTime+1},
    \symmetricMomentX{\vectorial{\indexSpace}}^{\indexTime+1},
    \antiSymmetricMomentY{\vectorial{\indexSpace}}^{\indexTime+1},
    \symmetricMomentY{\vectorial{\indexSpace}}^{\indexTime+1})$ for $\vectorial{\indexSpace}\in \{0, \numberGridPoints + 1\}\times \integerInterval{1}{\numberGridPointsOnY} \cup \integerInterval{1}{\numberGridPoints}\times  \{0, \numberGridPointsOnY + 1\}$ as described in \Cref{sec:bc2D}.
\end{itemize}

In \cite{bellotti2024initialisation}, we have proved that the characteristic equation for the scheme matrix $\schemeMatrixBulkFourier(\fourierShift_{\xLabel}, \fourierShift_{\yLabel})$ (with obvious notations concerning $\fourierShift_{\xLabel}$ and $\fourierShift_{\yLabel}$) reads
\begin{align}
    \determinant(\timeShiftOperator\identityMatrix{5} - \schemeMatrixBulkFourier(\fourierShift_{\xLabel}, \fourierShift_{\yLabel})) = (\timeShiftOperator + (1-\relaxationParameter))(\timeShiftOperator^2 &- (1-\relaxationParameter)^2) \Psi_{2}(\timeShiftOperator) = 0,\label{eq:charEquationD2Q5Magic}\\
    \text{where}\qquad 
    \Psi_{2}(\timeShiftOperator)\definitionEquality\timeShiftOperator^2 + \Bigl ( (\relaxationParameter-2) &- \frac{\relaxationParameter\courantNumberX}{2}(\fourierShift_{\xLabel}^{-1} - \fourierShift_{\xLabel}) + \frac{(\relaxationParameter-2)\equilibriumCoefficientSymmetricX}{2}(\fourierShift_{\xLabel}^{-1} -2 + \fourierShift_{\xLabel}) \nonumber\\
    &- \frac{\relaxationParameter\courantNumberY}{2}(\fourierShift_{\yLabel}^{-1} - \fourierShift_{\yLabel}) + \frac{(\relaxationParameter-2)\equilibriumCoefficientSymmetricY}{2}(\fourierShift_{\yLabel}^{-1} -2 + \fourierShift_{\yLabel})\Bigr )\timeShiftOperator + (1 - \relaxationParameter).\nonumber
\end{align}
We then see that $\Psi_{2}(\timeShiftOperator)$ becomes, when $\relaxationParameter = 2$, the amplification polynomial of a two-dimensional leap-frog scheme treated in \cite{besse2021discrete}.
In this setting, we also have the constant roots $\timeShiftOperator = 1$ (double) and $\timeShiftOperator = -1$ (simple). The former encode that the $\nonNonConservedMoment$'s are (artificially) conserved. 
Yet, these eigenvalues are constant in $\fourierShift$ because the equilibria do not depend on these moments.

\begin{figure}
    \begin{center}
        \includegraphics[width=1\textwidth]{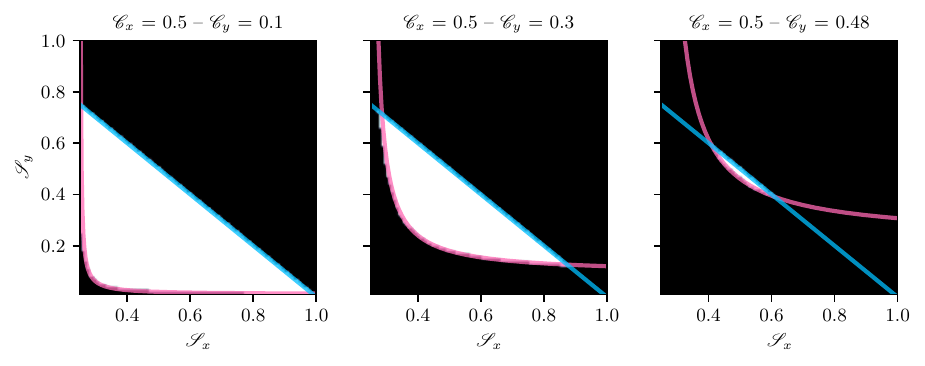}
    \end{center}\caption{\label{fig:stabD2Q5magic}Stability area (white), where \eqref{eq:neuamnnConditionD2Q5MagicNotTwo} holds, for different values of $\courantNumberX$ and $\courantNumberY$, as function of $\equilibriumCoefficientSymmetricX$ and $\equilibriumCoefficientSymmetricY$. This area has been found numerically. The light blue line indicates the constraint \eqref{eq:finalCondition}, whereas the pink one indicates the second constraint in \eqref{eq:positiveDiffusion}.}
\end{figure}

\begin{proposition}[Von Neumann stability of the link TRT \lbmScheme{2}{5} scheme on $\relatives^2$]\label{prop:stabilityD2Q5}
    The link TRT \lbmScheme{2}{5} scheme with magic parameter equal to $\tfrac{1}{4}$ is stable in the sense of von Neumann, namely $\spectrum(\schemeMatrixBulkFourier(e^{i\frequency_{\xLabel}\spaceStep}, e^{i\frequency_{\yLabel}\spaceStep}))\subset \closedUnitDisk$ for $\frequency_{\xLabel}\spaceStep, \frequency_{\yLabel}\spaceStep\in[-\pi, \pi]$, under the following necessary and sufficient conditions.
    \begin{itemize}
        \item For $\relaxationParameter = 2$, the condition\footnote{We know that equality cannot hold to have actual $\ell^2$-stability, due to coinciding roots on $\unitCircle$ midway in the spectrum.} is 
        \begin{equation}\label{eq:stabCondition2DLeapFrof}
            |\courantNumberX| + |\courantNumberY| \leq 1.
        \end{equation}
        \item For $\relaxationParameter \in (0, 2)$, the condition is 
        \begin{multline}\label{eq:neuamnnConditionD2Q5MagicNotTwo}
            -\bigl ( (\courantNumberX^2-\equilibriumCoefficientSymmetricX^2)\mu_{\xLabel}^2 - (\courantNumberX^2 - \equilibriumCoefficientSymmetricX)\bigr )\mu_{\xLabel}^2 -\bigl ( (\courantNumberY^2-\equilibriumCoefficientSymmetricY^2)\mu_{\yLabel}^2 - (\courantNumberY^2 - \equilibriumCoefficientSymmetricY)\bigr )\mu_{\yLabel}^2\\
            +2\Bigl (\equilibriumCoefficientSymmetricX\equilibriumCoefficientSymmetricY \mu_{\xLabel}\mu_{\yLabel} + \courantNumberX\courantNumberY \sqrt{(1-\mu_{\xLabel}^2)(1-\mu_{\yLabel}^2)}\Bigr ) \mu_{\xLabel}\mu_{\yLabel}\leq 0, \qquad \forall (\mu_{\xLabel}, \mu_{\yLabel})\in[-1, 1]^2.
        \end{multline}
    \end{itemize}

    Moreover, when $\relaxationParameter\in(0, 2)$, a necessary condition to fulfill \eqref{eq:neuamnnConditionD2Q5MagicNotTwo} is that \eqref{eq:stabCondition2DLeapFrof} holds.
    Provided that this latter holds, another necessary condition for having \eqref{eq:neuamnnConditionD2Q5MagicNotTwo} reads 
    \begin{equation}\label{eq:1DCondition}
        \courantNumberX^2\leq \equilibriumCoefficientSymmetricX\leq 1\qquad \text{and}\qquad 
        \courantNumberY^2\leq \equilibriumCoefficientSymmetricY\leq 1.
    \end{equation}
    Assuming that \eqref{eq:stabCondition2DLeapFrof}--\eqref{eq:1DCondition} are met, an additional necessary condition for \eqref{eq:neuamnnConditionD2Q5MagicNotTwo} reads 
    \begin{equation}\label{eq:finalCondition}
        \equilibriumCoefficientSymmetricX + \equilibriumCoefficientSymmetricY \leq 1.
    \end{equation}
\end{proposition}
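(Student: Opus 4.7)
My plan hinges on the factorisation of \eqref{eq:charEquationD2Q5Magic}: the factors $\timeShiftOperator+(1-\relaxationParameter)$ and $\timeShiftOperator^2-(1-\relaxationParameter)^2$ only contribute the constant eigenvalues $\pm(1-\relaxationParameter)\in\closedUnitDisk$ for every $\relaxationParameter\in(0,2]$, so the von Neumann analysis reduces to placing both roots of $\Psi_2(\timeShiftOperator)=\timeShiftOperator^2+B(\fourierShift_{\xLabel},\fourierShift_{\yLabel})\timeShiftOperator+(1-\relaxationParameter)$ in $\closedUnitDisk$ for all $\fourierShift_{\xLabel},\fourierShift_{\yLabel}\in\unitCircle$. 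Setting $\fourierShift_{\xLabel}=e^{i\frequency_{\xLabel}\spaceStep}$, $\fourierShift_{\yLabel}=e^{i\frequency_{\yLabel}\spaceStep}$ and introducing $\alpha_{\xLabel}\definitionEquality 1-\cos(\frequency_{\xLabel}\spaceStep)$, $\beta_{\xLabel}\definitionEquality\sin(\frequency_{\xLabel}\spaceStep)$ and their $\yLabel$-analogues, a direct computation yields $\textnormal{Re}(B)=(\relaxationParameter-2)(1-\equilibriumCoefficientSymmetricX\alpha_{\xLabel}-\equilibriumCoefficientSymmetricY\alpha_{\yLabel})$ and $\textnormal{Im}(B)=\relaxationParameter(\courantNumberX\beta_{\xLabel}+\courantNumberY\beta_{\yLabel})$.

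For $\relaxationParameter=2$, $B$ is purely imaginary and the constant term of $\Psi_2$ equals $-1$, so the product of the roots has modulus one and both roots lie in $\closedUnitDisk$ only if they lie on $\unitCircle$, which happens iff the discriminant $4-\textnormal{Im}(B)^2$ is non-negative; taking the supremum over $\frequency_{\xLabel}\spaceStep,\frequency_{\yLabel}\spaceStep\in[-\pi,\pi]$ of $|\courantNumberX\beta_{\xLabel}+\courantNumberY\beta_{\yLabel}|\leq 1$ is precisely \eqref{eq:stabCondition2DLeapFrof}. For $\relaxationParameter\in(0,2)$, since $|1-\relaxationParameter|<1$, the Schur--Cohn reduction applied to $\Psi_2$ is equivalent to $|B-(1-\relaxationParameter)\overline{B}|\leq\relaxationParameter(2-\relaxationParameter)$. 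Observing that $B-(1-\relaxationParameter)\overline{B}=\relaxationParameter\textnormal{Re}(B)+i(2-\relaxationParameter)\textnormal{Im}(B)$, squaring, and dividing by $\relaxationParameter^2(2-\relaxationParameter)^2$ reduces this to $(1-\equilibriumCoefficientSymmetricX\alpha_{\xLabel}-\equilibriumCoefficientSymmetricY\alpha_{\yLabel})^2+(\courantNumberX\beta_{\xLabel}+\courantNumberY\beta_{\yLabel})^2\leq 1$. Performing the half-angle substitution $\mu_{\xLabel}\definitionEquality\sin(\frequency_{\xLabel}\spaceStep/2)$, $\mu_{\yLabel}\definitionEquality\sin(\frequency_{\yLabel}\spaceStep/2)$, which bijects $[-\pi,\pi]^2$ onto $[-1,1]^2$ and gives $\alpha_{\xLabel}=2\mu_{\xLabel}^2$, $\beta_{\xLabel}=2\mu_{\xLabel}\sqrt{1-\mu_{\xLabel}^2}$ with the positive root absorbed into the sign of $\mu_{\xLabel}$, and finally collecting the resulting polynomial in $(\mu_{\xLabel},\mu_{\yLabel})$ recovers exactly \eqref{eq:neuamnnConditionD2Q5MagicNotTwo}.

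The three necessary conditions follow by specialising $(\mu_{\xLabel},\mu_{\yLabel})$ in \eqref{eq:neuamnnConditionD2Q5MagicNotTwo}. Setting $\mu_{\yLabel}=0$ reduces the inequality to a polynomial in $\mu_{\xLabel}^2\in[0,1]$ whose behaviour near $0$ forces $\equilibriumCoefficientSymmetricX\geq\courantNumberX^2$ and whose evaluation at $\mu_{\xLabel}^2=1$ forces $\equilibriumCoefficientSymmetricX\leq 1$, yielding \eqref{eq:1DCondition} after a symmetric argument. Taking $\mu_{\xLabel}^2=\mu_{\yLabel}^2=1$ kills the square-root term and collapses the inequality to $(\equilibriumCoefficientSymmetricX+\equilibriumCoefficientSymmetricY)(\equilibriumCoefficientSymmetricX+\equilibriumCoefficientSymmetricY-1)\leq 0$, yielding \eqref{eq:finalCondition}. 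Finally, letting $\mu_{\xLabel}=\sigma\mu_{\yLabel}$ with $\sigma\in\{\pm 1\}$ and extracting the zeroth-order term of the resulting polynomial in $\mu_{\yLabel}^2$ produces $(\courantNumberX+\sigma\courantNumberY)^2\leq\equilibriumCoefficientSymmetricX+\equilibriumCoefficientSymmetricY$; combined with \eqref{eq:finalCondition} and maximised over $\sigma$, this gives \eqref{eq:stabCondition2DLeapFrof}.

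I expect the main obstacle to be the algebraic simplification from the Schur--Cohn inequality into the compact form \eqref{eq:neuamnnConditionD2Q5MagicNotTwo}: the half-angle substitution generates quartic $\mu_{\xLabel}^4$ and $\mu_{\yLabel}^4$ contributions whose pairing with the mixed term $2\equilibriumCoefficientSymmetricX\equilibriumCoefficientSymmetricY\mu_{\xLabel}^2\mu_{\yLabel}^2$ must be tracked precisely to see the clean coefficients $\courantNumberX^2-\equilibriumCoefficientSymmetricX^2$ and $\courantNumberX^2-\equilibriumCoefficientSymmetricX$ emerge, and one has to verify that the positive-square-root convention $\beta_{\xLabel}=2\mu_{\xLabel}\sqrt{1-\mu_{\xLabel}^2}$ transports the equivalence uniformly in the sign of $\mu_{\xLabel}\mu_{\yLabel}$ over $[-1,1]^2$ rather than only on its non-negative quadrant.
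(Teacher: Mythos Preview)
Your proposal is correct and follows essentially the same route as the paper's proof: reduce to the quadratic factor $\Psi_2$, apply a Schur--Cohn type reduction (the paper phrases it via Strikwerda's recursive $\Psi_1$, you via the closed-form quadratic criterion $|B-(1-\relaxationParameter)\overline{B}|\leq\relaxationParameter(2-\relaxationParameter)$, which is the same thing), and perform the half-angle substitution to land on \eqref{eq:neuamnnConditionD2Q5MagicNotTwo}. The algebra you flag as the main obstacle does close cleanly, and your sign convention $\beta_{\xLabel}=2\mu_{\xLabel}\sqrt{1-\mu_{\xLabel}^2}$ is justified since $\cos(\frequency_{\xLabel}\spaceStep/2)\geq 0$ on $[-\pi,\pi]$.

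The one genuine difference lies in how \eqref{eq:stabCondition2DLeapFrof} is obtained as a necessary condition for $\relaxationParameter\in(0,2)$. The paper argues by continuity: since \eqref{eq:neuamnnConditionD2Q5MagicNotTwo} is $\relaxationParameter$-independent, stability at any $\relaxationParameter\in(0,2)$ forces, by letting $\relaxationParameter\to 2^-$, the eigenvalues into $\closedUnitDisk$ at $\relaxationParameter=2$, hence \eqref{eq:stabCondition2DLeapFrof}. You instead specialise \eqref{eq:neuamnnConditionD2Q5MagicNotTwo} along $\mu_{\xLabel}=\sigma\mu_{\yLabel}$, divide by $\mu_{\yLabel}^2$, and send $\mu_{\yLabel}\to 0$ to extract $(\courantNumberX+\sigma\courantNumberY)^2\leq\equilibriumCoefficientSymmetricX+\equilibriumCoefficientSymmetricY$, then combine with \eqref{eq:finalCondition}. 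Your argument is more self-contained (no limit in $\relaxationParameter$) but reverses the logical order in which the three necessary conditions are deduced relative to the statement; this is harmless since all three are direct consequences of \eqref{eq:neuamnnConditionD2Q5MagicNotTwo}.
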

The proof of this result is detailed in \Cref{app:prop:stabilityD2Q5}.
It is interesting to note that these conditions do not depend on $\relaxationParameter$ as long as it belongs to $(0, 2]$, analogously to the \lbmScheme{1}{2} scheme.
Note that \eqref{eq:1DCondition} and \eqref{eq:finalCondition} describe, for given $\courantNumberX$ and $\courantNumberY$ fulfilling \eqref{eq:stabCondition2DLeapFrof}, a right triangle in the $\equilibriumCoefficientSymmetricX\equilibriumCoefficientSymmetricY$-plane with vertexes $(\courantNumberX^2, \courantNumberY^2)$, $(1-\courantNumberY^2, \courantNumberY^2)$, and $(\courantNumberX^2, 1-\courantNumberX^2)$.
Computing the modified equation for this scheme, we obtain 
\begin{equation*}
    \partial_{\timeVariable}\conservedMoment + \advectionVelocity_{\xLabel}\partial_{\xLabel}\conservedMoment + \advectionVelocity_{\yLabel}\partial_{\yLabel}\conservedMoment = \latticeVelocity\spaceStep \Bigl (\frac{1}{\relaxationParameter}-\frac{1}{2}\Bigr ) \Bigl ( \partial_{\xLabel}\bigl ((\equilibriumCoefficientSymmetricX-\courantNumberX^2)\partial_{\xLabel} - \courantNumberX\courantNumberY \partial_{\yLabel}\bigr )  + 
    \partial_{\yLabel}\bigl (-\courantNumberX\courantNumberY \partial_{\xLabel} + (\equilibriumCoefficientSymmetricY-\courantNumberY^2)\partial_{\yLabel} \bigr ) \Bigr )\conservedMoment +\bigO{\spaceStep^2}
\end{equation*}
with diffusion term having the following associated matrix 
\begin{equation*}
    \matricial{D}
    =
    \begin{pmatrix}
        \equilibriumCoefficientSymmetricX-\courantNumberX^2 & -\courantNumberX\courantNumberY\\
        -\courantNumberX\courantNumberY & \equilibriumCoefficientSymmetricY-\courantNumberY^2
    \end{pmatrix}.
\end{equation*}
We then consider the stability criterion introduced in \cite[Chapter 6]{helie2023schema}, which is quite in the spirit of the work of \cite{warming1974modified}, requesting that $\matricial{D}$ be positive definite.
This is equivalent to 
\begin{equation}\label{eq:positiveDiffusion}
    \equilibriumCoefficientSymmetricX>\courantNumberX^2 \qquad \text{and}\qquad \equilibriumCoefficientSymmetricX\equilibriumCoefficientSymmetricY - \equilibriumCoefficientSymmetricX\courantNumberY^2 - \equilibriumCoefficientSymmetricY\courantNumberX^2>0.
\end{equation}
If we relax this criterion by requesting just positive semi-definedness for $\matricial{D}$, we have 
\begin{equation}\label{eq:semipositiveDiffusion}
    \equilibriumCoefficientSymmetricX\geq \courantNumberX^2, \qquad \equilibriumCoefficientSymmetricY\geq\courantNumberY^2 , \qquad \text{and}\qquad \equilibriumCoefficientSymmetricX\equilibriumCoefficientSymmetricY - \equilibriumCoefficientSymmetricX\courantNumberY^2 - \equilibriumCoefficientSymmetricY\courantNumberX^2\geq 0.
\end{equation}
Notice that $\matricial{D}$ is noting but $-1/2$ times the Hessian of the left-hand side of \eqref{eq:neuamnnConditionD2Q5MagicNotTwo} at $\mu_{\xLabel} = \mu_{\yLabel} = 0$ , where a Taylor expansion in this vicinity (i.e., for low frequencies) gives 
\begin{equation*}
    \textnormal{[left-hand side of \eqref{eq:neuamnnConditionD2Q5MagicNotTwo}]}(\mu_{\xLabel}, \mu_{\yLabel}) = -\transpose{(\mu_{\xLabel}, \mu_{\yLabel})} \matricial{D} (\mu_{\xLabel}, \mu_{\yLabel}) + \textnormal{higher order terms},
\end{equation*}
which justifies the criterion on the diffusion matrix from the point of view of \eqref{eq:neuamnnConditionD2Q5MagicNotTwo}.

Despite difficulties in explicitly writing easier conditions for \eqref{eq:neuamnnConditionD2Q5MagicNotTwo} (notice however that \eqref{eq:neuamnnConditionD2Q5MagicNotTwo} is nothing but the claim of \cite[Theorem 2.2.1]{ginzburg2010optimal} and \cite[Equation (23)]{kuzmin2011role}), we can numerically study it, see \Cref{fig:stabD2Q5magic}.
Interestingly, it seems that the stability area is the one enclosed between $\equilibriumCoefficientSymmetricX+\equilibriumCoefficientSymmetricY\leq 1$ and $\equilibriumCoefficientSymmetricX\equilibriumCoefficientSymmetricY - \equilibriumCoefficientSymmetricX\courantNumberY^2 - \equilibriumCoefficientSymmetricY\courantNumberX^2\geq0$.
This conjecture is quite plausible, since we practically see that \eqref{eq:neuamnnConditionD2Q5MagicNotTwo} is violated either on the corners of $[-1, 1]^2$, which gives $\equilibriumCoefficientSymmetricX+\equilibriumCoefficientSymmetricY\leq 1$, or in the neighborhood of $(0, 0)$, where a Taylor expansion of the consistency root of the characteristic equation for small frequencies indeed yields a criterion on the diffusivity, hence $\equilibriumCoefficientSymmetricX\equilibriumCoefficientSymmetricY - \equilibriumCoefficientSymmetricX\courantNumberY^2 - \equilibriumCoefficientSymmetricY\courantNumberX^2\geq0$.

\begin{remark}[Sufficient condition]
    A sufficient stability condition is given in \cite{ginzburg2010optimal, kuzmin2011role}, see their equation (49) (resp., equation (24)).
    To bridge with our previous discussion, our necessary condition \eqref{eq:finalCondition} is one of the three inequalities making up a sufficient stability condition in \cite{ginzburg2010optimal}.
    Assuming that $\courantNumberX, \courantNumberY\neq 0$, the inequalities in \eqref{eq:semipositiveDiffusion} entail the second and third parts of the condition from \cite{ginzburg2010optimal}.
    Again, this must be compared to \Cref{fig:stabD2Q5magic}.
\end{remark}

\begin{remark}[Link with the \lbmScheme{1}{2} scheme]
    Consider a 1D problem along $\xLabel$, hence $\courantNumberY = 0$.
    Moreover, consider a 1D numerical scheme by setting $\equilibriumCoefficientSymmetricY = 0$ as well.
    Taking $\equilibriumCoefficientSymmetricX = 1$, we obtain a (row-wise) \lbmScheme{1}{2} scheme, since with this choice the zero velocity does not play any role.
    For instance, \eqref{eq:neuamnnConditionD2Q5MagicNotTwo} becomes $(\courantNumberX^2-1)(1-\mu_{\xLabel}^2)\mu_{\xLabel}^2\leq 0$ for all $\mu_{\xLabel}\in[-1, 1]$, which is fulfilled under the CFL condition $|\courantNumberX|\leq 1$.
\end{remark}

\subsubsection{Transparent boundary conditions for the link TRT \lbmScheme{2}{5} scheme}\label{sec:bc2D}

\paragraph{Scalar approach}

We again extend the work of Besse \strong{et al.}, which was itself inspired by \cite{engquist1977absorbing}.
We present the way of proceeding on the $\xLabel$ axis (the same on $\yLabel$ is obtained by simply switching labels).
The initial step is to do as if the domain were vertically infinite, working on the slab $[\leftBoundary, \rightBoundary]\times \reals$, and take a Fourier transform in the $\yLabel$-direction.
This boils down to having $\fourierShift_{\yLabel} = e^{i\frequency\spaceStep}$ with $\frequency\spaceStep\in[-\pi, \pi]$ in the non-trivial part (i.e., $\Psi_2$) of the characteristic equation \eqref{eq:charEquationD2Q5Magic}:
\begin{multline}\label{eq:charEqD2Q5Magic}
    \frac{1}{2}\bigl ((\relaxationParameter-2)\equilibriumCoefficientSymmetricX + \relaxationParameter\courantNumberX\bigr )\timeShiftOperator \fourierShift^2 + \Bigl ( \timeShiftOperator^2 + (1-\relaxationParameter) + \bigl ( (\relaxationParameter - 2)(1-\equilibriumCoefficientSymmetricX) + i\relaxationParameter\courantNumberY\sin(\frequency\spaceStep) + (\relaxationParameter-2)\equilibriumCoefficientSymmetricY(\cos(\frequency\spaceStep)-1) \bigr )\timeShiftOperator\Bigr )\fourierShift\\
    +\frac{1}{2}((\relaxationParameter-2)\equilibriumCoefficientSymmetricX - \relaxationParameter\courantNumberX)\timeShiftOperator=0,
\end{multline}
where $\fourierShift$ is a placeholder for $\fourierShift_{\xLabel}$, and must be interpreted as $\fourierShift = \fourierShift(\timeShiftOperator, \frequency\spaceStep)$.
The term in $\sin(\frequency\spaceStep)$ concerns transport in the tangential (i.e. $\yLabel$) direction, whereas the one in $\cos(\frequency\spaceStep)-1$ diffusion in the same direction. 
The leading-order term in \eqref{eq:charEqD2Q5Magic} vanishes when 
\begin{equation*}
    \relaxationParameter = \frac{2\equilibriumCoefficientSymmetricX}{\equilibriumCoefficientSymmetricX + \courantNumberX}.
\end{equation*}
In all other cases, under von Neumann stability conditions, \eqref{eq:charEqD2Q5Magic} has two roots, indicated by $\stableRoot(\timeShiftOperator, \frequency\spaceStep)$ and $\unstableRoot(\timeShiftOperator, \frequency\spaceStep)$, with the first in $\unitDisk$ and the second in $\neighborhoodInfinity$ when $\timeShiftOperator\in\neighborhoodInfinity$.
Their product is independent of $\timeShiftOperator$ and $\frequency\spaceStep$ and given by
\begin{equation*}
    \productRoots_{\xLabel} = \frac{(\equilibriumCoefficientSymmetricX - \courantNumberX)\relaxationParameter-2\equilibriumCoefficientSymmetricX}{(\equilibriumCoefficientSymmetricX + \courantNumberX)\relaxationParameter-2\equilibriumCoefficientSymmetricX}.
\end{equation*}
As seen for the \lbmScheme{1}{2}, this allows to consider only $\stableRoot$ and its Laurent expansion to build boundary conditions both on the right and the left boundary.
To continue, we expand $\fourierShift(\timeShiftOperator, \frequency\spaceStep)$ into powers of small $\frequency\spaceStep$.
More precisely 
\begin{equation}\label{eq:taylorMajda}
    \fourierShift(\timeShiftOperator, \frequency\spaceStep) = \termAtOrder{\fourierShift}{0}(\timeShiftOperator) + 2i\sin(\frequency\spaceStep) \termAtOrder{\fourierShift}{1}(\timeShiftOperator) - 4\sin(\tfrac{1}{2}\frequency\spaceStep)^2\termAtOrder{\fourierShift}{2}(\timeShiftOperator) +\bigO{(\frequency\spaceStep)^3}.
\end{equation}

Into \eqref{eq:charEqD2Q5Magic}, we obtain the equation for $\termAtOrder{\fourierShift}{0}(\timeShiftOperator)$, which reads 
\begin{equation}\label{eq:zerothOrder}
    \frac{1}{2}\bigl ((\relaxationParameter-2)\equilibriumCoefficientSymmetricX + \relaxationParameter\courantNumberX\bigr )\timeShiftOperator (\termAtOrder{\fourierShift}{0}(\timeShiftOperator))^2+ \bigl ( \timeShiftOperator^2 + (1-\relaxationParameter) + (\relaxationParameter - 2)(1-\equilibriumCoefficientSymmetricX) \timeShiftOperator\bigr )\termAtOrder{\fourierShift}{0}(\timeShiftOperator)
    +\frac{1}{2}((\relaxationParameter-2)\equilibriumCoefficientSymmetricX - \relaxationParameter\courantNumberX)\timeShiftOperator=0.
\end{equation}
This is the characteristic equation of a 1D scheme and works under the generally wrong assumption that the solution hitting the boundary is constant along $\yLabel$.
We look for $\termAtOrder{\stableRoot}{0}(\timeShiftOperator) = \sum_{\indexTime = 1}^{+\infty}\termAtOrder{\coefficientsLaurentStableRootAllParities_{\indexTime}}{0}\timeShiftOperator^{-\indexTime}$, which into \eqref{eq:zerothOrder} gives the recurrence
\begin{align*}
    \termAtOrder{\coefficientsLaurentStableRootAllParities_{1}}{0} &= \tfrac{1}{2}(\relaxationParameter\courantNumberX+(2-\relaxationParameter)\equilibriumCoefficientSymmetricX), \qquad 
    \termAtOrder{\coefficientsLaurentStableRootAllParities_{2}}{0} = (2-\relaxationParameter)(1-\equilibriumCoefficientSymmetricX) \termAtOrder{\coefficientsLaurentStableRootAllParities_{1}}{0}, \qquad \text{and}\\
    \termAtOrder{\coefficientsLaurentStableRootAllParities_{\indexTime}}{0} &= (\relaxationParameter-1)\termAtOrder{\coefficientsLaurentStableRootAllParities_{\indexTime - 2}}{0} + (2-\relaxationParameter)(1-\equilibriumCoefficientSymmetricX)\termAtOrder{\coefficientsLaurentStableRootAllParities_{\indexTime-1}}{0} + \tfrac{1}{2}(-\relaxationParameter\courantNumberX + (2-\relaxationParameter)\equilibriumCoefficientSymmetricX)\sum_{k=1}^{\indexTime - 2}\termAtOrder{\coefficientsLaurentStableRootAllParities_{k}}{0}\termAtOrder{\coefficientsLaurentStableRootAllParities_{\indexTime-1-k}}{0}, \quad \indexTime\geq 3.
\end{align*}
The only cases where even terms are identically zero is $\equilibriumCoefficientSymmetricX = 1$, thus we recover the setting of the \lbmScheme{1}{2} scheme, or $\relaxationParameter = 2$ (leap-frog, or \lbmScheme{1}{2} with $\relaxationParameter = 2$).

At order $\bigO{\frequency\spaceStep}$, the equation for $\termAtOrder{\stableRoot}{1}(\timeShiftOperator)$ reads 
\begin{equation*}
    \bigl ( \timeShiftOperator^2 + (1 - \relaxationParameter) + (\relaxationParameter - 2)(1-\equilibriumCoefficientSymmetricX)\timeShiftOperator + ((\relaxationParameter-2)\equilibriumCoefficientSymmetricX+\relaxationParameter\courantNumberX)\timeShiftOperator \termAtOrder{\fourierShift}{0}(\timeShiftOperator) \bigr )\termAtOrder{\fourierShift}{1}(\timeShiftOperator) =- \tfrac{1}{2}\relaxationParameter\courantNumberY \timeShiftOperator \termAtOrder{\fourierShift}{0}(\timeShiftOperator),
\end{equation*}
which becomes \cite[Equation (4.7)]{besse2021discrete} when $\relaxationParameter = 2$.
With $\termAtOrder{\stableRoot}{1}(\timeShiftOperator) = \sum_{\indexTime = 1}^{+\infty}\termAtOrder{\coefficientsLaurentStableRootAllParities_{\indexTime}}{1}\timeShiftOperator^{-\indexTime}$, we obtain
\begin{align*}
    \termAtOrder{\coefficientsLaurentStableRootAllParities_{1}}{1} &= 0, \qquad \termAtOrder{\coefficientsLaurentStableRootAllParities_{2}}{1} = -\tfrac{1}{2}\relaxationParameter\courantNumberY\termAtOrder{\coefficientsLaurentStableRootAllParities_{1}}{0} = -\tfrac{1}{4}\relaxationParameter\courantNumberY(\relaxationParameter\courantNumberX+(2-\relaxationParameter)\equilibriumCoefficientSymmetricX), \qquad \text{and}\\
    \termAtOrder{\coefficientsLaurentStableRootAllParities_{\indexTime}}{1} &= 
    (\relaxationParameter-1)\termAtOrder{\coefficientsLaurentStableRootAllParities_{\indexTime - 2}}{1} + (2-\relaxationParameter)(1-\equilibriumCoefficientSymmetricX)\termAtOrder{\coefficientsLaurentStableRootAllParities_{\indexTime-1}}{1} + (-\relaxationParameter\courantNumberX + (2-\relaxationParameter)\equilibriumCoefficientSymmetricX)\sum_{k=1}^{\indexTime - 2}\termAtOrder{\coefficientsLaurentStableRootAllParities_{k}}{0}\termAtOrder{\coefficientsLaurentStableRootAllParities_{\indexTime-1-k}}{1} - \tfrac{1}{2}\relaxationParameter\courantNumberY \termAtOrder{\coefficientsLaurentStableRootAllParities_{\indexTime - 1}}{0}, \quad \indexTime\geq 3.
\end{align*}

Finally, at order $\bigO{(\frequency\spaceStep)^2}$, the equation for $\termAtOrder{\stableRoot}{2}(\timeShiftOperator)$ is 
\begin{multline*}
    \bigl ( ((\relaxationParameter-2)\equilibriumCoefficientSymmetricX+\relaxationParameter\courantNumberX)\timeShiftOperator\termAtOrder{\fourierShift}{0}(\timeShiftOperator) + \timeShiftOperator^2 + (1-\relaxationParameter)+(\relaxationParameter-2)(1-\equilibriumCoefficientSymmetricX)\timeShiftOperator \bigr )\termAtOrder{\fourierShift}{2}(\timeShiftOperator) = 2((2-\relaxationParameter)\equilibriumCoefficientSymmetricX-\relaxationParameter\courantNumberX)\timeShiftOperator(\termAtOrder{\fourierShift}{1}(\timeShiftOperator))^2 \\
    -
    2\relaxationParameter\courantNumberY\timeShiftOperator \termAtOrder{\fourierShift}{1}(\timeShiftOperator) +\tfrac{1}{2}(2-\relaxationParameter)\equilibriumCoefficientSymmetricY\timeShiftOperator\termAtOrder{\fourierShift}{0}(\timeShiftOperator),
\end{multline*}
so for $\termAtOrder{\stableRoot}{2}(\timeShiftOperator) = \sum_{\indexTime = 1}^{+\infty}\termAtOrder{\coefficientsLaurentStableRootAllParities_{\indexTime}}{2}\timeShiftOperator^{-\indexTime}$, we get
\begin{multline*}
    \termAtOrder{\coefficientsLaurentStableRootAllParities_1}{2} = 0, \qquad 
    \termAtOrder{\coefficientsLaurentStableRootAllParities_2}{2} = \tfrac{1}{2}(2-\relaxationParameter)\equilibriumCoefficientSymmetricY \termAtOrder{\coefficientsLaurentStableRootAllParities_1}{0}, \qquad \text{and}\\
    \termAtOrder{\coefficientsLaurentStableRootAllParities_{\indexTime}}{2} = 
    (\relaxationParameter-1)\termAtOrder{\coefficientsLaurentStableRootAllParities_{\indexTime - 2}}{2} + (2-\relaxationParameter)(1-\equilibriumCoefficientSymmetricX)\termAtOrder{\coefficientsLaurentStableRootAllParities_{\indexTime-1}}{2} + (-\relaxationParameter\courantNumberX + (2-\relaxationParameter)\equilibriumCoefficientSymmetricX)\sum_{k=1}^{\indexTime - 2}\termAtOrder{\coefficientsLaurentStableRootAllParities_{k}}{0}\termAtOrder{\coefficientsLaurentStableRootAllParities_{\indexTime-1-k}}{2} \\
    + 2(-\relaxationParameter\courantNumberX + (2-\relaxationParameter)\equilibriumCoefficientSymmetricX)\sum_{k=1}^{\indexTime - 2}\termAtOrder{\coefficientsLaurentStableRootAllParities_{k}}{1}\termAtOrder{\coefficientsLaurentStableRootAllParities_{\indexTime-1-k}}{1} 
    -2\relaxationParameter\courantNumberY \termAtOrder{\coefficientsLaurentStableRootAllParities_{\indexTime-1}}{1} + \tfrac{1}{2}(2-\relaxationParameter)\equilibriumCoefficientSymmetricY \termAtOrder{\coefficientsLaurentStableRootAllParities_{\indexTime-1}}{0}, \quad \indexTime\geq 3.
\end{multline*}

Therefore, the (approximate) transparent boundary conditions that we propose read, for $\indexTime\geq 1$
\begin{align}
    \phi_{\numberGridPoints + 1, k}^{\indexTime} = &\sum_{m = 1}^{\indexTime} \bigl (\termAtOrder{\coefficientsLaurentStableRootAllParities_{m}}{0} \phi_{\numberGridPoints, k}^{\indexTime - m}  + \termAtOrder{\coefficientsLaurentStableRootAllParities_{m}}{1} (\phi_{\numberGridPoints, k+1}^{\indexTime - m}-\phi_{\numberGridPoints, k-1}^{\indexTime - m}) + \termAtOrder{\coefficientsLaurentStableRootAllParities_{m}}{2} (\phi_{\numberGridPoints, k+1}^{\indexTime - m} - 2 \phi_{\numberGridPoints, k}^{\indexTime - m} +\phi_{\numberGridPoints, k-1}^{\indexTime - m})  \bigr ), \label{eq:BC2DRight}\\
    \phi_{0, k}^{\indexTime} = \frac{1}{\productRoots_{\xLabel}}&\sum_{m = 1}^{\indexTime} \bigl (\termAtOrder{\coefficientsLaurentStableRootAllParities_{m}}{0} \phi_{1, k}^{\indexTime - m}  + \termAtOrder{\coefficientsLaurentStableRootAllParities_{m}}{1} (\phi_{1, k+1}^{\indexTime - m}-\phi_{1, k-1}^{\indexTime - m}) + \termAtOrder{\coefficientsLaurentStableRootAllParities_{m}}{2} (\phi_{1, k+1}^{\indexTime - m} - 2 \phi_{1, k}^{\indexTime - m} +\phi_{1, k-1}^{\indexTime - m})  \bigr ), \quad k \in \integerInterval{1}{\numberGridPointsOnY},\nonumber
\end{align}
for the placeholder $\phi\in\{\conservedMoment, \antiSymmetricMomentXLetter, \symmetricMomentXLetter, \antiSymmetricMomentYLetter, \symmetricMomentYLetter\}$.

Compared to the systemic approach, which would be rather involved to devise in this case, we see that the current scalar approach is significantly more expensive from the standpoint of storage cost in this 2D framework.
Here, five moments need to be stored on the ``frame'' surrounding the boundary, whereas a systemic approach only needs to store one of them, plus five coefficients independent of the considered cell.

\begin{summarybox}{Summary on the construction of transparent boundary conditions in 2D}
    \begin{small}
        For a given boundary:
        \begin{enumerate}
            \item Consider the characteristic equation as the problem were infinite in the direction tangential to the boundary, and take a Fourier transform in this direction, see \eqref{eq:charEqD2Q5Magic}.
            \item Taylor expand quantities for small tangential frequencies \eqref{eq:taylorMajda} up to a desired order and proceed analogously to the 1D case order-by-order.
            \item Invert the tangential Fourier transform up to the desired order to yield the boundary conditions, e.g. \eqref{eq:BC2DRight}.
        \end{enumerate}
    \end{small}
\end{summarybox}

\paragraph{Some asymptotic expansions}

We do not delve into asymptotics for $\termAtOrder{\coefficientsLaurentStableRootAllParities_{\indexTime}}{0}$ and $\termAtOrder{\coefficientsLaurentStableRootAllParities_{\indexTime}}{1}$. 
We conjecture $\termAtOrder{\coefficientsLaurentStableRootAllParities_{\indexTime}}{h}$ for $h=0, 1$ geometrically decrease when $\relaxationParameter\in(0, 2)$ with a tame factor $\sim \indexTime^{-3/2+h}$ at leading order. 
The case $\relaxationParameter = 2$ is detailed in \cite{besse2021discrete}.
We solely study $\termAtOrder{\coefficientsLaurentStableRootAllParities_{\indexTime}}{2}$ for $\relaxationParameter = 2$, for which only the trend of the tame factor $\sim \indexTime^{1/2}$ was conjectured and numerically demonstrated by Besse and collaborators, with a rigorous proof left for future works, which we propose below.
\begin{lemma}
    Let $\relaxationParameter = 2$ and the $\ell^2$-stability condition $|\courantNumberX|+|\courantNumberY|<1$ be fulfilled.
    Let $\vartheta_{\xLabel}=\arccos (1-2\courantNumberX^2)$. 
    Then, the coefficients $\termAtOrder{\coefficientsLaurentStableRootAllParities_{\indexTime}}{2}$ for the $\xLabel$-axis are such that $\termAtOrder{\coefficientsLaurentStableRootAllParities_{\indexTime}}{2} = 0$ if $\indexTime$ is even and follow the asymptotics 
    \begin{equation}\label{eq:asymptMissedBesse}
        \termAtOrder{\coefficientsLaurentStableRootAllParities_{2k + 1}}{2} = \frac{2}{\sqrt{\pi}}\frac{\courantNumberY^2}{\courantNumberX^{1/2}(1-\courantNumberX^2)^{3/4}} \sin\bigl ( \bigl ( k+\tfrac{1}{2}\bigr )\vartheta_{\xLabel} - \tfrac{\pi}{4}\bigr ) k^{1/2} + \bigO{k^{-1/2}}.
    \end{equation}
\end{lemma}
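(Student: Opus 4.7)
The starting point is to derive a closed-form expression for $\termAtOrder{\stableRoot}{2}$. Substituting $\relaxationParameter = 2$ into the order-$1$ equation yields $\Delta \termAtOrder{\stableRoot}{1} = -\courantNumberY \timeShiftOperator\,\termAtOrder{\stableRoot}{0}$, where $\Delta(\timeShiftOperator) := \timeShiftOperator^2 - 1 + 2\courantNumberX\timeShiftOperator\,\termAtOrder{\stableRoot}{0}(\timeShiftOperator) = \sqrt{(\timeShiftOperator^2-1)^2 + 4\courantNumberX^2\timeShiftOperator^2}$ (with the branch determined by $\termAtOrder{\stableRoot}{0}(\timeShiftOperator)\to 0$ at infinity). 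Substituting $\termAtOrder{\stableRoot}{1} = -\courantNumberY\timeShiftOperator\,\termAtOrder{\stableRoot}{0}/\Delta$ into the order-$2$ equation gives $\termAtOrder{\stableRoot}{2} = 4\courantNumberY^2\timeShiftOperator^2\termAtOrder{\stableRoot}{0}(\Delta - \courantNumberX\timeShiftOperator\,\termAtOrder{\stableRoot}{0})/\Delta^3$. Using the defining relation $2\courantNumberX\timeShiftOperator\,\termAtOrder{\stableRoot}{0} = \Delta - (\timeShiftOperator^2 - 1)$ one simplifies the factor $\Delta - \courantNumberX\timeShiftOperator\,\termAtOrder{\stableRoot}{0} = \tfrac{1}{2}(\Delta + \timeShiftOperator^2 - 1)$, reduces the product $(\Delta - (\timeShiftOperator^2-1))(\Delta + (\timeShiftOperator^2-1)) = \Delta^2 - (\timeShiftOperator^2-1)^2 = 4\courantNumberX^2\timeShiftOperator^2$, and obtains the remarkably compact identity $\termAtOrder{\stableRoot}{2}(\timeShiftOperator) = 4\courantNumberX\courantNumberY^2\timeShiftOperator^3/\Delta^3$.

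From this closed form the parity statement $\termAtOrder{\coefficientsLaurentStableRootAllParities_{\indexTime}}{2} = 0$ for even $\indexTime$ is immediate: $\Delta$ depends only on $\timeShiftOperator^2$, so $\termAtOrder{\stableRoot}{2}(-\timeShiftOperator) = -\termAtOrder{\stableRoot}{2}(\timeShiftOperator)$. For the asymptotic estimate, I would switch to the generating function $\termAtOrder{\stableRoot}{2}(\timeShiftOperator^{-1}) = 4\courantNumberX\courantNumberY^2\timeShiftOperator^3/f(\timeShiftOperator)^{3/2}$ with $f(\timeShiftOperator) = (1-\timeShiftOperator^2)^2 + 4\courantNumberX^2\timeShiftOperator^2$ and follow the singularity-analysis strategy used earlier in \Cref{sec:backAsympt}. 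Factoring $f$ over $\complex$, one checks that the four roots are $\pm e^{\pm i\vartheta_{\xLabel}/2}$ on $\unitCircle$ by verifying $\tan\vartheta_{\xLabel} = 2\courantNumberX\sqrt{1-\courantNumberX^2}/(1-2\courantNumberX^2)$ using the identity $\vartheta_{\xLabel} = 2\arcsin\courantNumberX$. The product representation $f(\timeShiftOperator) = \prod_{k=1}^{4}(1-\timeShiftOperator/\singularity_k)$ (which follows from $\prod_k \singularity_k = 1$) singles out a canonical branch of $f^{1/2}$ normalized by $f^{1/2}(0) = 1$.

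The heart of the argument is the local Puiseux expansion near $\singularity_1 = e^{i\vartheta_{\xLabel}/2}$. Evaluating the three regular factors at $\singularity_1$ via $1-\singularity_1/\singularity_2 = -2i\sin(\vartheta_{\xLabel}/2)e^{i\vartheta_{\xLabel}/2}$, $1-\singularity_1/\singularity_3 = 2$, and $1-\singularity_1/\singularity_4 = 2\cos(\vartheta_{\xLabel}/2)e^{i\vartheta_{\xLabel}/2}$, each with its principal square-root branch, one collects the phases to find $f(\timeShiftOperator)^{1/2} \sim 2\sqrt{\sin\vartheta_{\xLabel}}\,e^{i(\vartheta_{\xLabel}/2 - \pi/4)}(1-\timeShiftOperator/\singularity_1)^{1/2}$ in a neighborhood of $\singularity_1$, so that the leading singular term of $g := 4\courantNumberX\courantNumberY^2\timeShiftOperator^3/f^{3/2}$ near $\singularity_1$ reads $C_1(1-\timeShiftOperator/\singularity_1)^{-3/2}$ with $C_1 = \courantNumberX\courantNumberY^2 e^{3i\pi/4}/(2\sin(\vartheta_{\xLabel})^{3/2})$. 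Since $f$ has real coefficients and $\singularity_2 = \overline{\singularity_1}$, the analogous coefficient at $\singularity_2$ is $\overline{C_1}$; by the symmetry $f(-\timeShiftOperator) = f(\timeShiftOperator)$, one gets $C_3 = -C_1$ and $C_4 = -C_2$, in line with the parity property already established.

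Finally, by \cite[Theorem~VI.5]{flajolet2009analytic}, each singular term contributes $(2/\sqrt{\pi})C_k\singularity_k^{-\indexTime}\indexTime^{1/2}$ to the coefficient $\termAtOrder{\coefficientsLaurentStableRootAllParities_{\indexTime}}{2}$; summing over $k=1,\dots,4$ yields $(8/\sqrt{\pi})\,\mathrm{Re}(C_1 \singularity_1^{-\indexTime})\,\indexTime^{1/2}$ for $\indexTime = 2k+1$. Using $\singularity_1^{-(2k+1)} = e^{-i(k+1/2)\vartheta_{\xLabel}}$, the identity $\cos((k+\tfrac{1}{2})\vartheta_{\xLabel} - \tfrac{3\pi}{4}) = \sin((k+\tfrac{1}{2})\vartheta_{\xLabel} - \tfrac{\pi}{4})$, the expansion $\sqrt{2k+1} = \sqrt{2}\,k^{1/2}+\bigO{k^{-1/2}}$, and $\sin(\vartheta_{\xLabel})^{3/2} = 2^{3/2}\courantNumberX^{3/2}(1-\courantNumberX^2)^{3/4}$, simplifies to \eqref{eq:asymptMissedBesse}. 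The main obstacle, and the step that requires the most care, is the consistent tracking of the square-root branch across the four complex singularities when combining the four local expansions; this is where the transition from the Besse--Coulombel--Noble approach (which only conjectured the $k^{1/2}$ trend for this coefficient) to a rigorous estimate is made.
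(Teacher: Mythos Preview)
Your approach is essentially the paper's own: obtain the closed form $\termAtOrder{\stableRoot}{2}(\timeShiftOperator^{-1}) = 4\courantNumberX\courantNumberY^2\timeShiftOperator^{3}\bigl(\timeShiftOperator^4 + 2(2\courantNumberX^2-1)\timeShiftOperator^2 + 1\bigr)^{-3/2}$ and then run the singularity analysis of \Cref{sec:backAsympt} at the four branch points $\pm e^{\pm i\vartheta_{\xLabel}/2}$ on $\unitCircle$. The only substantive difference is that you derive this closed form from the hierarchy of equations at orders $0$, $1$, $2$ (a nice self-contained computation), whereas the paper simply cites Equation~(4.12) in \cite{besse2021discrete}; for the parity claim you use the oddness of $\termAtOrder{\stableRoot}{2}$ rather than the recurrence, which is equally valid.
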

\begin{proof}
    The fact that $\termAtOrder{\coefficientsLaurentStableRootAllParities_{\indexTime}}{2} = 0$ if $\indexTime$ is even can be seen by the recurrence equation.
    From Equation (4.12) in Besse \strong{et al.}, we have that $\termAtOrder{\stableRoot}{2}(\timeShiftOperator^{-1}) = 4\timeShiftOperator^3\courantNumberX\courantNumberY^2 (\timeShiftOperator^4 + 2(2\courantNumberX^2-1)\timeShiftOperator^2 + 1)^{-3/2}$.
    Equation \eqref{eq:asymptMissedBesse} follows from the procedure illustrated in \Cref{sec:backAsympt}.
\end{proof}

\newcommand{\testCaseDescription}[2]{$\langle #1, #2\rangle$}

\paragraph{Numerical experiments}

\begin{figure}
    \begin{center}
        \includegraphics[width=1\textwidth]{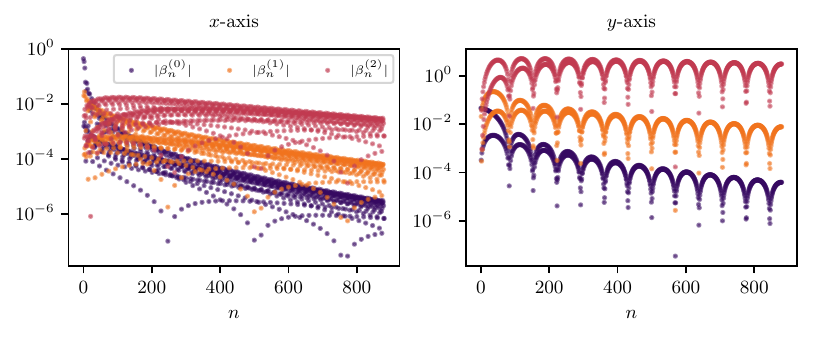}
    \end{center}
    \caption{\label{fig:D2Q5-coeff-asymptotic} Absolute value of the coefficients $\termAtOrder{\coefficientsLaurentStableRootAllParities_{\indexTime}}{0}$, $\termAtOrder{\coefficientsLaurentStableRootAllParities_{\indexTime}}{1}$, and $\termAtOrder{\coefficientsLaurentStableRootAllParities_{\indexTime}}{2}$ for the $\xLabel$-axis (on the left) and the $\yLabel$-axis (on the right).}
\end{figure}

\begin{figure}
    \begin{center}
        \includegraphics[width=1\textwidth]{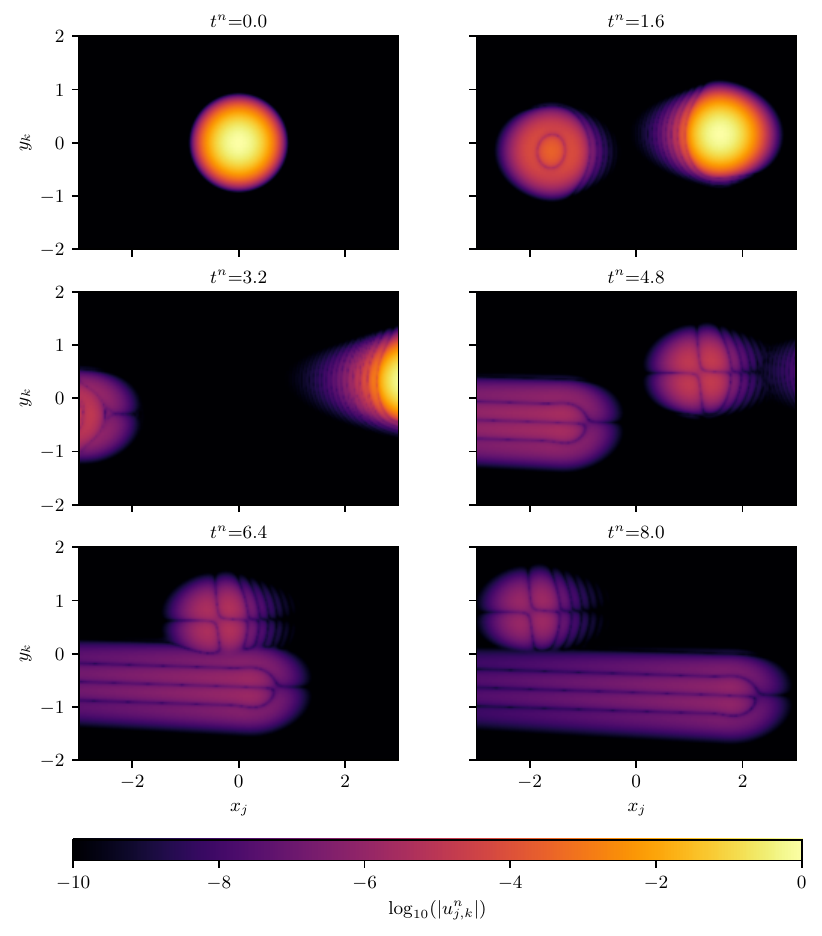}
    \end{center}
    \caption{\label{fig:D2Q5-magic-0-0}Solution for the TRT \lbmScheme{2}{5} with magic parameter equal to $\tfrac{1}{4}$ at $\relaxationParameter = 1.99$ at different time snapshots, using \testCaseDescription{0}{0}.}
\end{figure}

\begin{figure}
    \begin{center}
        \includegraphics[width=1\textwidth]{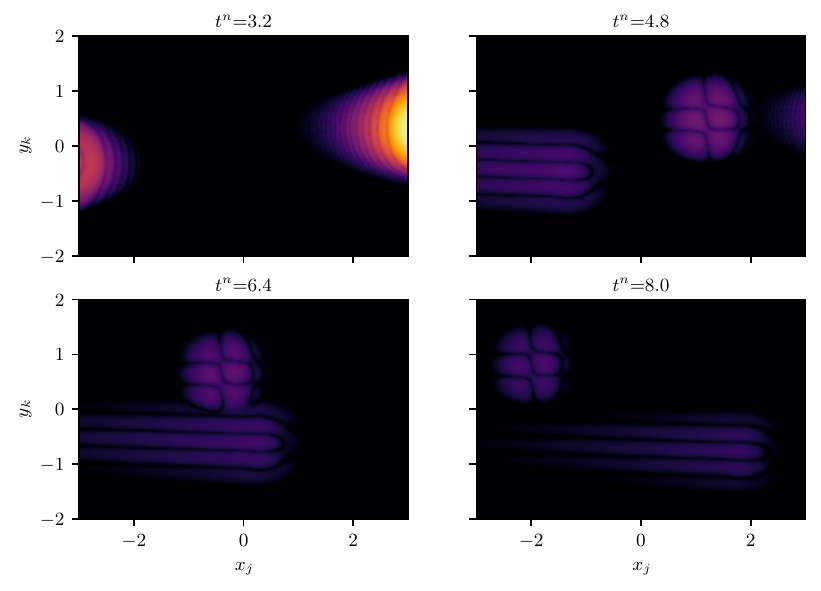}
    \end{center}
    \caption{\label{fig:D2Q5-magic-1-1}Solution for the TRT \lbmScheme{2}{5} with magic parameter equal to $\tfrac{1}{4}$ at $\relaxationParameter = 1.99$ at different time snapshots, using \testCaseDescription{1}{1}. Color-bar shared with \Cref{fig:D2Q5-magic-0-0}.}
\end{figure}

\begin{figure}
    \begin{center}
        \includegraphics[width=1\textwidth]{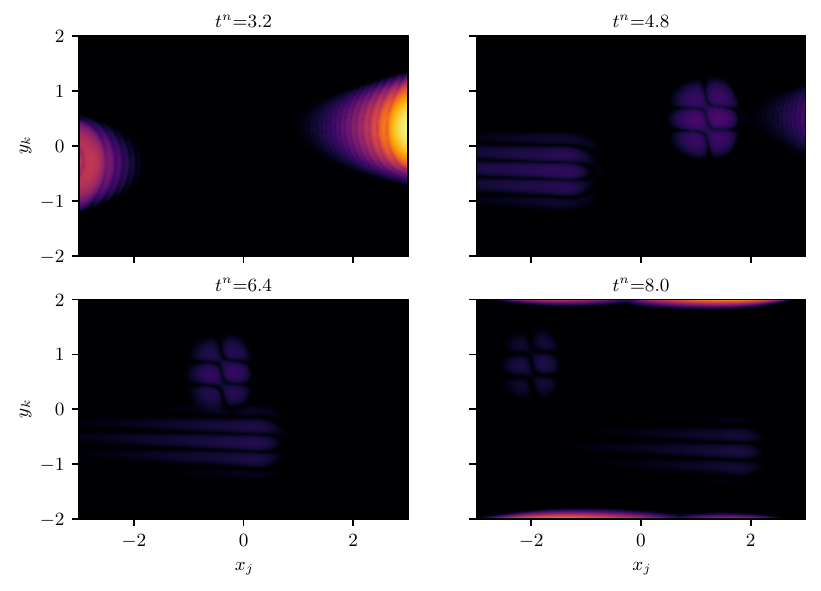}
    \end{center}
    \caption{\label{fig:D2Q5-magic-2-2}Solution for the TRT \lbmScheme{2}{5} with magic parameter equal to $\tfrac{1}{4}$ at $\relaxationParameter = 1.99$ at different time snapshots, using \testCaseDescription{2}{2}. Color-bar shared with \Cref{fig:D2Q5-magic-0-0}.}
\end{figure}

\begin{figure}
    \begin{center}
        \includegraphics[width=1\textwidth]{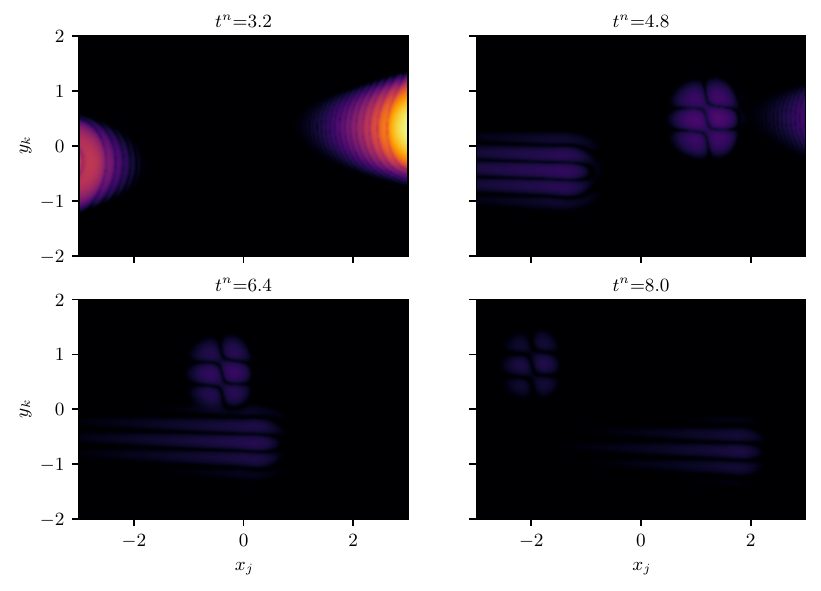}
    \end{center}
    \caption{\label{fig:D2Q5-magic-2-1}Solution for the TRT \lbmScheme{2}{5} with magic parameter equal to $\tfrac{1}{4}$ at $\relaxationParameter = 1.99$ at different time snapshots, using \testCaseDescription{2}{1}. Color-bar shared with \Cref{fig:D2Q5-magic-0-0}.}
\end{figure}

Numerical simulations are conducted using $\advectionVelocity_{\xLabel} = 1$, $\advectionVelocity_{\yLabel} = \tfrac{1}{10}$, $\latticeVelocity = 2(|\advectionVelocity_{\xLabel}|+|\advectionVelocity_{\yLabel}|) =\tfrac{11}{5}$ on a domain delimited by $\leftBoundary = -3$, $\rightBoundary = 3$, $\bottomBoundary = -2$, and $\topBoundary = 2$.
The initial datum is $\conservedMoment^{\circ}(\vectorial{\spaceVariable}) = \cos^{10}(\pi r(\vectorial{\spaceVariable}) /2)\indicatorFunction{[0, 1)}(r(\vectorial{\spaceVariable}))$, where $r(\vectorial{\spaceVariable})=\sqrt{\xLabel^2+\yLabel^2}$.
The mesh features $\numberGridPoints = 300$ and $\numberGridPointsOnY = 200$.
We systematically consider $\relaxationParameter = 1.99$.

We chose this regime featuring $\courantNumberX\gg \courantNumberY$ in such a way that it is dominated by the transport along the $\xLabel$-axis.
This entails that the upper and lower boundaries experience strong ``shear'' (or tangential) movements of the numerical solution.
The absolute value of the coefficients used in the boundary conditions are shown in \Cref{fig:D2Q5-coeff-asymptotic}.
In terms of amplitude, the terms for the $\yLabel$-axis are larger than those for the $\xLabel$-axis, by orders of magnitude. 
The damping of the former is slower than that for the latter.
It is overall quite weak as $\relaxationParameter$ is very close to two.
In terms of oscillations, we see that coefficients for the $\yLabel$-axis oscillate faster than those of $\xLabel$-axis.

In the spirit of the numerical simulations shown in \cite{besse2021discrete}, we switch tangential corrections in the boundary conditions on and off.
The notation \testCaseDescription{o_{\xLabel}}{o_{\yLabel}} indicates that we have set $\termAtOrder{\coefficientsLaurentStableRootAllParities_{\indexTime}}{h} \equiv 0$ for $h>o_{\xLabel}$ for the $\xLabel$-axis, and $\termAtOrder{\coefficientsLaurentStableRootAllParities_{\indexTime}}{h} \equiv 0$ for $h>o_{\yLabel}$ for the $\yLabel$-axis.
On \Cref{fig:D2Q5-magic-0-0}, \ref{fig:D2Q5-magic-1-1}, and \ref{fig:D2Q5-magic-2-2}, we see that---as expected---increasing the number of orders in the tangential correction reduces the amplitude of the reflected waves. 
However, in the case \testCaseDescription{2}{2}, we observe that instabilities attached to the lower and upper boundaries develop. 
They are due to the coupling between $\xLabel$ and $\yLabel$ from the corners and similar to those observed by Besse and collaborators.
The fact that they appear attached to the $\yLabel$ boundaries can be explained by the differences between $\termAtOrder{\coefficientsLaurentStableRootAllParities_{\indexTime}}{2}$ along different axes, see \Cref{fig:D2Q5-coeff-asymptotic}.
Stability is restored taking \testCaseDescription{2}{1}, see \Cref{fig:D2Q5-magic-2-1}.

Compared to the 1D setting, where ``bad'' boundary conditions reflected packets essentially preserving the size of their support, we see a radically different behavior in 2D.
The reflected waves caused by the approximations along $\yLabel$ also remain lodged at the boundary (with a slow damping as $\relaxationParameter$ is close to two).
This is caused by the global-in-time character of boundary conditions, which thus ``feel'' the approximation in $\yLabel$ eventually in time.

Finally, we have tested our transparent boundary conditions \testCaseDescription{0}{0} against the 2D generalization of \eqref{eq:kineticBoundaryConditions}.
Results, not shown here, are better with the former approach by several orders of magnitude.

\section{Systems}\label{sec:systems}

\subsection{Target partial differential equations}

Consider
\begin{equation}\label{eq:nonLinearSystem1D}
    \begin{dcases}
        \begin{aligned}
            &\partial_{\timeVariable}\vectorial{\conservedMoment}(\timeVariable, \spaceVariable) +  \partial_{\spaceVariable}(\vectorial{\fluxLetter}(\vectorial{\conservedMoment}(\timeVariable, \spaceVariable)))  = \vectorial{0}, \qquad &\timeVariable>0, \quad &\spaceVariable\in\reals, \\
            &\vectorial{\conservedMoment}(0, \spaceVariable)  = \vectorial{\conservedMoment}^{\circ}(\spaceVariable), \qquad & &\spaceVariable\in\reals,
        \end{aligned}
    \end{dcases}
\end{equation}
with $\vectorial{\conservedMoment}\in\reals^{\numberConservationLaws}$ and given fluxes $\vectorial{\fluxLetter}:\reals^{\numberConservationLaws}\to \reals^{\numberConservationLaws}$.
Note that, although we investigate the 1D setting for the sake of simplicity, the extension to multi-dimensional problems can be done in the spirit of \Cref{sec:2DScalar}.
Considering a reference state $\referenceState$ independent of space and time, and that $\vectorial{\conservedMoment}(\timeVariable, \spaceVariable) = \referenceState  + \delta \vectorial{\conservedMoment}(\timeVariable, \spaceVariable)$, where the acoustic fluctuation $\delta \vectorial{\conservedMoment}(\timeVariable, \spaceVariable)$ is assumed to be small, we linearize \eqref{eq:nonLinearSystem1D} to yield $\partial_{\timeVariable}\delta\vectorial{\conservedMoment} + \vectorial{\fluxLetter}'(\referenceState)\partial_{\spaceVariable}\delta \vectorial{\conservedMoment} = \vectorial{0}$.
Abusing notation mixing $\vectorial{\conservedMoment}$ and $\delta \vectorial{\conservedMoment}$, we consider the acoustic system
\begin{equation}\label{eq:linearizedSystem1D}
    \begin{dcases}
        \begin{aligned}
            &\partial_{\timeVariable}\vectorial{\conservedMoment}(\timeVariable, \spaceVariable) +  \vectorial{\fluxLetter}'(\referenceState) \partial_{\spaceVariable}\vectorial{\conservedMoment}(\timeVariable, \spaceVariable)  = \vectorial{0}, \qquad &\timeVariable>0, \quad &\spaceVariable\in\reals, \\
            &\vectorial{\conservedMoment}(0, \spaceVariable)  = \vectorial{\conservedMoment}^{\circ}(\spaceVariable), \qquad & &\spaceVariable\in\reals.
        \end{aligned}
    \end{dcases}
\end{equation}
We assume that the system in \eqref{eq:linearizedSystem1D} is hyperbolic, namely that $\vectorial{\fluxLetter}'(\referenceState)$ is diagonalizable with real eigenvalues, denoted $\eigenvalueLinearized_1, \dots, \eigenvalueLinearized_{\numberConservationLaws}$.
The corresponding eigenvectors $\eigenvectorLinearized_1, \dots, \eigenvectorLinearized_{\numberConservationLaws} \in\reals^{\numberConservationLaws}$ of $\vectorial{\fluxLetter}'(\referenceState)$ are gathered column-wise in a non-singular matrix $\matrixEigenvectorsLinearized\definitionEquality[\eigenvectorLinearized_1 \,|\, \cdots \,|\,  \eigenvectorLinearized_{\numberConservationLaws}]\in\linearGroup{\numberConservationLaws}{\reals}$.

The example we consistently present is the shallow water system, where $\vectorial{\conservedMoment} = \transpose{(\height, \height\velocity)}$ and $\vectorial{\fluxLetter}(\vectorial{\conservedMoment}) = \transpose{(\height\velocity, \height\velocity^2 + \tfrac{1}{2}\gravity\height^2)}$.
This entails 
\begin{equation*}
    \vectorial{\fluxLetter}'(\referenceState) = 
    \begin{pmatrix}
        0 & 1\\
        -\referenceStateMarked{\velocity}^2 + \gravity\referenceStateMarked{\height} & 2\referenceStateMarked{\velocity}
    \end{pmatrix}, \qquad 
    \eigenvalueLinearized_1 = \referenceStateMarked{\velocity} - \soundSpeed, \quad 
    \eigenvalueLinearized_2 = \referenceStateMarked{\velocity} + \soundSpeed, \qquad
    \matrixEigenvectorsLinearized = 
    \begin{pmatrix}
        1 & 1 \\
        \referenceStateMarked{\velocity} -\soundSpeed & \referenceStateMarked{\velocity} + \soundSpeed
    \end{pmatrix},
\end{equation*}
with the speed of the sound $\soundSpeed = \sqrt{\gravity \referenceStateMarked{\height}}$, and $\gravity>0$ the gravity constant.

\subsection{Numerical schemes and their properties}

\subsubsection{\lbmScheme{1}{3} scheme}

We first consider a three-velocities ``monolithic'' scheme \cite{van2010study}, which can be designed only for problems with $\numberConservationLaws = 2$ where the first flux is the one of the shallow water system, i.e. equals the second conserved quantity.
We present the scheme for the non-linear problem \eqref{eq:nonLinearSystem1D}.
\begin{itemize}
    \item \strong{Relaxation}.
    \begin{equation*}
        \conservedMoment_{\indexSpace}^{\indexTime, \collided} = \conservedMoment_{\indexSpace}^{\indexTime}, \qquad \nonConservedMoment_{\indexSpace}^{\indexTime, \collided} = \nonConservedMoment_{\indexSpace}^{\indexTime}, \qquad \nonNonConservedMoment_{\indexSpace}^{\indexTime, \collided} = (1-\relaxationParameter)\nonNonConservedMoment_{\indexSpace}^{\indexTime} + \relaxationParameter \fluxLetter_2(\conservedMoment_{\indexSpace}^{\indexTime}, \nonConservedMoment_{\indexSpace}^{\indexTime}), \qquad \indexSpace\in\integerInterval{0}{\numberGridPoints + 1}.
    \end{equation*}
    Here, $\conservedMoment_{\indexSpace}^{\indexTime}\approx \height(\timeGridPoint{\indexTime}, \spaceGridPoint{\indexSpace})$ and $\nonConservedMoment_{\indexSpace}^{\indexTime}\approx \height\velocity(\timeGridPoint{\indexTime}, \spaceGridPoint{\indexSpace})$ in the case of the shallow water system.
    The linearized equation is simulated replacing $\fluxLetter_2(\conservedMoment_{\indexSpace}^{\indexTime}, \nonConservedMoment_{\indexSpace}^{\indexTime})$ by $\transpose{\canonicalBasisVector{2}} \vectorial{\fluxLetter}'(\referenceState) \transpose{(\conservedMoment_{\indexSpace}^{\indexTime}, \nonConservedMoment_{\indexSpace}^{\indexTime})}$.
    \item \strong{Transport}. After having computed $\distributionFunctionLetter_{0, \indexSpace}^{\indexTime, \collided}= \conservedMoment_{\indexSpace}^{\indexTime, \collided} - \tfrac{1}{\latticeVelocity^2}\nonNonConservedMoment_{\indexSpace}^{\indexTime, \collided}$ and $\distributionFunctionLetter_{\pm, \indexSpace}^{\indexTime, \collided}= \tfrac{1}{2\latticeVelocity}(\pm\nonConservedMoment_{\indexSpace}^{\indexTime, \collided} +\tfrac{1}{\latticeVelocity} \nonNonConservedMoment_{\indexSpace}^{\indexTime, \collided} )$ for $\indexSpace\in\integerInterval{0}{\numberGridPoints + 1}$ (remark that in order to obtain the expected flux for the first unknown, the presence of $\latticeVelocity=\spaceStep/\timeStep$ in the previous equalities is essential), we have 
    \begin{equation*}
        \distributionFunctionLetter_{0, \indexSpace}^{\indexTime + 1}= \distributionFunctionLetter_{0, \indexSpace}^{\indexTime, \collided} 
        \qquad \text{and}\qquad
        \distributionFunctionLetter_{\pm, \indexSpace}^{\indexTime + 1}= \distributionFunctionLetter_{\pm, \indexSpace \mp 1}^{\indexTime, \collided}, \qquad \indexSpace\in\integerInterval{1}{\numberGridPoints}.
    \end{equation*}
    Then, we obtain $\conservedMoment_{\indexSpace}^{\indexTime+1} = \distributionFunctionLetter_{0, \indexSpace}^{\indexTime + 1} + \distributionFunctionLetter_{+, \indexSpace}^{\indexTime + 1} +  \distributionFunctionLetter_{-, \indexSpace}^{\indexTime + 1}$, $\nonConservedMoment_{\indexSpace}^{\indexTime+1} = \latticeVelocity(\distributionFunctionLetter_{+, \indexSpace}^{\indexTime + 1} -  \distributionFunctionLetter_{-, \indexSpace}^{\indexTime + 1})$, and $\nonNonConservedMoment_{\indexSpace}^{\indexTime+1} = \latticeVelocity^2(\distributionFunctionLetter_{+, \indexSpace}^{\indexTime + 1} +  \distributionFunctionLetter_{-, \indexSpace}^{\indexTime + 1})$ for $ \indexSpace\in\integerInterval{1}{\numberGridPoints}$.
\end{itemize}

\begin{proposition}[Consistency and von Neumann stability of the \lbmScheme{1}{3} scheme for the linearized shallow water system on $\relatives$]\label{prop:stabilityD1Q3ShallowWater}
     The \lbmScheme{1}{3} scheme for the linearized shallow water system is such that there exist two unique eigenvalues $\timeShiftOperator_{\textnormal{phy}, \pm}(\frequency\spaceStep)$ of $\schemeMatrixBulkFourier(\frequency\spaceStep)$ such that 
     \begin{multline}\label{eq:expPhySWD1Q3}
        \timeShiftOperator_{\textnormal{phy}, \pm} (\frequency\spaceStep)= 
        1 - i(\referenceStateMarked{\velocity} \pm \soundSpeed) \frequency\timeStep
        \\
        +\biggl (
        -\frac{(\referenceStateMarked{\velocity} \pm \soundSpeed)^2}{2}
        -
        \frac{\soundSpeed^4 - \latticeVelocity^2\soundSpeed^2 \pm 3\soundSpeed^3\referenceStateMarked{\velocity}/\referenceStateMarked{\height}^3\mp\latticeVelocity^2\soundSpeed\referenceStateMarked{\velocity}+3\soundSpeed^2\referenceStateMarked{\velocity}^2\pm\soundSpeed\referenceStateMarked{\velocity}^3}{2\latticeVelocity^2\soundSpeed^2}
        \Bigl ( \frac{1}{2}-\frac{1}{\relaxationParameter}\Bigr )
        \Bigr )
        (\frequency\timeStep)^2
        +\bigO{(\frequency\timeStep)^3}.
     \end{multline}
    This entails that the scheme is first-order accurate when $\relaxationParameter\in(0, 2)$ and second-order accurate when $\relaxationParameter = 2$.

    The scheme is stable in the sense of von Neumann, namely $\spectrum(\schemeMatrixBulkFourier(e^{i\frequency\spaceStep}))\subset \closedUnitDisk$ for $\frequency\spaceStep\in[-\pi, \pi]$, if and only if, taking $\relaxationParameter\in (0, 2]$
    \begin{equation}\label{eq:stabConditionD1Q3ShallowWater}
        \latticeVelocity\geq \max(|\referenceStateMarked{\velocity}-\soundSpeed|, |\referenceStateMarked{\velocity}+\soundSpeed|)\qquad \text{and}\qquad |\referenceStateMarked{\velocity}|\leq \soundSpeed.
    \end{equation}
\end{proposition}
The proof of the stability condition can be found in \Cref{app:prop:stabilityD1Q3ShallowWater}, while consistency can be verified with any computer algebra system.
The first stability condition is quite obvious, as it states that the lattice velocity must be larger than the fastest wave.
The second one says that the scheme can only be employed in a subsonic regime.
This latter fact, which is linked with numerical diffusion for $\relaxationParameter \in (0, 2)$, can be understood from the standpoint of Finite Volume methods by considering the relaxation scheme, i.e. $\relaxationParameter = 1$.
In this context, the scheme on $\vectorial{\conservedMoment} = \transpose{(\conservedMoment, \nonConservedMoment)}$ reads
\begin{equation*}
    \vectorial{\conservedMoment}_{\indexSpace}^{\indexTime+1} = \vectorial{\conservedMoment}_{\indexSpace}^{\indexTime}-\tfrac{\timeStep}{\spaceStep} (\tilde{\vectorial{\fluxLetter}}(\vectorial{\conservedMoment}_{\indexSpace}^{\indexTime}, \vectorial{\conservedMoment}_{\indexSpace+1}^{\indexTime}) - \tilde{\vectorial{\fluxLetter}}(\vectorial{\conservedMoment}_{\indexSpace-1}^{\indexTime}, \vectorial{\conservedMoment}_{\indexSpace}^{\indexTime}))
\end{equation*}
with numerical flux function 
\begin{equation*}
    \tilde{\vectorial{\fluxLetter}}(\vectorial{\conservedMoment}_{\ell}, \vectorial{\conservedMoment}_{r}) = \tfrac{1}{2}(\vectorial{\fluxLetter}(\vectorial{\conservedMoment}_{\ell})+\vectorial{\fluxLetter}(\vectorial{\conservedMoment}_{r})) - \tfrac{1}{2}\matricial{D}(\vectorial{\conservedMoment}_{r}-\vectorial{\conservedMoment}_{\ell}) \qquad \text{and}\qquad
    \matricial{D}= 
    \begin{pmatrix}
        (\soundSpeed^2-\referenceStateMarked{\velocity}^2)\tfrac{\timeStep}{\spaceStep} & 2 \referenceStateMarked{\velocity}  \tfrac{\timeStep}{\spaceStep} \\
        0 & \tfrac{\spaceStep}{\timeStep} 
    \end{pmatrix}.
\end{equation*}
This is similar to a Rusanov scheme, see \cite[Chapter 10]{toro2013riemann}, where one uses a matrix $\matricial{D}$ instead of a scalar\footnote{If we had $\matricial{D} = \latticeVelocity\identityMatrix{2}$, we would face the Lax-Friedrichs scheme.}.
The matrix $\matricial{D}$ yields a diffusion term, and requesting that this matrix be positive boils down to considering the subsonic regime.
Stability conditions \eqref{eq:stabConditionD1Q3ShallowWater} were both numerically observed in \cite[Figure 7]{van2010study}, where the authors remarked that the second one comes from constraining eigenvalues in $\closedUnitDisk$ in the vicinity of $\frequency\spaceStep = 0$. 

For this numerical scheme, damping and dissipation can be studied with analogous procedures to the \lbmScheme{1}{2} scheme, see \Cref{sec:D1Q2PresentationAndProp}, although we do not detail them for the sake of room.
We terminate the exposition of the features of this scheme by specifying the propagation pattern of the spurious root of the characteristic equation $\determinant(\timeShiftOperator\identityMatrix{3}-\schemeMatrixBulkFourier(e^{i\frequency\spaceStep})) = 0$ for low frequencies.
Its expansion reads
\begin{equation}\label{eq:spuriouSWD1Q3}
    \timeShiftOperator_{\textnormal{spu}}(\frequency\spaceStep) = \underbrace{(1-\relaxationParameter)}_{\textnormal{damping}} (\underbrace{1 + i2 \referenceStateMarked{\velocity}\frequency\timeStep}_{\textnormal{propagation}} + \bigO{(\frequency\timeStep)^2})
\end{equation}
so the damping is geometrical with common ratio $1-\relaxationParameter$ and propagation at speed $-2 \referenceStateMarked{\velocity}$.

\begin{summarybox}{Summary of the main properties of the \lbmScheme{1}{3} for the linearized shallow water}
    \begin{small}
        \begin{center}
            \begin{tabular}{|ll!{\vrule width 1pt}c|c!{\vrule width 1pt}}
                \cline{3-4}
                \multicolumn{2}{l|}{} & $\relaxationParameter\in(0, 2)$ & \multicolumn{1}{c|}{$\relaxationParameter = 2$}\\
                \cline{1-2}\noalign{\global\arrayrulewidth=1pt}
                \cline{3-4}
                \noalign{\global\arrayrulewidth=.4pt}
                \multicolumn{2}{|l!{\vrule width 1pt}}{Consistency (see \Cref{prop:stabilityD1Q3ShallowWater})} & 1st order & 2nd order \\
                \hline
                \multicolumn{2}{|l!{\vrule width 1pt}}{Von Neumann stability condition (see \Cref{prop:stabilityD1Q3ShallowWater})} & \multicolumn{2}{c!{\vrule width 1pt}}{$\latticeVelocity\geq \max(|\referenceStateMarked{\velocity}-\soundSpeed|, |\referenceStateMarked{\velocity}+\soundSpeed|)\quad \text{and}\quad |\referenceStateMarked{\velocity}|\leq \soundSpeed$} \\
                \hline
                \multirow{2}{*}{Propagation speed at low freq. ($\frequency\spaceStep\approx 0$)} & Physical symb. \eqref{eq:expPhySWD1Q3} & \multicolumn{2}{c!{\vrule width 1pt}}{$\referenceStateMarked{\velocity}\pm\soundSpeed$}\\\cline{2-4}
                                             & Spurious symb. \eqref{eq:spuriouSWD1Q3} & \multicolumn{2}{c!{\vrule width 1pt}}{$-2\referenceStateMarked{\velocity}$}\\
                \cline{1-2}\noalign{\global\arrayrulewidth=1pt}
                \cline{3-4}\noalign{\global\arrayrulewidth=.4pt}
                \end{tabular}
        \end{center}        
    \end{small}
\end{summarybox}

\subsubsection{Vectorial \lbmSchemeVectorial{1}{2}{\numberConservationLaws} scheme}

This numerical scheme has been introduced in \cite{graille2014approximation}, based on ideas close to those of kinetic schemes, see \cite{aregba2000discrete}.
As shown below, the main asset of this scheme is being able to simulate any system regardless of its size $\numberConservationLaws$ and the structure of the fluxes.
\begin{itemize}
    \item \strong{Relaxation}.
    \begin{equation}
        \vectorial{\conservedMoment}_{\indexSpace}^{\indexTime, \collided} = \vectorial{\conservedMoment}_{\indexSpace}^{\indexTime}\qquad \text{and} \qquad \vectorial{\nonConservedMoment}_{\indexSpace}^{\indexTime, \collided} = (1-\omega)\vectorial{\nonConservedMoment}_{\indexSpace}^{\indexTime} + \omega {\vectorial{\fluxLetter}(\vectorial{\conservedMoment}_{\indexSpace}^{\indexTime})}/{\latticeVelocity}, \qquad \indexSpace\in\integerInterval{0}{\numberGridPoints + 1}.
    \end{equation}
    The linearized version replaces $\vectorial{\fluxLetter}(\vectorial{\conservedMoment}_{\indexSpace}^{\indexTime})$ by $\vectorial{\fluxLetter}'(\referenceStateMarked{\vectorial{\conservedMoment}})\vectorial{\conservedMoment}_{\indexSpace}^{\indexTime}$.
    Here, $\vectorial{\conservedMoment}_{\indexSpace}^{\indexTime}\in\reals^{\numberConservationLaws}$ and $\vectorial{\conservedMoment}_{\indexSpace}^{\indexTime}\approx\vectorial{\conservedMoment}(\timeGridPoint{\indexTime}, \spaceGridPoint{\indexSpace})$.
    \item \strong{Transport}. After having computed $\vectorial{\distributionFunctionLetter}_{\pm, \indexSpace}^{\indexTime, \collided} =  \tfrac{1}{2}(\vectorial{\conservedMoment}_{\indexSpace}^{\indexTime, \collided} \pm \vectorial{\nonConservedMoment}_{\indexSpace}^{\indexTime, \collided} )$ for $\indexSpace\in\integerInterval{0}{\numberGridPoints + 1}$, we have 
    \begin{equation}
        \vectorial{\distributionFunctionLetter}_{\pm, \indexSpace}^{\indexTime + 1} = \vectorial{\distributionFunctionLetter}_{\pm, \indexSpace \mp 1}^{\indexTime, \collided}, \qquad \indexSpace\in\integerInterval{1}{\numberGridPoints}.
    \end{equation}
    Then, we obtain $\vectorial{\conservedMoment}_{\indexSpace}^{\indexTime+1} = \vectorial{\distributionFunctionLetter}_{+, \indexSpace}^{\indexTime + 1} +  \vectorial{\distributionFunctionLetter}_{-, \indexSpace}^{\indexTime + 1}$ and $\vectorial{\nonConservedMoment}_{\indexSpace}^{\indexTime+1} = \vectorial{\distributionFunctionLetter}_{+, \indexSpace}^{\indexTime + 1} -  \vectorial{\distributionFunctionLetter}_{-, \indexSpace}^{\indexTime + 1}$ for $ \indexSpace\in\integerInterval{1}{\numberGridPoints}$.
\end{itemize}

We have the following fundamental lemma, which tells us that the \lbmSchemeVectorial{1}{2}{\numberConservationLaws} behaves as a ``superposition'' of $\numberConservationLaws$ \lbmScheme{1}{2} schemes described in \Cref{sec:D1Q2PresentationAndProp}, with its properties inherited at the scale of each wave of hyperbolic system.
\begin{lemma}[Factorization of the characteristic equation for the \lbmSchemeVectorial{1}{2}{\numberConservationLaws} scheme]\label{lemma:splittingCharVectorialD1Q2}
    Introduce the Courant number of each wave: $\courantNumber_{k}\definitionEquality\eigenvalueLinearized_k/\latticeVelocity$ for $k\in\integerInterval{1}{\numberConservationLaws}$.
    For the vectorial \lbmSchemeVectorial{1}{2}{\numberConservationLaws} scheme tackling the linearized problem as above, we have
    \begin{equation*}
        \determinant(\timeShiftOperator\identityMatrix{2\numberConservationLaws} - \schemeMatrixBulkFourier(\fourierShift)) = 
        \prod_{k = 1}^{\numberConservationLaws}\Bigl ( \tfrac{1}{2}((1-\courantNumber_k)\relaxationParameter-2)\timeShiftOperator \fourierShift^{-1} + (\timeShiftOperator^2 + 1-\relaxationParameter) +\tfrac{1}{2}((1+\courantNumber_k)\relaxationParameter-2)\timeShiftOperator \fourierShift \Bigr ),
    \end{equation*}
    where each term in the product features the expression in \eqref{eq:charEquation} with the Courant number of each wave.
\end{lemma}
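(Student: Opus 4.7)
The plan is to show that the vectorial scheme decouples, in a suitable basis, into $\numberConservationLaws$ independent copies of the scalar \lbmScheme{1}{2} scheme, each carrying the Courant number $\courantNumber_k$ of the corresponding characteristic field. The factorization of the determinant will then follow from the block-diagonal structure.

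First I would make the bulk matrix $\schemeMatrixBulkFourier(\fourierShift)$ explicit. Working with the ordered block vector $\transpose{(\vectorial{\conservedMoment}, \vectorial{\nonConservedMoment})}\in\reals^{2\numberConservationLaws}$, the combination of relaxation and transport in Fourier reads, after a short computation paralleling the scalar case,
\begin{equation*}
    \schemeMatrixBulkFourier(\fourierShift) = \frac{1}{2}
    \begin{pmatrix}
       (\identityMatrix{\numberConservationLaws}+\relaxationParameter \vectorial{\fluxLetter}'(\referenceState)/\latticeVelocity)\fourierShift^{-1} + (\identityMatrix{\numberConservationLaws}-\relaxationParameter \vectorial{\fluxLetter}'(\referenceState)/\latticeVelocity)\fourierShift & (1-\relaxationParameter)(\fourierShift^{-1}-\fourierShift)\identityMatrix{\numberConservationLaws} \\
       (\identityMatrix{\numberConservationLaws}+\relaxationParameter \vectorial{\fluxLetter}'(\referenceState)/\latticeVelocity)\fourierShift^{-1} - (\identityMatrix{\numberConservationLaws}-\relaxationParameter \vectorial{\fluxLetter}'(\referenceState)/\latticeVelocity)\fourierShift & (1-\relaxationParameter)(\fourierShift^{-1}+\fourierShift)\identityMatrix{\numberConservationLaws}
    \end{pmatrix},
\end{equation*}
whose blocks are pairwise commuting matrices in $\reals^{\numberConservationLaws\times \numberConservationLaws}$: all four blocks are polynomial in the single matrix $\vectorial{\fluxLetter}'(\referenceState)$.

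Next I would perform the simultaneous change of basis $\matricial{P}\definitionEquality\matrixEigenvectorsLinearized\oplus\matrixEigenvectorsLinearized\in\linearGroup{2\numberConservationLaws}{\reals}$, which acts independently on the $\vectorial{\conservedMoment}$- and $\vectorial{\nonConservedMoment}$-components. Since $\matrixEigenvectorsLinearized^{-1}\vectorial{\fluxLetter}'(\referenceState)\matrixEigenvectorsLinearized=\diagonalMatrix(\eigenvalueLinearized_1, \dots, \eigenvalueLinearized_{\numberConservationLaws})$, the matrix $\matricial{P}^{-1}\schemeMatrixBulkFourier(\fourierShift)\matricial{P}$ has each of its four $\numberConservationLaws\times\numberConservationLaws$ blocks diagonal, with the $k$-th diagonal entry depending only on $\courantNumber_k=\eigenvalueLinearized_k/\latticeVelocity$. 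A simultaneous permutation of rows and columns grouping, for each $k$, the $k$-th component of $\vectorial{\conservedMoment}$ with the $k$-th component of $\vectorial{\nonConservedMoment}$, brings the transformed matrix to block-diagonal form with $2\times 2$ blocks: the $k$-th block is precisely the matrix $\schemeMatrixBulkFourier_{k}(\fourierShift)$ of a scalar \lbmScheme{1}{2} scheme with Courant number $\courantNumber_k$.

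Finally, since the determinant is invariant under conjugation and permutations, and multiplicative over block-diagonal forms,
\begin{equation*}
    \determinant(\timeShiftOperator\identityMatrix{2\numberConservationLaws}-\schemeMatrixBulkFourier(\fourierShift)) = \prod_{k=1}^{\numberConservationLaws} \determinant(\timeShiftOperator\identityMatrix{2}-\schemeMatrixBulkFourier_{k}(\fourierShift)),
\end{equation*}
and each factor equals the scalar expression in \eqref{eq:charEquation} with $\courantNumber$ replaced by $\courantNumber_k$. The main obstacle is purely bookkeeping: ensuring that the block-by-block conjugation by $\matricial{P}$ commutes with the permutation that regroups components wave-by-wave; this works cleanly because every block of $\schemeMatrixBulkFourier(\fourierShift)$ is a polynomial in $\vectorial{\fluxLetter}'(\referenceState)$ and thus is simultaneously diagonalized by $\matrixEigenvectorsLinearized$.
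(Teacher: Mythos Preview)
Your proposal is correct and follows essentially the same route as the paper's own proof: write $\schemeMatrixBulkFourier(\fourierShift)$ in $2\times 2$ block form with $\numberConservationLaws\times\numberConservationLaws$ blocks that are polynomials in $\vectorial{\fluxLetter}'(\referenceState)$, conjugate by $\matrixEigenvectorsLinearized\oplus\matrixEigenvectorsLinearized$ to make every block diagonal, then permute to a block-diagonal matrix with $2\times 2$ blocks equal to the scalar \lbmScheme{1}{2} scheme matrix at Courant number $\courantNumber_k$, and conclude by multiplicativity of the determinant. Your extra remark on pairwise commutativity of the blocks is correct but not actually exploited; the paper proceeds directly to the similarity transformation.
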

Similar results can be proved for more involved schemes, for instance based on the link TRT scheme \cite{d2009viscosity}, provided that the equilibria are of the form $(\alpha\identityMatrix{\numberConservationLaws} + \beta \vectorial{\fluxLetter}'(\referenceStateMarked{\vectorial{\conservedMoment}}))\vectorial{\conservedMoment}$ and that every component of the non-conserved moment(s) relaxes with the same relaxation parameter, \confer{} the proof below.
\begin{proof}[Proof of \Cref{lemma:splittingCharVectorialD1Q2}]
    We have 
    \begin{equation*}
        \schemeMatrixBulkFourier(\fourierShift) = 
        \left [\renewcommand{\arraystretch}{1.5}
        \begin{array}{c|c}
            \tfrac{1}{2}(\fourierShift^{-1}+\fourierShift)\identityMatrix{\numberConservationLaws} + \tfrac{\relaxationParameter}{2\latticeVelocity}(\fourierShift^{-1}-\fourierShift)\vectorial{\fluxLetter}'(\referenceState) & \tfrac{1-\relaxationParameter}{2}(\fourierShift^{-1}-\fourierShift)\identityMatrix{\numberConservationLaws}\\
            \hline
            \tfrac{1}{2}(\fourierShift^{-1}-\fourierShift)\identityMatrix{\numberConservationLaws} + \tfrac{\relaxationParameter}{2\latticeVelocity}(\fourierShift^{-1}+\fourierShift)\vectorial{\fluxLetter}'(\referenceState) & \tfrac{1-\relaxationParameter}{2}(\fourierShift^{-1}+\fourierShift)\identityMatrix{\numberConservationLaws}
        \end{array}
        \right ]
    \end{equation*}
    Introducing $\inTheWavesBase{\vectorial{\conservedMoment}} =\matrixEigenvectorsLinearized^{-1} \vectorial{\conservedMoment}$ and $\inTheWavesBase{\vectorial{\nonConservedMoment}} =\matrixEigenvectorsLinearized^{-1} \vectorial{\nonConservedMoment}$, the scheme can be rewritten as
    \begin{equation*}
        \begin{pmatrix}
            \inTheWavesBase{\vectorial{\conservedMoment}}_{\indexSpace}^{\indexTime+1}\\
            \inTheWavesBase{\vectorial{\nonConservedMoment}}_{\indexSpace}^{\indexTime+1}
        \end{pmatrix} = 
        \left [
        \begin{array}{c|c}
            \matrixEigenvectorsLinearized^{-1} & \zeroMatrix{\numberConservationLaws}\\
            \hline
            \zeroMatrix{\numberConservationLaws} & \matrixEigenvectorsLinearized^{-1} 
        \end{array}
        \right ]
        \schemeMatrixBulkFourier(\fourierShift)
        \left [
        \begin{array}{c|c}
            \matrixEigenvectorsLinearized & \zeroMatrix{\numberConservationLaws}\\
            \hline
            \zeroMatrix{\numberConservationLaws} & \matrixEigenvectorsLinearized
        \end{array}
        \right ]
        \begin{pmatrix}
            \inTheWavesBase{\vectorial{\conservedMoment}}_{\indexSpace}^{\indexTime}\\
            \inTheWavesBase{\vectorial{\nonConservedMoment}}_{\indexSpace}^{\indexTime}
        \end{pmatrix}, 
    \end{equation*}
    where the matrix on the right-hand side has the same characteristic polynomial of $\schemeMatrixBulkFourier(\fourierShift)$, as they are similar through a change of basis independent of $\fourierShift$.
    Since  $\matrixEigenvectorsLinearized$ diagonalizes $\vectorial{\fluxLetter}'(\referenceState)$:
    \begin{multline*}
        \left [
        \begin{array}{c|c}
            \matrixEigenvectorsLinearized^{-1} & \zeroMatrix{\numberConservationLaws}\\
            \hline
            \zeroMatrix{\numberConservationLaws} & \matrixEigenvectorsLinearized^{-1} 
        \end{array}
        \right ]
        \schemeMatrixBulkFourier(\fourierShift)
        \left [
        \begin{array}{c|c}
            \matrixEigenvectorsLinearized & \zeroMatrix{\numberConservationLaws}\\
            \hline
            \zeroMatrix{\numberConservationLaws} & \matrixEigenvectorsLinearized
        \end{array}
        \right ] \\
        =
        \left [\renewcommand{\arraystretch}{1.5}
        \begin{array}{c|c}
            \tfrac{1}{2}(\fourierShift^{-1}+\fourierShift)\identityMatrix{\numberConservationLaws} + \tfrac{\relaxationParameter}{2}(\fourierShift^{-1}-\fourierShift)\diagonalMatrix(\courantNumber_1, \dots, \courantNumber_{\numberConservationLaws}) & \tfrac{1-\relaxationParameter}{2}(\fourierShift^{-1}-\fourierShift)\identityMatrix{\numberConservationLaws}\\
            \hline
            \tfrac{1}{2}(\fourierShift^{-1}-\fourierShift)\identityMatrix{\numberConservationLaws} + \tfrac{\relaxationParameter}{2}(\fourierShift^{-1}+\fourierShift)\diagonalMatrix(\courantNumber_1, \dots, \courantNumber_{\numberConservationLaws})  & \tfrac{1-\relaxationParameter}{2}(\fourierShift^{-1}+\fourierShift)\identityMatrix{\numberConservationLaws}
        \end{array}
        \right ].
    \end{multline*}
    Using a permutation matrix (and its inverse) on the previous expression gives a block-diagonal matrix with $2\times2$ diagonal blocks given by 
    \begin{equation*}
        \frac{1}{2}
    \begin{pmatrix}
       (1+\courantNumber_k\relaxationParameter)\fourierShift^{-1} + (1-\courantNumber_k\relaxationParameter)\fourierShift & (1-\relaxationParameter)\fourierShift^{-1} -  (1-\relaxationParameter)\fourierShift\\
       (1+\courantNumber_k\relaxationParameter)\fourierShift^{-1} - (1-\courantNumber_k\relaxationParameter)\fourierShift & (1-\relaxationParameter)\fourierShift^{-1} +  (1-\relaxationParameter)\fourierShift
    \end{pmatrix}
    \end{equation*}
    for $k\in\integerInterval{1}{\numberConservationLaws}$, hence the claim by the fact that the determinant of a block-diagonal matrix is the product of the determinants of each block.
\end{proof}

The stability constraints of this scheme given below have been claimed in \cite{coulette2020vectorial} without proof, and naturally follow from \Cref{prop:consistencyD1Q2} and \Cref{lemma:splittingCharVectorialD1Q2}.

\begin{corollary}[Consistency and stability of the vectorial \lbmSchemeVectorial{1}{2}{\numberConservationLaws} scheme on $\relatives$]
    The vectorial \lbmSchemeVectorial{1}{2}{\numberConservationLaws} scheme tackling the linearized problem as above is such that, for each $k\in\integerInterval{1}{\numberConservationLaws}$, there exists a unique eigenvalue $\timeShiftOperator_{\textnormal{phy}, k}(\frequency\spaceStep)$ of $\schemeMatrixBulkFourier(e^{i\frequency\spaceStep})$ such that 
    \begin{equation}\label{eq:amplificationFactorsD1Q2Vect}
        \timeShiftOperator_{\textnormal{phy}, k}(\frequency\spaceStep) = 1 - i \eigenvalueLinearized_k\xi\timeStep + \Bigl ( -\frac{\courantNumber_k^2}{2} + (1-\courantNumber_k^2)\Bigl (\frac{1}{2}-\frac{1}{\relaxationParameter}\Bigr )\Bigr ) \latticeVelocity^2(\xi\timeStep)^2 + \bigO{(\xi\timeStep)^3}.
    \end{equation}
    This entails that the scheme is first-order accurate when $\relaxationParameter\in(0, 2)$ and second-order accurate when $\relaxationParameter = 2$.

    The scheme is $\ell^2$--stable, namely there exists $C>0$ such that $\sup_{\frequency\spaceStep\in[-\pi, \pi]} |\schemeMatrixBulkFourier(e^{i\frequency\spaceStep})^{\indexTime}|\leq C$ for all $\indexTime\in\naturals$, if and only if 
    \begin{equation*}
        \text{when } \relaxationParameter\in (0, 2), \quad \text{then}\quad \frac{\rho(\vectorial{\fluxLetter}'(\referenceState))}{\latticeVelocity} = \max_{k\in\integerInterval{1}{\numberConservationLaws}}|\courantNumber_k|\leq 1, \qquad\text{or}\qquad
        \text{when }\relaxationParameter = 2,\quad \text{then}\quad \frac{\rho(\vectorial{\fluxLetter}'(\referenceState))}{\latticeVelocity}<1,
    \end{equation*}
    where $\rho$ denotes the spectral radius.
\end{corollary}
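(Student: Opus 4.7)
The plan is to reduce everything to the scalar \lbmScheme{1}{2} scheme via the factorization already established in \Cref{lemma:splittingCharVectorialD1Q2}, together with the corresponding scalar statement \Cref{prop:consistencyD1Q2}. The key observation, which is implicit in the proof of the lemma, is that $\schemeMatrixBulkFourier(\fourierShift)$ is similar, through the \emph{$\fourierShift$-independent} change of basis
\[
\matricial{P}\definitionEquality\left[\begin{array}{c|c}\matrixEigenvectorsLinearized & \zeroMatrix{\numberConservationLaws}\\\hline\zeroMatrix{\numberConservationLaws}&\matrixEigenvectorsLinearized\end{array}\right],
\]
followed by a suitable permutation, to the block-diagonal matrix
$\diagonalMatrix(\schemeMatrixBulkFourier_{1}(\fourierShift),\dots,\schemeMatrixBulkFourier_{\numberConservationLaws}(\fourierShift))$, where each $\schemeMatrixBulkFourier_k(\fourierShift)$ is exactly the scheme matrix of a \lbmScheme{1}{2} scheme with Courant number $\courantNumber_k$.

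For consistency, I would simply apply \Cref{prop:consistencyD1Q2} to each block $\schemeMatrixBulkFourier_k$: this gives a unique eigenvalue $\timeShiftOperator_{\textnormal{phy},k}(\frequency\spaceStep)$ admitting the expansion \eqref{eq:expansionConsistencyRoot} in which $(\advectionVelocity,\courantNumber)$ is replaced by $(\eigenvalueLinearized_k,\courantNumber_k)$, which is precisely \eqref{eq:amplificationFactorsD1Q2Vect}. Uniqueness carries over because the spectrum of the full matrix is the disjoint union of the spectra of the blocks (up to the possible coincidence for different $k$, which is irrelevant as the physical branch of each block is singled out intrinsically by the condition $\timeShiftOperator_{\textnormal{phy},k}(0)=1$ and the first-order expansion that identifies $\eigenvalueLinearized_k$).

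For stability, I would use that $\schemeMatrixBulkFourier(e^{i\frequency\spaceStep})^{\indexTime}=\matricial{P}\,\matricial{Q}^{-1}\diagonalMatrix(\schemeMatrixBulkFourier_k(e^{i\frequency\spaceStep})^{\indexTime})_k\matricial{Q}\,\matricial{P}^{-1}$ for a constant permutation $\matricial{Q}$, so that
\[
\sup_{\frequency\spaceStep\in[-\pi,\pi]}|\schemeMatrixBulkFourier(e^{i\frequency\spaceStep})^{\indexTime}| \;\leq\; \|\matricial{P}\|\,\|\matricial{P}^{-1}\|\;\max_{k\in\integerInterval{1}{\numberConservationLaws}}\sup_{\frequency\spaceStep\in[-\pi,\pi]}|\schemeMatrixBulkFourier_k(e^{i\frequency\spaceStep})^{\indexTime}|,
\]
and the converse inequality holds by the same token. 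Therefore $\ell^2$-stability of the vectorial scheme is equivalent to simultaneous $\ell^2$-stability of each of the $\numberConservationLaws$ scalar blocks. Invoking \Cref{prop:consistencyD1Q2} block-by-block turns \eqref{eq:stabConditionD1Q2} into the requirement $|\courantNumber_k|\leq 1$ for every $k$ when $\relaxationParameter\in(0,2)$, and $|\courantNumber_k|<1$ for every $k$ when $\relaxationParameter=2$; maximizing over $k$ gives exactly the announced condition on $\rho(\vectorial{\fluxLetter}'(\referenceState))/\latticeVelocity$.

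The only mildly subtle point, and hence the ``hard part'', is justifying that the similarity transform really commutes with the supremum over $\frequency\spaceStep$ and with arbitrarily high matrix powers uniformly. This is immediate here because $\matricial{P}$ and $\matricial{Q}$ are constant (do not depend on $\frequency\spaceStep$ nor on $\indexTime$), so the condition number factor is a harmless constant absorbed into the $C$ of the definition of $\ell^2$-stability; no further analytic argument is needed.
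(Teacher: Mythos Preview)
Your proposal is correct and follows exactly the route the paper indicates: the corollary is obtained directly from \Cref{prop:consistencyD1Q2} applied block-by-block via the $\fourierShift$-independent similarity established in \Cref{lemma:splittingCharVectorialD1Q2}. You have simply written out in detail what the paper summarizes in one sentence, including the (correct) observation that the constant condition number of the change of basis is what allows the equivalence of $\ell^2$-stability to pass uniformly in $\frequency\spaceStep$ and $\indexTime$.
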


Propagation and damping features for physical and spurious modes are---by virtue of \Cref{lemma:splittingCharVectorialD1Q2}---the same as the scalar \lbmScheme{1}{2} scheme wave-by-wave, i.e., replacing $\advectionVelocity$ with $\eigenvalueLinearized_{k}$ and $\courantNumber$ with $\courantNumber_k$ for $k\in\integerInterval{1}{\numberConservationLaws}$ in all claims of \Cref{sec:D1Q2PresentationAndProp}.

\subsection{Transparent boundary conditions for the \lbmScheme{1}{3} scheme}\label{sec:transpD1Q3ShallowWater}

The procedure is exactly the same as the for the fourth-order \lbmScheme{1}{3} with a scalar approach with recurrent computation of the coefficients.
Boundary conditions read \eqref{eq:rightBoundaryD1Q3}--\eqref{eq:leftBoundaryD1Q3} with coefficients given in \Cref{app:sec:transpD1Q3ShallowWater}.

\subsubsection{Numerical experiments}\label{sec:numericalExpD1Q3ShallowWater}

\paragraph{Linearized problem}\label{sec:foo}

\begin{figure}
    \begin{center}
        \includegraphics[width=1\textwidth]{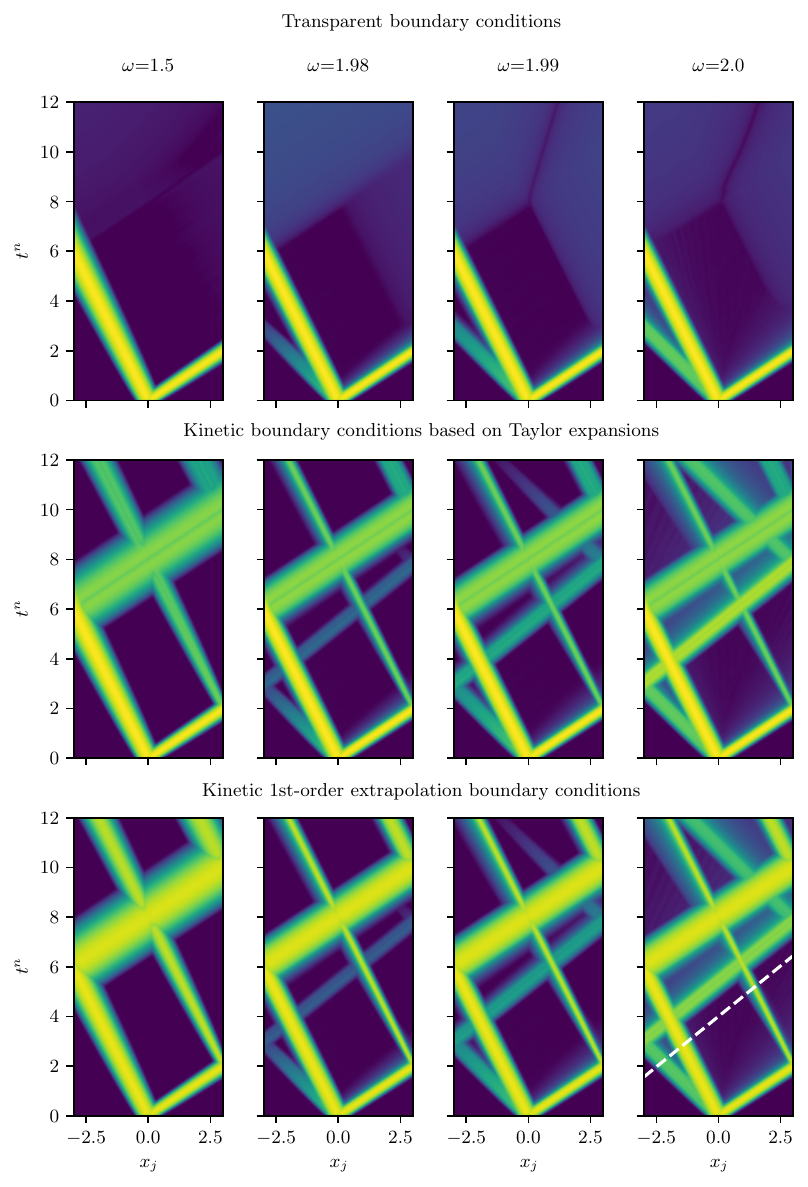}
    \end{center}\caption{\label{fig:D1Q3-shallow-water}Values of $\log_{10}(|\nonConservedMoment_{\indexSpace}^{\indexTime}|)$ (same color-scale as \Cref{fig:D1Q2-sys-vs-scal}) for the \lbmScheme{1}{3} for the linearized shallow water system endowed with different boundary conditions. The white dashed line has slope $\textnormal{V}_{\textnormal{g}}(\eta\timeStep = \pi, \frequency\spaceStep = \pi)$  given by \eqref{eq:groupVelocityInteresting} in the $\spaceVariable/\timeVariable$-plane.}
\end{figure}

We compare with kinetic boundary conditions devised using Taylor expansions in the spirit of \cite[Chapter 12]{bellotti2023numerical}.
The solution of the system can be decomposed on the eigenbasis 
\begin{equation*}
    \begin{pmatrix}
        \height\\
        \height\velocity
    \end{pmatrix}(\timeVariable, \spaceVariable)
    =
    \begin{pmatrix}
        1\\
        \referenceStateMarked{\velocity} - \soundSpeed
    \end{pmatrix}
    \phi_-(\spaceVariable - (\referenceStateMarked{\velocity} - \soundSpeed)\timeVariable)
    +
    \begin{pmatrix}
        1\\
        \referenceStateMarked{\velocity} + \soundSpeed
    \end{pmatrix}
    \phi_+(\spaceVariable - (\referenceStateMarked{\velocity} + \soundSpeed)\timeVariable),
\end{equation*}
hence 
\begin{equation*}
    \begin{pmatrix}
        1 & 1\\
        \referenceStateMarked{\velocity} - \soundSpeed & \referenceStateMarked{\velocity} + \soundSpeed
    \end{pmatrix}
    \begin{pmatrix}
        \phi_-\\
        \phi_+
    \end{pmatrix} = 
    \begin{pmatrix}
        \height\\
        \height\velocity
    \end{pmatrix}, 
    \qquad \text{thus}\qquad
    \phi_{\mp} \propto
    \pm (\referenceStateMarked{\velocity} \pm \soundSpeed)\height \mp \height \velocity.
\end{equation*}
Following the leitmotiv of ``killing'' reflected waves \cite{izquierdo2008characteristic, wissocq2017regularized}, we want $\phi_- = 0$, boiling down to $(\referenceStateMarked{\velocity} + \soundSpeed)\height - \height \velocity = 0$, on the right boundary; and $\phi_+ = 0$, calling for $-(\referenceStateMarked{\velocity} - \soundSpeed)\height + \height \velocity = 0$, on the left boundary.
For the sake of illustration, we try to achieve this using anti-bounce back conditions with source terms
\begin{equation*}
    \distributionFunctionLetter_{+, 0}^{\indexTime, \collided} = -\distributionFunctionLetter_{-, 1}^{\indexTime, \collided} + S_{\ell}^{\indexTime} 
    \qquad \text{and}\qquad 
    \distributionFunctionLetter_{-, \numberGridPoints + 1}^{\indexTime, \collided} = -\distributionFunctionLetter_{+, \numberGridPoints}^{\indexTime, \collided} + S_{r}^{\indexTime}.
\end{equation*}
To devise $S_{\ell}^{\indexTime}$ (we treat $S_{r}^{\indexTime}$ analogously), we follow the procedure that \cite{dubois2008equivalent} proposed to analyze the consistency of the bulk scheme: formal Taylor expansions at $(\timeVariable, \spaceVariable) = (\timeGridPoint{\indexTime}, \leftBoundary)$ of the boundary scheme rewritten on the moments give 
\begin{equation*}
    \latticeVelocity S_{\ell} - \frac{\nonNonConservedMoment^{\atEquilibrium}(\conservedMoment, \nonConservedMoment)}{\latticeVelocity} = \bigO{\spaceStep}.
\end{equation*}
Enforcing that the left-hand side of the previous equation be equal to $(\referenceStateMarked{\velocity} - \soundSpeed)\conservedMoment -  \nonConservedMoment$ (we made a choice concerning the sign in front of this term for empirical stability reasons), we obtain 
\begin{equation*}
    S_{\ell}^{\indexTime} = \frac{\nonNonConservedMoment^{\atEquilibrium}(\conservedMoment_1^{\indexTime}, \nonConservedMoment_1^{\indexTime}) + \latticeVelocity((\referenceStateMarked{\velocity} - \soundSpeed)\conservedMoment_1^{\indexTime} - \nonConservedMoment_1^{\indexTime})}{\latticeVelocity^2}
    \qquad \text{and}\qquad 
    S_{r}^{\indexTime} = \frac{\nonNonConservedMoment^{\atEquilibrium}(\conservedMoment_{\numberGridPoints}^{\indexTime}, \nonConservedMoment_{\numberGridPoints}^{\indexTime}) + \latticeVelocity(-(\referenceStateMarked{\velocity} + \soundSpeed)\conservedMoment_{\numberGridPoints}^{\indexTime} + \nonConservedMoment_{\numberGridPoints}^{\indexTime})}{\latticeVelocity^2}.
\end{equation*}

We also compare to kinetic 1st-order extrapolations $\distributionFunctionLetter_{+, 0}^{\indexTime, \collided} = \distributionFunctionLetter_{+, 1}^{\indexTime, \collided}$ and $\distributionFunctionLetter_{-, \numberGridPoints + 1}^{\indexTime, \collided} = \distributionFunctionLetter_{-, \numberGridPoints}^{\indexTime, \collided}$ at both boundaries.
They give analogous results when replaced by kinetic Dirichlet boundary conditions $\distributionFunctionLetter_{+, 0}^{\indexTime, \collided} = 0$ and $\distributionFunctionLetter_{-, \numberGridPoints + 1}^{\indexTime, \collided} = 0$, not included here.

We simulate using $\referenceStateMarked{\height} = 1$, $\referenceStateMarked{\velocity} = \tfrac{1}{2}$, and $\gravity = 1$.
This yields $\soundSpeed = 1$, $\referenceStateMarked{\velocity} - \soundSpeed = -\tfrac{1}{2}$, and $\referenceStateMarked{\velocity} + \soundSpeed = \tfrac{3}{2}$.
We take $\latticeVelocity = 2$, which ensures stability.
The domain delimited by $\leftBoundary = -3$ and $\rightBoundary = 3$, and discretized using $\numberGridPoints = 1000$ points.
The initial data are $\height^{\circ}(\spaceVariable) = \cos^{10}(\pi\spaceVariable)\indicatorFunction{(-1/2, 1/2)}(\spaceVariable)$ and $\velocity^{\circ}(\spaceVariable) = 0$.

Results are shown in \Cref{fig:D1Q3-shallow-water}.
When reflected, the spurious damped time-oscillating--space-smooth mode $(1-\relaxationParameter, 1)$ travelling at velocity $-2\referenceStateMarked{\velocity} = -1$ is not transformed into the sole physical mode travelling to the right, but becomes a fully checkerboard mode, essentially $(-1, -1)$, travelling to the right at group velocity\footnote{Now very different from the phase velocity.}
\begin{equation}\label{eq:groupVelocityInteresting}
    \textnormal{V}_{\textnormal{g}}(\eta\timeStep = \pi, \frequency\spaceStep = \pi) = \frac{2\latticeVelocity^2 \referenceStateMarked{\velocity}}{\latticeVelocity^2 + \referenceStateMarked{\velocity}^2 - \soundSpeed^2}.
\end{equation}
The fact that the reflected mode has $\timeShiftOperator \in\unitCircle$ (whereas the incoming mode had $|\timeShiftOperator|=|1-\relaxationParameter|<1$) explains why it is appears undamped even with $\relaxationParameter<2$, thus the importance of transparent boundary conditions.
When $\relaxationParameter$ is way apart from two, we observe important numerical diffusion (the beam enlarges with time) for the left-going physical wave, as its Courant number is rather small.
On the other hand, for $\relaxationParameter$ close to two, the right-going physical wave undergoes a significant amount of dispersion.
Finally, notice that kinetic boundary conditions devised through Taylor expansion significantly reduce the magnitude of reflected waves stemming from physical one, consistently with the fact that they have been designed to achieve this result, \confer{} \Cref{rem:costVsAccuracy}. 
Quite the opposite, they amplify the reflected wave arising from the numerical wave, which is not taken into account during the design of these boundary conditions.

\paragraph{Non-linear problem}

\begin{figure}
    \begin{center}
        \includegraphics[width=1\textwidth]{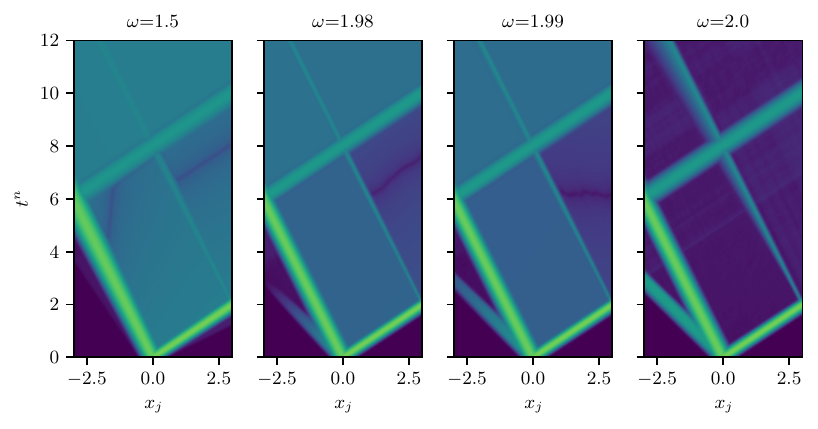}
    \end{center}\caption{\label{fig:D1Q3-shallow-water-non-linear}Values of $\log_{10}(|\nonConservedMoment_{\indexSpace}^{\indexTime} - \referenceStateMarked{\height}\referenceStateMarked{\velocity}|)$ (same color-scale as \Cref{fig:D1Q2-sys-vs-scal}) for the \lbmScheme{1}{3} for the shallow water system.}
\end{figure}

The setting stands as in the linear problem, except that for stability reasons, we take smoother initial data: $\height^{\circ}(\spaceVariable) = \referenceStateMarked{\height} + 10^{-3}e^{-100\spaceVariable^2}$ and $\velocity^{\circ}(\spaceVariable) = \referenceStateMarked{\velocity}$.
Boundary conditions are adapted in a way reminiscent of \eqref{eq:DirichletInflowD1Q2}: 
\begin{align*}
    \conservedMoment_{\numberGridPoints +1}^{\indexTime} = \sum_{k = 1}^{\indexTime}\coefficientsLaurentStableRootAllParities_k (\conservedMoment_{\numberGridPoints}^{\indexTime-k}-\referenceStateMarked{\height}) + \referenceStateMarked{\height} , \qquad 
    \nonConservedMoment_{\numberGridPoints +1}^{\indexTime} &=  \sum_{k = 1}^{\indexTime}\coefficientsLaurentStableRootAllParities_k (\nonConservedMoment_{\numberGridPoints}^{\indexTime-k}-\referenceStateMarked{\height}\referenceStateMarked{\velocity}) + \referenceStateMarked{\height}\referenceStateMarked{\velocity}, 
    \\ 
    \nonNonConservedMoment_{\numberGridPoints +1}^{\indexTime} &=  \sum_{k = 1}^{\indexTime}\coefficientsLaurentStableRootAllParities_k (\nonConservedMoment_{\numberGridPoints}^{\indexTime-k} - (\referenceStateMarked{\height}\referenceStateMarked{\velocity}^2 + \tfrac{1}{2}\gravity\referenceStateMarked{\height}^2 )) + \underbrace{\referenceStateMarked{\height}\referenceStateMarked{\velocity}^2 + \tfrac{1}{2}\gravity\referenceStateMarked{\height}^2}_{=\nonNonConservedMoment^{\atEquilibrium}(\referenceStateMarked{\height}, \referenceStateMarked{\height}\referenceStateMarked{\velocity})},
\end{align*}
and analogously at the left boundary, with the coefficients $\coefficientsLaurentStableRootAllParities_{\indexTime}$ computed on the linearized problem.

We see in \Cref{fig:D1Q3-shallow-water-non-linear} that our procedure is a first-order one: for a perturbation of amplitude $10^{-3}$ going into the boundary, the reflected wave has amplitude $10^{-7}$, which is roughly the square of the original amplitude.

\subsection{Transparent boundary conditions for the vectorial \lbmSchemeVectorial{1}{2}{\numberConservationLaws} scheme}

Although we adopt the scalar approach based on recurrent computations, we note that any of the strategies presented for the scalar \lbmScheme{1}{2} in \Cref{sec:transparentD1Q2} can be adapted---via the change of basis $\matrixEigenvectorsLinearized$ and thanks to \Cref{lemma:splittingCharVectorialD1Q2}---to the \lbmSchemeVectorial{1}{2}{\numberConservationLaws} scheme.
Let $\coefficientsLaurentStableRoot_{\indexTime}|_{\courantNumber = \courantNumber_k}$ denote the sequence given by \eqref{eq:recurrentDefinitionSD1Q2Scalar} where in these expressions, one takes $\courantNumber=\courantNumber_k$, for any $k\in\integerInterval{1}{\numberConservationLaws}$.
The boundary conditions thus read
\begin{align*}
    \vectorial{\phi}_{\numberGridPoints + 1}^{\indexTime} &= \sum_{k = 0}^{\lfloor (\indexTime - 1)/ 2 \rfloor}\matrixEigenvectorsLinearized\,\diagonalMatrix(\coefficientsLaurentStableRoot_{k}|_{\courantNumber = \courantNumber_1}, \dots, \coefficientsLaurentStableRoot_{k}|_{\courantNumber = \courantNumber_{\numberConservationLaws}}) \matrixEigenvectorsLinearized^{-1}\vectorial{\phi}_{\numberGridPoints}^{\indexTime-2k - 1}, \\
    \vectorial{\phi}_{0}^{\indexTime} &= \sum_{k = 0}^{\lfloor (\indexTime - 1)/ 2 \rfloor}\matrixEigenvectorsLinearized\,\diagonalMatrix\Bigl (\frac{\coefficientsLaurentStableRoot_{k}|_{\courantNumber = \courantNumber_1}}{\productRoots_1}, \dots, \frac{\coefficientsLaurentStableRoot_{k}|_{\courantNumber = \courantNumber_{\numberConservationLaws}}}{\productRoots_{\numberConservationLaws}} \Bigr ) \matrixEigenvectorsLinearized^{-1}\vectorial{\phi}_{1}^{\indexTime-2k - 1}, \qquad \vectorial{\phi}\in\{\vectorial{\conservedMoment}, \vectorial{\nonConservedMoment}\},
\end{align*}
with $\productRoots_k = \frac{(1-\courantNumber_k)\relaxationParameter-2}{(1+\courantNumber_k)\relaxationParameter-2}$, and it is understood that $1/\productRoots_k = 0$ if ever $\relaxationParameter = \frac{2}{1+\courantNumber_k}$.

\subsubsection{Numerical experiments}

\begin{figure}
    \begin{center}
        \includegraphics[width=1\textwidth]{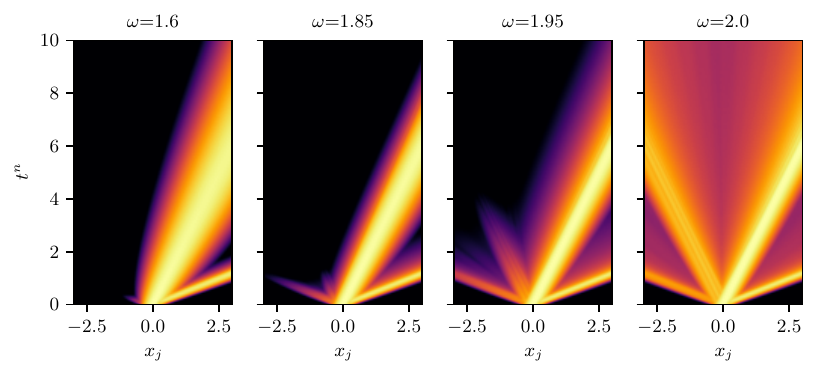}
    \end{center}\caption{\label{fig:D1Q2-vectorial}Values of $\log_{10}(|\conservedMoment_{1, \indexSpace}^{\indexTime}|)$ (same color-scale as \Cref{fig:D1Q2-sys-vs-scal}) for the \lbmSchemeVectorial{1}{2}{2} for the linearized shallow water system.}
\end{figure}

All conditions for the simulation are the same as in \Cref{sec:numericalExpD1Q3ShallowWater} except for those below.
We simulate a supersonic problem using $\referenceStateMarked{\height} = 1$, $\referenceStateMarked{\velocity} = \tfrac{3}{2}$, and $\gravity = 1$.
This yields $\soundSpeed = 1$, $\referenceStateMarked{\velocity} - \soundSpeed = \tfrac{1}{2}$, and $\referenceStateMarked{\velocity} + \soundSpeed = \tfrac{5}{2}$.
We take $\latticeVelocity = 3$, to ensure stability, and utilize $\numberGridPoints = 300$ grid-points.

Results are in \Cref{fig:D1Q2-vectorial}.
As $\latticeVelocity$ must be larger that the fastest wave, and there is a large separation between the fastest and the slowest wave, we see that the slowest wave is strongly diffused (its beam spreads on both sides) because its Courant number is small.
This is clearly manifest from \eqref{eq:amplificationFactorsD1Q2Vect}, since for this wave, the term $(1-\courantNumber^2) = \tfrac{35}{36}\approx 1$, whereas for the fastest wave $(1-\courantNumber^2) = \tfrac{11}{36}$.
For $\relaxationParameter = 1.95$ and $2$, we see that dispersion plays an important role, as beams look blurry on one side, opposite to propagation.
For $\relaxationParameter = 2$, the solution looks doughy due to dispersion and the lack of damping/dissipation.
These facts are particularly visible with coarse grids ($\numberGridPoints = 300 \ll 1000$).

\section*{Acknowledgments}

Constructive and useful comments by two anonymous referees are acknowledged.
The author thanks Irina Ginzburg for having brought relevant results on the stability of the scheme employed in \Cref{sec:2DScalar} to his attention.

\section*{Disclosures and declarations}
The author declares no conflict of interest.

\section*{Data availability statements}

Data sharing is not applicable to this article as no new data were created or analyzed in this
study.

\bibliographystyle{spmpsci}
\bibliography{biblio}

\appendix

\section{Detailed computations to study coinciding symbols for the \lbmScheme{1}{2} scheme}\label{app:D1Q2coincidingsymbols}

Taking $\eqref{eq:systemDoubleRootFirst}-\timeShiftOperator\eqref{eq:systemDoubleRootSecond}$, we obtain $\timeShiftOperator(\frequency\spaceStep)^2 = 1-\relaxationParameter$.
\begin{itemize}
    \item For $\relaxationParameter<1$, $\timeShiftOperator(\frequency\spaceStep) = \pm \sqrt{1-\relaxationParameter}\in\reals$, so issues potentially arise on the real axis. 
    Back into \eqref{eq:systemDoubleRootSecond}, we obtain 
    \begin{equation*}
        \pm 2 \sqrt{1-\relaxationParameter}= (2-\relaxationParameter)\cos(\frequency\spaceStep) - i\relaxationParameter\courantNumber\sin(\frequency\spaceStep),
    \end{equation*}
    which might be fulfilled only by $\frequency\spaceStep = 0, (-1)^{\alpha}\pi$ with $\alpha=0, 1$.
    Let $\frequency\spaceStep=0$, then $\pm 2 \sqrt{1-\relaxationParameter}= 2-\relaxationParameter$. We discard the case with minus sign as the right-hand side is strictly positive. Squaring both sides: $\relaxationParameter^2 = 0$, that we cannot consider.
    Let now $\frequency\spaceStep = (-1)^{\alpha}\pi$. We then consider $\pm 2 \sqrt{1-\relaxationParameter}= -(2-\relaxationParameter)$. For the previously mentioned reason, the minus sign has to be considered and thus we are back to the setting of $\frequency\spaceStep = 0$.
    
    This proves that the roots $\timeShiftOperator_{\textnormal{phy}}(\frequency\spaceStep)$ and $\timeShiftOperator_{\textnormal{spu}}(\frequency\spaceStep)$  are distinct for $\frequency\spaceStep\in[-\pi, \pi]$.

    \item For $\relaxationParameter\geq 1$, we have $\timeShiftOperator(\frequency\spaceStep) = \pm i \sqrt{\relaxationParameter - 1}\in i \reals$, so issues potentially arise on the imaginary axis.
    Into \eqref{eq:systemDoubleRootSecond}:
    \begin{equation*}
        \pm 2 i \sqrt{\relaxationParameter-1}= (2-\relaxationParameter)\cos(\frequency\spaceStep) - i\relaxationParameter\courantNumber\sin(\frequency\spaceStep),
    \end{equation*}
    which calls for consideration of $\frequency\spaceStep = (-1)^{\alpha}\pi/2$ with $\alpha=0, 1$.
    We obtain $\pm 2 \sqrt{\relaxationParameter - 1}= -\relaxationParameter\courantNumber (-1)^{\alpha}$, so the only meaningful relation is $2\sqrt{\relaxationParameter - 1} = \relaxationParameter|\courantNumber|$.
    Squaring both sides and solving in $\relaxationParameter$ gives $\relaxationParameter = \frac{2}{\courantNumber^2} (1-\sqrt{1-\courantNumber^2})$.
\end{itemize}

\section{Expressions of weights in \eqref{eq:recurrecenceRight} and \eqref{eq:recurrenceLeft}}\label{app:expressionWeights}

\begin{multline*}
    \mathscr{V}_{-1}(\indexTime) \\
    = (\omega-1)\Biggl ( 
    \frac{4  {({\argumentLegendrePolynomials}^{2} - {\argumentLegendrePolynomials})} \indexTime^{2} - {(2  {\courantNumber}^{2} {\argumentLegendrePolynomials} \indexTime^{2} - {\courantNumber}^{2} {\argumentLegendrePolynomials} \indexTime - 3  {\courantNumber}^{2} {\argumentLegendrePolynomials})} {\omega}^{2} + 3  {\argumentLegendrePolynomials}^{2} }{2  {({\argumentLegendrePolynomials} - 1)} \indexTime^{2} - {({\courantNumber}^{2} \indexTime^{2} + {\courantNumber}^{2} \indexTime)} {\omega}^{2} - {({\argumentLegendrePolynomials} - 1)} \indexTime - {(2  {({\courantNumber} {\argumentLegendrePolynomials} - {\courantNumber})} \indexTime^{2} - 3  {\courantNumber} {\argumentLegendrePolynomials} - {({\courantNumber} {\argumentLegendrePolynomials} + 2  {\courantNumber})} \indexTime)} {\omega} - 3  {\argumentLegendrePolynomials} + 3}\\
    +
    \frac{- 8  {({\argumentLegendrePolynomials}^{2} - {\argumentLegendrePolynomials})} \indexTime - {(3  {\courantNumber} {\argumentLegendrePolynomials}^{2} + 4  {({\courantNumber} {\argumentLegendrePolynomials}^{2} - {\courantNumber} {\argumentLegendrePolynomials})} \indexTime^{2} + 6  {\courantNumber} {\argumentLegendrePolynomials} - 2  {(4  {\courantNumber} {\argumentLegendrePolynomials}^{2} - {\courantNumber} {\argumentLegendrePolynomials})} \indexTime - 3  {\courantNumber})} {\omega} - 3}{2  {({\argumentLegendrePolynomials} - 1)} \indexTime^{2} - {({\courantNumber}^{2} \indexTime^{2} + {\courantNumber}^{2} \indexTime)} {\omega}^{2} - {({\argumentLegendrePolynomials} - 1)} \indexTime - {(2  {({\courantNumber} {\argumentLegendrePolynomials} - {\courantNumber})} \indexTime^{2} - 3  {\courantNumber} {\argumentLegendrePolynomials} - {({\courantNumber} {\argumentLegendrePolynomials} + 2  {\courantNumber})} \indexTime)} {\omega} - 3  {\argumentLegendrePolynomials} + 3} \Biggr ).
\end{multline*}

\begin{multline*}
    \mathscr{V}_{-2}(\indexTime) =\\ 
    (\relaxationParameter-1)^2\Biggl ( 
    -\frac{2  {({\argumentLegendrePolynomials} - 1)} \indexTime^{2} - {({\courantNumber}^{2} \indexTime^{2} - 2  {\courantNumber}^{2} \indexTime - 3  {\courantNumber}^{2})} {\omega}^{2} - 7  {({\argumentLegendrePolynomials} - 1)} \indexTime}{2  {({\argumentLegendrePolynomials} - 1)} \indexTime^{2} - {({\courantNumber}^{2} \indexTime^{2} + {\courantNumber}^{2} \indexTime)} {\omega}^{2} - {({\argumentLegendrePolynomials} - 1)} \indexTime - {(2  {({\courantNumber} {\argumentLegendrePolynomials} - {\courantNumber})} \indexTime^{2} - 3  {\courantNumber} {\argumentLegendrePolynomials} - {({\courantNumber} {\argumentLegendrePolynomials} + 2  {\courantNumber})} \indexTime)} {\omega} - 3  {\argumentLegendrePolynomials} + 3}\\
    -\frac{- {(2  {({\courantNumber} {\argumentLegendrePolynomials} - {\courantNumber})} \indexTime^{2} + 3  {\courantNumber} {\argumentLegendrePolynomials} - {(7  {\courantNumber} {\argumentLegendrePolynomials} - 4  {\courantNumber})} \indexTime + 6  {\courantNumber})} {\omega} + 3  {\argumentLegendrePolynomials} - 3}{2  {({\argumentLegendrePolynomials} - 1)} \indexTime^{2} - {({\courantNumber}^{2} \indexTime^{2} + {\courantNumber}^{2} \indexTime)} {\omega}^{2} - {({\argumentLegendrePolynomials} - 1)} \indexTime - {(2  {({\courantNumber} {\argumentLegendrePolynomials} - {\courantNumber})} \indexTime^{2} - 3  {\courantNumber} {\argumentLegendrePolynomials} - {({\courantNumber} {\argumentLegendrePolynomials} + 2  {\courantNumber})} \indexTime)} {\omega} - 3  {\argumentLegendrePolynomials} + 3} \Biggr ).
\end{multline*}
The expressions for $\overline{\mathscr{V}}_{-1}(\indexTime)$ and $\overline{\mathscr{V}}_{-2}(\indexTime)$ are the same as the ones for  $\mathscr{V}_{-1}(\indexTime) $ and $\mathscr{V}_{-2}(\indexTime) $ except for the fact that they feature $-\courantNumber$ instead of $\courantNumber$ (not in the definition of $\argumentLegendrePolynomials$).

\section{Proof of \Cref{lemma:rootsRadicand}}\label{app:rootsRadicand}

We study the roots of $\varphi_4(\timeShiftOperator)=  (\relaxationParameter-1)^2\timeShiftOperator^{4} + ((\courantNumber^2-1)\relaxationParameter^2 + 2\relaxationParameter - 2)\timeShiftOperator^{2} + 1$.
Following the notations of \cite[Chapter 4]{strikwerda2004finite}, we obtain $\varphi_4^{*}(\timeShiftOperator) = \timeShiftOperator^4 \overline{\varphi_4(\overline{\timeShiftOperator}^{-1})}=   \timeShiftOperator^4 + ((\courantNumber^2-1)\relaxationParameter^2 + 2\relaxationParameter-2)\timeShiftOperator^{2}  + (\relaxationParameter-1)^2$, hence $\varphi_4(0) = 1$ and $\varphi_4^{*}(0) = (\relaxationParameter-1)^2$, so $\varphi_3(\timeShiftOperator) =\timeShiftOperator^{-1}(\varphi_4^{*}(0)\varphi_4(\timeShiftOperator) - \varphi_4(0)\varphi_4^{*}(\timeShiftOperator)) = \timeShiftOperator((\relaxationParameter-1)^2 - 1)(((\relaxationParameter-1)^2+1)\timeShiftOperator^2 + ((\courantNumber^2-1)\relaxationParameter^2 + 2\relaxationParameter-2))$.
    
As we first want to investigate roots on $\unitCircle$ through \cite[Theorem 4.3.8]{strikwerda2004finite}, we look for $\varphi_3\equiv 0$, which can happen only when $\relaxationParameter = 2$ (our paper does not consider $\relaxationParameter = 0$).
We thus continue with $\relaxationParameter = 2$.
Next, we have to check the roots of $\varphi_4'(\timeShiftOperator) = 4\timeShiftOperator^3 +4 (2\courantNumber^2 - 1)\timeShiftOperator$ and show that they belong to $\closedUnitDisk$.
They are $\timeShiftOperator = 0$ and $\timeShiftOperator = \pm \sqrt{1-2\courantNumber^2}$, which can be either real or complex conjugate. Moreover, $\pm \sqrt{1-2\courantNumber^2}\in\closedUnitDisk$ under \eqref{eq:stabConditionD1Q2}.
This proves that if $\relaxationParameter = 2$ the roots are on $\unitCircle$.
    
We finish on proving that the roots of $\varphi_4(\timeShiftOperator)$ belong to $\neighborhoodInfinity$ when $\relaxationParameter<2$.
To this end, we want to show that here, the zeros of $\psi_4(\timeShiftOperator)=\varphi_4^{*}(\timeShiftOperator)$ are in $\unitDisk$.
We can use \cite[Theorem 4.3.1]{strikwerda2004finite}, checking $|\psi_4(0)| =(\relaxationParameter-1)^2 <|\psi_4^{*}(0)| = 1$, where $\psi_4^{*}(\timeShiftOperator) = \timeShiftOperator^4 \overline{\psi_4(\overline{\timeShiftOperator}^{-1})}$, which boils down to $0< \relaxationParameter< 2$.
We continue with $\psi_3(\timeShiftOperator) = \timeShiftOperator^{-1}(\psi_4^{*}(0)\psi_4(\timeShiftOperator) - \psi_4(0)\psi_4^{*}(\timeShiftOperator))  = -\varphi_3(\timeShiftOperator) = (1-(\relaxationParameter-1)^2) \timeShiftOperator ( (1+(\relaxationParameter-1)^2)\timeShiftOperator^2 + ((\courantNumber^2-1)\relaxationParameter^2 + 2\relaxationParameter-2) ) $, showing that its roots are in $\unitDisk$.
To this end, we can straightly show that the roots of $\psi_2(\timeShiftOperator)= (1+(\relaxationParameter-1)^2)\timeShiftOperator^2 + ((\courantNumber^2-1)\relaxationParameter^2 + 2\relaxationParameter-2) $ are in $\unitDisk$.
We want $|\psi_2(0)|=|(\courantNumber^2-1)\relaxationParameter^2 + 2\relaxationParameter-2|<|\psi_2^*(0)| = 1+(\relaxationParameter-1)^2$, where $\psi_2^{*}(\timeShiftOperator) = \timeShiftOperator^2 \overline{\psi_2(\overline{\timeShiftOperator}^{-1})}$.
\begin{itemize}
    \item If $(\courantNumber^2-1)\relaxationParameter^2 + 2\relaxationParameter-2\geq 0$, we want $(\courantNumber^2-2)\relaxationParameter^2 + 4\relaxationParameter-4< 0$.
    Under \eqref{eq:stabConditionD1Q2}, we have $\courantNumber^2-1\leq -1$, thus $(\courantNumber^2-2)\relaxationParameter^2 + 4\relaxationParameter-4\leq -\relaxationParameter^2 + 4\relaxationParameter-4 $. The right-hand side of the inequality is strictly negative for $\relaxationParameter\neq 2$, concluding this case.
    \item If $(\courantNumber^2-1)\relaxationParameter^2 + 2\relaxationParameter-2< 0$, we want $-\courantNumber^2\relaxationParameter^2<0$, which is trivially true as $\courantNumber, \relaxationParameter\neq 0$.
\end{itemize}
We obtain that $\psi_1(\timeShiftOperator) = \timeShiftOperator^{-1}(\psi_2^{*}(0)\psi_2(\timeShiftOperator) - \psi_2(0)\psi_2^{*}(\timeShiftOperator)) = \alpha\timeShiftOperator$, for some $\alpha\in\reals$: its unique root is $\timeShiftOperator = 0\in\unitDisk$.

\section{Additional numerical simulations with the \lbmScheme{1}{2} scheme}\label{app:moreSimulationsD1Q2}

\begin{figure}
    \begin{center}
        \includegraphics[width=1\textwidth]{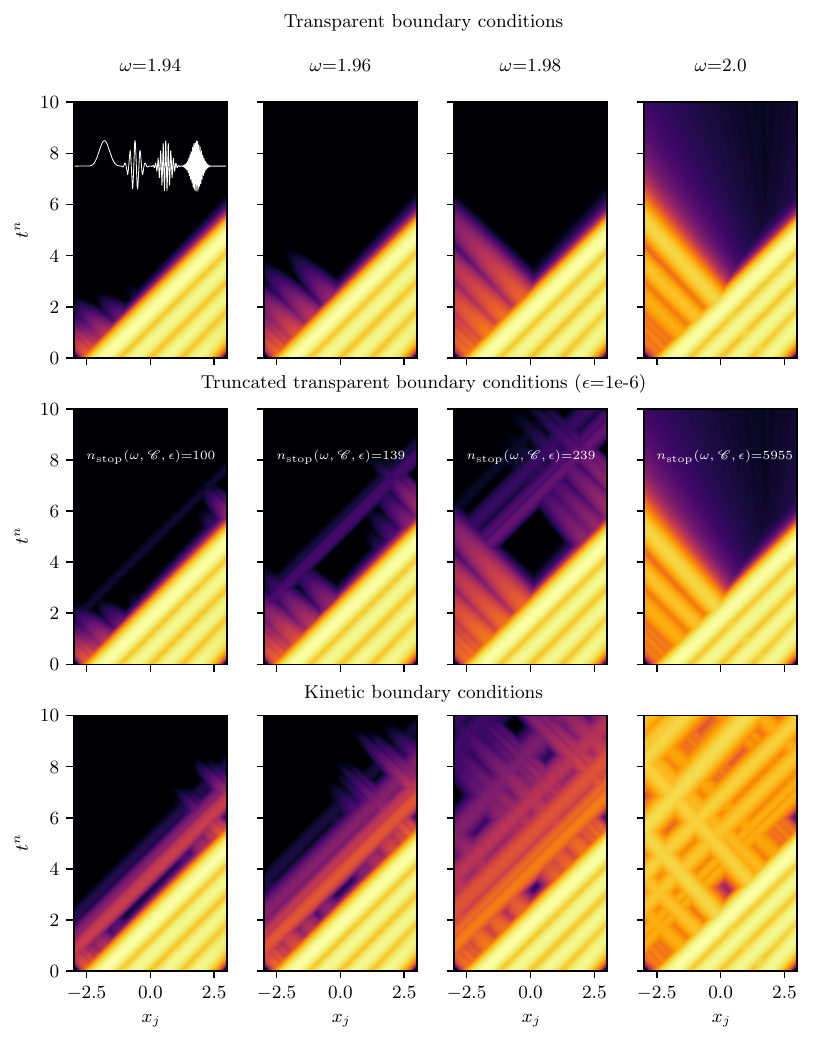}
    \end{center}\caption{\label{fig:D1Q2-comparison-BC-many-packets}Values of $\log_{10}(|\conservedMoment_{\indexSpace}^{\indexTime}|)$ (same color-scale as \Cref{fig:D1Q2-sys-vs-scal}) for the \lbmScheme{1}{2} endowed with different boundary conditions and using \eqref{eq:manypackets}. The white line draws the initial datum.}
\end{figure}

We here report numerical results, see \Cref{fig:D1Q2-comparison-BC-many-packets}, lacking in \Cref{sec:numExpD1Q2}.

\section{Proof of \Cref{prop:stabilityD2Q5}}\label{app:prop:stabilityD2Q5}

We only need to check $\Psi_2(\timeShiftOperator)$, using \cite[Theorem 4.3.7]{strikwerda2004finite}.
    Let $\fourierShift_{\xLabel}=e^{i\frequency_{\xLabel}\spaceStep}$ and $\fourierShift_{\yLabel}=e^{i\frequency_{\yLabel}\spaceStep}$, hence 
    \begin{align*}
        \Psi_{2}(\timeShiftOperator)=\timeShiftOperator^2 + \bigl ( (\relaxationParameter-2) &+ i\relaxationParameter\courantNumberX\sin(\frequency_{\xLabel}\spaceStep) + (\relaxationParameter-2)\equilibriumCoefficientSymmetricX(\cos(\frequency_{\xLabel}\spaceStep)-1) \\
        &+ i\relaxationParameter\courantNumberY\sin(\frequency_{\yLabel}\spaceStep) + (\relaxationParameter-2)\equilibriumCoefficientSymmetricY(\cos(\frequency_{\yLabel}\spaceStep)-1)\bigr )\timeShiftOperator + (1 - \relaxationParameter).
    \end{align*}
    We obtain 
    \begin{align*}
        \Psi_{2}^*(\timeShiftOperator) = \timeShiftOperator^2 \overline{\Psi_{2}(\overline{\timeShiftOperator}^{-1})}= (1 - \relaxationParameter) \timeShiftOperator^2 + \bigl ( (\relaxationParameter-2) &- i\relaxationParameter\courantNumberX\sin(\frequency_{\xLabel}\spaceStep) + (\relaxationParameter-2)\equilibriumCoefficientSymmetricX(\cos(\frequency_{\xLabel}\spaceStep)-1) \\
        &- i\relaxationParameter\courantNumberY\sin(\frequency_{\yLabel}\spaceStep) + (\relaxationParameter-2)\equilibriumCoefficientSymmetricY(\cos(\frequency_{\yLabel}\spaceStep)-1)\bigr )\timeShiftOperator + 1.
    \end{align*}
    A simple computation yields 
    \begin{align*}
        \Psi_1(\timeShiftOperator) &= \timeShiftOperator^{-1}(\Psi_{2}^*(0)\Psi_{2}(\timeShiftOperator) - \Psi_{2}(0)\Psi_{2}^*(\timeShiftOperator)) \\
        &= \relaxationParameter(2-\relaxationParameter) (\timeShiftOperator - 1 - \equilibriumCoefficientSymmetricX(\cos(\frequency_{\xLabel}\spaceStep)-1) - \equilibriumCoefficientSymmetricY(\cos(\frequency_{\yLabel}\spaceStep)-1) + i\courantNumberX\sin(\frequency_{\xLabel}\spaceStep) + i\courantNumberY\sin(\frequency_{\yLabel}\spaceStep) ).
    \end{align*}
    We see that $\Psi_1(\timeShiftOperator)\equiv 0$ if $\relaxationParameter=2$ and not identically zero when $\relaxationParameter\in(0, 2)$.
    \begin{itemize}
        \item Let $\relaxationParameter = 2$. We have to consider 
        \begin{equation*}
            \Psi_{2}'(\timeShiftOperator) = 2 (\timeShiftOperator + i\courantNumberX\sin(\frequency_{\xLabel}\spaceStep) + i\courantNumberY\sin(\frequency_{\yLabel}\spaceStep))
        \end{equation*}
        and show that its only root is in $\closedUnitDisk$ under some condition.
        Its modulus squared is $\courantNumberX^2\sin(\frequency_{\xLabel}\spaceStep)^2 +2\courantNumberX\courantNumberY \sin(\frequency_{\xLabel}\spaceStep)\sin(\frequency_{\yLabel}\spaceStep) + \courantNumberY^2\sin(\frequency_{\yLabel}\spaceStep)^2$.
        Its maximum is clearly reached when $\sin(\frequency_{\xLabel}\spaceStep) = \text{sgn}(\courantNumberX)$ and $\sin(\frequency_{\yLabel}\spaceStep) = \text{sgn}(\courantNumberY)$, which means $\frequency_{\xLabel}\spaceStep = \text{sgn}(\courantNumberX) \tfrac{\pi}{2}$ and $\frequency_{\yLabel}\spaceStep = \text{sgn}(\courantNumberY) \tfrac{\pi}{2}$. For these frequencies, we obtain 
        \begin{equation*}
            |\courantNumberX|^2 +2|\courantNumberX||\courantNumberY| + |\courantNumberY|^2 = (|\courantNumberX| + |\courantNumberY|)^2\leq 1,
        \end{equation*} 
        giving the claim.
        \item Let $\relaxationParameter\in (0, 2)$. The condition $|1-\relaxationParameter| = |\Psi_2(0)|<|\Psi_2^*(0)| = 1$ is fulfilled.
        We have to check that the only root of $\Psi_1(\timeShiftOperator)$ is in $\closedUnitDisk$.
        Let $\mu_{\xLabel}=\sin(\tfrac{1}{2}\frequency_{\xLabel}\spaceStep)\in [-1, 1]$ and $\mu_{\yLabel}=\sin(\tfrac{1}{2}\frequency_{\yLabel}\spaceStep)\in [-1, 1]$.
        Using the identities $\sin(\frequency_{\xLabel}\spaceStep) = 2\mu_{\xLabel}\sqrt{1-\mu_{\xLabel}^2}$ and $\cos(\frequency_{\xLabel}\spaceStep) = 1-2\mu_{\xLabel}^2$, the condition on the root reads
        \begin{multline}\label{eq:stabilityD2Q5LinkOmegaNotTwo}
            -\bigl ( (\courantNumberX^2-\equilibriumCoefficientSymmetricX^2)\mu_{\xLabel}^2 - (\courantNumberX^2 - \equilibriumCoefficientSymmetricX)\bigr )\mu_{\xLabel}^2 -\bigl ( (\courantNumberY^2-\equilibriumCoefficientSymmetricY^2)\mu_{\yLabel}^2 - (\courantNumberY^2 - \equilibriumCoefficientSymmetricY)\bigr )\mu_{\yLabel}^2\\
            +2\Bigl (\equilibriumCoefficientSymmetricX\equilibriumCoefficientSymmetricY \mu_{\xLabel}\mu_{\yLabel} + \courantNumberX\courantNumberY \sqrt{(1-\mu_{\xLabel}^2)(1-\mu_{\yLabel}^2)}\Bigr ) \mu_{\xLabel}\mu_{\yLabel}\leq 0
        \end{multline}
        for every $(\mu_{\xLabel}, \mu_{\yLabel})\in[-1, 1]^2$.

        We can investigate specific values of $(\mu_{\xLabel}, \mu_{\yLabel})\in[-1, 1]^2$ to find explicit necessary conditions.
        Before doing this, remark that we are checking the absolute values of the roots (not their multiplicity), which are continuous functions of the scheme parameters (for instance, $\relaxationParameter$).
        This shows that a necessary condition to have \eqref{eq:stabilityD2Q5LinkOmegaNotTwo} is the stability condition for $\relaxationParameter = 2$, namely $|\courantNumberX| + |\courantNumberY| \leq 1$, which trivially entails $|\courantNumberX|, |\courantNumberY|\leq 1$.

        Let us first pick $\mu_{\yLabel} = 0$ (or symmetrically, $\mu_{\xLabel} = 0$): \eqref{eq:stabilityD2Q5LinkOmegaNotTwo} becomes $(\courantNumberX^2-\equilibriumCoefficientSymmetricX^2)\mu_{\xLabel}^2 - (\courantNumberX^2 - \equilibriumCoefficientSymmetricX)\geq 0$ for $\mu_{\xLabel}\in[-1, 1]$. 
        This is the stability constraint of a 1D problem, \confer{} \cite{bellotti2024initialisation}.
        \begin{itemize}
            \item Let $\courantNumberX^2-\equilibriumCoefficientSymmetricX^2\geq0$. 
            In this case, we have to check that $(\courantNumberX^2-\equilibriumCoefficientSymmetricX^2) - (\courantNumberX^2 - \equilibriumCoefficientSymmetricX) = \equilibriumCoefficientSymmetricX(1-\equilibriumCoefficientSymmetricX)\geq 0$, which is equivalent to $0\leq \equilibriumCoefficientSymmetricX\leq 1$.
            \item Let $\courantNumberX^2-\equilibriumCoefficientSymmetricX^2<0$. We request $ \equilibriumCoefficientSymmetricX\geq \courantNumberX^2$. 
        \end{itemize}
        As we have pointed out, the previous necessary condition entails $\courantNumberX^2\leq 1$, so a necessary condition for \eqref{eq:stabilityD2Q5LinkOmegaNotTwo} is $\courantNumberX^2\leq \equilibriumCoefficientSymmetricX\leq 1$ and $\courantNumberY^2\leq \equilibriumCoefficientSymmetricY\leq 1$.
        This also shows that $\equilibriumCoefficientSymmetricX, \equilibriumCoefficientSymmetricY\geq 0$.

        Secondly, we consider $\mu_{\xLabel} = (-1)^{\alpha_{\xLabel}}$ and $\mu_{\yLabel} = (-1)^{\alpha_{\yLabel}}$ with $\alpha_{\xLabel}, \alpha_{\yLabel}$ being either zero or one.
        Into \eqref{eq:stabilityD2Q5LinkOmegaNotTwo}, we obtain 
        \begin{equation*}
            \equilibriumCoefficientSymmetricX(\equilibriumCoefficientSymmetricX-1) + \equilibriumCoefficientSymmetricY(\equilibriumCoefficientSymmetricY-1) +2 \equilibriumCoefficientSymmetricX\equilibriumCoefficientSymmetricY = (\equilibriumCoefficientSymmetricX+\equilibriumCoefficientSymmetricY)(\equilibriumCoefficientSymmetricX+\equilibriumCoefficientSymmetricY - 1) \leq 0.
        \end{equation*}
        Assuming that the previous necessary condition is met, we have $\equilibriumCoefficientSymmetricX+\equilibriumCoefficientSymmetricY\geq 0$, hence we now request $\equilibriumCoefficientSymmetricX+\equilibriumCoefficientSymmetricY \leq 1$.
    \end{itemize}

\section{Proof of \Cref{prop:stabilityD1Q3ShallowWater}}\label{app:prop:stabilityD1Q3ShallowWater}
    We again proceed following \cite[Chapter 4]{strikwerda2004finite}.
        Set $\varphi_3(\timeShiftOperator) = \determinant(\timeShiftOperator\identityMatrix{3}-\schemeMatrixBulkFourier(e^{i\frequency\spaceStep}))$.
    The polynomial $\varphi_3^*(\timeShiftOperator) =\timeShiftOperator^3 \overline{\varphi_3(\overline{\timeShiftOperator}^{-1})}$ is computed, and yields $\varphi_2(\timeShiftOperator) = \timeShiftOperator^{-1}(\varphi_3^*(0)\varphi_3(\timeShiftOperator)-\varphi_3(0)\varphi_3^*(\timeShiftOperator)) = \relaxationParameter(2-\relaxationParameter) \psi_2(\timeShiftOperator)$,  with $ \psi_2(\timeShiftOperator)\not\equiv 0$ a monic polynomial of degree two.
    
    \begin{itemize}
        \item Let $\relaxationParameter\in (0, 2)$, so that $\varphi_2(\timeShiftOperator)\not\equiv 0$. Requesting $|\varphi_3(0)|<|\varphi_3^*(0)|$ gives $\relaxationParameter\in (0, 2)$, which we assumed for this item.
        Without loss of generality, assume that $\referenceStateMarked{\velocity}\geq 0$.
        
        We compute $\psi_2^*(\timeShiftOperator) = \timeShiftOperator^2 \overline{\psi_2(\overline{\timeShiftOperator}^{-1})}$ and then $\psi_1(\timeShiftOperator) =  \timeShiftOperator^{-1}(\psi_2^*(0)\psi_2(\timeShiftOperator)-\psi_2(0)\psi_2^*(\timeShiftOperator))= (\cos(\frequency\spaceStep) - 1) \eta_1(\timeShiftOperator)$, where $\eta_1$ is a first-order polynomial, which identically vanish only if $\latticeVelocity=\referenceStateMarked{\velocity}+\soundSpeed$ and $\referenceStateMarked{\velocity}=\soundSpeed$. We see that $\psi_1(\timeShiftOperator)\equiv 0$ only when $\frequency\spaceStep = 0$ and when $\latticeVelocity=\referenceStateMarked{\velocity}+\soundSpeed$ and $\referenceStateMarked{\velocity}=\soundSpeed$, which we first study.
        Consider  $\frequency\spaceStep = 0$: we obtain $\psi_2'(\timeShiftOperator)|_{\frequency\spaceStep=0} = 2(\timeShiftOperator-1)$ whose root is in $\closedUnitDisk$.
        Then, consider $\latticeVelocity=\referenceStateMarked{\velocity}+\soundSpeed$ and $\referenceStateMarked{\velocity}=\soundSpeed$, which yields $\psi_2'(\timeShiftOperator) = 2\timeShiftOperator - \cos(\frequency\spaceStep) + i\sin(\frequency\spaceStep) - 1$, whose root is on $\unitCircle$, so a fortiori in $\closedUnitDisk$.
    
        Now consider $\frequency\spaceStep\neq 0$ and that the identities $\latticeVelocity=\referenceStateMarked{\velocity}+\soundSpeed$, $\referenceStateMarked{\velocity}=\soundSpeed$ do not hold. We request that $|\psi_2(0)|^2< |\psi_2^*(0)|^2$, which translates into 
        \begin{equation*}
            (\cos(\frequency\spaceStep) - 1)\Biggl ( {\left(\frac{{\soundSpeed}^{4}}{{\latticeVelocity}^{4}} - \frac{2 {\soundSpeed}^{2}}{{\latticeVelocity}^{2}} - \frac{2 {\soundSpeed}^{2} {\referenceStateMarked{\velocity}}^{2}}{{\latticeVelocity}^{4}} - \frac{2 {\referenceStateMarked{\velocity}}^{2}}{{\latticeVelocity}^{2}} + \frac{{\referenceStateMarked{\velocity}}^{4}}{{\latticeVelocity}^{4}} + 1\right)} \cos(\frequency\spaceStep) - \frac{{\soundSpeed}^{4}}{{\latticeVelocity}^{4}} + \frac{2 {\soundSpeed}^{2} {\referenceStateMarked{\velocity}}^{2}}{{\latticeVelocity}^{4}} - \frac{4 {\referenceStateMarked{\velocity}}^{2}}{{\latticeVelocity}^{2}} - \frac{{\referenceStateMarked{\velocity}}^{4}}{{\latticeVelocity}^{4}} + 1
            \Biggr )<0.
        \end{equation*}
        This culminates in the analysis of 
        \begin{equation*}
            \frac{(\latticeVelocity+\referenceStateMarked{\velocity} + \soundSpeed)(\latticeVelocity-\referenceStateMarked{\velocity} + \soundSpeed)(-\latticeVelocity+\referenceStateMarked{\velocity} + \soundSpeed)(-\latticeVelocity-\referenceStateMarked{\velocity} + \soundSpeed)}{\latticeVelocity^4}\mu - \frac{{\soundSpeed}^{4}}{{\latticeVelocity}^{4}} + \frac{2 {\soundSpeed}^{2} {\referenceStateMarked{\velocity}}^{2}}{{\latticeVelocity}^{4}} - \frac{4 {\referenceStateMarked{\velocity}}^{2}}{{\latticeVelocity}^{2}} - \frac{{\referenceStateMarked{\velocity}}^{4}}{{\latticeVelocity}^{4}} + 1>0
        \end{equation*}
        for $ \mu\in(-1, 1)$.
        The left-hand side is affine in $\mu$, except when $\latticeVelocity = \referenceStateMarked{\velocity} + \soundSpeed$, which by the assumptions before cannot happen.
        When $\latticeVelocity > \referenceStateMarked{\velocity} + \soundSpeed$, one can easily see that the coefficient in front of $\mu$ is positive, so the infimum of the left-hand side is obtained at $\mu = -1$. The inequality is 
        \begin{equation*}
            (\latticeVelocity^2 +  \referenceStateMarked{\velocity}^2 -\soundSpeed^2)(\soundSpeed^2 - \referenceStateMarked{\velocity}^2)\geq 0
        \end{equation*}
        fulfilled if and only if the subsonic condition $|\referenceStateMarked{\velocity}|\leq \soundSpeed$ holds.
        Finally, analogous computations when $\latticeVelocity < \referenceStateMarked{\velocity} + \soundSpeed$ show that the inequality can never be fulfilled in this case.
        
        \item Let $\relaxationParameter = 2$, so $\varphi_2\equiv 0$. 
        Still, we can conclude without further computations, as our criterion is based only on the modulus of the eigenvalues, and eigenvalues (thus their modulii) are continuous in the entries of the matrix. 
        Quite the opposite, multiplicities can change abruptly with parameters.
    \end{itemize}

\section{Expression of the coefficients for \Cref{sec:transpD1Q3ShallowWater}}\label{app:sec:transpD1Q3ShallowWater}

Let 
\begin{align*}
    d_{-1}^2 &= -\frac{\gravity {\referenceStateMarked{\height}} {\relaxationParameter}}{2  {\latticeVelocity}^{2}} + \frac{1}{2}  {\relaxationParameter} - \frac{{\relaxationParameter} {\referenceStateMarked{\velocity}}}{{\latticeVelocity}} + \frac{{\relaxationParameter} {\referenceStateMarked{\velocity}}^{2}}{2  {\latticeVelocity}^{2}} - 1, \qquad
    d_{-1}^1 = -\frac{\gravity {\referenceStateMarked{\height}} {\relaxationParameter}}{2  {\latticeVelocity}^{2}} - \frac{1}{2}  {\relaxationParameter} + \frac{{\relaxationParameter} {\referenceStateMarked{\velocity}}}{{\latticeVelocity}} + \frac{{\relaxationParameter} {\referenceStateMarked{\velocity}}^{2}}{2  {\latticeVelocity}^{2}} + 1,\\
    d_{0}^2 &= \frac{\gravity {\referenceStateMarked{\height}} {\relaxationParameter}}{{\latticeVelocity}^{2}} - \frac{{\relaxationParameter} {\referenceStateMarked{\velocity}}^{2}}{{\latticeVelocity}^{2}} - 1, \qquad 
    d_{0}^1 =\frac{\gravity {\referenceStateMarked{\height}} {\relaxationParameter}}{{\latticeVelocity}^{2}} - {\relaxationParameter} - \frac{{\relaxationParameter} {\referenceStateMarked{\velocity}}^{2}}{{\latticeVelocity}^{2}} + 1, \qquad 
    d_{0}^0 = {\relaxationParameter} - 1, \\
    d_{1}^2 &= -\frac{\gravity {\referenceStateMarked{\height}} {\relaxationParameter}}{2  {\latticeVelocity}^{2}} + \frac{1}{2}  {\relaxationParameter} + \frac{{\relaxationParameter} {\referenceStateMarked{\velocity}}}{{\latticeVelocity}} + \frac{{\relaxationParameter} {\referenceStateMarked{\velocity}}^{2}}{2  {\latticeVelocity}^{2}} - 1, \qquad 
    d_{1}^1 =  -\frac{\gravity {\referenceStateMarked{\height}} {\relaxationParameter}}{2  {\latticeVelocity}^{2}} - \frac{1}{2}  {\relaxationParameter} - \frac{{\relaxationParameter} {\referenceStateMarked{\velocity}}}{{\latticeVelocity}} + \frac{{\relaxationParameter} {\referenceStateMarked{\velocity}}^{2}}{2  {\latticeVelocity}^{2}} + 1.
\end{align*}
Then, we have 
\begin{multline*}
    \coefficientsLaurentStableRootAllParities_1 = -d_{-1}^2, \qquad 
    \coefficientsLaurentStableRootAllParities_2 = -d_{-1}^1 - d_0^2 \coefficientsLaurentStableRootAllParities_1, \qquad
    \coefficientsLaurentStableRootAllParities_3 = -d_0^2 \coefficientsLaurentStableRootAllParities_2 - d_0^1 \coefficientsLaurentStableRootAllParities_1 - d_1^2\coefficientsLaurentStableRootAllParities_1\coefficientsLaurentStableRootAllParities_1, \qquad \text{and}\\ 
    \coefficientsLaurentStableRootAllParities_{\indexTime} = 
    -d_1^2  \sum_{k = 1}^{\indexTime - 2}\coefficientsLaurentStableRootAllParities_k \coefficientsLaurentStableRootAllParities_{\indexTime -1 - k} - d_1^1 \sum_{k = 1}^{\indexTime-3}\coefficientsLaurentStableRootAllParities_k \coefficientsLaurentStableRootAllParities_{\indexTime - 2- k} - d_0^1 \coefficientsLaurentStableRootAllParities_{\indexTime-2} - d_0^0 \coefficientsLaurentStableRootAllParities_{\indexTime-3}, \qquad \indexTime\geq 4,
\end{multline*}
and 
\begin{multline*}
    \coefficientsLaurentUnstableRootReciprocalAllParities_1 = -d_{1}^2, \qquad 
    \coefficientsLaurentUnstableRootReciprocalAllParities_2 = -d_{1}^1 - d_0^2 \coefficientsLaurentUnstableRootReciprocalAllParities_1, \qquad
    \coefficientsLaurentUnstableRootReciprocalAllParities_3 = -d_0^2 \coefficientsLaurentUnstableRootReciprocalAllParities_2 - d_0^1 \coefficientsLaurentUnstableRootReciprocalAllParities_1 - d_{-1}^2\coefficientsLaurentUnstableRootReciprocalAllParities_1\coefficientsLaurentUnstableRootReciprocalAllParities_1, \qquad \text{and}\\ 
    \coefficientsLaurentUnstableRootReciprocalAllParities_{\indexTime} = 
    -d_{-1}^2  \sum_{k = 1}^{\indexTime - 2}\coefficientsLaurentUnstableRootReciprocalAllParities_k \coefficientsLaurentUnstableRootReciprocalAllParities_{\indexTime -1 - k} - d_{-1}^1 \sum_{k = 1}^{\indexTime-3}\coefficientsLaurentUnstableRootReciprocalAllParities_k \coefficientsLaurentUnstableRootReciprocalAllParities_{\indexTime - 2- k} - d_0^1 \coefficientsLaurentUnstableRootReciprocalAllParities_{\indexTime-2} - d_0^0 \coefficientsLaurentUnstableRootReciprocalAllParities_{\indexTime-3}, \qquad \indexTime\geq 4.
\end{multline*}

\end{document}